\documentclass[11pt,a4paper,reqno,final]{article}
\usepackage{amsmath,amsfonts,amssymb,titlesec}
\usepackage{amsthm}
\usepackage{cite}
\usepackage{graphicx}
\usepackage{stix}
\usepackage{mathtools}
\usepackage{esint}
\usepackage{multirow}
\usepackage{authblk}
\usepackage{tikz}
\usepackage{verbatim}
\usepackage{subcaption}
\usepackage[left=1.5cm,right=1.5cm,top=1.5cm, bottom = 2cm]{geometry}
\usepackage[english]{babel}
\usepackage{booktabs}
\usepackage{caption}
\usepackage{cancel}
\usepackage[normalem]{ulem}

\titlespacing*{\subsection}
{0pt}      
{0.5ex}    
{0.5ex}    
\titlespacing*{\subsection}
{0pt}{2ex}{2ex}

\titlespacing*{\section}
{0pt}      
{0.5ex}    
{0.5ex}    

\newcounter{constcount}
\newcommand{\const}[1]{%
  \refstepcounter{constcount}%
  \expandafter\xdef\csname constlabel@#1\endcsname{\theconstcount}%
  C_{\theconstcount}%
}
\newcommand{\constref}[1]{C_{\csname constlabel@#1\endcsname}}
\usepackage[colorlinks=true,breaklinks=true,linkcolor=lightblue,citecolor=lightgreen,urlcolor=lightblue]{hyperref}
\newcounter{cnstcnt}

\numberwithin{figure}{section}
\numberwithin{table}{section}
\numberwithin{equation}{section}
\definecolor{lightblue}{rgb}{0.22,0.45,0.70}
\definecolor{lightgreen}{rgb}{0.22,0.50,0.25}

\newcommand{\trinl}{\ensuremath{|\!|\!|}}
\newcommand{\trinr}{\ensuremath{|\!|\!|}}
\newcommand{\mycomment}[1]{}
\newtheorem{thm}{Theorem}[section]

\newtheorem{lemma}[thm]{Lemma} 
 
\newtheorem{rem}[thm]{Remark}

\newcommand\norm[1]{\lVert#1\rVert}
\newcommand{\cred}[1]{{\leavevmode\color{red}{#1}}}
\newcommand{\cblue}[1]{{\leavevmode\color{blue}{#1}}}

\newcommand{\bx}{{\boldsymbol{x}}}

\newcommand{\dx}{\,\mathrm{d}\bx}
\newcommand{\pw}{\mathrm{pw}}

\newcommand{\m}{\mathrm{M}}
\newcommand{\ds}{\,\mathrm{d}s}
\newcommand{\dt}{\,\mathrm{d}t}

\allowdisplaybreaks

\title{\bf Unconditionally stable and energy conserving discretization \\of the dynamic von K\'arm\'an equations}
\author{Carsten Carstensen\thanks{Institut für Mathematik, Humboldt-Universität zu Berlin, 10117 Berlin, Germany and Distinguished Visiting Professor, Department of Mathematics, Indian Institute of Technology Bombay, Powai, Mumbai, Maharashtra ({\tt  cc@math.hu-berlin.de}).} \qquad 
Neela Nataraj\thanks{Department of Mathematics, Indian Institute of Technology Bombay, Powai, Mumbai, Maharashtra, India ({\tt neela@math.iitb.ac.in}).} \qquad 
Ricardo Ruiz-Baier\thanks{School of Mathematics, Monash University, 9 Rainforest Walk, 3800 Clayton VIC, Australia
({\tt ricardo.ruizbaier@monash.edu}).} \qquad 
Aamir Yousuf\thanks{IITB–Monash Research Academy, Indian Institute of Technology Bombay, Powai, Mumbai, Maharashtra, India ({\tt aamir72@iitb.ac.in}).}} 
\begin{document}
\maketitle
\begin{abstract}
\noindent
A fully discrete approximation of the dynamic von K\'arm\'an equations combines nonconforming Morley finite element methods  for spatial discretization with an energy conserving {\it modified}  unconditionally stable Newmark second-order time-stepping scheme. Brouwer’s fixed-point  theorem establishes existence of a solution to the fully discrete scheme and further uniqueness and stability estimates follow for small loads. Optimal order a priori error estimates in the piecewise energy norm with quadratic convergence in time are derived for the fully discrete scheme. The results of the  numerical experiments validate the theoretical error bounds.

\medskip \noindent 
\textbf{Key words.}  von K\'arm\'an  system, stability, finite element methods, error estimates.
\end{abstract}
\section{Introduction}
\subsection{Mathematical Model}
The von K\'arm\'an equations provide a fundamental mathematical framework for describing the nonlinear bending behavior of thin elastic plates. These equations, originally developed in the context of classical elasticity, account for large deflections while retaining the assumption of small strains, making them particularly useful in modeling geometrically nonlinear deformations.
This paper analyzes a discretization scheme for the {dynamic von K\'arm\'an system}
\begin{subequations}\label{strong-form}
\begin{align}
&u_{tt} + \Delta^2 u -[u,v] = f(\bx,t) \text{ at } (\bx,t) \in \Omega \times (0,T ],\label{stron-1}\\
& \quad\;\;\Delta^2 v +\frac{1}{2}[u,u]= 0 \text{ at }(\bx,t) \in \Omega \times [0,T]
\label{stron-2}
\end{align}
with initial and clamped boundary  conditions  
\begin{equation}
u(\bx,0)=u_0(\bx), \quad u_t(\bx,0)=u_1(\bx) \text{ in }\Omega;\quad    u=\frac{\partial u}{\partial  {\bf n}} =0, v=\frac{\partial v}{\partial  {\bf n}} =0 \text{ on }\partial \Omega \times (0,T].
\label{P1 strong_icbc}
\end{equation}
\end{subequations}
Here and throughout the paper, $[\bullet,\bullet]$ is the Monge--Amp\`ere form operator or von K\'arm\'an bracket,
$$[u,v]:=\frac{\partial^2 u}{\partial
				x^2}\frac{\partial^2 v}{\partial
				y^2}+\frac{\partial^2 u}{\partial
				y^2}\frac{\partial^2 v}{\partial
				x^2}-2\frac{\partial^2 u}{\partial
				x\partial
				y}\frac{\partial^2 v}{\partial
				x\partial
				y},$$ 
while $\Omega$ is a bounded polygonal Lipschitz domain in $\mathbb{R}^2$ with outward-pointing unit normal $ {\bf n}$ to the boundary $\partial \Omega$, $\Delta^2 w:= {\partial^4 w}/{\partial	x^4} + 2 {\partial^4 w}/{\partial x^2 \partial y^2}	+{\partial^4 w}/{\partial y^4}$  denotes the biharmonic operator, $\partial u / \partial {\bf n} = \nabla u \cdot {\bf n} $ (resp. $\partial v / \partial  {\bf n} = \nabla v \cdot {\bf n} )$ is the outer normal derivative of $u$ (resp. $v$) on $\partial \Omega$, and  $u_{t} $, $u_{tt} $ denote the first- and second-order  derivatives with respect to time, respectively. The system of equations \eqref{strong-form} describes the transversal displacement $u$ and the Airy-stress function $v$ of a vibrating plate, whose boundary is clamped in transversal direction but free in horizontal direction. For a derivation of the equations from physical principles, we refer to \cite{MR569597} and the references therein.

\subsection{Literature overview}
Several contributions in the literature discuss the discretization of the static von K\'arm\'an equations \cite{MR4229191,MR3554363,MR4230429,mallik2016nonconforming}; \cite{MR229978,MR220472,MR1477663} and the references therein study existence of solutions, regularity, and bifurcation phenomena. The
 weak solutions $u,v\in H^2_
 0(\Omega )$ to the static von K\'arm\'an equations belong to $H^2_0(\Omega)\cap H^{2+\sigma}(\Omega)$ with the index of elliptic regularity $\sigma>1/2$ determined by the interior
 angles of the polygonal boundary $\partial \Omega$ with $\sigma=1$ for convex domains \cite{MR595625}. In \cite{MR3554363} and \cite{mallik2016nonconforming}, the authors analyze conforming and nonconforming finite elements for the static von K\'arm\'an model and derive the optimal-order error estimates in the energy, $H^1$, and $L^2$  norms. A unified analysis  for fourth-order quadratic semilinear problems in  \cite{MR4630544}  applies to Morley, discontinuous Galerkin, $C^0$ interior penalty, and the weakly over-penalized symmetric interior penalty schemes.  
A conforming $C^1$ virtual element method in \cite{MR4229191} approximates  isolated solutions to the von K\'arm\'an equations.

Despite the rich literature on the numerical analysis of the von K\'arm\'an static model, not many results are available for its challenging time-dependent counterpart. The underlying biharmonic wave equation has been the subject of several numerical studies: \cite{MR4444402} considers the  $C^1$-conforming Bogner--Fox--Schmit elements in space coupled with Galerkin and collocation methods in time and \cite{MR4971619} analyzes lowest-order nonstandard finite element methods (Morley, discontinuous Galerkin, and $C^0$ interior penalty) combined with explicit and implicit time-stepping schemes.
One reference for the dynamic von K\'arm\'an equation is \cite{MR3403715} with  a semidiscrete (in time) energy conserving finite difference schemes. The dynamic von K\'arm\'an equation \eqref{strong-form} over rectangular domains in \cite{yosibash2004collocation} employs  Chebyshev and Legendre-collocation methods for the spatial discretization and the implicit Newmark-$\beta$ scheme combined with a nonlinear fixed-point algorithm in each time step. 
Bilbao et al.\cite{MR2371355,MR3403715,bilbao2023explicit} have developed a line of finite-difference schemes for the dynamic von Kármán plate, from closed-form energy-conserving but conditionally stable schemes.

\subsection{Contributions}
This paper presents a rigorous analysis and implementation of a {\it novel} fully discrete scheme for the dynamic von K\'arm\'an equations with clamped boundary conditions. A nonconforming  Morley finite element method for the spatial discretization and a {\it modified} Newmark second-order time discretization scheme in this article lead to an {\it unconditionally stable} scheme. 
The main contributions of this work are as follows.

\begin{itemize}
\item 
An implicit modified Crank--Nicolson scheme leads to an unconditionally stable quadratic scheme with
optimal convergence rates for the initial discretization.  
\item 
A classical implicit time-integration approach for plate dynamics is the Newmark constant-average acceleration method \cite{MR3395524}
that is unconditionally stable for linear problems.   
\textit{It 
is 
observed that the Newmark constant-average 
acceleration method loses its unconditional stability 
in the nonlinear problem} \cite[Page no. 9]{akay1980dynamic}. Since the Courant--Friedrichs--Lewy (CFL) condition $k =\mathcal{O}(h^2)$ \cite[eqn. (36)]{akay1980dynamic} appears too restrictive, we
  propose a {\it new} scheme which  preserves the  \textit{energy conservation} property  at discrete level and  exploits the advantage of unconditional stability of implicit schemes.

\item 
The smallness assumption on the data $u_0$, $u_1$, and the time step ensures the well-posedness of the fully discrete scheme. 
Optimal-order a priori error estimates in the energy norm are obtained for the fully discrete solution under additional smallness assumptions on $f$: We obtain quadratic  convergence in time  and optimal   convergence rates in space.

\item Numerical experiments  validate the theoretical analysis and confirm the predicted spatial and temporal convergence rates in Example 1 of Section~\ref{num-sec}. Example 2 in  Section~\ref{num-sec} demonstrates that the von K\'arm\'an thin   plate model is effective. 
The findings indicate that as the plate thickness decreases, the two-dimensional simulations closely approximate the results from three-dimensional modeling.
\end{itemize}
\subsection{Outline of the paper}
The remaining parts of this section introduce  overall notation. 
Section~\ref{sec-main results} presents the weak formulation of the problem,  describes the Morley finite element space and the fully discrete scheme, and states the main results of the paper.  Proofs follow in Sections~\ref{sec-fully-ini}–\ref{sect-fully-full}. Section~\ref{Sec-aux} introduces a companion operator, a Ritz projection, and  other auxiliary results. Section~\ref{sec-fully-ini} establishes the well-posedness and a priori error estimates for the initial time steps of the fully discrete scheme, while Section~\ref{sect-fully-full} addresses the remaining time levels. Finally, Section~\ref{num-sec} presents the Newton--Raphson procedure together with the results of  numerical experiments.

\subsection{Notation}\label{sub-notation}
Standard notations on Lebesgue and Sobolev spaces \cite{MR2597943} apply  throughout the paper. We denote the $L^2$ scalar product by $(\bullet, \bullet)_{L^2(\Omega)}$ and its induced norm by $\| \bullet \|$; the same  notation applies to the vector and tensor-valued functions. 
Throughout this paper, $\mathcal{T}$ denotes a shape-regular triangulation of the bounded polygonal  Lipschitz domain $\Omega$ into triangles, $H^m({\cal T})$ denotes the Hilbert space $ {\prod}_{{{K \in {\cal T}}}}H^m (K)$, and $P_r({\cal T}) \subset L^2(\Omega)$ is the space of  piecewise  polynomials of degree at most $r$. The notation \(\trinl \bullet \trinr\) denotes the global seminorm
\(|\bullet|_{H^2(\Omega)}\) and  \(\trinl \bullet \trinr_{\mathrm{pw}}\) is the piecewise seminorm \(|\bullet|_{H^2(\mathcal T)}\) for ${\mathcal T}-$piecewise \(H^2\)-functions.
The operators \(D^2\) and \(D^2_{\mathrm{pw}}\) denote the global and piecewise Hessian.   The notation $a \lesssim b$ abbreviates $a \le Cb$ with a  generic constant $C$, that is independent of the discretization parameters. The  weighted arithmetic-geometric mean  inequality $ab \le \frac{\epsilon}{2}a^2 + \frac{1}{2\epsilon}b^2$ for  positive real numbers $a ,b ,\epsilon$ applies throughout.

\medskip \noindent 
Let $(X,\|\bullet\|_X)$ be a Hilbert space  and  $0\le a < b \le T$, the Bochner space $L^p(a,b;X)$ consists of all strongly measurable functions $\varphi:(a,b) \to X$ \cite{MR2597943} with finite norm
$$\displaystyle \norm{g}_{L^p(a,b;X)}:=\Big(\int_a^b \norm{g(t)}_X^p \dt\Big)^{1/p} \text{ for }\ 1\le p< \infty \quad \text{ and else }\quad 
 \norm{g}_{L^\infty(a,b;X)}:=\underset{a \le t \le b}{\text{ess sup}} \norm{g(t)}_X.$$ 
 For $m \in \mathbb{N}$, the Sobolev-Bochner space $H^m(a,b;X)$ consists of all functions $g \in L^2(a,b;X)$ such that $ \frac{d^k g}{dt^k} \in L^2(a,b;X)$ $\text{ for } k = 0, 1, \ldots, m$ and is 
equipped with the norm
$$\|g\|_{H^m(a,b;X)} := \left(\sum_{k=0}^{m} \left\|\frac{d^k g}{dt^k}\right\|_{L^2(a,b;X)}^2\right)^{1/2} = \left(\sum_{k=0}^{m} \int_a^b \left\|\frac{d^k g(t)}{dt^k}\right\|_X^2 \, dt\right)^{1/2}.$$
The space  
$C^k([a,b];X)$ denote all $k-$times continuously differentiable  functions $g :[a,b] \rightarrow X$ with 
$$\norm {g}_{C^k([a,b];X)}:=\underset{0 \le i \le k}{\sum} \underset{a \le t\le b}{\max} \left\|\frac{\partial^i g}{\partial t^i}\right\|_X < \infty.$$

\section{Main results}\label{sec-main results}
This section presents the weak formulation of \eqref{strong-form}, summarizes  its well-posedness results from the literature, and derives energy conservation identities in Subsection~\ref{subsec-weak formultion}. Subsection~\ref{subsec Preliminaries} discusses the spatial and temporal discretization framework. Subsection~\ref{motivation} motivates an unconditionally stable modified Newmark fully discrete scheme \eqref{P4fully}  from the energy conservation identity \eqref{energy-c0} at the discrete level and  Subsection~\ref{sub-main-full}  proposes this scheme to approximate the solution at higher time steps. Subsection~\ref{P1 fully_discrete_section} introduces the initial approximations at the first two time steps using a modified Crank–Nicolson scheme. The stability and  error estimates for the fully-discrete scheme are stated in Subsection~\ref{key}.
\subsection{Weak formulation}\label{subsec-weak formultion}
Let $a(\bullet,\bullet):H^2_0(\Omega) \times H^2_0(\Omega) \rightarrow {\mathbb R}$ denote the energy scalar product in $H^2_0(\Omega)$ defined by 
$$ \displaystyle a(\varphi,\psi):=\int_\Omega D^2\varphi:D^2\psi \dx \text{ for all } \varphi, \psi \in H^2_0(\Omega).$$
Let  $b(\bullet,\bullet,\bullet):H^2_0(\Omega) \times H^2_0(\Omega)\times H^2_0(\Omega) \rightarrow {\mathbb R}$  be the  symmetric trilinear form given by
$$ \displaystyle b(\varphi,\phi,\psi):=-\int_\Omega [\varphi,\phi]\psi \dx \text{ for all }\varphi,\phi,\psi \in H^2_0(\Omega).$$
{The global Sobolev embedding   $H^2_0(\Omega)  \hookrightarrow L^\infty({\Omega})$ \cite{MR450957} and the Friedrichs inequality reveal $\norm{\varphi}_{L^\infty({\Omega})} \le C_{\rm S}\trinl \varphi \trinr$ for all  $\varphi \in H^2_0(\Omega)$   and  $C_{\rm S}>0$. H\"older and Cauchy–Schwarz inequalities lead to
\begin{align}
  |b(\varphi,\chi,\psi)|\le \norm{\psi}_{L^\infty({\Omega})}\int_\Omega |[\varphi,\chi]| \dx\le C_{\rm S}\trinr \psi \trinr\int_\Omega |[\varphi,\chi]| \dx \le C_{\rm S} \trinl \varphi \trinr \trinl \chi \trinr\trinl\psi \trinr.\label{b-bdd-c}
\end{align}
\medskip \noindent
The weak formulation of \eqref{strong-form}  seeks   $(u,v) \in \big(L^\infty(0,T;H_0^2(\Omega))\big)^2$ with $u_t,u_{tt}\in L^\infty(0,T;L^2(\Omega))$ such that, at a.e  $0< t <T$, 
\begin{subequations}\label{P1 weak_form}
\begin{align}
        &(u_{tt},\varphi)_{L^2(\Omega)}+a(u,\varphi)+b(u,\varphi,v)= (f,\varphi)_{L^2(\Omega)}\quad  \text{ for all }\varphi \in H_0^2(\Omega), \label{P1 weak_form1}\\  
   &\qquad\qquad a(v,\psi)-\frac{1}{2}b(u,u,\psi)=0 \qquad\qquad\;\; \text{ for all }\psi \in H_0^2(\Omega),\label{P1 weak_form2}\\
    &\qquad\qquad u(0)=u_0 \text{ and } u_t(0)=u_1.\label{P1 weak_form3}
\end{align}
\end{subequations}
The dependence of functions on $t$ will be skipped whenever there is no risk of confusion, e.g., $u:=u(t)$ and $v=v(t)$ in \eqref{P1 weak_form}.

\medskip
\noindent\textbf{Existence, uniqueness, and regularity results}

\medskip \noindent 
The well-posedness of the dynamic von K\'arm\'an equations depends on the regularity of the given  data and the smoothness properties of the domain. In Table \ref{tab:von_karman_summary}, we present the main existence and uniqueness results from the literature when $\Omega$ is a smooth bounded domain in $\mathbb{R}^2$. These results  highlight the assumptions on given data and the corresponding regularity of solution.
%
Moreover, under the assumptions $u_0 \in H^2_0(\Omega)$, $u_1\in L^2(\Omega)$, and $f \in L^2(0,T;L^2(\Omega))$ (see \cite{MR1621722} in Table~\ref{tab:von_karman_summary} below), the Faedo--Galerkin technique (cf. \cite[Eqn.~4.3]{MR4279861}) leads to the following  {\it energy conservation} identity
\begin{align}
     \|u_t\|^2+\|D^2u\|^2+\|D^2v\|^2=\|u_1\|^2+\|D^2u_0\|^2+\|D^2v_0\|^2+2\int_0^t(f,u_t)_{L^2(\Omega)}\dt \quad \text{at  any }t \in [0,T].\label{energy-c0}
\end{align}
\begin{table}[htbp]
\centering
\caption{Summary of existence, uniqueness, and regularity results for \eqref{strong-form} ($\Omega \subset \mathbb{R}^2$ is smooth and bounded)}
\label{tab:von_karman_summary}
\small
\renewcommand{\arraystretch}{1.5}
\begin{tabular}{@{}p{1cm}lp{13.7cm}@{}}
\toprule
\textbf{Ref.} & \multicolumn{2}{l}{\textbf{Details}} \\
\midrule
\cite{MR906214} & \textit{Assumptions:} & $u_0 \in H^{5+\delta}_{\rm D}(\Omega):=\{\varphi \in H^{5+\delta}\cap H^4(\Omega)\cap H^2_0(\Omega):\Delta^2 \varphi=0 \text{ on }\partial \Omega\}$,\\
&&$u_1 \in H^{3+\delta}(\Omega)\cap H^2_0(\Omega)$ for $0<\delta<\frac{1}{2}$; $f = 0$ \\[0.3em]
& \textit{Result:} & {Local} unique solution on maximal interval $I^* = [T_-, T_+)$ containing zero\\[0.3em]
& \textit{Regularity:} & $u \in C^0(I^*, H^{5+\delta}_{\rm D}(\Omega)) \cap C^1(I^*, H^{3+\delta}(\Omega)\cap H^2_0(\Omega)) \cap C^2(I^*, H^{1+\delta}(\Omega)\cap H^1_0(\Omega))$ (Classical solution)\\
\midrule
\cite{MR1233041} & \textit{Assumptions:} & \textbf{Case 1:} $u_0 \in H^{4}(\Omega)\cap H^2_0(\Omega)$, $u_1 \in H^2_0(\Omega)$, $f=0$ \\
& & \textbf{Case 2:} $u_0 \in H^{5+\delta}_{\rm D}(\Omega)$, $u_1 \in H^{3+\delta}(\Omega)\cap H^2_0(\Omega)$  for $0 < \delta < \frac{1}{2}$, $f=0$\\[0.3em]
& \textit{Result:} & {Global existence and uniqueness} for both cases \\[0.3em]
& \textit{Regularity:} & \textbf{Case 1:} $u \in C^0(\mathbb{R}, H^{4}(\Omega)\cap H^2_0(\Omega)) \cap C^1(\mathbb{R}, H^{2}(\Omega)) \cap C^2(\mathbb{R}, L^2(\Omega))$ \\
& & \textbf{Case 2:} $u \in C^0(\mathbb{R}, H^{5+\delta}_{\rm D}(\Omega)) \cap C^1(\mathbb{R}, H^{3+\delta}(\Omega)) \cap C^2(\mathbb{R}, H^{1+\delta}(\Omega))$ (Classical solution)\\
\midrule
\cite{MR1621722} & \textit{Assumptions:} & $u_0 \in H^2_0(\Omega)$, $u_1 \in L^2(\Omega)$, $f \in L^2(0,T; L^2(\Omega))$ \\[0.3em]
& \textit{Result:} & {Existence and uniqueness} of weak solution on $[0,T]$ \\[0.3em]
& \textit{Regularity:} & $u \in L^\infty(0,T; H^2_0(\Omega))$, $ u_t \in L^\infty(0,T; L^2(\Omega))$, $u_{tt} \in L^\infty(0,T; H^{-2}(\Omega))$, $v \in L^\infty(0,T; H^2_0(\Omega))$ \\
\bottomrule
\end{tabular}
\end{table}
Consider  $\eqref{P1 weak_form2}$ at $t=0$ and test it against $v_0$. Then, from \eqref{b-bdd-c} we   obtain the inequality $|v_0|_{H^2(\Omega)} \le C_{\rm S}  \trinl u_0\trinl^2$. This, 
Cauchy-Schwarz and Young's inequalities reveal $2(f,u_t)_{L^2(\Omega)} \le 2\|f\|\|u_t\| \le 
{T}\|f\|^2+\|u_t\|^2/T$ and lead to 
\begin{align*}
\|u_t\|^2+|u|^2_{H^2(\Omega)}+|v|_{H^2(\Omega)}^2&\le \trinl u_0\trinl^2(1+C_{\rm S} ^2 \trinl u_0\trinl^2)+\|u_1\|^2+T\|f\|^2_{L^2(0,T;L^2(\Omega))} +\frac{1}{T}\int_0^t\|u_t\|^2\dt\\
&=:{\rm M}'_{1}(u_0,u_1,f)+\frac{1}{T}\int_0^t\|u_t\|^2\dt
\end{align*}
 {with $\int_0^t\|f\|^2\dt \le \|f\|^2_{L^2(0,T;L^2(\Omega))}$ in the first inequality. Gronwall's inequality \cite[Lemma 2.1]{MR1638130}  leads to stability of the continuous solution
\begin{align}    \|u_t\|^2+|u|^2_{H^2(\Omega)}+|v|_{H^2(\Omega)}^2&\le
\left(1+  \frac{1}{T} \int_0^t \exp\big({\frac{t-s}{T}}\big) \ds\right) {\rm M}'_{1}(u_0,u_1,f)
 \le e {\rm M}'_{1}(u_0,u_1,f)=:\text{\v{M}}_{1}(u_0,u_1,f)
\label{energy-c2}
\end{align}
with the Euler number $ e=2.71$.
\subsection{Preliminaries}\label{subsec Preliminaries}
 For a triangle $K \in {\cal T}$ 
with  diameter $h_K$, area $|K|$, and the outward unit normal  $n_K$   along $\partial K$,  let $h=h_{\max}:= \underset {K \in \mathcal{T}}{{\max }}\;h_K $. Let ${\mathcal V}(\Omega)$ (resp. ${\mathcal V}(\partial \Omega))$ denote the set of all interior (resp. boundary) vertices of  ${\cal T}$ and let ${\mathcal V}= {\mathcal V}(\Omega) \cup {\mathcal V}(\partial \Omega)$.  Let ${\mathcal E}(\Omega)$ (resp. ${\mathcal E}(\partial \Omega)$) denote the set of all interior (resp. boundary) edges of  ${\cal T}$ and let ${\mathcal E}= {\mathcal E}(\Omega) \cup {\mathcal E}(\partial \Omega)$. 
 The nonconforming Morley FE space ${{\rm M}({\cal T})}$ \cite{CC} reads (see Figure~\ref{fig:mor})
 
\begin{minipage}{0.65\linewidth}
\begin{align*}
 {\rm M}'({\cal T}) &:= \big\{ \varphi_{\rm M} \in P_2({\cal T}) : \varphi_{\rm M} \text{ is continuous at interior  vertices}\\
&\qquad\qquad\qquad\qquad\text{  and its normal derivatives are continuous}\\
&\qquad\qquad\qquad\qquad\text{  at the midpoints of interior edges} \big\},\\[4pt]
{\rm M}({\cal T}) &:= \big\{ \varphi_{\rm M} \in {\rm M}'({\cal T}) : \varphi_{\rm M} \text{ vanishes at the vertices of } \partial\Omega\\
&\qquad\qquad\qquad\qquad\text{ and its normal derivatives vanish}\\
&\qquad\qquad\qquad\qquad\text{ at the midpoints of boundary edges of } \partial\Omega \big\}.
\end{align*}
\end{minipage}%
\hfill%
\begin{minipage}{0.29\linewidth}
\centering
\includegraphics[width=0.75\linewidth]{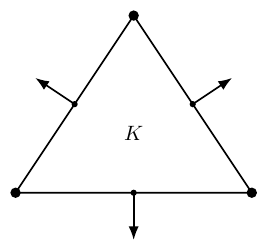}
\captionof{figure}{Morley finite element.}
\label{fig:mor}
\end{minipage}

\noindent
\medskip
Define a discrete bilinear form  $a_{\rm pw}(\bullet, \bullet):(H^2_0(\Omega)+{\rm M}({\cal T})) \times (H^2_0(\Omega)+ {\rm  M}({\cal T})) \rightarrow \mathbb{R}$  by 
$$a_{{\rm pw}}(\varphi,\psi):= \int_\Omega D^2_{{\rm pw}}\varphi:D^2_{\rm{pw}}\psi \dx \text{ for all } \varphi, \psi \in H^2_0(\Omega)+ {\rm M}({\cal T}).$$
This is a semi-scalar product and induces a semi-norm on $H^2(\mathcal{T})$ given by 
\begin{equation}
\trinl\varphi\trinl_{\pw}^2:=\sum_{K\in \mathcal{T}}|\varphi|_{H^2(K)}^2=a_{{\rm pw}}(\varphi,\varphi) \quad \text{ for all } \varphi \in H^2(\mathcal{T}). \label{P1 mesh_norm} 
\end{equation}
It is known that $a_{\rm pw}(\bullet, \bullet)$ is a scalar product  on $ H^2_0(\Omega)+ {\rm M}({\cal T})$ \cite{MR3407244,MR4235819} {and $\big(H^2_0(\Omega)+ {\rm M}({\cal T}),a_{\rm pw}(\bullet, \bullet)\big)$ is a Hilbert space  with the  induced norm $\trinl \bullet \trinl_\pw$}. \\
\noindent 
 The piecewise trilinear form  $b_{\pw}(\bullet,\bullet,\bullet) : (H^2_0(\Omega)+{\rm M}({\cal T})) \times (H^2_0(\Omega)+ {\rm  M}({\cal T})) \times (H^2_0(\Omega)+ {\rm  M}({\cal T})) \rightarrow \mathbb{R}$ is defined by
\begin{equation*}
     b_{\pw}(\varphi,\chi,\psi):=-\sum_{K \in {\mathcal{T}}}\int_K [\varphi, \chi ] \psi \dx\,{=:}- \int_\Omega [\varphi, \chi ]_{\rm{pw}} \psi \dx\quad \text{ for all  } \varphi,\chi,\psi \in H^2_0(\Omega)+ {\rm  M}({\cal T}).
\end{equation*}
The trilinear form $b_\pw$ is  merely symmetric with respect to the first two arguments, that is,   $b_{\pw}(\varphi,\chi,\psi) =b_{\rm pw}(\chi, \varphi, \psi)$.
The definition of $b_{\pw}(\bullet,\bullet,\bullet)$ from the last displayed expression, a H\"older's inequality, and a discrete Sobolev embedding 
\begin{align} \label{Sobolev}
\|\psi\|_{L^\infty(\Omega)} &\le C_{\rm dS} \trinl\psi\trinl_{\pw}  
\end{align} (cf. \cite[Lemma~4.7]{MR4205055} and \cite[Lemma~2.5]{MR4230429} for proofs) reveal
    \begin{equation}
    |b_{\pw}(\varphi,\chi,\psi)|\le C_{\rm dS}\trinl\varphi\trinl_{\pw}\trinl \chi\trinl_{\pw}\trinl\psi\trinl_{\pw}.\label{Bh-bdd}
    \end{equation}
    The time discretization on a uniform partition $ 0=t_0 < t_1<t_2< \cdots<t_N=T$ of the interval $[0,T]$ into $N \in\mathbb{N}$  
congruent intervals 
 with $t_n=nk$, focuses on a uniform  time step $k=T/N$. 
The following abbreviations apply  to any function $\varphi(\bx,t)$:
\begin{subequations}
\begin{align}
& \varphi^n  := \varphi(\bx,t_n)= \varphi(t_n), \quad \varphi^{n+1/2}:= \frac{1}{2}\left(\varphi^{n+1}+\varphi^n \right),
\quad \varphi^{n,1/4} :=\frac{1}{4} \left(\varphi^{n+1}+2\varphi^n+  \varphi^{n-1}\right),\label{varphi1}\\
& \bar{\partial}_t \varphi^{n+1/2} :=\frac{\varphi^{n+1}-\varphi^{n}}{k} ,\quad 
{\bar{\partial}}^2_t \varphi^n := \frac{\varphi^{n+1}-2\varphi^n+\varphi^{n-1}}{k^2} ,\quad 
\delta_t \varphi^n := \frac{\varphi^{n+1}-\varphi^{n-1}}{2k}. \label{varphi2}
\end{align}\end{subequations}
The identities below result from \eqref{varphi1}-\eqref{varphi2} and are straightforward.
\begin{subequations}
 \begin{align}
   &k  \bar{\partial}_t ^2 \varphi^n= \bar{\partial}_t \varphi^{n+1/2}-\bar{\partial}_t \varphi^{n-1/2},\;
    2 \varphi^{n,1/4} =\varphi^{n+1/2}+\varphi^{n-1/2},\label{un14}\\
     &\varphi^{n+1}-\varphi^{n-1}=2(\varphi^{n+1/2}-\varphi^{n-1/2})=k(\bar{\partial}_t \varphi^{n+1/2}+\bar{\partial}_t \varphi^{n-1/2})=2k\delta_t\varphi^n.\label{dtu}
 \end{align}\end{subequations}
\subsection{Motivation}\label{motivation} 
The algorithmic energy momentum method by Simo and Tarnow's \cite{MR1187632,MR1284833}  first established that midpoint or trapezoidal time discretizations of nonlinear elastodynamics fail to conserve energy exactly and could be numerically unstable in a related but different application to shells. They introduced a stress evaluation built from a discrete gradient that restores exact conservation of energy and momentum. Bilbao et al.  \cite{MR2371355,MR3403715} later adapted this discrete gradient framework to the  von K\'arm\'an bracket and derived closed form energy conserving finite difference schemes that are {\it stable  conditionally}, later generalized to the full system with in plane inertia. 
While \cite{bilbao2023explicit} establish unconditional stability in general, their treatment of the plate case falls back to a conditionally stable variant; to the best of our knowledge {\it no} fully-discrete scheme with rigorously proven unconditional stability is available for the nonlinear, coupled dynamic von Kármán system on a nonconforming spatial discretization with particular attention to the choice for the initial approximations.


Suppose that $U^n$ denotes the approximation in a semi-discrete problem at a time $t_n= nk$ and consider $U(t)$ as a piecewise linear spline in a uniform time discretization with time-step size $k>0$. 
Let  $U^{n+1/2} := (U^{n+1}+U^n)/2$ be  the approximation at the midpoint $t_{n+1/2} = (n+1/2)k$ and $\bar{\partial}_t{U}^{n+1/2} := (U^{n+1}-U^n)/k$
be the approximation of the velocity in the time interval $(t_{n-1},t_n)$. The discretization of the time derivative $u_{tt}$ at time $t_n$ by a central difference quotient $k^{-2}(U^{n+1}-2U^n+U^{n-1}) =k^{-1}(U^{n+1/2}-U^{n-1/2})$ is the most natural choice and thereafter we need an approximation of $\Delta^2 u$ at time $t_n$ by an average of values of $U^{n-1}, U^n, U^{n+1} $ to allow energy conservation. It is known from \cite{MR4971619} that $(U^{n+1/2}+U^{n-1/2})/2$ allows for the following arguments. The resulting discrete problem for the wave equation and $f \equiv 0$ reads
\[ k^{-1} (\bar{\partial}_t{U}^{n+1/2}-\bar{\partial}_t{U}^{n-1/2}, \varphi)_{L^2(\Omega)}
+\frac{1}{2}a_\pw(U^{n+1/2}+U^{n-1/2}, \varphi)=0.
\]
The test with $\varphi= k(\bar{\partial}_t{U}^{n+1/2}+\bar{\partial}_t{U}^{n-1/2})=U^{n+1}-U^{n-1} =2(U^{n+1/2} -U^{n-1/2})$ leads to telescoping terms 
\[ \| \bar{\partial}_t{U}^{n+1/2} \|^2 - \|\bar{\partial}_t{U}^{n-1/2}\|^2 +  \trinl {U}^{n+1/2}\trinr_\pw^2 -  \trinl {U}^{n-1/2}\trinr_\pw^2=0 \]
that (up to initial values for $U^0,U^1,\bar{\partial}_t{U}^{1/2}$) provides energy conservation (for the wave equation) viz.
\[E'_{n+1/2}:=  \| \bar{\partial}_t{U}^{n+1/2} \|^2 +   \trinl {U}^{n+1/2}\trinr_\pw^2=E'_{n-1/2} \text{ for } n \in {\mathbb N}.
\]

\medskip \noindent Adopt the notation $ V^n, V^{n+1/2}, V^{n+1}$ for piecewise linear approximations for the variable $V(t)$ that approximates $v$
in \eqref{stron-2}. This preliminary consideration eventually reveals a technique for energy conservation and its repeated application favours the subsequent scheme 
\[ k^{-1} (\bar{\partial}_t{U}^{n+1/2}-\bar{\partial}_t{U}^{n-1/2}, \varphi)_{L^2(\Omega)}
+\frac{1}{2}a_\pw(U^{n+1/2}+U^{n-1/2}, \varphi) +\frac{1}{4}b_{\pw}(U^{n+1/2}+U^{n-1/2}, \varphi, V^{n+1/2}+V^{n-1/2})=0.
\]
The test with $\varphi= k(\bar{\partial}_t{U}^{n+1/2}+\bar{\partial}_t{U}^{n-1/2})=U^{n+1}-U^{n-1} =2(U^{n+1/2} -U^{n-1/2})$ and the above calculations lead to 
\[
2E_{n-1/2}'-2 E_{n+1/2}'=b_\pw(U^{n+1/2}, U^{n+1/2},V^{n+1/2}+V^{n-1/2} ) -
b_\pw(U^{n-1/2}, U^{n-1/2},V^{n+1/2}+V^{n-1/2} )
\]
provided the trilinear form $b_\pw$ is symmetric in the first two components. The second discrete equation may be evaluated in an averaged sense with 
\[ b_\pw(U^{n+1/2}, U^{n+1/2},\psi) =2a_\pw(V^{n+1/2},\psi). \]
For the test function $\psi= V^{n+1/2}+V^{n-1/2}$, this leads to 
\[ E_{n-1/2}'- E_{n+1/2}' =
a_\pw(V^{n+1/2}-V^{n-1/2}, 
V^{n+1/2}+V^{n-1/2}) =  
\trinl {V}^{n+1/2}\trinr_\pw^2 
-\trinl {V}^{n-1/2}\trinr_\pw^2\]
and so to energy conservation in the sense of 
\[E_{n+1/2} := E'_{n+1/2} +\trinl{V^{n+1/2}} \trinr^2_{\pw} = E_{n-1/2}.\]
This novel motivation for the semi-discrete scheme is examined for the fully-discrete scheme below in this paper. 
\subsection{Fully-discrete scheme}\label{sub-main-full}
\medskip \noindent
This subsection presents the fully discrete scheme for the von Kármán equations under the assumption that discrete solutions at the first two time steps are available. Initial discretization computes the discrete solutions at the first two time steps in Subsection~\ref{P1 fully_discrete_section} below. We state the main well-posedness and error estimate results for the scheme here, while the detailed proofs follow in Section~\ref{sect-fully-full}.

\medskip
\noindent
Let $(u_{\m}^n,v_{\m}^n)$ denote the Morley approximation of $(u(t),v(t))$ at time $t_n$ for $n=0,1,\ldots,N$. Assume that $(u_{\m}^0,v_{\m}^0)$ and $(u_{\m}^1,v_{\m}^1) \in {{\bf M}({\cal T})}:=\m(\cal{T}) \times \m(\cal{T})$ be approximations of the continuous solution $(u(t),v(t))$ at times $t_0$ and $t_1$, respectively. For $n=1,2,\ldots, N-1$, the fully discrete scheme motivated in Subsection~\ref{motivation} seeks the solution $(u_{\m}^{n+1}, v_{\m}^{n+1})\in {{\bf M}({\cal T})}$ to
\begin{subequations}\label{P4fully}
\begin{align}
    (\bar{\partial}_t ^2 u_{\m}^n,\varphi_{\m})_{L^2(\Omega)} + a_{\pw} (u_{\m}^{n,1/4},\varphi_{\m} )+b_{\pw}(u_{\m}^{n,1/4},\varphi_{\m},v_{\m}^{n,1/4})&= (f^{n,1/4},\varphi_{\m} )_{L^2(\Omega)}\quad\text{ for all } \varphi_{\m} \in  \m(\cal{T}),\label{P4 fully_discrete1}\\
 a_{\pw}(v_{\m}^{n+1/2},\psi_{\m}) -\frac{1}{2}{b_{\pw}(u_{\m}^{n+1/2},u_{\m}^{n+1/2},\psi_{\m})}&=0 \quad\text{ for all }\psi_{\m}\in \m(\cal{T}) .\label{P4 fully_discrete2}
     \end{align}
     \end{subequations}
    With reference to \eqref{P4 fully_discrete1}, a Newmark scheme  
for $u_{tt}$ with $t>t_1$ and  
the approximation $b_{\pw}(u_{\m}^{n,1/4},\varphi,v_{\m}^{n,1/4})$ for the non-linear term imply a 
quadratic convergence in time .
To see this, we look back at the key identity 
\eqref{energy-c0} and Subsection~\ref{motivation} for the nonlinear term that ensures the \textit{energy 
conservation} of \eqref{P1 weak_form}.
For its discrete counterpart, in the proof of stability, the test function 
is chosen as $2k\delta_t u_{\m}^{n}$ in \eqref{P4 fully_discrete1} and the linear terms telescope 
perfectly as $\norm{\bar{\partial}_t u_{\m}^{n+1/2}}^2- \norm{\bar{\partial}_t u_{\m}^{n-1/2}}^2+ \trinl u_{\m}^{n+1/2} \trinl_{\pw}^2- \trinl u_{\m}^{n-1/2} \trinl_{\pw}^2$. From the design of  \eqref{P4 fully_discrete2},  the non-linear term  reduces to
\begin{align}
   b_{\pw}(u_{\m}^{n,1/4}, 2k\delta_tu_{\m}^{n},v_{\m}^{n,1/4})
   &=b_{\pw}(u_{\m}^{n+1/2}, u_{\m}^{n+1/2},v_{\m}^{n,1/4})
   -b_{\pw}(u_{\m}^{n-1/2}, u_{\m}^{n-1/2},v_{\m}^{n,1/4})\nonumber\\
   &=\trinl v_{\m}^{n+1/2}\trinl_{\pw}^2
   -\trinl v_{\m}^{n-1/2}\trinl_{\pw}^2.\label{motiv-final}
\end{align}
{The second equality in \eqref{motiv-final} holds precisely 
because $v_{\m}^{n+1/2}$ satisfies \eqref{P4 fully_discrete2}: 
\eqref{P4 fully_discrete2} is therefore not an independent approximation 
of a separate Partial Differential Equation (PDE), but the exact discrete condition on $v$ forced by the 
requirement that the nonlinear energy balance closes. The sum of 
\eqref{motiv-final} over time steps then leads to a global discrete energy 
inequality with no restriction on $\Delta t$.}
Beyond accuracy, the chosen weighted time-averaging strategy is motivated 
by stability: The discrete energy balance for the nonlinear von 
K\'{a}rm\'{a}n system ensures unconditional stability, unlike classical 
implicit Newmark schemes which are only conditionally stable in the 
nonlinear setting (see, e.g., \cite[eqn. (36)]{akay1980dynamic}). 
Theorem~\ref{TH-WELLPOSED-FINAL}$(a)$ provides the details of 
{\it unconditional stability of this discrete scheme}. {The design is further guided by the requirement that the 
same quarter-average structure must simultaneously deliver a discrete energy 
law (see Subsection~\ref{motivation} for details.)}
\subsection{Initial discretization} \label{P1 fully_discrete_section}
We propose approximations for the solution $(u,v)$ at the initial time discretization points $t_0$ and $t_1$. This is crucial for the design of the fully discrete formulation in Subsection 2.3, since the multi-step scheme \eqref{P4fully} requires not only the solution at these two initial steps, but its stability and error estimates also depend critically on these approximations. The results on well-posedness and error control are stated in Theorem~\ref{TH-WELLPOSED-INI} and Theorem~\ref{LEM-ERROR-INI} respectively, while the proofs follow in Section~\ref{sec-fully-ini}.\\
For $u_0 \in H^2_0(\Omega)$, we propose $(u_{\m}^0,v_{\m}^0)$ as an approximation of $(u(t),v(t))$ at $t=t_0$ defined by
\begin{align}
  & a_{\pw}(u_{\m}^0,\varphi_{\m} )=a( u_0, J\varphi_{\m} ),\;\; 
  a_{\pw}(v_{\m}^0, \psi_{\m}) = \frac{1}{2} b_{\pw}(u_{\m}^0, u_{\m}^0, \psi_{\m}) \; \text{ for all } (\varphi_{{\m}},{\psi}_{{\m}}) \in {{\bf M}({\cal T})}. 
  \label{app_at_0}
\end{align}
\noindent Next, for  
$u_1 \in L^2(\Omega)$, the approximation $(u_{\m}^1,v_{\m}^1)$ of the continuous solution $(u(t),v(t))$ at $t=t_1$ is then derived from
   \begin{subequations}
\label{P1ic}
\begin{align}
{2k^{-1}}(\bar{\partial}_t u_{\m}^{1/2} -u_1, \varphi_{{\m}} )_{L^2(\Omega)}+a_{\pw}(u_{\m}^{1/2},\varphi_{{\m}}) +b_{\pw}(u_{\m}^{1/2},\varphi_{{\m}},v_{\m}^{1/2})&= (f^{1/2},\varphi_{{\m}} )_{L^2(\Omega)}, \label{P1 ic_1}\\
 a_{\pw}(v_{\m}^{1},\psi_{{\m}}) -\frac{1}{2}b_{\pw}(u_{\m}^{1},u_{\m}^{1},\psi_{{\m}})&=0.
 \label{P1 ic_2} 
 \end{align}
\end{subequations}
The key identity \eqref{energy-c0} for the nonlinear term ensures the \textit{energy conservation}  of \eqref{P1 weak_form}. To arrive at discrete counterpart of \eqref{energy-c0} in the proof of stability, the test function will be $\bar{\partial}_t u_{\m}^{1/2}$. For the discrete scheme in \eqref{P1ic}, elementary manipulations lead to 
       \begin{align}
   b_{\pw}(u_{\m}^{1/2}, \bar{\partial}_t u_{\m}^{1/2},v_{\m}^{1/2})&=\frac{1}{2k}\big(b_{\pw}(u_{\m}^{1}, u_{\m}^{1},v_{\m}^{1/2})-b_{\pw}(u_{\m}^{0}, u_{\m}^{0},v_{\m}^{1/2})\big)=\frac{1}{2k}\big(\trinl v_{\m}^{1}\trinl_{\pw}^2-\trinl v_{\m}^{0}\trinl_{\pw}^2\big).\label{motiv-ini}
    \end{align} 
The above identity facilitates the proof of stability of the discrete scheme (see Theorem~\ref{TH-WELLPOSED-INI}$(c)$ below, for details).
\subsection{Key results}\label{key} 
In this subsection, we present the main results on stability and error estimates for the fully discrete scheme \eqref{P4fully}-\eqref{P1ic}. The proofs are given in Sections \ref{sec-fully-ini}-\ref{sect-fully-full}.\\
 The first theorem provides sufficient conditions under which \eqref{app_at_0} and \eqref{P1ic} are well-posed. The proof is provided in Subsection~\ref{subsec} and utilizes the expression
 $$\text{\v{M}}_{2}(u_0,u_1,f,k):= \|J\|^2\trinl u_0\trinl^2\big(1+ \frac{1}{4}C_{\rm dS}^2\|J\|^2\trinl u_0\trinl^2\big)+4\norm{u_1}^2+{k^2}\norm{f}^2_{C([0,t_1];L^2(\Omega))}$$
 with the operator norm  $\|J\|:=\|J\|_{\mathcal{L}(M(\mathcal{T}), H^2_0(\Omega))}$ for a bounded linear companion operator of Subsection~\ref{subsec-comp} below. Recall $C_{\rm dS}$ from \eqref{Sobolev}.
\begin{thm}[well-posedness]\label{TH-WELLPOSED-INI}
    Suppose that the initial data satisfy 
    $u_0 \in H^2_0(\Omega)$,  $u_1 \in L^2(\Omega)$, and the source $f \in C([0,t_1];L^2(\Omega))$. Then $(a)$-$(d)$ hold viz. 
    \begin{itemize}
    \item[($a$)] \eqref{app_at_0}
is well-posed with  $\trinl u_\m^0 \trinr_\pw  \le \|J\| \trinl u_0\trinl$ and $ \trinl v_\m^0 \trinr_\pw  \le  \frac{1}{2} C_{\rm dS}\|J\|^2\trinl u_0\trinl^2 $,
    \item[($b$)] there exists a solution $(u_{\m}^{1},v_{\m}^{1})$  to \eqref{P1ic}, 
    \item[($c$)] $(u_{\m}^1,v_{\m}^1)$ satisfies $\trinl u_{\m}^1\trinl_{\pw}+\trinl v_{\m}^1\trinl_{\pw}  \le \sqrt{2}\big({\rm{\text{\v{\rm M}}}}_{2}(u_0,u_1,f,k)\big)^{1/2}$, 
\item[($d$)] the solution $(u_{\m}^1,v_{\m}^1)$ to \eqref{P1ic} is unique under smallness  assumptions on $u_0$, $u_1$, and the temporal discretization parameter $k$,  precisely ${{\rm M}}_{2}(u_0,u_1,f,k):=C_{\rm dS}^2 \big( \|J\|^2\trinl u_0\trinl^2+{\rm{\text{\v{\rm M}}}}_{2}(u_0,u_1,f,k)\big) \le 1$.
  \end{itemize}
\end{thm}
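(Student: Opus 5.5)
For (a), \eqref{app_at_0} consists of two linear problems on $\m(\mathcal T)$, which I solve one after the other. Since $a_\pw$ is a scalar product on $\m(\mathcal T)$ and $\varphi_\m\mapsto a(u_0,J\varphi_\m)$ is bounded by $\trinl u_0\trinr\,\|J\|\,\trinl\varphi_\m\trinr_\pw$, Riesz representation gives a unique $u_\m^0$. Testing with $\varphi_\m=u_\m^0$ yields $\trinl u_\m^0\trinr_\pw\le\|J\|\trinl u_0\trinr$. With $u_\m^0$ fixed, the second equation is again a Riesz problem and has a unique solution $v_\m^0$. Testing with $\psi_\m=v_\m^0$ and applying \eqref{Bh-bdd} gives $\trinl v_\m^0\trinr_\pw\le\frac12 C_{\rm dS}\trinl u_\m^0\trinr_\pw^2\le\frac12C_{\rm dS}\|J\|^2\trinl u_0\trinr^2$.

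For (b) and (c), I first eliminate $v$ in \eqref{P1ic}. For each $w\in\m(\mathcal T)$, let $G(w)\in\m(\mathcal T)$ be the Riesz solution of $a_\pw(G(w),\psi)=\frac12 b_\pw(w,w,\psi)$. The map $G$ is continuous and quadratic, and $v_\m^1=G(u_\m^1)$. The unknown is $z:=u_\m^{1/2}$, with $u_\m^1=2z-u_\m^0$ and $\bar\partial_t u_\m^{1/2}=2(z-u_\m^0)/k$. This turns \eqref{P1 ic_1} into $F(z)=0$, where $F:\m(\mathcal T)\to\m(\mathcal T)$ is defined through the $L^2$ inner product (or through $a_\pw$):
\[(F(z),\varphi)=\tfrac{4}{k^2}(z-u_\m^0,\varphi)-\tfrac2k(u_1,\varphi)+a_\pw(z,\varphi)+b_\pw(z,\varphi,\tfrac12(G(u_\m^0)+G(2z-u_\m^0)))-(f^{1/2},\varphi).\]
I will use the standard consequence of Brouwer's theorem: if $(F(z),z-u_\m^0)>0$ on a sphere $\|z-u_\m^0\|=R$, then $F$ has a zero inside. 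To check the sign, I test with $\varphi=z-u_\m^0=\frac k2\bar\partial_t u_\m^{1/2}$. The identity \eqref{motiv-ini} turns the nonlinear term into $\frac14(\trinl v_\m^1\trinr_\pw^2-\trinl v_\m^0\trinr_\pw^2)$. The term $a_\pw(z,z-u_\m^0)$ equals $\frac14(\trinl u_\m^1\trinr_\pw^2-\trinl u_\m^0\trinr_\pw^2)$.

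The main obstacle is that these energy terms are not coercive in $z-u_\m^0$. I will handle this with $\frac4{k^2}\|z-u_\m^0\|^2$, which dominates the linear terms; the energy terms have a good sign apart from constants. Young's inequality bounds $\frac2k(u_1,z-u_\m^0)+(f^{1/2},z-u_\m^0)$ by $\frac{2}{k^2}\|z-u_\m^0\|^2+$ constants. Positivity then holds for $R$ large, since all finite-dimensional norms are equivalent. Hence a solution exists.

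For (c), I evaluate the same tested identity at the solution, where $F(z)=0$. Multiplying by $4$ gives
\[\|\bar\partial_t u_\m^{1/2}\|^2+\trinl u_\m^1\trinr_\pw^2+\trinl v_\m^1\trinr_\pw^2=\trinl u_\m^0\trinr_\pw^2+\trinl v_\m^0\trinr_\pw^2+4(u_1,\bar\partial_t u_\m^{1/2})+2k(f^{1/2},\bar\partial_t u_\m^{1/2}).\]
Young's inequality bounds the last two terms by $\frac12\|\bar\partial_t u_\m^{1/2}\|^2$ plus $4\|u_1\|^2$ plus $k^2\|f\|_{C}^2$, with a suitable split. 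Inserting (a) then yields $\trinl u_\m^1\trinr_\pw^2+\trinl v_\m^1\trinr_\pw^2\le$\v{M}$_2$. Finally $(a+b)\le\sqrt2(a^2+b^2)^{1/2}$ gives (c). The exact constants 4 and $k^2$ may need adjusting with a different Young split.

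For (d), I take two solutions and form the differences $e_u=u_\m^1-\tilde u_\m^1$ and $e_v$. Then $e_u^{1/2}=e_u/2$. Subtracting the $v$ equations, the symmetry of $b_\pw$ in its first two arguments gives $a_\pw(e_v,\psi)=\frac12 b_\pw(e_u,u_\m^1+\tilde u_\m^1,\psi)$. Hence $\trinl e_v\trinr_\pw\le\frac12 C_{\rm dS}\trinl e_u\trinr_\pw(\trinl u_\m^1\trinr_\pw+\trinl\tilde u_\m^1\trinr_\pw)$. Next I subtract the first equations and test with $e_u$. This gives $\frac{2}{k^2}\|e_u\|^2+\frac12\trinl e_u\trinr_\pw^2$ on the left. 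The right-hand side consists of the differences of the $b_\pw$ terms, split as $b_\pw(e_u/2,e_u,v^{1/2})+b_\pw(\tilde u^{1/2},e_u,e_v/2)$. By \eqref{Bh-bdd}, (a), (c) and the $e_v$ bound, these are at most $\frac12C_{\rm dS}\trinl e_u\trinr_\pw^2$ times a factor built from $\trinl v^{1/2}\trinr_\pw$ and $C_{\rm dS}$ times squared norms of $u$. The hypothesis M$_2\le1$ should make this factor strictly less than $\frac12$, or equal to it with the $L^2$ term giving strictness. That forces $e_u=0$, and then $e_v=0$. Matching the constants exactly to M$_2\le1$ is the delicate bookkeeping in this step.
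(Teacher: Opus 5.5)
Your proof is correct. Parts (a) and (c) follow the paper: Lax--Milgram, then testing \eqref{P1 ic_1} with a multiple of $u_{\m}^{1}-u_{\m}^{0}$ and invoking \eqref{motiv-ini}. Parts (b) and (d) are organized differently.

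\textbf{Existence (b).} The paper applies the Brouwer corollary to the coupled unknown $(u_{\m}^{1}-u_{\m}^{0},\,v_{\m}^{1}+v_{\m}^{0})$ in a $k$-weighted inner product on ${\bf M}({\cal T})$. Its map is designed so that, tested against its own argument, the cubic terms cancel algebraically down to the linear term $-2a_{\pw}(v_{\m}^{0},\psi_{{\m}2})$. You instead eliminate $v$ through the quadratic solution operator $G$. Coercivity of the reduced map then comes from the energy identity, since the nonlinearity contributes $\frac14(\trinl v_{\m}^{1}\trinr_{\pw}^2-\trinl v_{\m}^{0}\trinr_{\pw}^2)\ge-\frac14\trinl v_{\m}^{0}\trinr_{\pw}^2$. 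Your version makes (b) and (c) one computation and shows that discrete energy conservation drives solvability. The paper's version stays with the coupled unknowns and needs no nonlinear solution map for $v$.

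\textbf{Uniqueness (d).} The paper adds the $v$-difference tested with $2\bar e_{\m}$ to the $u$-difference tested with $4e_{\m}$. The mixed terms then merge into $b_{\pw}(u_{\m}^{1}-u_{\m}^{0},e_{\m},\bar e_{\m})$, so only one of the two solutions needs bounds. You eliminate $e_v$ through its Lipschitz bound, which needs bounds on both solutions; these are available because (c) holds for every solution.

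The bookkeeping you left open does close. Set $A:=\|J\|^2\trinl u_0\trinr^2$ and $B:=\text{\v{M}}_{2}(u_0,u_1,f,k)$. Use two inputs:
\begin{itemize}
\item $\trinl v_{\m}^{j}\trinr_{\pw}\le\frac12C_{\rm dS}\trinl u_{\m}^{j}\trinr_{\pw}^2$, obtained by testing the $v$-equations with $v_{\m}^{j}$;
\item the squared bounds $\trinl u_{\m}^{1}\trinr_{\pw}^2,\ \trinl\tilde u_{\m}^{1}\trinr_{\pw}^2\le B$ from the intermediate inequality in (c). The final $\sqrt2$-form of (c) is too weak here.
\end{itemize}
Then the coefficient of $\trinl e_u\trinr_{\pw}^2$ on the right is at most $\frac18C_{\rm dS}^2(A+B)+\frac14C_{\rm dS}^2(\sqrt{AB}+B)\le\frac12{\rm M}_{2}(u_0,u_1,f,k)$. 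Hence $\frac{2}{k^2}\|e_u\|^2+\frac12(1-{\rm M}_{2})\trinl e_u\trinr_{\pw}^2\le0$, which forces $e_u=0$ even when ${\rm M}_{2}=1$.

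\textbf{A slip in (c).} Four times the tested identity at a solution has $4\|\bar\partial_t u_{\m}^{1/2}\|^2$ on the left, not $\|\bar\partial_t u_{\m}^{1/2}\|^2$. With coefficient $1$, the Young bound you quote does not hold in general. With the correct coefficient $4$ it holds with room to spare. For $w=\bar\partial_t u_{\m}^{1/2}$,
\[
4(u_1,w)_{L^2(\Omega)}+2k(f^{1/2},w)_{L^2(\Omega)}\le4\|w\|^2+2\|u_1\|^2+\tfrac{k^2}{2}\|f^{1/2}\|^2 .
\]
This gives $\trinl u_{\m}^{1}\trinr_{\pw}^2+\trinl v_{\m}^{1}\trinr_{\pw}^2\le\text{\v{M}}_{2}(u_0,u_1,f,k)$ without adjusting any constants.
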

\begin{rem}[well-posedness of \eqref{app_at_0}]
 Theorem~\ref{TH-WELLPOSED-INI}$(a)$ 
  does not require smallness data assumptions. 
  This is because \eqref{app_at_0} is linear and its equations decouple.  Hence the unique solvability of its first equation follows from the Lax--Milgram lemma, and the solution is used to uniquely solve the second equation.
\end{rem}
\noindent The next result  provides the discretization  error bounds for  \eqref{app_at_0}-\eqref{P1ic}. The proof  is postponed to  Subsection~\ref{sub-inierror}.
\begin{thm}[initial error bounds]\label{LEM-ERROR-INI}
    Suppose that the solution $u$ to \eqref{P1 weak_form} belongs to $ H^2
       (0,t_1;H^2_0(\Omega) \cap H^{2+\sigma}(\Omega))$ 
       and $u_{ttt}  \in L^\infty(0,t_1;L^2(\Omega))$. Let $\sigma \in (1/2,1]$ be an index of elliptic regularity 
for the biharmonic problem \cite{MR595625}. Then
       \begin{align*}
        \norm{\bar{\partial}_t(u^{1/2}-u_{\m}^{1/2})}+ \trinl u^{1/2}-u_{\m}^{1/2}\trinl_{\pw}+ \trinl v^{1/2}-v_{\m}^{1/2} \trinl_{\pw} = \mathcal{O}(h^{\sigma}+k^2).
       \end{align*}      
\end{thm}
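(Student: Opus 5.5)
We follow the standard splitting through an intermediate projection. Let $R$ denote the Ritz-type projection onto $\m(\mathcal T)$ (with respect to $a_\pw$ composed with the companion operator $J$) announced for Section~\ref{Sec-aux}. We assume it satisfies $\trinl w-Rw\trinl_\pw+\|w-Rw\|\lesssim h^\sigma\|w\|_{H^{2+\sigma}(\Omega)}$. For $u$ we also assume the consistency bound $a_\pw(w-Rw,\varphi_\m)=0$ up to terms controlled by $h^\sigma$. Write
\[
u^j-u_\m^j=(u^j-Ru^j)+(Ru^j-u_\m^j)=:\rho^j+\theta^j,\qquad v^j-v_\m^j=(v^j-Rv^j)+(Rv^j-v_\m^j)=:\eta^j+\xi^j,\qquad j=0,1.
\]
The $\rho,\eta$ contributions are $\mathcal O(h^\sigma)$ by the projection estimates and the regularity hypothesis. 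The regularity $H^{2+\sigma}$ of $v$ follows from $u\in H^{2+\sigma}$ and elliptic regularity of \eqref{P1 weak_form2}. For $\bar\partial_t\rho^{1/2}$ we use $\|\bar\partial_t\rho^{1/2}\|\le k^{-1}\int_0^{t_1}\|\rho_t\|\dt\lesssim h^\sigma$.

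\textbf{Step 1 (time $t_0$).} From \eqref{app_at_0}, $a_\pw(\theta^0,\varphi_\m)=a_\pw(Ru_0,\varphi_\m)-a(u_0,J\varphi_\m)$, which is $\mathcal O(h^\sigma)\trinl\varphi_\m\trinl_\pw$ by the companion operator properties. Hence $\trinl\theta^0\trinl_\pw\lesssim h^\sigma$. For $\xi^0$, subtract \eqref{app_at_0} from \eqref{P1 weak_form2} tested with $J\psi_\m$. This gives a linear term plus $b(u_0,u_0,J\psi_\m)-b_\pw(u_\m^0,u_\m^0,\psi_\m)$. We split the latter into a consistency part $b(u_0,u_0,J\psi_\m-\psi_\m)$, which is $\lesssim h^\sigma$ via $\|J\psi_\m-\psi_\m\|\lesssim h^2\trinl\psi_\m\trinl_\pw$ and $[u_0,u_0]\in L^2$, and a bilinear difference bounded by \eqref{Bh-bdd} and Theorem~\ref{TH-WELLPOSED-INI}(a). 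This yields $\trinl\xi^0\trinl_\pw\lesssim h^\sigma$.

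\textbf{Step 2 (error equation at $t_1$).} Taylor expansion about $t_{1/2}$ shows that $u$ satisfies \eqref{P1 ic_1} with test function $J\varphi_\m$ up to a truncation functional $\tau(\varphi_\m)$. It contains $2k^{-1}(\bar\partial_t u^{1/2}-u_1)-u_{tt}(t_{1/2})$, which is $\mathcal O(k^2)$ in $L^2$ after using $u_{ttt}\in L^\infty(0,t_1;L^2)$ and exploiting the factor: $\bar\partial_tu^{1/2}-u_1=\tfrac k2u_{tt}(t_{1/2})+\mathcal O(k^3)$ roughly. It also contains the averaging errors $u^{1/2}-u(t_{1/2})$, $f^{1/2}-f(t_{1/2})$ and $v^{1/2}-v(t_{1/2})$, each $\mathcal O(k^2)$ in $H^2$ because $u\in H^2(0,t_1;H^2)$. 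Subtracting gives
\[
2k^{-1}(\bar\partial_t\theta^{1/2},\varphi_\m)+a_\pw(\theta^{1/2},\varphi_\m)+\big[b_\pw(u^{1/2},\varphi_\m,v^{1/2})-b_\pw(u_\m^{1/2},\varphi_\m,v_\m^{1/2})\big]=\tau(\varphi_\m)+\text{(projection terms)}.
\]
A companion equation for $\xi^1$ comes from \eqref{P1 ic_2}, exactly as in Step 1.

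\textbf{Step 3 (energy argument).} Test with $\varphi_\m=k\bar\partial_t\theta^{1/2}=\theta^1-\theta^0$. This gives $2\|\bar\partial_t\theta^{1/2}\|^2k^{-1}\cdot k+\tfrac12(\trinl\theta^1\trinl_\pw^2-\trinl\theta^0\trinl_\pw^2)$. The nonlinear difference is expanded as $b_\pw(\rho^{1/2}+\theta^{1/2},\varphi_\m,v^{1/2})+b_\pw(u_\m^{1/2},\varphi_\m,\eta^{1/2}+\xi^{1/2})$ and bounded by \eqref{Bh-bdd}. The a priori bounds use Theorem~\ref{TH-WELLPOSED-INI}(c) and \eqref{energy-c2}, while $\trinl\varphi_\m\trinl_\pw\le\trinl\theta^1\trinl_\pw+\trinl\theta^0\trinl_\pw$. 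The $\xi^{1/2}$ term is controlled by the second error equation, $\trinl\xi^1\trinl_\pw\lesssim h^\sigma+C_{\rm dS}(\dots)\trinl\theta^1\trinl_\pw$. Absorption then requires the smallness condition ${\rm M}_2\le1$ of Theorem~\ref{TH-WELLPOSED-INI}(d). Young's inequality gives $\|\bar\partial_t\theta^{1/2}\|+\trinl\theta^1\trinl_\pw\lesssim h^\sigma+k^2$.

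\textbf{Main obstacle.} The delicate point is the truncation term, where the weight $2k^{-1}$ could cost a power of $k$. It must be paired with $k\bar\partial_t\theta^{1/2}$, giving $2(\cdot,\bar\partial_t\theta^{1/2})$, so that only an $L^2$ quantity $\mathcal O(k^2)$ times $\|\bar\partial_t\theta^{1/2}\|$ appears, rather than a $\trinl\cdot\trinl_\pw$ duality that would lose a factor $k$. The nonconforming consistency terms $a(u,J\varphi_\m-\varphi_\m)$ also need care: we treat them via the $J$ and $R$ estimates with $\sigma>1/2$. Combining everything via the triangle inequality yields the claim.
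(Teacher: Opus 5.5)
Your skeleton is the paper's. You split with $\mathcal{R}_{\m}$, test the $u$-error equation with $\theta^1-\theta^0=k\bar{\partial}_t\theta^{1/2}$, and pair the $2k^{-1}$-weighted consistency term with $\bar{\partial}_t\theta^{1/2}$. Up to a factor $2$ this test function is the paper's $\zeta^{1/2}$; the paper's $\zeta,\theta$ are your $\theta,\xi$. The gap is that the absorption in your Step~3 does not close under ${\rm M}_2\le 1$. The error-quadratic terms you must absorb are:
\begin{itemize}
\item $b_{\pw}(\theta^{1/2},\theta^1-\theta^0,v^{1/2})$, with a coefficient of size $C_{\rm dS}\trinl v^{1/2}\trinr$;
\item $b_{\pw}(u_{\m}^{1/2},\theta^1-\theta^0,\xi^{1/2})$. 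After inserting $\trinl \xi^1\trinr_{\pw}\le \mathcal{O}(h^\sigma)+\frac12 C_{\rm dS}\trinl u^1+u_{\m}^1\trinr_{\pw}\trinl\theta^1\trinr_{\pw}$, its coefficient has size $C_{\rm dS}^2\trinl u_{\m}^{1/2}\trinr_{\pw}\trinl u^1+u_{\m}^1\trinr_{\pw}$.
\end{itemize}
Both coefficients involve norms of the \emph{exact} solution. The condition ${\rm M}_2\le 1$ controls only discrete quantities, through \eqref{sm}. For the exact ones you invoke \eqref{energy-c2}, i.e.\ $\text{\v{M}}_{1}$, which contains $T\|f\|^2_{L^2(0,T;L^2(\Omega))}$. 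So as written your argument needs an additional ${\rm M}_1$-type smallness hypothesis.

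The missing idea is the cancellation the scheme is built for. The paper averages \eqref{P1 weak_form2} at $t_0$ and $t_1$ (tested with $J\psi_{\m}$), subtracts the averaged discrete $v$-equations, tests with $\psi_{\m}=\xi^{1/2}$, and adds the result to the $u$-identity. Then $-b_{\pw}(u^{1/2},\theta^{1/2},\xi^{1/2})$ from the $u$-equation cancels $+b_{\pw}(u^{1/2},\theta^{1/2},\xi^{1/2})$ from the $v$-equation. Using $\theta^0=0$, the only error-quadratic terms left are $-b_{\pw}(\theta^{1/2},\theta^{1/2},v_{\m}^{1/2})+\frac12 b_{\pw}(u_{\m}^1-u_{\m}^0,\theta^{1/2},\xi^{1/2})$. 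Their coefficients are discrete, so \eqref{sm} bounds them by $\frac34{\rm M}_2\trinl\theta^{1/2}\trinr_{\pw}^2+\frac14\trinl\xi^{1/2}\trinr_{\pw}^2$. Every other nonlinear term carries a factor $h^\sigma$, $h$ or $k^2$.

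Your Step~2 also needs repair. Comparing with the weak form at the midpoint $t_{1/2}$ leaves the residual $(f^{1/2}-f(t_{1/2}),J\varphi_{\m})$. With $\varphi_{\m}=\theta^1-\theta^0$ you need $\|f^{1/2}-f(t_{1/2})\|=\mathcal{O}(k)$, which requires time regularity of $f$ that is not assumed; $u\in H^2(0,t_1;H^2)$ says nothing about $f$. Likewise, $v^{1/2}-v(t_{1/2})$ needs $v_{tt}$ in $H^2$. Averaging \eqref{P1 weak_form1} at $t_0$ and $t_1$, as the paper does, matches $f^{1/2}$ and $a(u^{1/2},J\varphi_{\m})=a_{\pw}(\mathcal{R}_{\m}u^{1/2},\varphi_{\m})$ exactly. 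The only time-consistency errors left are $2k^{-1}(\bar{\partial}_tu^{1/2}-u_1)-u_{tt}^{1/2}$ and $\frac{k^2}{4}b(\bar{\partial}_tu^{1/2},J\varphi_{\m},\bar{\partial}_tv^{1/2})$, and the latter needs only first time derivatives of $v$.

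The former is only $\mathcal{O}(k)$: Lemma~\ref{trunc-lem}$(a)$ is sharp in order, e.g.\ it equals $-k/6$ for $u=t^3/6$. Hence $\bar{\partial}_tu^{1/2}-u_1-\frac{k}{2}u_{tt}^{1/2}=\mathcal{O}(k^2)$, not $\mathcal{O}(k^3)$. The rate $k^2$ comes entirely from the factor $k$ in the test function, as your last paragraph correctly says.

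Finally, $\theta^0=0$ exactly, because $u_{\m}^0=\mathcal{R}_{\m}u_0$ by \eqref{app_at_0} and \eqref{P1 ritz_projection}, and no $a$-consistency term arises at all. The nonconforming term that does need an argument is $b_{\pw}(u^{1/2},(I-J)\varphi_{\m},v^{1/2})$, with $J-I$ in the differentiated second slot. There \eqref{Bh-bdd} gives no small factor, and one needs Lemma~\ref{IB}$(e)$.
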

\noindent The next theorem provides sufficient conditions under which the scheme \eqref{P4fully} is well-posed. This  result utilizes the  expression
\begin{align*}
&{\rm{\text{\v{\rm M}}}}_{3}(u_0,u_1,f,k)
:=e^2 \big({\|J\|^2}\trinl u_0\trinl^2\big(1+\frac{1}{4}C_{\rm dS}^2\|J\|^2\trinl u_0\trinl^2\big)+2 \norm{u_1}^2+ \frac{k^2}{2}\norm{f}^2_{C([0,t_1];L^2(\Omega))}+ T{k} \sum_{n=1}^{m}\norm{f^{n,1/4}}^2\big)\end{align*}
and the proof is presented in Subsection~\ref{subsec-wellfull}. 
\begin{thm}[well-posedness]\label{TH-WELLPOSED-FINAL}
    Suppose that the initial data satisfy the smoothness assumptions  $u_0 \in H^2_0(\Omega),$ $u_1 \in L^2(\Omega)$, and the source $f \in C([0,T];L^2(\Omega))$. Then      
    \begin{itemize}
      \item[($a$)] the fully discrete scheme  \eqref{P4fully} is unconditionally  stable, in the sense that if $(u_{\m}^{m+1},v_{\m}^{m+1})$ solves \eqref{P4fully} for any $1 \le m \le N-1$, then 
$\trinl u_{\m}^{m+1/2}\trinl_{\pw}+\trinl v_{\m}^{m+1/2}\trinl_{\pw}\le \sqrt{2}\bigl({\rm{\text{\v{\rm M}}}}_{3}(u_0,u_1,f,k)\bigr)^{1/2},$
\item[($b$)] for  any $n=1,2,\cdots$,$N-1$, there exists a solution $(u_{\m}^{n+1},v_{\m}^{n+1})$  to \eqref{P4fully},  

\item[($c$)] If  ${\rm{\text{{\rm M}}}}_{3}(u_0,u_1,f,k):=C_{\rm dS}^2{\rm{\text{\v{\rm M}}}}_{3}(u_0,u_1,f,k) <1$ then the  solution $(u_{\m}^{n+1},v_{\m}^{n+1})$ to  \eqref{P4fully}  is unique.
\end{itemize}
\end{thm}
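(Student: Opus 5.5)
For part (a) I would mimic the energy argument of Subsection~\ref{motivation} at the fully discrete level. Fix $1\le n\le m$ and test \eqref{P4 fully_discrete1} with $\varphi_\m=2k\delta_t u_\m^n=u_\m^{n+1}-u_\m^{n-1}$. By \eqref{un14}--\eqref{dtu}, the inertia term is $\|\bar\partial_t u_\m^{n+1/2}\|^2-\|\bar\partial_t u_\m^{n-1/2}\|^2$ and the elastic term is $\trinl u_\m^{n+1/2}\trinr_\pw^2-\trinl u_\m^{n-1/2}\trinr_\pw^2$. The nonlinear term telescopes into $\trinl v_\m^{n+1/2}\trinr_\pw^2-\trinl v_\m^{n-1/2}\trinr_\pw^2$ by \eqref{motiv-final}. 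That identity uses the symmetry of $b_\pw$ in its first two arguments, $2u_\m^{n,1/4}=u_\m^{n+1/2}+u_\m^{n-1/2}$, and \eqref{P4 fully_discrete2} at levels $n\pm1/2$ tested with $\psi_\m=2v_\m^{n,1/4}$. Setting $E_{n+1/2}:=\|\bar\partial_t u_\m^{n+1/2}\|^2+\trinl u_\m^{n+1/2}\trinr_\pw^2+\trinl v_\m^{n+1/2}\trinr_\pw^2$, this gives $E_{n+1/2}-E_{n-1/2}=k(f^{n,1/4},\bar\partial_t u_\m^{n+1/2}+\bar\partial_t u_\m^{n-1/2})$.

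I would bound the right-hand side by Young's inequality with weight $T$, namely $\le Tk\|f^{n,1/4}\|^2+\tfrac{k}{2T}(\|\bar\partial_t u_\m^{n+1/2}\|^2+\|\bar\partial_t u_\m^{n-1/2}\|^2)$. Summing over $n=1,\dots,m$ gives $E_{m+1/2}\le E_{1/2}+Tk\sum\|f^{n,1/4}\|^2+\tfrac{k}{T}\sum_{n=0}^{m}E_{n+1/2}$. To control $E_{1/2}$ I would use the energy identity of the initial step, proved like Theorem~\ref{TH-WELLPOSED-INI}(c). Test \eqref{P1 ic_1} with $k\bar\partial_t u_\m^{1/2}$ and use \eqref{motiv-ini} and Theorem~\ref{TH-WELLPOSED-INI}(a). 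Because \eqref{P1 ic_2} and \eqref{app_at_0} hold pointwise at levels $0$ and $1$, linearity does not give $v_\m^{1/2}$ solving \eqref{P4 fully_discrete2}. I would therefore either work with the energy at integer levels for the first step, or use $\trinl v_\m^{1/2}\trinr_\pw^2\le\frac12(\trinl v_\m^0\trinr_\pw^2+\trinl v_\m^1\trinr_\pw^2)$. This bounds $E_{1/2}$ by the first three terms in $\text{\v{M}}_3$, and the factors $2$ and $k^2/2$ come from absorbing $\|\bar\partial_t u_\m^{1/2}\|^2$. A discrete Gronwall lemma (or absorbing the $n=m$ term when $k/T<1$) then yields the factor $e^2$. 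Finally $(a+b)^2\le2(a^2+b^2)$ gives the $\sqrt2$ bound. Matching the exact constants is the bookkeeping part, and I expect the main delicacy to be the treatment of $v_\m^{1/2}$ versus the level-$1$ equation.

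For part (b) I would solve for $w:=u_\m^{n+1/2}$ given the previous levels. Write $v_\m^{n+1/2}=G(w)$, where $a_\pw(G(w),\psi)=\frac12 b_\pw(w,w,\psi)$, which is continuous with $\trinl G(w)\trinr_\pw\le\frac12C_{\rm dS}\trinl w\trinr_\pw^2$. Equation \eqref{P4 fully_discrete1} becomes $F(w)=0$ on the finite-dimensional space $\m(\mathcal T)$. Testing $F(w)$ with $w-u_\m^{n-1/2}$ reproduces the energy computation above, so $(F(w),w-u_\m^{n-1/2})>0$ on a large sphere centred at $u_\m^{n-1/2}$. The Brouwer-type corollary (acute-angle lemma) then gives existence.

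For part (c) I would take two solutions and subtract. I would test the $u$-difference with $2\delta_t$ of the difference, which is twice the difference at level $n+1/2$, and test the $v$-difference with its own difference. The cross terms are bounded by \eqref{Bh-bdd} together with the a priori bounds from (a) for both solutions. This yields $(1-C\,C_{\rm dS}^2\text{\v{M}}_3)\,\mathrm{err}^2\le0$, so the smallness condition ${\rm M}_3<1$ forces uniqueness. The obstacle here is arranging the constants so that exactly ${\rm M}_3<1$ suffices.
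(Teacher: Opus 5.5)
\textbf{Part (a).} This is the paper's argument. You test \eqref{P4 fully_discrete1} with $2k\delta_t u_{\m}^n$, telescope via \eqref{motiv-final}, apply Young's inequality with weight $T$ and the discrete Gronwall lemma with $\mu=2k/T$, and control the initial energy by $\trinl v_{\m}^{1/2}\trinr_{\pw}^2\le\frac12(\trinl v_{\m}^0\trinr_{\pw}^2+\trinl v_{\m}^1\trinr_{\pw}^2)$ together with \eqref{stab-halfway}.

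The defect you spotted is real, but neither of your remedies reaches it. Bounding $E_{1/2}$ is harmless. What fails is the second equality of \eqref{motiv-final} for $n=1$, which needs \eqref{P4 fully_discrete2} at level $1/2$. From \eqref{app_at_0} and \eqref{P1 ic_2} one only gets $b_{\pw}(u_{\m}^{1/2},u_{\m}^{1/2},\psi_{\m})=2a_{\pw}(v_{\m}^{1/2},\psi_{\m})-\frac14 b_{\pw}(u_{\m}^1-u_{\m}^0,u_{\m}^1-u_{\m}^0,\psi_{\m})$. So the $n=1$ balance carries the additional term $\frac14 b_{\pw}(u_{\m}^1-u_{\m}^0,u_{\m}^1-u_{\m}^0,v_{\m}^{1,1/4})$, whatever levels you use to measure the energy.

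The paper's proof applies \eqref{motiv-final} for all $n\ge1$ without comment, so you share this omission with it. It can be repaired by treating $m=1$ separately and telescoping only from $n=2$. At $m=1$, absorb the $v_{\m}^{3/2}$-part of the extra term into $\trinl v_{\m}^{3/2}\trinr_{\pw}^2$ by Young's inequality, and bound the rest by the data via \eqref{stab-halfway}. The price is a modified constant in place of $\text{\v{M}}_3$.

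\textbf{Part (b).} Your argument is correct but organised differently from the paper's. The paper keeps both unknowns and applies Theorem~\ref{Brouwer} on $\m(\mathcal T)\times\m(\mathcal T)$. It uses the weighted inner product $(\psi_{\m 1},\varphi_{\m 1})_{L^2(\Omega)}+\frac{k^2}{4}\big(a_{\pw}(\psi_{\m 1},\varphi_{\m 1})+a_{\pw}(\psi_{\m 2},\varphi_{\m 2})\big)$, a map \eqref{gmap-final} whose zeros encode both equations of \eqref{P4fully}, and the explicit radius \eqref{raduis-final}.

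You eliminate $v$ through the solution operator $G$ of the equation that is linear in $v$. You then apply the same corollary to a single continuous map on $\m(\mathcal T)$ centred at $u_{\m}^{n-1/2}$. Positivity on a large sphere is then the discrete energy identity itself: $\trinl w\trinr_{\pw}^2$ grows quadratically, the load term only linearly, and the remaining terms are nonnegative or fixed.

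Your route is shorter and makes the link between stability and existence transparent. The paper's runs parallel to its treatment of \eqref{P1ic} and gives an explicit bound on the fixed point. At $n=1$ the level-$1/2$ defect only adds a term linear in $G(w)$, which $\frac12\trinl G(w)\trinr_{\pw}^2$ absorbs, so existence is unaffected.

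\textbf{Part (c).} Here the step you leave open is exactly the one that makes ${\rm M}_3<1$ sufficient. As written, you only get uniqueness under ${\rm M}_3<1/C$ with an unspecified $C$.

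Suppose you bound the cross terms from the two equations separately, using a priori bounds for both solutions. These are terms such as $b_{\pw}(\hat u_{\m}^{n+1}+2u_{\m}^n+u_{\m}^{n-1},e_{\m},\bar e_{\m})$ and $b_{\pw}(e_{\m},u_{\m}^{n+1}+\hat u_{\m}^{n+1}+2u_{\m}^n,\bar e_{\m})$. Then the coefficient of $\trinl e_{\m}\trinr_{\pw}\trinl\bar e_{\m}\trinr_{\pw}$ is about $8({\rm M}_3)^{1/2}$, and the argument closes only for roughly ${\rm M}_3<2/3$.

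The missing idea is a cancellation. After testing with $16e_{\m}$ and $4\bar e_{\m}$ and adding, the nonlinear terms reduce to $2b_{\pw}(u_{\m}^{n+1/2}-u_{\m}^{n-1/2},e_{\m},\bar e_{\m})-4b_{\pw}(e_{\m},e_{\m},v_{\m}^{n,1/4})$. This holds because $(u_{\m}^{n+1}+u_{\m}^n)+(\hat u_{\m}^{n+1}+u_{\m}^n)-(\hat u_{\m}^{n+1}+2u_{\m}^n+u_{\m}^{n-1})=u_{\m}^{n+1}-u_{\m}^{n-1}$, so $\hat u_{\m}^{n+1}$ drops out.

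In addition, $v$ must be bounded quadratically in $u$ through its defining equation, $\trinl v_{\m}^{n\pm1/2}\trinr_{\pw}\le\frac12C_{\rm dS}\trinl u_{\m}^{n\pm1/2}\trinr_{\pw}^2$. At $n=1$, use $\trinl v_{\m}^{1/2}\trinr_{\pw}\le\frac14C_{\rm dS}(\trinl u_{\m}^0\trinr_{\pw}^2+\trinl u_{\m}^1\trinr_{\pw}^2)$ instead. Bounding $v$ by $(\text{\v{M}}_3)^{1/2}$ would produce $({\rm M}_3)^{1/2}$ instead of ${\rm M}_3$. With these two ingredients, \eqref{stabf1} and one Young step give $8k^{-2}\|e_{\m}\|^2+2(1-{\rm M}_3)\trinl e_{\m}\trinr_{\pw}^2+\trinl\bar e_{\m}\trinr_{\pw}^2\le0$.
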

\noindent Next, we present a theorem that provides the error bounds for the discretization \eqref{P4fully}. It utilizes the expression ${\rm M}_{1}(u_0,u_1,f):=\|J\|C_{\rm dS }\big(\text{\v{M}}_{1}(u_0,u_1,f)\big)^{1/2} $ with  $\text{\v{M}}_{1}(u_0,u_1,f)$ from \eqref{energy-c2}. The proof is provided in Subsection~\ref{apriori-sec}.
\begin{thm}[energy error bounds]\label{TH-ERROR-FINAL}
       Let  $u_0 \in H^2_0(\Omega), u_1\in L^2(\Omega)$, and $f \in H^1(0,T;L^2(\Omega))$ be  such that ${\rm M}_{1}(u_0,u_1,f) $ $\le 1/2$. Suppose that  $ u,u_t,v,v_t  \in H^1(0,T;H^2_0(\Omega)\cap H^{2+\sigma}(\Omega))$ and   $u  \in H^4(0,T;L^2(\Omega))$. Then, for any $ 1 \le m \le n$ with $1 \le n \le N-1$, we have 
       \begin{align*}
        \norm{\bar{\partial}_t (u^{m+1/2}-u_{\m}^{m+1/2})}+    \trinl u^{m+1/2}-u_{\m}^{m+1/2}\trinl_{\pw}+\trinl v^{m+1/2}-v_{\m}^{m+1/2}\trinl_{\pw} = \mathcal{O}(h^\sigma+k^2).
       \end{align*}
\end{thm}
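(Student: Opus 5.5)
The proof splits the error with a Ritz-type projection and a companion operator, as announced for Section~\ref{Sec-aux}. Let $R_{\m}:H^2_0(\Omega)\to\m(\mathcal T)$ be the Ritz projection defined by $a_\pw(R_{\m}\varphi,\psi_{\m})=a(\varphi,J\psi_{\m})$ for all $\psi_{\m}\in\m(\mathcal T)$. Write
\[
u^n-u_{\m}^n=\rho^n+\theta^n,\qquad \rho^n:=u^n-R_{\m}u^n,\quad \theta^n:=R_{\m}u^n-u_{\m}^n,
\]
and analogously $v^n-v_{\m}^n=\eta^n+\xi^n$. The projection errors satisfy
\[
\trinl\rho\trinr_\pw+\trinl\eta\trinr_\pw+\trinl\rho_t\trinr_\pw+\trinl\eta_t\trinr_\pw\lesssim h^\sigma .
\]
Here the $u_t,v_t\in H^1(0,T;H^{2+\sigma})$ assumption controls time differences of $\rho$ such as $\bar\partial_t\rho^{n+1/2}$ and $\bar\partial_t^2\rho^n$. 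It therefore suffices to bound the discrete parts $\bar\partial_t\theta^{m+1/2}$, $\theta^{m+1/2}$ and $\xi^{m+1/2}$.

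\textbf{Step 1: error equation.} Evaluate \eqref{P1 weak_form1} at $t_{n-1},t_n,t_{n+1}$ and form the $(1,2,1)/4$ average, tested with $J\varphi_{\m}$. Subtract \eqref{P4 fully_discrete1} to obtain
\begin{multline*}
(\bar\partial_t^2\theta^n,\varphi_{\m})+a_\pw(\theta^{n,1/4},\varphi_{\m})
+b_\pw(u_{\m}^{n,1/4},\varphi_{\m},\xi^{n,1/4})
+\bigl[b_\pw(u^{n,1/4},\varphi_{\m},v^{n,1/4})-b_\pw(u_{\m}^{n,1/4},\varphi_{\m},R_{\m}v^{n,1/4})\bigr]\\
=\langle\tau^n,\varphi_{\m}\rangle .
\end{multline*}
The residual $\tau^n$ collects four contributions:
\begin{itemize}
\item[(i)] the time truncation $\bar\partial_t^2u^n-u_{tt}^{n,1/4}=\mathcal O(k^2)$, which uses $u\in H^4(0,T;L^2)$;
\item[(ii)] the averaging errors $u^{n,1/4}$ versus the average of $u$ inside the nonlinear term, also $\mathcal O(k^2)$;
\item[(iii)] the $J$-consistency terms $(u_{tt}-f,\varphi_{\m}-J\varphi_{\m})\lesssim h^{2}\|\cdot\|\trinl\varphi_{\m}\trinr_\pw$ together with $b$ versus $b_\pw$ mismatches of order $h^\sigma$;
\item[(iv)] the term $\bar\partial_t^2\rho^n$.
\end{itemize}
Similarly, averaging \eqref{P1 weak_form2} at $t_n,t_{n+1}$ and subtracting \eqref{P4 fully_discrete2} gives
\[
a_\pw(\xi^{n+1/2},\psi_{\m})=\tfrac12\bigl[b_\pw(u^{n+1/2},u^{n+1/2},\psi_{\m})-b_\pw(u_{\m}^{n+1/2},u_{\m}^{n+1/2},\psi_{\m})\bigr]+\mathcal O(h^\sigma+k^2)\trinl\psi_{\m}\trinr_\pw .
\]

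\textbf{Step 2: energy argument.} Test with $\varphi_{\m}=2k\delta_t\theta^n=\theta^{n+1}-\theta^{n-1}$. As in Subsection~\ref{motivation}, the linear terms telescope into
\[
\norm{\bar\partial_t\theta^{n+1/2}}^2+\trinl\theta^{n+1/2}\trinr_\pw^2-(\text{same at }n-1/2).
\]
The nonlinear difference is expanded as a sum of terms, each carrying one error factor $\theta$ or $\xi$. The delicate term is $b_\pw(u_{\m}^{n,1/4},2k\delta_t\theta^n,\xi^{n,1/4})$. This is handled by mimicking \eqref{motiv-final}: write $2k\delta_t\theta^n=2(\theta^{n+1/2}-\theta^{n-1/2})$ and use the $\xi$-equation tested with $\psi_{\m}=\xi^{n,1/4}$. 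This produces the telescoping quantity $\trinl\xi^{n+1/2}\trinr_\pw^2-\trinl\xi^{n-1/2}\trinr_\pw^2$ plus remainders that are quadratic in the errors. Those remainders are bounded by \eqref{Bh-bdd} and the stability bounds of Theorem~\ref{TH-WELLPOSED-FINAL}(a) and \eqref{energy-c2}. The consistency terms tested with $\theta^{n+1}-\theta^{n-1}$ are treated by summation by parts in time, so that only $k\sum_n(\cdots)\trinl\theta^{n\pm1/2}\trinr_\pw$ appears without a $1/k$ loss. This step is where $f\in H^1(0,T;L^2)$ and the time regularity of $\rho$ enter.

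\textbf{Step 3: sum and conclude.} Sum over $n=1,\dots,m$ and insert the initial error from Theorem~\ref{LEM-ERROR-INI}. This gives
\[
E_{m+1/2}\lesssim (h^\sigma+k^2)^2+k\sum_{n\le m}E_{n+1/2}+\bigl(\text{nonlinear remainder}\bigr)\max_{n\le m}E_{n+1/2},
\]
where $E_{n+1/2}:=\norm{\bar\partial_t\theta^{n+1/2}}^2+\trinl\theta^{n+1/2}\trinr_\pw^2+\trinl\xi^{n+1/2}\trinr_\pw^2$. The smallness assumption ${\rm M}_1\le1/2$ makes the remainder coefficient $\|J\|C_{\rm dS}(\text{\v M}_1)^{1/2}\le1/2$, so it can be absorbed into the left-hand side. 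The discrete Gronwall lemma then yields $E_{m+1/2}\lesssim(h^\sigma+k^2)^2$. The triangle inequality with the $\rho,\eta$ bounds finishes the proof.

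\textbf{Main obstacle.} The hardest part is organizing the nonlinear error terms so that the $\xi$-contribution telescopes exactly and the leftover terms carry $\|J\|C_{\rm dS}$ times solution norms, which is precisely what ${\rm M}_1\le1/2$ can absorb. The mismatch between $\xi^{n,1/4}$ and the half-step quantities $\xi^{n\pm1/2}$ must be handled carefully through the identity $2\varphi^{n,1/4}=\varphi^{n+1/2}+\varphi^{n-1/2}$ from \eqref{un14}.
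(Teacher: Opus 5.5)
Your proposal follows essentially the same route as the paper. Both use the Ritz-projection splitting, the $(1,2,1)/4$-averaged error equation tested with $2k\delta_t$ of the discrete error, and the time-differenced $v$-error equation tested with the quarter average, so that the $\xi$-terms telescope as in \eqref{motiv-final}. Both then treat the consistency terms by summation by parts, absorb the $R_{\m}v$-weighted quadratic endpoint term via ${\rm M}_1\le 1/2$, and close with the discrete Gronwall lemma and the initial bound \eqref{eqn-ni}.

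The one point to tighten is the $v$-residual. It enters only through its difference between $t_{n+1/2}$ and $t_{n-1/2}$, so you must show that difference is $\mathcal O(k(h^\sigma+k^2))$ by using the time smoothness of the residual. A bound of only $\mathcal O(h^\sigma+k^2)$, as you state, loses a factor $1/k$ after summation. The paper gets the explicit factor $k$ by differencing the continuous equation at $t_{n\pm1}$ before comparing.
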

\section{Auxiliary results}\label{Sec-aux}
\noindent The fourth-order terms and the nonlinearity in the coupled system create several technical difficulties, and these are further enhanced by the complexities of second-order time discretization. In this section, we present some technical results that prepare us for the proof of the main results presented later on.  

\medskip \noindent 
\subsection{Companion operator} \label{subsec-comp}
In contrast to other averaging operators \cite{MR2670114}, the conforming companion operator (see, e.g.,  \cite{carsput2020,CC,MR3407244}) preserves the integral mean of the function and its Hessian, and it also satisfies an enhanced best-approximation property. In this article, companion operator serves two main purposes:
(a) it allows a  nonstandard Ritz projection in Subsection~\ref{sub-ritz}, below. This projection possesses optimal approximation features and plays a central role in the error analysis,  
(b) during the derivation of the error bounds, the test functions in the continuous weak formulation are chosen from the range of this operator. This produces cleaner estimates and removes the need for boundary-term computations that arise 
when the PDE is tested with discrete functions. The next lemma gives the construction of this operator and utilizes  the $C^1$-conforming Hsieh--Clough--Tocher $({\rm HCT}(\mathcal{T}))$ element \cite{MR520174}.
\begin{lemma}[companion operator --{\cite[Lemma 3.7]{CC} and \cite[Lemma 5.1]{carsput2020}}]\label{bhcompanion_lem} 
There exists a linear mapping $J: {\rm M}(\mathcal{T})\to ({\rm HCT}(\mathcal{T})+P_8(\mathcal{T})) \cap H^2_0(\Omega)$ such that any $\varphi_{\rm M}\in {\rm M}(\mathcal{T})$ satisfies
\begin{align*}
&\text{($a$) } J\varphi_{\rm M}(z)=\varphi_{\rm M}(z) \quad \text{ for } z\in\mathcal{V}, \; 
\text{($b$) } \nabla ({J}\varphi_{\rm M})(z)=
|\mathcal{T}(z)|^{-1}\sum_{K\in\mathcal{T}(z)}(\nabla \varphi_{\rm M}|_K)(z)
\quad \text{ for }z\in\mathcal{V}(\Omega),  \nonumber\\
&\text{($c$) } \fint_e \frac{\partial J \varphi_{\rm M}}{{\partial {\mathbf{n}}}} \ds=\fint_e \frac{ \partial \varphi_{\rm M}}{{\partial {\mathbf{n}}}} \ds \text{ for any } e\in\mathcal{E},\;\text{($d$) }  \trinl \varphi_{\rm M}- J \varphi_{\rm M} \trinr_{\rm pw} \le \const{jpw}  \min_{\varphi \in  H^2_0(\Omega)}  \trinl \varphi_{\rm M}-\varphi \trinr_{\rm pw}, \nonumber \\
&\text{($e$) } \|{\varphi_{\m} -J\varphi_{\m}}\|_{H^s({\cal T})} \le  \const{ca} h^{2-s}\min_{\varphi\in  H^2_0(\Omega)} \trinl \varphi- \varphi_{\m} \trinl_{\pw}\quad \constref{ca}>0, \text{ and }0 \le s \le 2,\\
&\text{($f$) } \Pi^0\bigl(\varphi_{\m} - J\varphi_{\m}\bigr) = 0
\quad\text{and}\quad
\Pi^0\!\bigr(D_{\mathrm{\pw}}^{2}\bigl(\varphi_{\m} - J\varphi_{\m}\bigr)\bigr) = 0.
\end{align*} 
\end{lemma}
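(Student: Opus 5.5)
The plan is a local construction of $J\varphi_{\rm M}$ on each triangle in two stages: a $C^1$ HCT interpolant fixed by averaged vertex data, followed by elementwise bubble corrections that restore the integral means. The estimates ($d$)–($e$) then come from local inverse and averaging arguments.

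\emph{Stage 1 (HCT interpolant).} Given $\varphi_{\rm M}\in{\rm M}(\mathcal{T})$, define $J_1\varphi_{\rm M}\in{\rm HCT}(\mathcal{T})$ by its degrees of freedom.
\begin{itemize}
\item At every vertex $z$, set the value to $\varphi_{\rm M}(z)$. This is well defined because Morley functions are continuous at vertices, and it vanishes on $\mathcal{V}(\partial\Omega)$.
\item At interior vertices, set the gradient to the arithmetic mean of $(\nabla\varphi_{\rm M}|_K)(z)$ over $K\in\mathcal{T}(z)$. At boundary vertices, set it to zero, as the clamped condition requires.
\item At edge midpoints, set the normal derivative to the edge mean $\fint_e\partial\varphi_{\rm M}/\partial\mathbf{n}\ds$. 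This is single-valued, since the Morley normal derivative is affine on $e$ and continuous at its midpoint.
\end{itemize}
This yields ($a$), ($b$), ($c$) and $J_1\varphi_{\rm M}\in H^2_0(\Omega)$.

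\emph{Stage 2 (bubble corrections).} To get ($f$), add on each $K$ a correction in $P_8(K)$. Use $b_K^2 p$, where $b_K$ is the cubic element bubble and $p$ is polynomial. Then $b_K^2$ vanishes to second order on $\partial K$, so values, gradients and all edge data are unchanged and $C^1$ conformity is kept. Choose these corrections so that
\[
\int_K(\varphi_{\rm M}-J\varphi_{\rm M})\dx=0
\quad\text{and}\quad
\int_K D^2(\varphi_{\rm M}-J\varphi_{\rm M})\dx=0 .
\]
Integration by parts shows $\int_K D^2(J_1\varphi_{\rm M}-\varphi_{\rm M})\dx$ depends only on traces of the gradient on $\partial K$. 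This is where ($c$) and the boundary behaviour of $b_K^2p$ are used to make the constraints compatible. One then solves a finite-dimensional local system: a scaled-reference-element argument shows the map from $p$ to the constraint moments is surjective, with bounds scaling correctly in $h_K$.

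\emph{Estimates ($d$) and ($e$).} Bound $\|D^2_{\rm pw}(\varphi_{\rm M}-J_1\varphi_{\rm M})\|_{L^2(K)}$ by inverse estimates and scaling in terms of the local degree-of-freedom differences. These are
\[
|(\nabla\varphi_{\rm M}|_K)(z)-(\nabla\varphi_{\rm M}|_{K'})(z)|,
\]
controlled via jumps of $\nabla\varphi_{\rm M}$ across edges of the vertex patch. Jumps are $h$-scaled and, by Morley continuity, have vanishing edge means. Hence they are controlled by
\[
h_E^{1/2}\|[D^2_{\rm pw}\varphi_{\rm M}]_E\|_{L^2(E)},
\]
and standard efficiency-type arguments bound this by $\min_{\varphi\in H^2_0}\trinl\varphi_{\rm M}-\varphi\trinr_{\pw}$ on the patch. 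The bubble contributions are bounded by the same quantity through the local solves. Lower-order norms in ($e$) follow from the Morley interpolation properties at vertices and midpoints together with elementwise Poincaré/Friedrichs inequalities, which give the $h^{2-s}$ factor.

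\emph{Main obstacle.} I expect this to be the patch estimate behind ($d$): bounding the gradient-averaging and jump terms by the best-approximation error, not merely by $\trinl\varphi_{\rm M}\trinr_{\pw}$. I would do this through a bubble-function efficiency argument against an arbitrary $\varphi\in H^2_0(\Omega)$. Alternatively, since the statement is cited from \cite[Lemma 3.7]{CC} and \cite[Lemma 5.1]{carsput2020}, I would defer to those proofs for the detailed constants.
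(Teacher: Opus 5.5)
Your two-stage plan has the same architecture as the cited sources: an HCT interpolant with averaged vertex gradients, then squared-bubble corrections, then estimates via jumps and efficiency. As written, however, two steps fail, and together they lose $(c)$ and the Hessian half of $(f)$.

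\emph{Stage 1 does not give $(c)$.} In the full HCT element the normal derivative $g=\partial J_1\varphi_{\rm M}/\partial\mathbf{n}$ along an edge $e=\mathrm{conv}\{a,b\}$ is a quadratic. It is determined by $g(a)$, $g(b)$ (from the vertex gradients) and its midpoint value. If you prescribe the midpoint value $\mu_e:=\fint_e\partial\varphi_{\rm M}/\partial\mathbf{n}\ds$, Simpson's rule gives $\fint_e g\ds=\frac{2}{3}\mu_e+\frac{1}{6}\bigl(g(a)+g(b)\bigr)$. This equals $\mu_e$ only if $\frac12\bigl(g(a)+g(b)\bigr)=\mu_e$. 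That identity holds when $\varphi_{\rm M}\in H^2_0(\Omega)$. It fails for a generic nonconforming $\varphi_{\rm M}$, whose vertex gradients you replace by patch averages, or by zero at boundary vertices. The remedy is to use the edge integral $\int_e\partial J_1\varphi_{\rm M}/\partial\mathbf{n}\ds=\int_e\partial\varphi_{\rm M}/\partial\mathbf{n}\ds$ itself as the edge degree of freedom. This is unisolvent for HCT, amounts to a corrected midpoint value, and is what $(c)$ in integral form requires.

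\emph{Stage 2 cannot repair this, because bubbles do not affect Hessian means.} For $w=b_K^2p$, both $w$ and $\nabla w$ vanish on $\partial K$, so $\int_K D^2w\dx=\int_{\partial K}\nabla w\otimes n_K\ds=0$. Hence the map $p\mapsto\int_K D^2(b_K^2p)\dx$ is identically zero, and your surjectivity claim is false for the Hessian constraints. The Hessian identity in $(f)$ must come entirely from Stage 1. Set $w:=\varphi_{\rm M}-J_1\varphi_{\rm M}$ and take an edge $e$ of $K$ with unit tangent $\tau_e$ pointing from $a$ to $b$. Then $\int_e\nabla w\ds=\bigl(\int_e\partial w/\partial\mathbf{n}\ds\bigr)\mathbf{n}_e+\bigl(w(b)-w(a)\bigr)\tau_e=0$ by $(a)$ and the integral form of $(c)$, and therefore $\int_K D^2w\dx=0$. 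Bubbles are needed only for the $L^2$ mean, for example $J\varphi_{\rm M}:=J_1\varphi_{\rm M}+\sum_K c_Kb_K^2$ with the single scalar $c_K=\int_K w\dx/\int_K b_K^2\dx$. The $P_8$ in the cited statement comes from correcting moments against $P_2(K)$ with $b_K^2P_2(K)$. $|c_K|$ is controlled through $\|w\|_{L^2(K)}\lesssim h_K^2|w|_{H^2(K)}$, since $w$ vanishes at the vertices of $K$.

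A smaller point in $(d)$: $[\nabla\varphi_{\rm M}]_E$ is affine with zero mean, so only the tangential jumps $[D^2_{\rm pw}\varphi_{\rm M}]_E\tau_E$ enter. Only these are bounded by $\min_{\varphi\in H^2_0(\Omega)}\trinl\varphi_{\rm M}-\varphi\trinr_{\pw}$ through efficiency. The full Hessian jump is not, because its normal--normal part need not vanish for $C^1$ piecewise quadratics.
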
 

\subsection{Modified Ritz projection}\label{sub-ritz}
The {\it modified} Ritz projection $\mathcal{R}_{\m}: H^2_0(\Omega)\rightarrow {\rm M}({\cal T}) $   is defined as:
\begin{equation}
    a_{\pw}({\cal R}_{\m}\varphi,\varphi_{\m} )=a(\varphi, J\varphi_{\m} )\qquad  \text{ for all } \varphi_{\m} \in {\rm M}({\cal T}),\, \varphi \in H^2_0(\Omega)\label{P1 ritz_projection}.
\end{equation}
   The choice $\varphi_{\m}={\cal R}_{\m}\varphi$ in \eqref{P1 ritz_projection}, together with the continuity of $a(\bullet,\bullet)$ and $J$, imply the \textit{stability of $\mathcal{R}_{\m}$} 
\begin{align}
    \trinl {\cal R}_{\m}\varphi \trinl_{\pw}\le \|J\|   \trinl  \varphi  \trinl \qquad \text{ for all } \varphi \in H_0^2(\Omega).\label{r-con}
\end{align}
The next lemma states the approximation properties of \eqref{P1 ritz_projection}. For a proof, see   \cite[Appendix A.5–A.6]{MR4848002}.
\begin{lemma}[approximation properties for ${\cal R}_{\m}$ \cite{MR4848002,MR4235819}] %
\label{P1 ritz_lemma}
Let $\varphi \in H^2_0(\Omega) \cap H^{2+\sigma}(\Omega)$, and let $\mathcal{R}_{\m} \varphi$ be its Ritz projection defined in \eqref{P1 ritz_projection}. Then, there exists a constant $\const{car} > 0$ such that
\begin{equation}
    \|\varphi-\mathcal{R}_{\m}\varphi\| 
    +  h^{\sigma}\trinl\varphi-\mathcal{R}_{\m}\varphi\trinl_{\pw} \le \constref{car}h^{2\sigma}\norm{\varphi}_{H^{2+\sigma }(\Omega)}.  \label{P1 norm_ritz}
\end{equation}
\end{lemma}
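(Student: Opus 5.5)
The plan is to prove the energy-norm part of \eqref{P1 norm_ritz} by comparing $\mathcal{R}_{\m}\varphi$ with the Morley interpolation $I_{\m}\varphi$. The $L^2$ part then follows from an Aubin--Nitsche duality argument. The companion operator $J$ of Lemma~\ref{bhcompanion_lem} is the key tool in both steps, especially the orthogonality property ($f$).

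\emph{Step 1 (energy estimate).} Recall the standard integral-mean property of the Morley interpolation, $D^2_{\pw} I_{\m}\varphi=\Pi^0 D^2\varphi$. It gives $a_{\pw}(\varphi-I_{\m}\varphi,\psi_{\m})=0$ for all $\psi_{\m}\in\m(\mathcal{T})$.

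Set $\psi_{\m}:=I_{\m}\varphi-\mathcal{R}_{\m}\varphi$. By \eqref{P1 ritz_projection},
\[
\trinl\psi_{\m}\trinr_{\pw}^2=a_{\pw}(\varphi,\psi_{\m})-a(\varphi,J\psi_{\m})=a_{\pw}(\varphi,\psi_{\m}-J\psi_{\m}).
\]
Lemma~\ref{bhcompanion_lem}($f$) gives $\Pi^0 D^2_{\pw}(\psi_{\m}-J\psi_{\m})=0$. Hence we may subtract $\Pi^0D^2\varphi$ and obtain
\[
\trinl\psi_{\m}\trinr_{\pw}^2=\int_\Omega (1-\Pi^0)D^2\varphi : D^2_{\pw}(\psi_{\m}-J\psi_{\m})\dx .
\]
Lemma~\ref{bhcompanion_lem}($d$), with the minimum bounded by the choice $0$, gives $\trinl\psi_{\m}-J\psi_{\m}\trinr_{\pw}\lesssim\trinl\psi_{\m}\trinr_{\pw}$. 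Together with $\|(1-\Pi^0)D^2\varphi\|\lesssim h^\sigma\|\varphi\|_{H^{2+\sigma}(\Omega)}$, this shows $\trinl\psi_{\m}\trinr_{\pw}\lesssim h^\sigma\|\varphi\|_{H^{2+\sigma}(\Omega)}$. The same bound holds for $\trinl\varphi-I_{\m}\varphi\trinr_{\pw}=\|(1-\Pi^0)D^2\varphi\|$, and the triangle inequality yields the $h^{\sigma}\trinl\varphi-\mathcal{R}_{\m}\varphi\trinr_{\pw}$ part of \eqref{P1 norm_ritz}.

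This argument also gives a quasi-optimality statement used below: $\trinl \varphi-\mathcal{R}_{\m}\varphi\trinr_{\pw}\lesssim\|(1-\Pi^0)D^2\varphi\|$ for every $\varphi\in H^2_0(\Omega)$.

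\emph{Step 2 (duality).} Let $e:=\varphi-\mathcal{R}_{\m}\varphi$ and let $z\in H^2_0(\Omega)\cap H^{2+\sigma}(\Omega)$ solve the dual problem $\Delta^2 z=e$. Elliptic regularity gives $\|z\|_{H^{2+\sigma}(\Omega)}\lesssim\|e\|$.

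I would split $(e,\mathcal{R}_{\m}\varphi)=(e,J\mathcal{R}_{\m}\varphi)+(e,\mathcal{R}_{\m}\varphi-J\mathcal{R}_{\m}\varphi)$. The first piece equals $a(z,J\mathcal{R}_{\m}\varphi)=a_{\pw}(\mathcal{R}_{\m}z,\mathcal{R}_{\m}\varphi)$ by \eqref{P1 ritz_projection}. This leads to the identity
\[
\|e\|^2=a(z,\varphi)-a_{\pw}(\mathcal{R}_{\m}z,\mathcal{R}_{\m}\varphi)-(e,\mathcal{R}_{\m}\varphi-J\mathcal{R}_{\m}\varphi).
\]
Adding and subtracting $a_{\pw}(z,\mathcal{R}_{\m}\varphi)$ and $a_{\pw}(\mathcal{R}_{\m}z,\varphi)$, the difference of the first two terms becomes
\[
a_{\pw}(z-\mathcal{R}_{\m}z,\varphi-\mathcal{R}_{\m}\varphi)
\]
plus consistency defects. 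Each defect has the form $a_{\pw}(\chi,\psi_{\m}-J\psi_{\m})$ with $\chi\in\{z,\varphi\}$ and $\psi_{\m}\in\{\mathcal{R}_{\m}z,\mathcal{R}_{\m}\varphi\}$; they come from rewriting $a(\chi,J\psi_{\m})$ as $a_{\pw}(\chi,\psi_{\m})$.

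As in Step 1, property ($f$) lets me replace $D^2\chi$ by $(1-\Pi^0)D^2\chi$ in each defect. Property ($d$), with $\min\le\trinl\psi_{\m}-\chi'\trinr_{\pw}$ for the matching continuous $\chi'\in\{z,\varphi\}$, then bounds each defect by
\[
h^\sigma\|\chi\|_{H^{2+\sigma}(\Omega)}\cdot h^\sigma\|\chi'\|_{H^{2+\sigma}(\Omega)} .
\]
The leading product term is bounded the same way using Step 1 for both $z$ and $\varphi$. The last term is handled by Lemma~\ref{bhcompanion_lem}($e$) with $s=0$: $\|\mathcal{R}_{\m}\varphi-J\mathcal{R}_{\m}\varphi\|\lesssim h^2\trinl\varphi-\mathcal{R}_{\m}\varphi\trinr_{\pw}\lesssim h^{2+\sigma}\|\varphi\|_{H^{2+\sigma}(\Omega)}$. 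Collecting terms gives
\[
\|e\|^2\lesssim h^{2\sigma}\|\varphi\|_{H^{2+\sigma}(\Omega)}\|e\|,
\]
which completes \eqref{P1 norm_ritz}.

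The main obstacle is the bookkeeping in Step 2. Every nonconforming defect must appear as a product of two $O(h^\sigma)$ factors, and never as a single $O(h^\sigma)$ factor times $O(1)$. This is why ($f$) has to be used on each defect, so that the piecewise-constant part of $D^2\chi$ drops out before any norm is taken. If a defect resists this pairing, I would first try inserting the Morley interpolation $I_{\m}z$ and using its orthogonality $a_{\pw}(z-I_{\m}z,\psi_{\m})=0$ to shift the smallness onto the $z$-factor.
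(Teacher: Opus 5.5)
Your proof is correct. The duality identity
$\|e\|^2=a_{\pw}(z-\mathcal{R}_{\m}z,e)+a_{\pw}(z,(1-J)\mathcal{R}_{\m}\varphi)+a_{\pw}(\varphi,(1-J)\mathcal{R}_{\m}z)-(e,(1-J)\mathcal{R}_{\m}\varphi)$
holds, and properties ($d$), ($e$), ($f$) of $J$ together with your Step~1 bound each term by $h^{2\sigma}\|\varphi\|_{H^{2+\sigma}(\Omega)}\|e\|$, up to a constant.

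The paper gives no argument of its own for this lemma and only cites \cite[Appendix A.5--A.6]{MR4848002}. Your route is the standard one for this modified Ritz projection: comparison with the Morley interpolation for the energy bound, then an Aubin--Nitsche duality argument using the companion operator for the $L^2$ bound. It therefore supplies the self-contained proof the paper omits.
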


\subsection{Trilinear forms}
\begin{lemma}[properties of   trilinear forms]\label{IB}
For all  $\psi, \xi \in H^2_0(\Omega)$, $\varphi,\chi \in H^2_0(\Omega) + {\m}({\cal T})$, $\zeta \in H^2_0(\Omega)\cap H^{2+\sigma}(\Omega)$, and $\chi_{\m},\psi_{\m} \in {\m}({\cal T})$, there exist positive constants
$\const{ccj},\;  \const{cdsj}, \;\const{clamd}$, $\const{C3.4a}, $ and $\const{C3.4b}$
such that 
\begin{itemize}
\item [($a$)]  $|b(\xi,\psi,J\chi_{\m})|$, $|b(\xi,J\chi_{\m} ,\psi)|$  $\le \constref{ccj}\trinl \xi\trinl  \trinl\psi\trinl\trinl\chi_{\m}\trinl_{\pw},$
\item[($b$)] $|b_{\pw}(\varphi,J\chi_{\m},\psi_{\m} )|$, $|b_{\pw}(\varphi,\chi_{\m}, J\psi_{\m} )| \le $  $\constref{cdsj}\trinl\varphi \trinl_{\pw}\trinl\chi_{\m}\trinl_{\pw}\trinl\psi_{\m}\trinl_{\pw} $,
\item[($c$)] $  |b_{\pw}(\varphi,\chi,(J-I)\psi_{\m})|$, $ |b_{\pw}(\varphi, (J-I) \psi_{\m},\chi)| \le  $ 
$\constref{clamd} \trinl \varphi\trinr_{\pw} \trinl\chi\trinl_{\pw}\trinl\psi_{\m}\trinl_{\pw},$
\item[($d$)] $|b_{\pw}(\psi,\varphi, (J-I)\chi_{\m}) \le  \constref{C3.4a} h\trinl \psi\trinl\trinl\varphi\trinl _{\pw}\trinl \chi_{\m}\trinl_{\pw}$, \text{ and}
    \item[($e$)]  
 $|b_{\pw}(\zeta,(J-I)\chi_{\m}, \xi)| \le  \constref{C3.4b} h^\sigma\norm{\zeta}_{ H^{2+\sigma}(\Omega)} \trinl \chi_{\m}\trinl _{\pw}\trinl {\xi} \trinr$.
 \end{itemize}
\end{lemma}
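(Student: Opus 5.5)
The five estimates all rest on three ingredients. The first is the bound \eqref{b-bdd-c} and its piecewise analogue \eqref{Bh-bdd}, both built from H\"older's inequality with one factor in $L^\infty$ via the (discrete) Sobolev embedding \eqref{Sobolev}. The second is the boundedness $\trinl J\chi_\m\trinr\le\|J\|\trinl\chi_\m\trinr_\pw$ of the companion operator. The third is the approximation properties of Lemma~\ref{bhcompanion_lem}$(d)$--$(f)$. Throughout I will use $\int_K|[\varphi,\chi]|\dx\lesssim |\varphi|_{H^2(K)}|\chi|_{H^2(K)}$, which follows from Cauchy--Schwarz applied to the three products of second derivatives in the bracket.

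For $(a)$, all arguments lie in $H^2_0(\Omega)$, since $J\chi_\m\in H^2_0(\Omega)$. The trilinear form $b$ is symmetric, so both orderings reduce to \eqref{b-bdd-c} combined with the stability of $J$, giving $\constref{ccj}=C_{\rm S}\|J\|$. For $(b)$, I apply \eqref{Bh-bdd} to $J\chi_\m$ or $J\psi_\m$, which lie in $H^2_0(\Omega)\subset H^2_0(\Omega)+\m(\mathcal T)$, and then use $\trinl J\cdot\trinr\le\|J\|\trinl\cdot\trinr_\pw$. For $(c)$, I write $(J-I)\psi_\m\in H^2_0(\Omega)+\m(\mathcal T)$ and again use \eqref{Bh-bdd}. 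The factor is then bounded by $\trinl (J-I)\psi_\m\trinr_\pw\le\constref{jpw}\trinl\psi_\m\trinr_\pw$, obtained from Lemma~\ref{bhcompanion_lem}$(d)$ with the choice $\varphi=0$ in the minimum. These three parts are routine.

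For $(d)$ the extra factor $h$ must come from placing $(J-I)\chi_\m$ in $L^\infty$ and exploiting its smallness there. I estimate $|b_\pw(\psi,\varphi,(J-I)\chi_\m)|\le \|(J-I)\chi_\m\|_{L^\infty(\Omega)}\sum_K\int_K|[\psi,\varphi]|\dx\lesssim \|(J-I)\chi_\m\|_{L^\infty}\trinl\psi\trinr\trinl\varphi\trinr_\pw$. To bound the $L^\infty$ norm I use an elementwise inverse-type estimate for the piecewise polynomial $(J-I)\chi_\m\in P_8(\mathcal T)+{\rm HCT}$, namely $\|w\|_{L^\infty(K)}\lesssim h_K^{-1}\|w\|_{L^2(K)}$. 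Combined with Lemma~\ref{bhcompanion_lem}$(e)$ for $s=0$, this gives $\|(J-I)\chi_\m\|_{L^\infty}\lesssim h^{-1}h^2\trinl\chi_\m\trinr_\pw=h\trinl\chi_\m\trinr_\pw$. An alternative that avoids the inverse estimate on a quasi-uniform mesh is to apply the Sobolev embedding $H^2(K)\hookrightarrow L^\infty(K)$ with $h$-scaling together with $(e)$ for $s=0,1,2$; this also yields order $h$.

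Part $(e)$ is the main obstacle, because the small factor $(J-I)\chi_\m$ now sits inside the bracket, where only $\trinl(J-I)\chi_\m\trinr_\pw\lesssim\trinl\chi_\m\trinr_\pw$ is available and no power of $h$ is gained directly. My plan is to use the divergence structure $[\zeta,w]=\operatorname{div}(\operatorname{cof}(D^2\zeta)\nabla w)$, valid since the cofactor matrix of a Hessian is divergence free. Integrating by parts elementwise gives
\begin{align*}
b_\pw(\zeta,w,\xi)=\sum_K\int_K \operatorname{cof}(D^2\zeta)\nabla w\cdot\nabla\xi\dx-\sum_K\int_{\partial K}(\operatorname{cof}(D^2\zeta)\nabla w\cdot n_K)\,\xi\ds
\end{align*}
with $w=(J-I)\chi_\m$. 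For the volume term, H\"older's inequality with $D^2\zeta\in L^{p}$ for some $p>2$ (by $H^\sigma\hookrightarrow L^p$), $\nabla\xi\in L^q$, and $\|\nabla_\pw w\|\lesssim h\trinl\chi_\m\trinr_\pw$ from $(e)$ with $s=1$ gives order $h$. For the edge terms I use the integral mean property of Lemma~\ref{bhcompanion_lem}$(c)$, which states that normal derivatives of $w$ have zero mean on edges, together with continuity of $w$ at vertices and $\int_e\partial_t w\ds=0$. This lets me subtract edge means of $\operatorname{cof}(D^2\zeta)$ and of $\xi$. Trace inequalities and the fractional approximation $\|D^2\zeta-\Pi^0D^2\zeta\|_{L^2(K)}\lesssim h^\sigma|D^2\zeta|_{H^\sigma(K)}$ then yield $h^\sigma\|\zeta\|_{H^{2+\sigma}}\trinl\chi_\m\trinr_\pw\trinl\xi\trinr$. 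If this edge bookkeeping becomes too delicate, I would instead fall back on the references \cite{MR4230429,MR4630544}, where estimates of exactly this type are proved for Morley discretizations of von K\'arm\'an problems.
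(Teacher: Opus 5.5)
Your parts $(a)$--$(d)$ follow the paper's proof. They use \eqref{b-bdd-c} or \eqref{Bh-bdd} with the stability of $J$ and Lemma~\ref{bhcompanion_lem}$(d)$, and for $(d)$ the elementwise inverse estimate followed by Lemma~\ref{bhcompanion_lem}$(e)$ with $s=0$.

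For $(e)$ you take a genuinely different and considerably heavier route. The paper never integrates by parts; it splits $\zeta=(\zeta-\mathcal R_{\m}\zeta)+\mathcal R_{\m}\zeta$.
\begin{itemize}
\item The first part is covered by $(c)$ together with Lemma~\ref{P1 ritz_lemma}, which already gives $h^\sigma$.
\item For the second part, $D^2_{\pw}\mathcal R_{\m}\zeta$ is piecewise constant. Lemma~\ref{bhcompanion_lem}$(f)$ says the elementwise mean of $D^2_{\pw}(J-I)\chi_{\m}$ vanishes, so $b_{\pw}(\mathcal R_{\m}\zeta,(J-I)\chi_{\m},\Pi_0\xi)=0$. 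Only $\xi-\Pi_0\xi$ is left, and it is small.
\end{itemize}
Property $(f)$ is the ingredient your plan does not use. It places the orthogonality inside the elements, so no edge terms, no traces of $D^2\zeta$ and no fractional trace inequalities arise.

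Your divergence-form argument does work. Each one-sided trace of $\nabla(J-I)\chi_{\m}$ has zero mean on every edge by Lemma~\ref{bhcompanion_lem}$(a),(c)$. Boundary edges drop out since $\xi\in H^2_0(\Omega)$, and the volume term is $O(h)$ by H\"older.

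The edge piece $\bar A(\xi-\bar\xi)$ needs more than your sketch, because neither $D^2\zeta$ nor $\nabla\xi$ is bounded. Take $\bar A$ and $\bar\xi$ as element means, not edge means. Then use $|\bar A|\le|K|^{-1/p}\|D^2\zeta\|_{L^p(K)}$ and $\|\xi-\bar\xi\|_{L^2(e)}\lesssim h_K^{1/2}|K|^{1/2-1/q}\|\nabla\xi\|_{L^q(K)}$ with $1/p+1/q=1/2$. The powers of $|K|$ cancel, and the factor $h$ comes from the trace inequality for $\nabla(J-I)\chi_{\m}$ combined with $\|\nabla_{\pw}(J-I)\chi_{\m}\|\lesssim h\trinl\chi_{\m}\trinr_{\pw}$. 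Without this $L^p$--$L^q$ split that piece yields no power of $h$.

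What your route buys is that it needs only integrability of $D^2\zeta$ and $\nabla\xi$. The paper's last step invokes $\|\xi-\Pi_0\xi\|_{L^\infty(\Omega)}\lesssim h\|\xi\|_{H^2(\Omega)}$. In two dimensions this holds only up to a logarithmic factor, because $H^2(\Omega)$ does not embed into $W^{1,\infty}(\Omega)$. That is harmless for $\sigma<1$. For $\sigma=1$ the simplest repair is to run the paper's argument with $\Pi^0 D^2\zeta$ in place of $D^2_{\pw}\mathcal R_{\m}\zeta$, together with the same $L^p$--$L^q$ H\"older split. This is still shorter than your edge analysis.
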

\begin{proof}
 ($a$) 
An application of \eqref{b-bdd-c} and the continuity of $J$ reveals that 
\begin{align*}
   &|b(\xi,\psi,J\chi_{\m} )|\le C_{\rm S} \trinl \xi \trinl\trinl \psi\trinl\trinl J\chi_{\m}\trinl\le \constref{ccj}\trinl \xi\trinl\trinl \psi\trinl\trinl\chi_{\m}\trinl_{\pw}
\end{align*}
with $\constref{ccj}:=\|J\|C_{\rm S}$. The bound for $b(\varphi,J\chi_{\m},\psi)$ follows analogously.\\
    ($b$) The boundedness of the discrete trilinear form $b_{\pw}(\bullet,\bullet,\bullet)$ from  \eqref{Bh-bdd} and continuity of $J$ lead to
    \begin{align*}
       & |b_{\pw}(\varphi,J\chi_{\m},\psi_{\m} )|\le C_{\rm dS}\trinl\varphi\trinl_{\pw}\trinl  J\chi_{\m} \trinl\trinl\psi_{\m}\trinl_{\pw} \le \constref{cdsj}\trinl\varphi\trinl_{\pw}\trinl  \chi_{\m} \trinl_{\pw}\trinl\psi_{\m}\trinl_{\pw}
    \end{align*}
    with $\constref{cdsj}:=\|J\|C_{\rm dS}.$
     The bound for $b_{\pw}(\varphi,\chi_{\m}, J\psi_{\m} )$ follows analogously.\\
 ($c$) The boundedness of $b_{\pw}(\bullet,\bullet,\bullet)$ from \eqref{Bh-bdd} and  Lemma~\ref{bhcompanion_lem}$(d)$ allow us to show that 
 \begin{align*}
& |b_{\pw}(\varphi,\chi,(J-I)\psi_{\m})| \le C_{\rm dS} \trinl \varphi\trinl_{\pw}\trinl\chi\trinl_{\pw}\trinl(J-I)\psi_{\m}\trinl_{\pw}\le \constref{clamd} \trinl \varphi\trinl_{\pw}\trinl\chi\trinl_{\pw}\trinl\psi_{\m}\trinl_{\pw}
 \end{align*}
 with $\constref{clamd}:=\constref{jpw}C_{\rm dS}.$ The bound for $b_{\pw}(\varphi, (J-I) \psi_{\m},\chi)$ follows analogously.\\
($d$) The definition of $b_{\pw}(\bullet,\bullet, \bullet)$, a Cauchy-Schwarz inequality, and an inverse estimate $h_{{K}}\norm{ (J-I)\chi_{\m}}_{L^\infty (K)} \le C_{\rm inv} \norm{ (J-I)\chi_{\m}}_{L^2 (K)} $ \cite[Lemma 12.1]{MR4242224}  lead to
 \begin{align*}
   | b_{\pw}(\psi, \varphi, (J-I)\chi_{\m})|  &\le \trinl \psi\trinl\trinl \varphi\trinl _{\pw}\norm{ (J-I)\chi_{\m}}_{L^\infty (\Omega)}\\
   &\le C_{\rm inv} \trinl \psi\trinl \trinl\varphi\trinl _{\pw}\norm{h^{-1}_{\mathcal{T}} (J-I)\chi_{\m}}_{L^2(\Omega)} \le  \constref{C3.4a} h\trinl \psi\trinl\trinl\varphi\trinl _{\pw}\trinl \chi_{\m}\trinl _{\pw},
    \end{align*}
    with  Lemma \ref{bhcompanion_lem}$(e)$ in the last step and $\constref{C3.4a}:=\constref{ca}C_{\rm inv}.$\\
    ($e$)
    The definition of $b_{\pw}(\bullet,\bullet, \bullet)$, ($c$), a Cauchy-Schwarz inequality, and  $b_{\pw}(\mathcal{R}_{\m}\zeta,(J-I)\chi_{\m},\Pi_0 \xi)=0$ (Lemma~\ref{bhcompanion_lem}($f$)) show 
    \begin{align*}
        |b_{\pw}(\zeta,(J-I)\chi_{\m}, \xi)|&=|b_{\pw}(\zeta-\mathcal{R}_{\m}\zeta,(J-I)\chi_{\m}, \xi)| +|b_{\pw}(\mathcal{R}_{\m}\zeta,(J-I)\chi_{\m}, \xi)|\\
        & \le \constref{clamd} \trinl {\zeta}-\mathcal{R}_{\m}\zeta\trinl _{\pw}\trinl \chi_{\m}\trinl_{\pw}\trinl\xi\trinl+\trinl \mathcal{R}_{\m}\zeta\trinl_{\pw}\trinl (J-I)\chi_{\m}\trinl_{\pw}\norm{\xi-\Pi_0\xi}_{L^\infty(\Omega)}.
    \end{align*}
We apply \eqref{P1 norm_ritz} to bound the first term on the right-hand side, \eqref{r-con}, Lemma~\ref{bhcompanion_lem}$(d)$, and $\norm{\xi-\Pi_0 \xi}_{L^\infty(\Omega)} \le C_{\Pi_0}h \norm{\xi}_{ H^{2}(\Omega)} $ \cite[Theorem 18.16]{MR4242224}
to bound the second term on the right-hand side above. This eventually gives 
     \begin{align*}
        |b_{\pw}(\zeta,(J-I)\chi_{\m}, \xi)|&\le \constref{car} \constref{clamd}  h^\sigma \norm{\zeta}_{ H^{2+\sigma}(\Omega)}\trinl \chi_{\m}\trinl_{\pw}\trinl\xi\trinl+\|J\|\constref{jpw}C_{\Pi_0}
        h \trinl \zeta\trinl \trinl \chi_{\m}\trinl_ {\pw}\norm{\xi}_{ H^{2}(\Omega)}\\
        & \le \constref{C3.4b} h^\sigma\norm{\zeta}_{ H^{2+\sigma}(\Omega)} \trinl \chi_{\m}\trinl _{\pw}\norm{\xi}_{H^2(\Omega)}
        \end{align*}
    with $\trinl\xi\trinl\le\norm{\xi}_{H^2(\Omega)}$, 
    $\trinl\zeta\trinl \le \norm{\zeta}_{ H^{2+\sigma}(\Omega)}$ and  $\constref{C3.4b}:=\constref{car} \constref{clamd} +\|J\|\constref{jpw}C_{\Pi_0}|\Omega|^{\frac{1-\sigma}{2}}$ in the  last step.
\end{proof}
\subsection{Temporal discretization results}
\begin{lemma}\label{axul} Let $(X,\|\bullet\|_{X})$ be a Hilbert space  and $\varphi: [0,T] \rightarrow X$.  If $\varphi \in C([t_{m-1},t_{m+1}];X)$, 
for  some  $1\le m \le N-1$,  then  
\begin{itemize}
    \item 
 [($a$)] $\|\varphi^{m}\|_{X}$, $\|\varphi^{m+1/2}\|_{X}\le$ $\|\varphi\|_{C([t_{m},t_{m+1}];X)}$ and 
  ($b$) $\|\varphi^{m,1/4}\|_{X} \le  \|\varphi\|_{C([t_{m-1},t_{m+1}];X)}$.
   \end{itemize}
    In addition, if $\varphi_t \in  L^\infty(t_{m-1},t_{m+2};X)$ for  some  $1\le m \le N-2$, then 
    \begin{itemize}
    \item 
    [($c$)]$ \|\bar{\partial}_t \varphi^{m+1/2}\|_{X} \le \|\varphi_t\|_{L^\infty(t_{m},t_{m+1};X)}$, \; ($d$) $\|\varphi^{m+1/2}-\varphi^{m-1/2}\|_{X} \le \sqrt{k/2}\|\varphi_t\|_{L^2(t_{m-1},t_{m+1};X)},$
     \item 
   [($e$)]$ \| \delta_t\varphi^{m} \|_X\le \norm{\varphi_t} _{L^\infty(t_{m-1},t_{m+1};X)}, $\; and     
      ($f$) $ \norm{{\varphi^{m+1,1/4}-\varphi^{m,1/4}}}_{X} \le  \sqrt{k/2}\|\varphi_t\|_{L^2(t_{m-1},t_{m+2};X)}$. 
        \end{itemize} 
Furthermore, if $\varphi \in  H^2(0,T;X)$,  then 
   ($g$) $ k \sum_{n=1}^{m} \|\bar{\partial}^2_t\varphi^{n}\|^2_{X} \le \frac{4}{3} \norm{\varphi_{tt}}^2_{L^2(0.T;X)}.$
\end{lemma}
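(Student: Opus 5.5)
The plan is to treat each item by writing the relevant discrete quantity either as an average of point values or as an integral of $\varphi_t$, and then to apply the triangle inequality or the Cauchy--Schwarz inequality in the Bochner space. Item ($g$) needs a Peano-kernel representation instead.

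For ($a$) and ($b$), $\varphi^m$ is a point value in $[t_m,t_{m+1}]$, so its norm is bounded by the maximum. The triangle inequality gives $\|\varphi^{m+1/2}\|_X\le \frac12(\|\varphi^{m+1}\|_X+\|\varphi^m\|_X)$ and $\|\varphi^{m,1/4}\|_X\le\frac14(\|\varphi^{m+1}\|_X+2\|\varphi^m\|_X+\|\varphi^{m-1}\|_X)$. Each right-hand side is bounded by the maximum over the indicated interval because the weights sum to one.

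For ($c$) and ($e$), the fundamental theorem of calculus in Bochner spaces (valid since $\varphi\in W^{1,\infty}$) gives
\[\bar\partial_t\varphi^{m+1/2}=k^{-1}\int_{t_m}^{t_{m+1}}\varphi_t\dt,\qquad \delta_t\varphi^m=(2k)^{-1}\int_{t_{m-1}}^{t_{m+1}}\varphi_t\dt.\]
Taking norms inside the integral yields the $L^\infty$ bounds.

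For ($d$), I write
\[\varphi^{m+1/2}-\varphi^{m-1/2}=\tfrac12\Big(\int_{t_m}^{t_{m+1}}\varphi_t\dt+\int_{t_{m-1}}^{t_m}\varphi_t\dt\Big)=\tfrac12\int_{t_{m-1}}^{t_{m+1}}\varphi_t\dt.\]
Cauchy--Schwarz on an interval of length $2k$ gives the bound $\frac12\sqrt{2k}\,\|\varphi_t\|_{L^2}=\sqrt{k/2}\,\|\varphi_t\|_{L^2}$.

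For ($f$), I use \eqref{un14}, namely $\varphi^{m+1,1/4}-\varphi^{m,1/4}=\frac12\big[(\varphi^{m+3/2}-\varphi^{m+1/2})+(\varphi^{m+1/2}-\varphi^{m-1/2})\big]$. This equals $\frac14\int_{t_{m-1}}^{t_{m+2}}w\,\varphi_t\dt$ with weight $w=1$ on the outer subintervals and $w=2$ on the middle one. Cauchy--Schwarz then gives $\frac14(\int w^2)^{1/2}\|\varphi_t\|_{L^2}=\frac14\sqrt{6k}\,\|\varphi_t\|_{L^2}$. This is slightly above $\sqrt{k/2}$, so the constant needs care. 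I would instead bound the term with pure Cauchy--Schwarz, $\frac14\big\|\int w\varphi_t\big\|\le\frac14\|w\|_{L^2}\|\varphi_t\|_{L^2}$, and check whether the stated constant follows. If it does not, I would accept a constant $\sqrt{3k/8}\le\sqrt{k/2}$. Indeed $6/16=3/8<1/2$, so the claim holds. The estimate for ($d$) is exact in the same way.

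For ($g$), the Peano kernel gives
\[\bar\partial_t^2\varphi^n=k^{-2}\int_{t_{n-1}}^{t_{n+1}}(k-|s-t_n|)\,\varphi_{tt}(s)\ds.\]
Cauchy--Schwarz gives $\|\bar\partial_t^2\varphi^n\|_X^2\le k^{-4}\big(\int(k-|s-t_n|)^2\ds\big)\|\varphi_{tt}\|^2_{L^2(t_{n-1},t_{n+1};X)}=\frac{2}{3k}\|\varphi_{tt}\|^2_{L^2(t_{n-1},t_{n+1};X)}$. Multiplying by $k$ and summing over $n=1,\dots,m$ counts each subinterval at most twice, which produces the factor $\frac43$.

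The only delicate points are keeping track of constants, especially the overlap count in ($g$) and the weights in ($f$), and justifying the integral representations for Bochner-space-valued functions. The latter follows from density of smooth functions in $H^1$ and $H^2$ in time.
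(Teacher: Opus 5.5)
Your proposal is correct and follows essentially the paper's proof: point values and convex averages for ($a$)--($b$), the fundamental theorem of calculus with Cauchy--Schwarz for ($c$)--($e$), and for ($g$) a hat-function kernel $k-|s-t_n|$ that is exactly the sum of the paper's two Taylor remainders, so you get the same $\frac{2}{3}\|\varphi_{tt}\|^2_{L^2(t_{n-1},t_{n+1};X)}$ per step and $\frac43$ after counting each subinterval twice. The only real deviation is ($f$), where your single weighted Cauchy--Schwarz gives the sharper constant $\sqrt{3k/8}$ instead of the paper's triangle inequality combined with ($d$); your aside that $\frac14\sqrt{6k}$ is ``slightly above'' $\sqrt{k/2}$ is backwards, as your own check $3/8<1/2$ shows.
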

\begin{proof}
    ($a$) From \eqref{varphi1}, we have $\varphi^{m}=\varphi(t_{m})$ and hence 
    \begin{equation}
    \|\varphi^{m}\|_{X} =\|\varphi(t_{m})\|_{X} \le \|\varphi\|_{C([t_{m},t_{m+1}];X)}.\label{evaluation}
    \end{equation}
    The definition of $\varphi^{m+1/2}$ from \eqref{varphi1},  triangle inequality, and arguments as above, imply that 
    \[       \|\varphi^{m+1/2}\|_{X} =\frac{1}{2} \|\varphi^{m}+\varphi^{m+1}\|_{X} \le \frac{1}{2} \big(\|\varphi^{m}\|_{X} +\|\varphi^{m+1}\|_{X} \big) \le  \|\varphi\|_{C([t_{m},t_{m+1}];X)}.
    \]
($b$) The proof proceeds analogous to that of ($a$) by using   definition  \eqref{varphi1} and  triangle inequality.\\
($c$) From the definition \eqref{varphi2}, there holds $\bar{\partial}_t \varphi^{m+1/2}=k^{-1}(\varphi^{m+1}-\varphi^{m})=k^{-1}\int_{t_m}^{t_{m+1}}\varphi_t(t)\dt,$ and hence 
\begin{align*}
   \| \bar{\partial}_t \varphi^{m+1/2}\|_X=k^{-1}\|\int_{t_m}^{t_{m+1}}\varphi_t(t)\dt\|_{X} \le k^{-1}\int_{t_m}^{t_{m+1}}\|\varphi_t(t)\|_{X} \dt\le \|\varphi_t\|_{L^\infty(t_{m},t_{m+1};X)}.
\end{align*}
 ($d$)  Since $\varphi^{m+1/2}-\varphi^{m-1/2}=\frac{1}{2}(\varphi^{m+1}-\varphi^{m-1})$ holds from \eqref{varphi1}, a Cauchy-Schwarz inequality shows
             \begin{align*}
                \|\varphi^{m+1/2}-\varphi^{m-1/2}\|_{X} =\frac{1}{2}\|\int_{t_{m-1}}^{t_{m+1}}\varphi_t \dt\|_{X}\le \frac{1}{2}\int_{t_{m-1}}^{t_{m+1}}\|\varphi_t \|_{X}\dt \le \sqrt{\frac{k}{2}}\|\varphi_t \|_{L^2(t_{m-1},t_{m+1};X)}.
             \end{align*}
\noindent          ($e$) Since $ \delta_t\varphi^{m} =\frac{1}{k}\big(\varphi^{m+1/2}-\varphi^{m-1/2}\big)$, analogous arguments to ($d$) provide
          \begin{align*}
               \|\delta_t\varphi^{m}\|_{X} =  \frac{1}{k}\|\varphi^{m+1/2}-\varphi^{m-1/2}\|_{X} \le \frac{1}{\sqrt{2k}}\|\varphi_t \|_{L^2(t_{m-1},t_{m+1};X)} \le  \|\varphi_t \|_{L^\infty(t_{m-1},t_{m+1};X)}  .
             \end{align*}
          ($f$) A triangle inequality $\|{\varphi^{m+1,1/4}-\varphi^{m,1/4}}\|_{X}=\frac{1}{2}\|{\varphi^{m+3/2}-\varphi^{m+1/2}\|_{X}+\|\varphi^{m+1/2}-\varphi^{m-1/2}}\|_{X}$ and ($d$) lead to
    \begin{align*}
        \norm{{\varphi^{m+1,1/4}-\varphi^{m,1/4}}}_{X} 
        &  \le \sqrt{\frac{k}{8}}\big(\|\varphi_t\|_{L^2(t_{m},t_{m+2};X)}+\|\varphi_t\|_{L^2(t_{m-1},t_{m+1};X)}\big)
         \le \sqrt{\frac{k}{2}}\|\varphi_t\|_{L^2(t_{m-1},t_{m+2};X)}.
    \end{align*}
         ($g$) From Taylor’s theorem (with integral remainder), we have 
         \begin{align*}
             \varphi^{n+1}=\varphi^n+k\varphi_t^n+\int_{t_{n}}^{t_{n+1}}(t_{n+1}-t)\varphi_{tt}\dt \text{ and }\varphi^{n-1}=\varphi^n-k\varphi_t^n+\int_{t_{n-1}}^{t_{n}}(t_{n-1}-t)\varphi_{tt}\dt.
         \end{align*}
         These two identities  and  \eqref{varphi2} allow us to assert that 
         \begin{align*}
k^2\|\bar{\partial}_t^2\varphi^n\|_{X}&=\| \varphi^{n+1}-2 \varphi^{n}+ \varphi^{n-1}\|_{X}
            =\|\int_{t_{n}}^{t_{n+1}}(t_{n+1}-t)\varphi_{tt}\dt+\int_{t_{n-1}}^{t_{n}}(t_{n-1}-t)\varphi_{tt}\dt\|_{X}\\
            &\le\int_{t_{n}}^{t_{n+1}}|t_{n+1}-t| \|\varphi_{tt}\|_{X}\dt+\int_{t_{n-1}}^{t_{n}}|t_{n-1}-t|\|\varphi_{tt}\|_{X}\dt.
         \end{align*}
    An appeal to a Cauchy-Schwarz inequality and $\int_{t_{n}}^{t_{n+1}}(t_{n+1}-t)^2\dt=\frac{k^3}{3}=\int_{t_{n-1}}^{t_{n}}(t_{n-1}-t)^2\dt$ and elementary manipulations provide
       \begin{align*}
         k \sum_{n=1}^m \|\bar{\partial}_t^2\varphi^n\|^2_{X} \le \frac{2}{3}\big(\sum_{n=1}^m\int_{t_{n}}^{t_{n+1}}\|\varphi_{tt}\|^2_{X}\dt+\sum_{n=1}^m\int_{t_{n-1}}^{t_{n}}\|\varphi_{tt}\|^2_{X}\dt\big)\le \frac{4}{3}\|\varphi_{tt}\|^2_{L^
         2{}(0,T;X)}. 
       \end{align*}
       This concludes the proof.
\end{proof}
\begin{lemma}\label{axul_ritz}
    Let $\xi=\varphi-\mathcal{R}_{\m}\varphi$. If 
 ($a$) $\varphi \in C([t_{m},t_{m+1}];H_0^{2}(\Omega) \cap H^{2+\sigma}(\Omega))$ for some  $0\le m \le N-1$,
 then  
  $ \trinl\xi^{m}\trinl_{\pw}$, $ \trinl\xi^{m+1/2}\trinl_{\pw}$ $ \le   \constref{car}h^\sigma\|\varphi\|_{C([t_{m},t_{m+1}];H^{2+\sigma}(\Omega))}$,
 ($b$)  $\varphi \in C([t_{m-1},t_{m+1}];H_0^{2}(\Omega) \cap H^{2+\sigma}(\Omega))$ for some  $1\le m \le N-1$, 
 then  $\trinl\xi^{m,1/4}\trinl_{\pw} \le  \constref{car}h^\sigma\|\varphi\|_{C([t_{m-1},t_{m+1}];H^{2+\sigma}(\Omega))}.$

 \medskip \noindent 
   Furthermore,  $\varphi \in  H^1(t_{m-1},t_{m+2};H_0^{2}(\Omega) \cap H^{2+\sigma}(\Omega))$ for some  $1\le m \le N-2$ satisfies \\
   \noindent 
   ($c$) $\trinl k \delta_t\xi^{m}\trinl_{\pw} \le  \constref{car}\sqrt{k/2}h^{\sigma}\|\varphi_t\|_{L^2(t_{m-1},t_{m+1};H^{2+\sigma}(\Omega))},$ 
 ($d$) $\trinl \xi^{m+1,1/4}-\xi^{m,1/4}\trinl_{\pw} \le   \constref{car}\sqrt{k/2}h^{\sigma}\|\varphi_t\|_{L^2(t_{m-1},t_{m+2};H^{2+\sigma}(\Omega))}$.
\end{lemma}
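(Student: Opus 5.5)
The plan is to observe that $\mathcal{R}_{\m}$ is linear and time independent, so $\xi=\varphi-\mathcal{R}_{\m}\varphi$ commutes with every temporal operation in \eqref{varphi1}--\eqref{varphi2}. For each averaged or differenced quantity we then have $\xi^{\bullet}=(I-\mathcal{R}_{\m})\varphi^{\bullet}$, where $\varphi^{\bullet}$ is the same average or difference of $\varphi$. The whole lemma thus reduces to the stationary estimate \eqref{P1 norm_ritz}, which gives $\trinl \psi-\mathcal{R}_{\m}\psi\trinl_{\pw}\le \constref{car}h^{\sigma}\norm{\psi}_{H^{2+\sigma}(\Omega)}$ for each fixed $\psi\in H^2_0(\Omega)\cap H^{2+\sigma}(\Omega)$. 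This is combined with Lemma~\ref{axul} in the Hilbert space $X=H^{2+\sigma}(\Omega)$, i.e., applied to $\varphi$ itself rather than to $\xi$.

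For ($a$), apply \eqref{P1 norm_ritz} with $\psi=\varphi^m$ and with $\psi=\varphi^{m+1/2}$; both lie in $H^2_0(\Omega)\cap H^{2+\sigma}(\Omega)$ because this space is linear. Then Lemma~\ref{axul}($a$) with $X=H^{2+\sigma}(\Omega)$ bounds $\norm{\varphi^m}_{H^{2+\sigma}(\Omega)}$ and $\norm{\varphi^{m+1/2}}_{H^{2+\sigma}(\Omega)}$ by $\|\varphi\|_{C([t_m,t_{m+1}];H^{2+\sigma}(\Omega))}$. Part ($b$) is identical, with $\psi=\varphi^{m,1/4}$ and Lemma~\ref{axul}($b$).

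For ($c$), write $k\delta_t\xi^m=(I-\mathcal{R}_{\m})(\varphi^{m+1/2}-\varphi^{m-1/2})$ using \eqref{dtu}. Applying \eqref{P1 norm_ritz} and then Lemma~\ref{axul}($d$) in $X=H^{2+\sigma}(\Omega)$ gives the factor $\sqrt{k/2}\,\|\varphi_t\|_{L^2(t_{m-1},t_{m+1};H^{2+\sigma}(\Omega))}$. For ($d$), similarly $\xi^{m+1,1/4}-\xi^{m,1/4}=(I-\mathcal{R}_{\m})(\varphi^{m+1,1/4}-\varphi^{m,1/4})$, and Lemma~\ref{axul}($f$) supplies the bound over $(t_{m-1},t_{m+2})$.

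The only point needing care is the applicability of Lemma~\ref{axul}($d$),($f$). The hypothesis $\varphi\in H^1(t_{m-1},t_{m+2};H^{2+\sigma}(\Omega))$ gives absolute continuity in time with values in $H^{2+\sigma}(\Omega)$, so the point values are meaningful. Moreover, the proofs of ($d$) and ($f$) only use $\varphi^{j+1}-\varphi^{j}=\int_{t_j}^{t_{j+1}}\varphi_t\dt$ and Cauchy--Schwarz, which need only $\varphi_t\in L^2$. The proofs therefore hold under this $H^1$ assumption rather than the $L^\infty$ hypothesis stated in Lemma~\ref{axul}. No genuine obstacle is expected beyond this bookkeeping.
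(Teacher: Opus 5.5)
Your proposal is correct and matches the paper's proof in substance: both rest on the linearity and time-independence of $\mathcal{R}_{\m}$, the stationary estimate \eqref{P1 norm_ritz}, and Lemma~\ref{axul}. The only difference is the order of operations in ($c$),($d$). The paper first applies the temporal bound of Lemma~\ref{axul}($d$) to $\xi$ itself, then invokes \eqref{P1 norm_ritz} pointwise in time for $\xi_t=(I-\mathcal{R}_{\m})\varphi_t$, and finally obtains ($d$) from ($c$) by a triangle inequality. You instead apply \eqref{P1 norm_ritz} once to the fixed difference and then use Lemma~\ref{axul}($d$),($f$) in $X=H^{2+\sigma}(\Omega)$. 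This ordering is slightly cleaner because it avoids differentiating $\xi$ in the broken space. Your remark that only $\varphi_t\in L^2$ (not $L^\infty$) is needed there is also correct.
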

\begin{proof}
   ($a$)  Lemma~\ref{P1 ritz_lemma} and arguments similar to those used in  \eqref{evaluation} reveals
    \begin{equation}\label{eq:aux05}\trinl \xi^m \trinl_{\pw}=\trinl\varphi^{m}-\mathcal{R}_{\m}\varphi^{m}\trinl_{\pw} \le \constref{car}h^{\sigma}\|\varphi^{m}\|_{H^{2+\sigma}(\Omega)} \le \constref{car}h^{\sigma}\|\varphi\|_{C([t_{m},t_{m+1}];H^{2+\sigma}(\Omega))}.
    \end{equation}
    Definition \eqref{varphi1} and  triangle inequality followed by application of \eqref{eq:aux05} twice, shows that
    \begin{align*}
         \trinl\xi^{m+1/2}\trinl_{\pw} =\frac{1}{2}\trinl\xi^{m}+\xi^{m+1}\trinl_{\pw}
         &\le\frac{1}{2}\big(\trinl\xi^{m}\trinl_{\pw}+\trinl\xi^{m+1}\trinl_{\pw}\big)\le \constref{car}h^{\sigma}\|\varphi\|_{C([t_{m},t_{m+1}];H^{2+\sigma}(\Omega))}.
    \end{align*}
($b$) The proof follows analogous to  ($a$).\\
($c$) Owing to similar arguments employed in Lemma~\ref{axul}($d$), we can assert that  
\begin{align*}
  \trinl  k \delta_t\xi^{m}\trinl_{\pw}=  \trinl \xi^{m+1/2}-\xi^{m-1/2}\trinl_{\pw}  \le \sqrt{\frac{{k}}{2}}\big(\int_{t_{m-1}}^{t_{m+1}}\|\xi_t\|^2_{H^2(\Omega)}\big)^{1/2}.
\end{align*} The inequality $\big(\int_{t_{m-1}}^{t_{m+1}}\|\xi_t\|^2_{H^2(\Omega)}\big)^{1/2}\le\constref{car}h^{\sigma}\|\xi_t\|_{L^2(t_{m-1},t_{m+1};H^2(\Omega))}$ follows from the definition of $\xi(t)$ and  \eqref{P1 norm_ritz}.\\
($d$) The definition \eqref{un14} and triangle inequality leads to 
\begin{align*}
    \trinl \xi^{m+1,1/4}-\xi^{m,1/4}\trinl_{\pw}\le \frac{1}{2}\trinl \xi^{m+3/2}-\xi^{m+1/2}\trinl_{\pw}+\frac{1}{2}\trinl \xi^{m+1/2}-\xi^{m-1/2}\trinl_{\pw} .
\end{align*} As a consequence of  ($c$) we conclude  the proof.
\end{proof}
\begin{lemma}{\rm (discrete Gronwall \cite[Lemma~4.1]{MR3003381}\label{P1 d-gronwall})}
 Let $\{a_n\}$, $\{b_n\}$, and $\{c_n\}$ be three non-negative sequences, with $\{c_n\}$ monotone, that satisfy 
  $\displaystyle    a_m+b_m \le c_m + \mu \sum_{n=0}^{m-1} a_n, \quad \mu >0, \ a_0+b_0 \le c_0 $. Then for $m \ge 0,$ it holds that 
  $    a_m+b_m \le c_m e^{m \mu}.$
\end{lemma}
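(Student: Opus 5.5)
The plan is to prove the discrete Gronwall lemma (Lemma~\ref{P1 d-gronwall}) by strong induction on $m$. The case $m=0$ is the hypothesis $a_0+b_0\le c_0=c_0e^{0}$.

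For the induction step, suppose $a_n+b_n\le c_n e^{n\mu}$ for all $0\le n\le m-1$. Nonnegativity of $b_n$ gives $a_n\le c_n e^{n\mu}$. We read ``monotone'' as nondecreasing, which is the case used later, where $c_m$ collects data sums growing in $m$. Then $c_n\le c_m$ for $n\le m-1$, and the recursive inequality yields
\begin{align*}
a_m+b_m\le c_m+\mu\sum_{n=0}^{m-1}a_n\le c_m\Big(1+\mu\sum_{n=0}^{m-1}e^{n\mu}\Big).
\end{align*}

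It remains to show $1+\mu\sum_{n=0}^{m-1}e^{n\mu}\le e^{m\mu}$. Since $x\mapsto e^{x}$ is increasing, $\mu e^{n\mu}\le\int_{n\mu}^{(n+1)\mu}e^{x}\,dx$. Summing over $n=0,\dots,m-1$ telescopes to $\int_0^{m\mu}e^x\,dx=e^{m\mu}-1$. This closes the induction.

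The only delicate point is the monotonicity convention. If $\{c_n\}$ were nonincreasing, the step $c_n\le c_m$ would fail. In that case one would instead bound by $\max_{n\le m}c_n$, so I will state explicitly that nondecreasing monotonicity is assumed, as in \cite[Lemma~4.1]{MR3003381}. Everything else is elementary.
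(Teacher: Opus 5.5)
Your proof is correct. The strong induction works as written. Nonnegativity of $b_n$ and the nondecreasing property give $a_n\le c_ne^{n\mu}\le c_me^{n\mu}$ for $n<m$, hence $a_m+b_m\le c_m\bigl(1+\mu\sum_{n=0}^{m-1}e^{n\mu}\bigr)$. The inequality $\mu e^{n\mu}\le e^{(n+1)\mu}-e^{n\mu}$ then telescopes to the factor $e^{m\mu}$.

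The paper gives no argument of its own; it imports the result from \cite[Lemma~4.1]{MR3003381}. Your proposal therefore replaces a citation with a self-contained proof rather than competing with another route.

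A minor simplification is available. Run the same induction with the hypothesis $a_n+b_n\le c_n(1+\mu)^n$. The key step then becomes the geometric-sum identity $1+\mu\sum_{n=0}^{m-1}(1+\mu)^n=(1+\mu)^m$. This gives the slightly sharper bound $c_m(1+\mu)^m\le c_me^{m\mu}$ and needs no integral comparison.

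You should state the monotonicity point more strongly than ``the step $c_n\le c_m$ would fail''. For nonincreasing $\{c_n\}$ the conclusion itself is false. Take $c_0=1$, $c_n=0$ for $n\ge1$, $a_0=1$, $a_1=\mu$, and $b_n=0$. All hypotheses hold, but $a_1+b_1=\mu>0=c_1e^{\mu}$. So ``monotone'' must be read as nondecreasing; this is a necessity, not a convention.

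Both uses in the paper satisfy it. In the proof of Theorem~\ref{TH-WELLPOSED-FINAL}$(a)$, $c_m$ grows with the partial sums $Tk\sum_{n=1}^{m}\|f^{n,1/4}\|^2$. In the proof of Theorem~\ref{TH-ERROR-FINAL}, $c_m$ does not depend on $m$.
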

\begin{lemma}{\rm (truncation error  \cite[Theorem~4.4]{MR3003381}\label{trunc-lem})}
Let $\varphi \in C^2([0,T]; L^2(\Omega))$ satisfy
\begin{itemize}
    \item [($a$)]  $\varphi_{ttt} \in L^\infty(0,t_1;L^2(\Omega))$, then  $\norm{2k^{-1}(\bar{\partial}_t \varphi^{1/2}-\varphi_t(0))-\varphi_{tt}^{1/2}} \le  k \norm{\varphi_{ttt}}_{L^\infty(0,t_1;L^2(\Omega))},$ 
    \item [($b$)]  $\varphi \in H^4(0,T;L^2(\Omega))$, then $k\sum_{n=1}^{N-1} 
\norm{\bar{\partial}^2_t \varphi^n -\varphi_{tt}^{n,1/4} }^2 \le {\cal C}(\varphi_{tttt})k^{4}.$
\end{itemize}
The constant ${\cal C}(\varphi_{tttt})$ in ($b$) depends on $\norm{\varphi_{tttt}}_{L^2(0,T;L^2(\Omega))}.$
 \end{lemma}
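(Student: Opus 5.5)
The plan is to prove both parts by Taylor expansion in time with integral remainder. Each truncation error is written as an explicit kernel integral of the next-higher time derivative of $\varphi$. Part ($a$) is then bounded pointwise in time, and part ($b$) by the Cauchy--Schwarz inequality on each pair of subintervals $(t_{n-1},t_{n+1})$, summed over $n$.

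For ($a$), I first rewrite the difference quotient. From \eqref{varphi2},
\[
2k^{-1}\big(\bar{\partial}_t\varphi^{1/2}-\varphi_t(0)\big)=2k^{-2}\big(\varphi(k)-\varphi(0)-k\varphi_t(0)\big)=2k^{-2}\int_0^k (k-s)\,\varphi_{tt}(s)\ds .
\]
Next I insert $\varphi_{tt}(s)=\varphi_{tt}(0)+\int_0^s\varphi_{ttt}(r)\,\mathrm{d}r$. Because $2k^{-2}\int_0^k(k-s)\ds=1$, the constant part reproduces $\varphi_{tt}(0)$. For the midpoint value I use $\varphi_{tt}^{1/2}=\varphi_{tt}(0)+\frac12\int_0^k\varphi_{ttt}\,\mathrm{d}r$. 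Subtracting, the $\varphi_{tt}(0)$ terms cancel and the error becomes
\[
2k^{-2}\int_0^k(k-s)\int_0^s\varphi_{ttt}\,\mathrm{d}r\ds-\frac12\int_0^k\varphi_{ttt}\,\mathrm{d}r .
\]
With $M:=\norm{\varphi_{ttt}}_{L^\infty(0,t_1;L^2(\Omega))}$, the first term is at most $2k^{-2}M\int_0^k(k-s)s\ds=Mk/3$ and the second at most $Mk/2$. Their sum $5Mk/6$ is at most $kM$, which proves ($a$).

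For ($b$), I fix $1\le n\le N-1$ and expand $\varphi^{n\pm1}$ around $t_n$ up to third order, keeping the remainder $\frac16\int(t_{n\pm1}-s)^3\varphi_{tttt}(s)\ds$. Likewise I expand $\varphi_{tt}^{n\pm1}$ around $t_n$ to first order, keeping the remainder $\int(t_{n\pm1}-s)\varphi_{tttt}(s)\ds$. The odd terms cancel by symmetry. Then $\bar{\partial}_t^2\varphi^n$ and $\varphi_{tt}^{n,1/4}=\frac14(\varphi_{tt}^{n+1}+2\varphi_{tt}^n+\varphi_{tt}^{n-1})$ both reduce to $\varphi_{tt}^n$ plus remainders. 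This reflects that both operators are exact on cubic polynomials, a fact I check directly: for $t^3$ both give $6t_n$. Consequently
\[
\bar{\partial}_t^2\varphi^n-\varphi_{tt}^{n,1/4}=\int_{t_{n-1}}^{t_{n+1}}K_n(s)\,\varphi_{tttt}(s)\ds .
\]
The Peano kernel $K_n$ consists of $k^{-2}(t_{n\pm1}-s)^3/6$ from the central difference and $\frac14(t_{n\pm1}-s)$ from the average, restricted to the relevant half-intervals. Hence $|K_n(s)|\le Ck$ on $(t_{n-1},t_{n+1})$.

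The Cauchy--Schwarz inequality then yields
\[
\norm{\bar{\partial}_t^2\varphi^n-\varphi_{tt}^{n,1/4}}^2\le C^2k^2\cdot 2k\int_{t_{n-1}}^{t_{n+1}}\norm{\varphi_{tttt}}^2\ds .
\]
I multiply by $k$ and sum over $n$. Each subinterval of $(0,T)$ is covered at most twice, so
\[
k\sum_{n=1}^{N-1}\norm{\bar{\partial}_t^2\varphi^n-\varphi_{tt}^{n,1/4}}^2\le 4C^2k^4\norm{\varphi_{tttt}}^2_{L^2(0,T;L^2(\Omega))},
\]
which is ($b$) with ${\cal C}(\varphi_{tttt})=4C^2\norm{\varphi_{tttt}}^2_{L^2(0,T;L^2(\Omega))}$.

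The only delicate step is the bookkeeping in ($b$). I must verify that all terms of order below four cancel, in particular the $\varphi_{tttt}$-free $k^2\varphi_{tttt}^n$-type contributions. This is handled by keeping everything in integral-remainder form rather than in point-value Taylor form, so that no pointwise value of $\varphi_{tttt}$ is ever needed and $H^4(0,T;L^2(\Omega))$ regularity suffices.
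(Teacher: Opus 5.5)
Your proof is correct. The paper gives no argument for this lemma; it simply imports it from \cite[Theorem~4.4]{MR3003381}, so your derivation is a self-contained replacement rather than a reproduction.

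It uses the same device the paper employs in its own proof of Lemma~\ref{axul}$(g)$: Taylor expansion with integral remainder, Cauchy--Schwarz on $(t_{n-1},t_{n+1})$, and the fact that each subinterval is counted at most twice. You push it one order higher, so that the $\varphi_{tt}^n$ terms of $\bar{\partial}_t^2\varphi^n$ and $\varphi_{tt}^{n,1/4}$ cancel. What remains is a kernel $K_n$ with $|K_n|\le 5k/12$, in fact $|K_n|\le k/(6\sqrt2)$.

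Compared with the bare citation, this gives explicit constants: $5/6$ in $(a)$, and ${\cal C}(\varphi_{tttt})=4C^2\norm{\varphi_{tttt}}^2_{L^2(0,T;L^2(\Omega))}$ in $(b)$. It also shows that exactly the stated regularity suffices. The inclusion $H^4(0,T;L^2(\Omega))\subset C^3([0,T];L^2(\Omega))$, with $\varphi_{ttt}$ absolutely continuous, is what justifies the third-order integral-remainder formula.

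One slip in your closing remark: the $k^2\varphi_{tttt}$-type contribution does not cancel. It is the leading truncation error; for $\varphi(t)=t^4$ one gets $\bar{\partial}_t^2\varphi^n-\varphi_{tt}^{n,1/4}=-4k^2=-\frac{k^2}{6}\varphi_{tttt}$. Only the $\varphi_t$, $\varphi_{tt}$ and $\varphi_{ttt}$ terms cancel. The surviving fourth-order part is exactly what your kernel estimate controls, so the computation itself is unaffected.
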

\section{Proofs of results for the initial discretization}\label{sec-fully-ini}
{This section presents the proofs of the well-posedness result in 
Theorem~\ref{TH-WELLPOSED-INI} and the a priori error estimates in 
Theorem~\ref{LEM-ERROR-INI} for the initialization schemes~\eqref{app_at_0}--\eqref{P1ic}. 
}
\subsection{Well-posedness of initial discretizations}\label{subsec}
{The proof of Theorem~\ref{TH-WELLPOSED-INI}($a$) (resp. Theroem~\ref{TH-WELLPOSED-INI}($b$)) below establishes the existence of the discrete solution $(u_{\m}^0, v_{\m}^0)$ (resp. $(u_{\m}^1, v_{\m}^1)$) to \eqref{app_at_0} (resp. \eqref{P1ic}).
Theorem~\ref{TH-WELLPOSED-INI}($c$)  establishes stability of the solution to the 
initialization schemes~\eqref{app_at_0}--\eqref{P1ic} and 
Theorem~\ref{TH-WELLPOSED-INI}($d$) provides the uniqueness. This result is
subsequently employed in the error analysis of Theorem~\ref{LEM-ERROR-INI} 
and serves as the base case for the inductive well-posedness argument for 
the fully discrete scheme~\eqref{P4fully} in Theorem~\ref{TH-WELLPOSED-FINAL}. }
\begin{proof}[\large{\textbf{{Proof of Theorem~\ref{TH-WELLPOSED-INI}$(a)$}}}]
{The proof 
relies on a Lax-Milgram lemma.  For given $u_0 \in H^2_0(\Omega)$, the map $a(u_0, J(\bullet)): {\m}({\cal T}) \rightarrow \mathbb{R}$ is linear. This follows directly from the fact that $J$ is linear  and $a(\bullet,\bullet)$ is bilinear. Also, from continuity of $a(\bullet,\bullet)$ and $J$  we have $$|a(u_0, J\varphi_{\m})| \le \trinl u_0\trinl |J\varphi_{\m}|_{H^2(\Omega)} \le \|J\| \trinl u_0\trinl\trinl\varphi_{\m}\trinl _{\pw}.$$
 This implies that $a(u_0, J(\bullet))$ is a bounded linear functional on ${\m}({\cal T}).$ This, the continuity and ellipticity of the bilinear form  $a_{\pw}(\bullet,\bullet)$ on  ${\m}({\cal T})$ lead to the existence of a unique solution $u_{\m}^0$ to the first equation in  \eqref{app_at_0}. Similarly, the boundedness of  $b_{\pw}(u_{\m}^0, u_{\m}^0, \bullet): {\m}({\cal T}) \rightarrow \mathbb{R}$ from \eqref{Bh-bdd}, and the ellipticity of 
$a_{\pw}(\bullet,\bullet)$ ensures the existence of a unique solution $v_{\m}^0$ to the second equation in \eqref{app_at_0}. 

\medskip \noindent  The bound $\trinl u_{\m}^0\trinl_{\pw} \le \|J\|\trinl u_0\trinl$  follows from $\varphi_{\m}=u_{\m}^0$ in \eqref{app_at_0}, the continuity of $a(\bullet,\bullet)$, and $J$. The choice $\psi_{\m}=v_{\m}^0$ in \eqref{app_at_0},  \eqref{Bh-bdd}, and the a priori bound for $u_{\m}^0$ lead to  $\trinl v_{\m}^0 \trinl_{\pw} \le\frac{1}{2} C_{\rm dS}\|J\|^2\trinl u_0\trinl^2$.}
 \end{proof}
\noindent For any \( (\varphi_\m, \psi_\m) \in {\bf M}(\cal T) \),  recall from \eqref{P1ic} that the solution \( (u_{\m}^1, v_{\m}^1) \) of \eqref{P1ic} (near \( (u(t_1),v(t_1)) \)) satisfies  
\begin{align*}
{2k^{-1}}(\bar{\partial}_t u_{\m}^{1/2} -u_1, \varphi_{{\m}} )_{L^2(\Omega)}+a_{\pw}(u_{\m}^{1/2},\varphi_{{\m}}) +b_{\pw}(u_{\m}^{1/2},\varphi_{{\m}},v_{\m}^{1/2})&= (f^{1/2},\varphi_{{\m}} )_{L^2(\Omega)},\\
 a_{\pw}(v_{\m}^{1},\psi_{{\m}}) -\frac{1}{2}b_{\pw}(u_{\m}^{1},u_{\m}^{1},\psi_{{\m}})&=0  .
 \end{align*}
Unlike the coupled system in \eqref{app_at_0}, where the first equation does not involve $v_\m^0$
(allowing the equations to be solved sequentially and the system to remain linear), the proof of the existence result for this nonlinear coupled system requires say, fixed-point theory; we employ a corollary of Brouwer fixed-point theorem.
\begin{thm}[Brouwer fixed-point theorem {\cite[Theorem 5.2.5]{kesavan}\cite[Page no. 529]{MR2597943}}] \label{Brouwer}
    Let ${\bf H}$ be a finite dimensional
Hilbert space with the norm $\norm{\bullet}_{\bf H} $ induced from the inner product $(\bullet,\bullet)_{\bf H}$.
 Let ${\boldsymbol{\mathcal{L}}}:{\bf H}\rightarrow{\bf H}$ be a  continuous mapping such  $(\boldsymbol{\mathcal{L}}(\boldsymbol{{\varphi}}),\boldsymbol{{\varphi}})_{\bf H} \ge 0$ for all $\boldsymbol{{\varphi}} \in {\bf H}$ with $\norm{\boldsymbol{{\varphi}}}_{\bf H}={\rm R}>0$. Then there exists $\boldsymbol{{\varphi^*}} \in{\bf H}$ with $\norm{\boldsymbol{{\varphi}}^*}_{\bf H} \le {\rm R}$ such that  $\boldsymbol{\mathcal{L}}(\boldsymbol{{\varphi}^*})=0$.
\end{thm}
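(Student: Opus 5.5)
The plan is to argue by contradiction and reduce the statement to the classical Brouwer fixed-point theorem for continuous self-maps of a closed ball in $\mathbb{R}^d$. The first step is this reduction. Since ${\bf H}$ is finite-dimensional, say $\dim {\bf H}=d$, choose an orthonormal basis of ${\bf H}$ with respect to $(\bullet,\bullet)_{\bf H}$. The coordinate map is then a linear isometry ${\bf H}\to\mathbb{R}^d$ with the Euclidean norm. It maps the closed ball $\overline{B}_{\rm R}:=\{\boldsymbol{\varphi}\in{\bf H}:\norm{\boldsymbol{\varphi}}_{\bf H}\le {\rm R}\}$ homeomorphically onto the Euclidean closed ball of radius ${\rm R}$. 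Hence every continuous map $\overline{B}_{\rm R}\to\overline{B}_{\rm R}$ has a fixed point.

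Next, suppose that $\boldsymbol{\mathcal{L}}(\boldsymbol{\varphi})\neq 0$ for every $\boldsymbol{\varphi}\in\overline{B}_{\rm R}$. Define $\boldsymbol{g}:\overline{B}_{\rm R}\to{\bf H}$ by
\[
\boldsymbol{g}(\boldsymbol{\varphi}):=-\,{\rm R}\,\frac{\boldsymbol{\mathcal{L}}(\boldsymbol{\varphi})}{\norm{\boldsymbol{\mathcal{L}}(\boldsymbol{\varphi})}_{\bf H}}.
\]
This map is continuous, because $\boldsymbol{\mathcal{L}}$ is continuous and its norm does not vanish on $\overline{B}_{\rm R}$. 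Moreover $\norm{\boldsymbol{g}(\boldsymbol{\varphi})}_{\bf H}={\rm R}$, so $\boldsymbol{g}$ maps $\overline{B}_{\rm R}$ into itself. The first step then yields a fixed point $\boldsymbol{\varphi}_0=\boldsymbol{g}(\boldsymbol{\varphi}_0)$. In particular $\norm{\boldsymbol{\varphi}_0}_{\bf H}={\rm R}$, so $\boldsymbol{\varphi}_0$ lies on the sphere where the sign hypothesis of Theorem~\ref{Brouwer} applies.

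Finally, compute
\[
(\boldsymbol{\mathcal{L}}(\boldsymbol{\varphi}_0),\boldsymbol{\varphi}_0)_{\bf H}
=-\frac{{\rm R}}{\norm{\boldsymbol{\mathcal{L}}(\boldsymbol{\varphi}_0)}_{\bf H}}\,(\boldsymbol{\mathcal{L}}(\boldsymbol{\varphi}_0),\boldsymbol{\mathcal{L}}(\boldsymbol{\varphi}_0))_{\bf H}
=-{\rm R}\,\norm{\boldsymbol{\mathcal{L}}(\boldsymbol{\varphi}_0)}_{\bf H}<0 .
\]
This contradicts the hypothesis $(\boldsymbol{\mathcal{L}}(\boldsymbol{\varphi}),\boldsymbol{\varphi})_{\bf H}\ge 0$ on the sphere $\norm{\boldsymbol{\varphi}}_{\bf H}={\rm R}$. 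Therefore some $\boldsymbol{\varphi}^*$ with $\norm{\boldsymbol{\varphi}^*}_{\bf H}\le{\rm R}$ satisfies $\boldsymbol{\mathcal{L}}(\boldsymbol{\varphi}^*)=0$.

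None of these steps is a genuine obstacle; the only nontrivial input is the classical Brouwer theorem itself, which I take as known. The one point to watch is that the construction of $\boldsymbol{g}$ needs $\boldsymbol{\mathcal{L}}$ to be nonvanishing on the whole closed ball, not only on the sphere. This is exactly what the contradiction assumption provides.
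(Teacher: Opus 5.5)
Your proof is correct and is the standard argument for this corollary, sometimes called the "zeros of a vector field" lemma: if $\boldsymbol{\mathcal{L}}$ had no zero in the closed ball, the map $-{\rm R}\,\boldsymbol{\mathcal{L}}/\norm{\boldsymbol{\mathcal{L}}}_{\bf H}$ would have a Brouwer fixed point on the sphere, and there $(\boldsymbol{\mathcal{L}}(\boldsymbol{\varphi}),\boldsymbol{\varphi})_{\bf H}<0$, contradicting the hypothesis. The paper gives no proof of its own and only cites Kesavan and Evans, whose proofs proceed exactly this way.
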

\begin{proof}[\large{\textbf{{Proof of Theorem~\ref{TH-WELLPOSED-INI}($b$)}}}] We follow  {\it four} steps. The first step describes the settings in the context of Brouwer's fixed-point theorem. This is followed by verification of the assumptions in Steps 2 and 3. Step 4 concludes the proof. 

\medskip \noindent {\it Step 1. Setting.}
Choose ${\bf H}={{\bf M}({\cal T})}$ and $\displaystyle \text{for all } \boldsymbol{\Psi_{\m}}=({\psi_{{\m}1}},{\psi_{{\m}2}}) $ , $ \boldsymbol{\Phi_{\m}}=({\varphi_{{\m1}}},{\varphi_{{\m2}}}) \in {{\bf M}({\cal T})}$, define the inner product in ${\bf H}$ by $(\boldsymbol{\Psi_{\m}},\boldsymbol{\Phi_{\m}})_{\pw}:=2(\psi_{{\m}1},{\varphi_{{\m1}}})_{L^2(\Omega)}+\frac{k^2}{2}\left( a_{\pw}(\psi_{{\m}1},{\varphi_{{\m1}}})+a_{\pw}(\psi_{{\m 2}},{\varphi_{{\m2}}})\right)$.
The norm induced by this inner product is $\norm{\boldsymbol{\Phi_{\m}}}_{\pw}=(\boldsymbol{\Phi_{\m}},\boldsymbol{\Phi_{\m}})^{1/2}_{\pw}.$ The definition of $\| \bullet \|_{\pw}$ leads to
\begin{equation}
  \sqrt{2}\|\varphi_{{\m}1}\| \le \| \boldsymbol{\Phi_{\m}}\|_{\pw},\;{k}\trinl\varphi_{{\m}j}\trinl_{\pw}\le{\sqrt{2}} \| \boldsymbol{\Phi_{\m}} \|_{\pw} \; \text{ for } j=1,2
  .
  \label{norm-com}
\end{equation}
Next, we introduce the map $\boldsymbol{\mathcal{G}}_{\boldsymbol{\Psi_{\m}}}: {{\bf M}({\cal T})} \rightarrow \mathbb{R}$ that helps to define $\boldsymbol{\mathcal{L}}$ via a Riesz representation. For a fixed $\boldsymbol{\Psi_{\m}} =(\psi_{{\m}1},\psi_{{\m}2}) \in {{\bf M}({\cal T})}$ define 
\begin{align}
    \boldsymbol{\mathcal{G}}_{\boldsymbol{\Psi_{\m}}}(\boldsymbol{\Phi_{\m}}):=
       (\boldsymbol{\Psi_{\m}},\boldsymbol{\Phi_{\m}})_{\pw}+\frac{{k^2}}{4}\big[b_{\pw}( \psi_{{\m}1}+2u_{\m}^{0}, \varphi_{{\m}1},\psi_{{\m}2})-b_{\pw}( \psi_{{\m}1}+u_{\m}^{0}, \psi_{{\m}1}+u_{\m}^{0},\varphi_{{\m}2})\big]\nonumber\\
       -{k^2}\big[{(f^{1/2}+2k^{-1}u_1,\varphi_{{\m}1})_{L^2(\Omega)}}-a_{\pw}(u_{\m}^{0},\varphi_{{\m}1})+\frac{1}{2}a_{\pw}(v_{\m}^{0},\varphi_{{\m}2})\big] .\label{gmap}
\end{align}
(The definition of $\boldsymbol{\mathcal{G}}_{\boldsymbol{\Psi_{\m}}}$ is in fact motivated by  \eqref{P1ic} as seen in Step 4). 
It is easy to verify that
$\boldsymbol{\mathcal{G}}_{\boldsymbol{\Psi_{\m}}}(\bullet)$ is linear. We establish  its continuity with respect to $\|\bullet\|_\pw$. Since \eqref{Bh-bdd} and \eqref{norm-com} hold and $k \le T$, we conclude that
\begin{align*}
   &\frac{{k^2}}{4}\big|b_{\pw}( \psi_{{\m}1}+2u_{\m}^{0}, \varphi_{{\m}1},\psi_{{\m}2})-b_{\pw}( \psi_{{\m}1}+u_{\m}^{0}, \psi_{{\m}1}+u_{\m}^{0},\varphi_{{\m}2})\big| \\
   &\quad \le C_{\rm dS}\frac{{k^2}}{4}\big[\trinl\psi_{{\m}1}+2u_{\m}^{0}\trinl_{\pw}\trinl\varphi_{{\m}1}\trinl_{\pw}\trinl\psi_{{\m}2}\trinl_{\pw}+\trinl\psi_{{\m}1}+u_{\m}^{0}\trinl_{\pw}^2\trinl\varphi_{{\m}2}\trinl_{\pw}\big]\\
   &\quad \le C_{\rm dS}\frac{T}{\sqrt{8}}\big[\trinl\psi_{{\m}1}+2u_{\m}^{0}\trinl_{\pw}\trinl\psi_{{\m}2}\trinl_{\pw}+\trinl\psi_{{\m}1}+u_{\m}^{0}\trinl_{\pw}^2\big]\norm{\boldsymbol{\Phi_{{\m}}}}_{\pw}.
\end{align*}
Elementary manipulations with  \eqref{norm-com} and $k \le T$ provide
\begin{align*}
   {k^2} |(f^{1/2},\varphi_{{\m}1})_{L^2(\Omega)}+2k^{-1}(u_1,\varphi_{{\m}1})_{L^2(\Omega)}|&\le \frac{1}{\sqrt{2}}\big(T^2\|f^{1/2}\|+2T\|u_1\|\big)\norm{\boldsymbol{\Phi_{{\m}}}}_{\pw},\\
  k^2 |a_{\pw}(u_{\m}^{0},\varphi_{{\m}1})-\frac{1}{2}a_{\pw}(v_{\m}^{0},\varphi_{{\m}2})| &\le \sqrt{2} T (\trinl u_{\m}^{0} \trinr_\pw +\frac{1}{2}\trinl v_{\m}^{0} \trinr_\pw) \norm{\boldsymbol{\Phi_{{\m}}}}_{\pw} .
\end{align*}
A combination of the last three displayed inequalities and $(\boldsymbol{\Psi_{\m}},\boldsymbol{\Phi_{\m}})_{\pw} \le \|\boldsymbol{\Psi_{\m}}\|_{\pw}\|\boldsymbol{\Phi_{\m}}\|_{\pw }$ in \eqref{gmap} 
reveals that $\boldsymbol{\mathcal{G}}_{\boldsymbol{\Psi_{\m}}}(\bullet)$ is a bounded linear functional in the Hilbert space $\big({{\bf M}({\cal T})},\|\bullet \|_\pw\big)$.
Its Riesz representation ${\boldsymbol{\mathcal{L}}}(\boldsymbol{\Psi_{\m}})$ defines the  map ${\boldsymbol{\mathcal{L}}}: {{\bf M}({\cal T})} \rightarrow{{\bf M}({\cal T})}$ given by 
   \begin{align}
       (\boldsymbol{\mathcal{L}}(\boldsymbol{\Psi_{\m}}),\boldsymbol{\Phi_{\m}})_{\pw} =\boldsymbol{\mathcal{G}}_{\boldsymbol{\Psi_{\m}}}(\boldsymbol{\Phi_{\m}}) \;\text{for all } \boldsymbol{\Phi_{\m}}\in {{\bf M}({\cal T})}. \label{Fmap-ini}
       \end{align}
\noindent{\it Step 2. {$\boldsymbol{\mathcal{L}}$ is continuous.}} For $\boldsymbol{\Psi_{\m}}= (\psi_{{\m}1}, \psi_{{\m}2}), \boldsymbol{\Psi^*_{\m}}=(\psi^*_{{\m}1},\psi^*_{{\m}2}) \in 
  {{\bf M}({\cal T})}$ and for all $\boldsymbol{\Phi_{\m}}=(\varphi_{{\m}1},\varphi_{{\m}2}) \in {{\bf M}({\cal T})}$, the relations \eqref{gmap} and \eqref{Fmap-ini} show 
\begin{align*}
  &(\boldsymbol{\mathcal{L}}(\boldsymbol{\Psi_{\m}}) -\boldsymbol{\mathcal{L}}(\boldsymbol{\Psi^*_{\m}}),\boldsymbol{\Phi_{\m}})_{\pw}=(\boldsymbol{\Psi_{\m}}-\boldsymbol{\Psi^*_{\m}},\boldsymbol{\Phi_{{\m}}})_{\pw}
     +\frac{k^2}{4}\big[b_{\pw}( \psi_{{\m}1}+2u_{\m}^{0}, \varphi_{{\m}1},\psi_{{\m}2} )\\
     &-b_{\pw}( \psi_{{\m}1}+u_{\m}^{0}, \psi_{{\m}1}+u_{\m}^{0},\varphi_{{\m}2})-b_{\pw}( \psi_{{\m}1}^*+2u_{\m}^{0}, \varphi_{{\m}1},\psi_{{\m}2}^* )+b_{\pw}( \psi_{{\m}1}^*+u_{\m}^{0}, \psi_{{\m}1}^*+u_{\m}^{0},\varphi_{{\m}2})\big].
\end{align*}
Abbreviate $\boldsymbol{\mathcal{L}}(\boldsymbol{\Psi_{\m}})-\boldsymbol{\mathcal{L}}(\boldsymbol{\Psi^*_{\m}})=\boldsymbol{\Theta_{{\m}}}=(\theta_{{\m}1},\theta_{{\m}2})$ and  test  $\boldsymbol{\Phi_{\m}}=\boldsymbol{\Theta_{{\m}}}$ in the above expression to infer
\begin{align}
    \norm{\boldsymbol{\Theta_{{\m}}}}^2_{\pw}=(\boldsymbol{\Psi_{\m}}-\boldsymbol{\Psi^*_{\m}},\boldsymbol{\Theta_{{\m}}})
    _{{ {\pw}}}+\frac{k^2}{4}\big[b_{\pw}( \psi_{{\m}1}+2u_{\m}^{0}, \theta_{{\m}1},\psi_{{\m}2} )-b_{\pw}( \psi_{{\m}1}+u_{\m}^{0}, \psi_{{\m}1}+u_{\m}^{0},\theta_{{\m}2})\nonumber\\
    -b_{\pw}( \psi_{{\m}1}^*+2u_{\m}^{0}, \theta_{{\m}1},\psi_{{\m}2}^* )+b_{\pw}( \psi_{{\m}1}^*+u_{\m}^{0}, \psi_{{\m}1}^*+u_{\m}^{0},\theta_{{\m}2})\big].\label{b1+b2-ini}
\end{align}
Applying Cauchy-Schwarz inequality we readily obtain $(\boldsymbol{\Psi_{\m}}-\boldsymbol{\Psi^*_{\m}},\boldsymbol{\Theta_{\m}})_{\pw} \le \|\boldsymbol{\Psi_{\m}}-\boldsymbol{\Psi^*_{\m}}\|_{\pw}\|\boldsymbol{\Theta_{\m}}\|_{\pw}$. 
 The linearity of $b_{\pw}(\bullet,\bullet,\bullet)$ in each component and elementary manipulations then show
\begin{align*}
  &b_{\pw}( \psi_{{\m}1}+2u_{\m}^{0}, \theta_{{\m}1},\psi_{{\m}2} )-b_{\pw}( \psi_{{\m}1}^*+2u_{\m}^{0}, \theta_{{\m}1},\psi_{{\m}2}^*)\nonumber\\
   &\qquad =b_{\pw}( \psi_{{\m}1}- \psi_{{\m}1}^*, \theta_{{\m}1},\psi_{{\m}2})+b_{\pw}( \psi_{{\m}1}^*+2u_{\m}^{0}, \theta_{{\m}1},\psi_{{\m}2}- \psi_{{\m}2}^*),\\
  & b_{\pw}( \psi_{{\m}1}^*+u_{\m}^{0}, \psi_{{\m}1}^*+u_{\m}^{0},\theta_{{\m}2})-b_{\pw}( \psi_{{\m}1}+u_{\m}^{0}, \psi_{{\m}1}+u_{\m}^{0},\theta_{{\m}2})\nonumber\\
  &\qquad =b_{\pw}( \psi_{{\m}1}^*- \psi_{{\m}1},\psi_{{\m}1}^*+u_{\m}^{0},\theta_{{\m}2})+b_{\pw}( \psi_{{\m}1}+u_{\m}^{0}, \psi_{{\m}1}^*-\psi_{{\m}1},\theta_{{\m}2}). 
\end{align*}
The boundedness of $b_{\pw}(\bullet,\bullet,\bullet)$ from \eqref{Bh-bdd} and 
\eqref{norm-com} with the abbreviation   $(\psi_{{\m}1}- \psi_{{\m}1}^*,\psi_{{\m}2}- \psi_{{\m}2}^*)=\boldsymbol{\Psi_{\m}}- \boldsymbol{\Psi^*_{\m}}$, $(\theta_{{\m}1},\theta_{{\m}2})=\boldsymbol{\Theta_{\m}}$ lead to
\begin{align*}
    b_{\pw}( \psi_{{\m}1}- \psi_{{\m}1}^*, \theta_{{\m}1},\psi_{{\m}2})
   & \le C_{\rm dS}\trinl  \psi_{{\m}1}- \psi_{{\m}1}^*\trinl _{\pw}\trinl\theta_{{\m}1}\trinl_{\pw}\trinl \psi_{{\m}2} \trinl_{\pw} \nonumber\\
    &\le 2 C_{\rm dS}k^{-2}  \|
    \boldsymbol{\Psi_{\m}}- \boldsymbol{\Psi^*_{\m}}\|_{\pw}\|\boldsymbol{\Theta_{\m}}\|_{\pw} \trinl \psi_{{\m}2} \trinl_{\pw}. 
\end{align*}
Analogous arguments reveal 
\begin{align*}
      &b_{\pw}( \psi_{{\m}1}^*+2u_{\m}^{0}, \theta_{{\m}1},\psi_{{\m}2}- \psi_{{\m}2}^*)+b_{\pw}( \psi_{{\m}1}^*- \psi_{{\m}1},\psi_{{\m}1}^*+u_{\m}^{0},\theta_{{\m}2})+b_{\pw}( \psi_{{\m}1}+u_{\m}^{0}, \psi_{{\m}1}-\psi_{{\m}1}^*,\theta_{{\m}2})\nonumber\\
      &\qquad\quad\le   2C_{\rm dS}k^{-2}\big(\trinl \psi_{{\m}1}^*+2u_{\m}^{0}\trinl_{\pw} +\trinl\psi_{{\m}1}^*+u_{\m}^{0}\trinl_{\pw} 
    +\trinl\psi_{{\m}1}+u_{\m}^{0}\trinl_{\pw}\big)\|\boldsymbol{\Psi_{\m}}- \boldsymbol{\Psi^*_{\m}}\|_{\pw}\|\boldsymbol{\Theta_{\m}}\|_{\pw} . 
\end{align*}
A  combination of all this in  \eqref{b1+b2-ini} shows  
that the map $\boldsymbol{\mathcal{L}}$ is (locally Lipschitz) continuous.

\medskip \noindent 
\textit{Step 3.  {$(\boldsymbol{\mathcal{L}}(\boldsymbol{\Psi_{\m}}),\boldsymbol{\Psi_{\m}})_{\pw} \ge 0$.}} Select $\boldsymbol{\Psi_{\m}} \in {{\bf M}({\cal T})}$ such that $\norm{\boldsymbol{\Psi_{\m}}}_{\pw}={\rm R}$, 
where
\begin{align}
   {\rm R}&=k \sqrt{2{k^2}\norm{f}^2_{C([0,t_1];L^2(\Omega))}+8\norm{u_1}^2+\|J\|^2\trinl u_0\trinl^2\big(4+ C^2_{\rm dS}\|J\|^2\trinl u_0\trinl^2\big)}.\label{raduis-ini}
\end{align} 
 First observe that the linearity, symmetry of $b_{\pw}(\bullet,\bullet,\bullet)$ in its first two components, and \eqref{app_at_0} shows $$b_{\pw}( \psi_{{\m}1}+2u_{\m}^{0}, \psi_{{\m}1},\psi_{{\m}2} )-b_{\pw}( \psi_{{\m}1}+u_{\m}^{0}, \psi_{{\m}1}+u_{\m}^{0},\psi_{{\m}2})=-b_{\pw}( u_{\m}^{0}, u_{\m}^{0},\psi_{{\m}2})=-2a_{\pw}(v_{\m}^{0},\psi_{{\m}2}).$$ The  choice 
$\boldsymbol{\Phi_{\m}}=\boldsymbol{\Psi_{\m}}$ in \eqref{Fmap-ini},  \eqref{gmap}, and the last displayed identity lead to
\begin{align*}
(\boldsymbol{\mathcal{L}}(\boldsymbol{\Psi_{\m}}),\boldsymbol{\Psi_{\m}})_{\pw}
     &=\norm{\boldsymbol{\Psi_{\m}}}^2_{\pw}-{k^2}(f^{1/2},\psi_{{\m}1})_{L^2(\Omega)}-2k(u_1,\psi_{{\m}1})_{L^2(\Omega)}+{{k^2}}a_{\pw}(u_{\m}^{0},\psi_{{\m}1})-{k^2}a_{\pw}(v_{\m}^{0},\psi_{{\m}2}).
\end{align*}
A Cauchy-Schwarz inequality and Young's inequality with {$a=k^2 \norm{f^{1/2}+2k^{-1}u_1},b=\norm{\psi_{{\m}1}}$, $\epsilon=1/2$} followed by a triangle inequality and  the bound $\norm{f^{1/2}} \le \norm{f}_{C([0,t_1];L^2(\Omega))}$ from Lemma~\ref{axul}$(a)$ reveal 
{
\begin{align*}
    k^2(f^{1/2}+2k^{-1}u_1,\psi_{{\m}1})_{L^2(\Omega)}
   \le k^2\|f^{1/2}+2k^{-1}u_1\|\|\psi_{{\m}1}\|
    & \le\frac{k^4}{2}\norm{f}^2_{C([0,t_1];L^2(\Omega))}+2k^2\norm{u_1}^2 +\|\psi_{{\m}1}\|^2.
\end{align*}}
The continuity of bilinear form $a_{\pw}(\bullet,\bullet)$, Young's inequality with $a=\trinl u_{\m}^{0}\trinl_{\pw},b=\trinl\psi_{{\m}1}\trinl_{\pw} $, and $\epsilon=2$ (resp. $a=\trinl v_{\m}^{0}\trinl_{\pw},b=\trinl\psi_{{\m}2}\trinl_{\pw} $, and $\epsilon=2$), and the bounds from ($a$) reveal
\begin{align*}
    |a_{\pw}(u_{\m}^0,\psi_{{\m}1})|& \le\trinl u_{\m}^0\trinl_{\pw}\trinl\psi_{{\m}1}\trinl_{\pw} \le \trinl u_{\m}^0\trinl_{\pw}^2+\frac{1}{4}\trinl\psi_{{\m}1}\trinl_{\pw} ^2\le \|J\|^2\trinl u_0\trinl^2+\frac{1}{4}\trinl\psi_{{\m}1}\trinl_{\pw} ^2,\\
\big(\text{resp. }a_{\pw}(v_{\m}^{0},\psi_{{\m}2}) & \le \trinl v_{\m}^{0}\trinl_{\pw}\trinl\psi_{{\m}2}\trinl_{\pw} \le \trinl v_{\m}^{0}\trinl_{\pw}^2+\frac{1}{4}\trinl \psi_{{\m}2}\trinl_{\pw}^2\le \frac{1}{4} C^2_{\rm dS}\|J\|^4\trinl u_0\trinl^4_{H^2(\Omega)}+\frac{1}{4}\trinl \psi_{{\m}2}\trinl_{\pw}^2\big).
\end{align*}
A combination of the four last displayed results and the definition of ${\rm R}$ from \eqref{raduis-ini}  show
\begin{align*}
(\boldsymbol{\mathcal{L}}(\boldsymbol{\Psi_{\m}}),\boldsymbol{\Psi_{\m}})_{\pw}
     &\ge \norm{\boldsymbol{\Psi_{\m}}}^2_{\pw}-\big(\|\psi_{{\m}1} \|^2+\frac{k^2}{4}\trinl \psi_{{\m}1} \trinl_{\pw}^2+\frac{k^2}{4}\trinl \psi_{{\m}2} \trinl_{\pw}^2\big)-\frac{{\rm R}^2}{4}.
\end{align*}
Recall that $\boldsymbol{\Psi_{\m}}\in {{\bf M}({\cal T})}$ is chosen such that $\norm{\boldsymbol{\Psi_{\m}}}^2_{\pw}=2\| \psi_{{\m}1} \|^2+\frac{k^2}{2}\trinl \psi_{{\m}1} \trinl^2_{\pw}+\frac{k^2}{2}\trinl \psi_{{\m}2} \trinl_{\pw}^2={\rm R}^2$. Utilize this in the last displayed inequality to obtain 
$$(\boldsymbol{\mathcal{L}}(\boldsymbol{\Psi_{\m}}),\boldsymbol{\Psi_{\m}})_{\pw} \ge {\rm R}^2-\frac{{\rm R}^2}{2}-\frac{{\rm R}^2}{4}\ge 0.$$

\noindent {\it Step 4. Conclusion.} Steps 2 and 3 show that \( \boldsymbol{\mathcal{L}} \) is continuous and there exists ${\rm R} >0$ such that {$ (\boldsymbol{\mathcal{L}} (\boldsymbol{ \Psi_{\m}}),\boldsymbol{ \Psi_{\m}})_{\pw} \ge 0$} for all $\boldsymbol{ \Psi_{\m}}$ with $\norm{\boldsymbol{ \Psi_{\m}}}_{\pw}={\rm R}$.  Theorem \ref{Brouwer} establishes the existence of $\boldsymbol{ \Psi^*_{\m}} \in{{\bf M}({\cal T})}$ such that $\norm{\boldsymbol{ \Psi^*_{\m}}}_{\pw}\le {\rm R}$ and $ \boldsymbol{\mathcal{L}} (\boldsymbol{ \Psi^*_{\m}})=0$. {Define $(u^{1}_{\m},v^{1}_{\m}):=( \psi^*_{{\m}1}+u_{\m}^{0},\psi^*_{{\m}2}-v_{\m}^{0})$, choose $\boldsymbol{\Phi_{\m}}=k^{-2}(\varphi_{\m},0)$ (resp.  $\boldsymbol{\Phi_{\m}}=k^{-2}(0,2\psi_{\m})$) in  \eqref{Fmap-ini}, and utilize $ \boldsymbol{\mathcal{L}} (\boldsymbol{ \Psi^*_{\m}})=0$ to obtain \eqref{P1 ic_1} (resp. \eqref{P1 ic_2}).}
\end{proof}
 \begin{proof}[\large {\textbf{Proof of Theorem~\ref{TH-WELLPOSED-INI}$(c)$}}]
This  part establishes the stability for $(u_{\m}^1, v_{\m}^1)$ 
by deriving a discrete energy estimate for the scheme~\eqref{P1ic}, 
with the key ingredient being the nonlinear identity~\eqref{motiv-ini}.\\
     Choose $\varphi_{{\m}}= u_{\m}^{1}-u_{\m}^0=k\bar{\partial}_t u_{\m}^{1/2}$ in \eqref{P1 ic_1}, utilize $a_{\pw}(u_{\m}^{1/2},u_{\m}^{1}-u_{\m}^0)=\frac{1}{2}\big(\trinl u_{\m}^1\trinl_{\pw}^2-\trinl u_{\m}^0\trinl_{\pw}^2\big)$ and \eqref{motiv-ini} to obtain
     \begin{align}
        2 \norm{\bar{\partial}_t u_{\m}^{1/2}}^2+\frac{1}{2}\big(\trinl u_{\m}^1\trinl_{\pw}^2-\trinl u_{\m}^0\trinl_{\pw}^2+\trinl v_{\m}^{1}\trinl_{\pw}^2-\trinl v_{\m}^{0}\trinl_{\pw}^2\big)=(f^{1/2}+ {2k^{-1}u_1},u_{\m}^{1}- u_{\m}^{0})_{L^2(\Omega)}.\label{stabic1}
     \end{align}
 A   Cauchy-Schwarz 
    and a Young's inequality with $a=\|f^{1/2}+ {2k^{-1}u_1}\|, b=\|u_{\m}^{1}- u_{\m}^{0} \|,$ and $\epsilon=k^2/2$ reveal 
    \begin{align}
        (f^{1/2}+ {2k^{-1}u_1},u_{\m}^{1}- u_{\m}^{0} )_{L^2(\Omega)}  
        \le  \|f^{1/2}+2k^{-1}u_1\|\norm{u_{\m}^{1}- u_{\m}^{0} }
        &
        \le \frac{k^2}{2}\norm{f}^2_{C([0,t_1];L^2(\Omega))}+2\norm{u_1}^2+\norm{\bar{\partial}_t u_{\m}^{1/2}}^2 \label{f11}\end{align}
       with $\norm{f^{1/2}} \le \norm{f}_{C([0,t_1];L^2(\Omega))} $ from Lemma~\ref{axul}$(a)$ in the last inequality.
    A combination of \eqref{stabic1}- \eqref{f11}  and  the bounds $\trinl u_{\m}^0\trinl_{\pw}\le  \|J\|\trinl u_0\trinl$ and $\trinl v_{\m}^0\trinl_{\pw} \le \frac{1}{2}C_{\rm dS}\|J\|^2\trinl u_0\trinl^2$ from ($a$)  shows
     \begin{align}
          2\norm{\bar{\partial}_t u_{\m}^{1/2}}^2+\trinl u_{\m}^1\trinl_{\pw}^2+\trinl v_{\m}^1\trinl_{\pw}^2 
          & \le \|J\|^2\trinl u_0\trinl^2\big(1+\frac{1}{4}C_{\rm dS}^2\|J\|^2\trinl u_0\trinl^2\big)+4 \norm{u_1}^2+ {k^2}\norm{f}^2_{C([0,t_1];L^2(\Omega))}.
          \label{stab-halfway}
     \end{align}
Elementary algebra
concludes  the proof.
 \end{proof}
\begin{proof}[\large{\textbf{Proof of Theorem~\ref{TH-WELLPOSED-INI}$(d)$}}]
If possible, let $(u_{\m}^{1},v_{\m}^{1})$ and $(\hat{u}_{\m}^{1},\hat{v}_{\m}^{1})$ be two solutions to \eqref{P1ic} and ${e}_{\m}=u_{\m}^{1}-\hat{u}_{\m}^{1}, \bar{e}_{\m}=v_{\m}^{1}-\hat{v}_{\m}^{1}$. For all $(\varphi_{\m}, \psi_{\m}) \in {{\bf M}({\cal T})},$ \eqref{P1ic} provides
\begin{align*}
   \frac{2}{k^2}( {e}_{\m},\varphi_{\m})_{L^2(\Omega)} & + \frac{1}{2}a_{\pw} ({e}_{\m},\varphi_{\m} )+\frac{1}{4}b_{\pw}(u_{\m}^{1}+u_{\m}^{0},\varphi_{\m},v_{\m}^{1}+v_{\m}^0)-\frac{1}{4}b_{\pw}(\hat{u}_{\m}^{1}+u_{\m}^{0},\varphi_{\m},\hat{v}_{\m}^{1}+v_{\m}^0)= 0, \\
  &\qquad a_{\pw}(\bar{e}_{\m},\psi_{\m}) =\frac{1}{2}b_{\pw}(u_{\m}^1,u_{\m}^1,\psi_{\m})-\frac{1}{2}b_{\pw}(\hat{u}_{\m}^1,\hat{u}_{\m}^1,\psi_{\m}).
    \end{align*}
   The choices of  test functions $\varphi_{\m}=4e_{\m}$ and $\psi_{\m}=2\bar{e}_{\m}$ in the two equations above, respectively, lead to
  \begin{subequations}  \begin{align}
    \frac{8}{k^2}\norm{{e}_{\m}}^2 + 2\trinl{e}_{\m}\trinl_{\pw}^2 &=-b_{\pw}(u_{\m}^{1}+u_{\m}^{0},e_{\m},v_{\m}^{1}+v_{\m}^0)+b_{\pw}(\hat{u}_{\m}^{1}+u_{\m}^{0},e_{\m},\hat{v}_{\m}^{1}+v_{\m}^0),\label{u-1-ini} \\
  2\trinl\bar{e}_{\m}\trinl^2_{\pw} &=b_{\pw}(u_{\m}^1,u_{\m}^1,\psi_{\m})-b_{\pw}(\hat{u}_{\m}^1,\hat{u}_{\m}^1,\psi_{\m}). \label{v-ini}
    \end{align}\end{subequations}
Add  $\pm b_{\pw}(\hat{u}_{\m}^{1}+u_{\m}^{0},e_{\m},{v}_{\m}^{1}+v_{\m}^0)$ and  utilize trilinearity of $b_{\pw}(\bullet,\bullet,\bullet)$   to simplify the right-hand side of  \eqref{u-1-ini} as
\begin{align*}
  &-b_{\pw}(u_{\m}^{1}+u_{\m}^{0},e_{\m},v_{\m}^{1}+v_{\m}^0)+b_{\pw}(\hat{u}_{\m}^{1}+u_{\m}^{0},e_{\m},\hat{v}_{\m}^{1}+v_{\m}^0) 
 =-b_{\pw}(e_{\m},e_{\m},v_{\m}^{1}+v_{\m}^{0})-b_{\pw}(\hat{u}_{\m}^{1}+u_{\m}^{0},e_{\m},\bar{e}_{\m}).
\end{align*}
Add $\pm b_{\pw}(\hat{u}_{\m}^1,{u}_{\m}^1,\psi_{\m})$ and utilize  symmetry of $b_{\pw}(\bullet,\bullet,\bullet)$ to rewrite the right-hand side of \eqref{v-ini} as
\begin{align*}
    b_{\pw}(u_{\m}^{1},u_{\m}^1,\bar{e}_{\m})-b_{\pw}(\hat{u}_{\m}^{1},\hat{u}_{\m}^{1},\bar{e}_{\m})=b_{\pw}(e_{\m},u_{\m}^{1},\bar{e}_{\m})+b_{\pw}(\hat{u}_{\m}^{1},e_{\m},\bar{e}_{\m})=b_{\pw}(u_{\m}^{1}+\hat{u}_{\m}^{1},e_{\m},\bar{e}_{\m}).
\end{align*}
{
A combination of last four displayed equations and  boundedness of $b_{\pw}(\bullet,\bullet,\bullet)$ from \eqref{Bh-bdd} imply that 
\begin{align*}
    \frac{8}{k^2}\norm{{e}_{\m}}^2 + 2\trinl{e}_{\m}\trinl_{\pw}^2+2\trinl\bar{e}_{\m}\trinl_{\pw}^2
& =-b_{\pw}(e_{\m},e_{\m},v_{\m}^{1}+v_{\m}^{0})+b_{\pw}(u_{\m}^1-u_{\m}^{0},e_{\m},\bar{e}_{\m})\nonumber\\
    &\le C_{\rm dS} \trinl v_{\m}^{1}+v_{\m}^{0}\trinl_{\pw}\trinl e_{\m}\trinl_{\pw}^2+C_{\rm dS}\trinl u_{\m}^1-u_{\m}^{0}\trinl_{\pw}\trinl e_{\m}\trinl_{\pw}\trinl \bar{e}_{\m}\trinl_{\pw}. 
\end{align*} 
 Ignore the first non-negative term on the left-hand side and apply Young's inequality with $a=C_{\rm dS}\trinl u_{\m}^1-u_{\m}^{0}\trinl_{\pw}\trinl e_{\m}\trinl_{\pw}$, $b=\trinl\bar{e}_{\m}\trinl_{\pw}$, and $\epsilon=1$ for the last term on the right-hand side to derive
\begin{align}
 & 4\trinl {e}_{\m}\trinl_{\pw}^2+4\trinl\bar{e}_{\m}\trinl_{\pw}^2 
 \le  C_{\rm dS}\ \big(4\trinl v_{\m}^{1/2}\trinl_{\pw} + C_{\rm dS}\trinl u_{\m}^1-u_{\m}^{0}\trinl_{\pw}^2\big)\trinl e_{\m}\trinl_{\pw}^2  +\trinl \bar{e}_{\m}\trinl_{\pw}^2.\label{use-small}
\end{align}
Choose $\psi_{\m}=v_{\m}^{0}$ (resp. $\psi_{\m}=v_{\m}^{1}$) in \eqref{P1 ic_2} and utilize \eqref{P1 mesh_norm} followed by \eqref{Bh-bdd} to obtain
\begin{align*}
 \trinl v_{\m}^{0}\trinl_{\pw} \le\frac{1}{2}C_{\rm dS}\trinl u_{\m}^0\trinl_{\pw}^2\;\big(\text{resp. }  \trinl v_{\m}^{1}\trinl_{\pw} \le\frac{1}{2}C_{\rm dS}\trinl u_{\m}^1\trinl_{\pw}^2\big).
\end{align*}
This, a triangle inequality, the  bounds  for $\trinl u_{\m}^{0}\trinl_{\pw}$ from  ($a$), and that of $\trinl u_{\m}^1\trinl_{\pw}$ from \eqref{stab-halfway} yield
\begin{align}
 & 4 \trinl v_{\m}^{1/2}\trinl_{\pw} + C_{\rm dS}\trinl u_{\m}^1-u_{\m}^{0}\trinl_{\pw}^2 \le 2 \big(\trinl v_{\m}^{0}\trinl_{\pw}+\trinl v_{\m}^{1}\trinl_{\pw}\big) + 2C_{\rm dS}\big(\trinl u_{\m}^{0}\trinl_{\pw}^2+\trinl u_{\m}^1\trinl_{\pw}^2\big)\nonumber\\
  & \le 3C_{\rm dS}\big(\trinl u_{\m}^0\trinl_{\pw}^2+\trinl u_{\m}^1\trinl_{\pw}^2\big) \le 3C_{\rm dS}\big(\|J\|^2\trinl u_0\trinl^2+ \text{\v{M}}_{2}(u_0,u_1,f,k)\big)= 3C_{\rm dS}^{-1} {\rm M}_{2}(u_0,u_1,f,k).\label{sm}
\end{align}
Utilize this in the right-hand side of \eqref{use-small} to obtain
\begin{align}
   4\trinl {e}_{\m}\trinl_{\pw}^2+3\trinl\bar{e}_{\m}\trinl_{\pw}^2 
   \le 3{\rm M}_{2}(u_0,u_1,f,k)\trinl {e}_{\m}\trinl_{\pw}^2.\label{sml}
\end{align}
   For sufficiently small $u_0$, $u_1$, and $k$, we assume   ${\rm M}_{2}(u_0,u_1,f,k)\le 1 $ and then  \eqref{sml} reveals 
    $ \trinl {e}_{\m}\trinl_{\pw}^2+3\trinl \bar{e}_{\m}\trinl_{\pw}^2\le 0 $}.
\end{proof}
 \subsection{A priori error analysis for initial discretization}\label{sub-inierror}

 For any $n=0,1,2,\cdots,N$, let us  split  the errors as follows:
\begin{subequations} \begin{align}
&u^n-u_{\m}^n= \big(u^n-\mathcal{R}_{\m}u^n\big)+\big(\mathcal{R}_{\m}u^n-u_{\m}^n\big):=\rho^n+\zeta^n,\label{p4-split1}\\
& v^n-v_{\m}^n= \big(v^n-\mathcal{R}_{\m}v^n\big)+\big(\mathcal{R}_{\m}v^n-v_{\m}^n\big):=\chi^n+\theta^n.\label{p4-split2}
\end{align} \end{subequations}
 This, the linearity of $\mathcal{R}_{\m}$, and the definitions in \eqref{varphi1}-\eqref{varphi2} lead to 
 \begin{align*}
     &\rho^{n+1/2}=\frac{1}{2}\big(\rho^{n+1}+\rho^{n}\big),\;\rho^{n,1/4}=\frac{1}{4}\big(\rho^{n+1}+2\rho^{n}+\rho^{n-1}\big),\\
     & \bar{\partial}_t\rho^{n+1/2}=\frac{1}{k}\big(\rho^{n+1}-\rho^{n}\big),\;\delta_t \rho^n=\frac{1}{2k}\big(\rho^{n+1}-\rho^{n-1}\big),\; \bar{\partial}_t^2\rho^{n}=\frac{1}{k^2}\big(\rho^{n+1}-2\rho^{n}+\rho^{n-1}\big).
 \end{align*}
Analog notation replace $\rho^n$ by $\zeta^n,\chi^n,$ and $\theta^n$.
\begin{proof}[\large \textbf{Proof of Theorem~\ref{LEM-ERROR-INI}}] We follow \textit{five} steps. Firstly, we derive the error equation. Step 2 bounds the linear terms appearing in the error equation. In Step 3, the nonlinear terms are rearranged.   Bounds for these terms are derived in Step 4. In Step 5, the truncation term is estimated. Step 6  concludes the proof. 

\medskip \noindent{\it Step 1. Error equation.}
For any $\varphi_{{\m}} \in  \m(\cal{T})$, choose the test function in \eqref{P1 weak_form1} as $J\varphi_{{\m}}$ and consider the average of \eqref{P1 weak_form1} at $t=t_0$ and $t_1$. Subtract \eqref{P1 ic_1} from the resultant expression to arrive at 
\begin{align}
    &(u_{tt}^{1/2}, J\varphi_{{\m}})_{L^2(\Omega)}+a(u^{1/2},J\varphi_{{\m}})-{2k^{-1}}(\bar{\partial}_t u_{\m}^{1/2} -u_1, \varphi_{{\m}} )_{L^2(\Omega)}-a_{\pw}(u_{\m}^{1/2},\varphi_{{\m}}) \nonumber\\
   & \qquad  =(f^{1/2},(J-I)\varphi_{{\m}} )_{L^2(\Omega)}
    -  \frac{1}{2}\big(b(u^{1},J\varphi_{{\m}},v^{1})+b(u_0,J\varphi_{{\m}},v^{0})\big)+b_{\pw}(u_{\m}^{1/2},\varphi_{{\m}},v_{\m}^{1/2}).
    \label{ee1}
\end{align}
The linearity of \(b(\bullet,\bullet,\bullet)\) in each component and definitions \eqref{un14}-\eqref{dtu} with elementary algebra lead to 
$$ b(u^{1},J\varphi_{{\m}},v^{1})+b(u_0,J\varphi_{{\m}},v^{0})
=2b(u^{1/2},J\varphi_{{\m}},v^{1/2})+\frac{k^2}{2}b(\bar{\partial}_t u^{1/2},J\varphi_{{\m}},\bar{\partial}_t v^{1/2}).$$ 
Then, $a(u^{1/2},J\varphi_{{\m}})=a_{\pw}(\mathcal{R}_{\m}u^{1/2},\varphi_{{\m}})$ from  \eqref{P1 ritz_projection}, \eqref{p4-split1}, and elementary manipulations in \eqref{ee1} establish 
\begin{align*}
  &   2k^{-1}(\bar{\partial}_t \zeta^{1/2}, \varphi_{{\m}} )_{L^2(\Omega)}+a_{\pw}(\zeta^{1/2},\varphi_{{\m}})
    =(f^{1/2}-u_{tt}^{1/2},(J-I)\varphi_{{\m}} )_{L^2(\Omega)}-2k^{-1}(\bar{\partial}_t \rho^{1/2}, \varphi_{{\m}} )_{L^2(\Omega)}\\
    &+(2k^{-1}(\bar{\partial}_t u^{1/2}-u_1)-u_{tt}^{1/2},\varphi_{{\m}})_{L^2(\Omega)} +b_{\pw}(u_{\m}^{1/2},\varphi_{{\m}},v_{\m}^{1/2})-b(u^{1/2},J\varphi_{{\m}},v^{1/2})-\frac{k^2}{4}b(\bar{\partial}_t u^{1/2},J\varphi_{{\m}},\bar{\partial}_t v^{1/2}). 
\end{align*}
From \eqref{app_at_0} and \eqref{P1 ritz_projection} , $u_{\m}^0=\mathcal{R}_{\m}u_0$. This and   \eqref{p4-split1} imply $\zeta^0=0$. The choice  $\varphi_{{\m}}=\zeta^{1/2}$ and $2\zeta^{1/2}=\zeta^{1}+\zeta^{0}=\zeta^{1}-\zeta^{0}={k}\bar{\partial}_t\zeta^{1/2}$ and \eqref{P1 mesh_norm} in the above displayed equation reveals
\begin{align}
 \|\bar{\partial}_t\zeta^{1/2}\|^2+\trinl \zeta^{1/2}\trinl_{\pw}^2 &= (f^{1/2}-u_{tt}^{1/2},(J-I)\zeta^{1/2} )_{L^2(\Omega)}+(\bar{\partial}_t u^{1/2}-u_1-\frac{k}{2}u_{tt}^{1/2}-\bar{\partial}_t \rho^{1/2},\bar{\partial}_t\zeta^{1/2})_{L^2(\Omega)}\nonumber\\
    & \quad+b_{\pw}(u_{\m}^{1/2},\zeta^{1/2},v_{\m}^{1/2})-b(u^{1/2},J\zeta^{1/2},v^{1/2})-\frac{k^2}{4}b(\bar{\partial}_t u^{1/2},J\zeta^{1/2},\bar{\partial}_t v^{1/2}). \label{ib-zeta}
\end{align}
Choose the test function in  \eqref{P1 weak_form2} as $J \psi_{{\m}}$ for any $\psi_{{\m}} \in \m(\cal{T})$ and consider the average of \eqref{P1 weak_form2} at $t=t_0$ and $t_1$ to obtain  $ a(v^{1/2},J\psi_{{\m}})=\frac{1}{4} \big(b(u^{1},u^1,J\psi_{{\m}})+b(u_0,u_0,J\psi_{{\m}})\big) $. Subtract this from  $a_{\pw}(v_{\m}^{1/2},\psi_{{\m}})=\frac{1}{4} \big(b_{\pw}(u_{\m}^{1},u_{\m}^1,\psi_{{\m}})+b_{\pw}(u_{\m}^{0},u_{\m}^0,\psi_{{\m}})\big)$ for all $\psi_{{\m}} \in \m(\cal{T})$  (obtained from a combination of  \eqref{app_at_0} and \eqref{P1 ic_2})  to show
\begin{align}
   & a(v^{1/2},J\psi_{{\m}})-a_{\pw}(v_{\m}^{1/2},\psi_{{\m}})\nonumber
   \\
    &=\frac{1}{4} \big(b(u^{1},u^1,J\psi_{{\m}})+b(u_0,u_0,J\psi_{{\m}}) -b_{\pw}(u_{\m}^{1},u_{\m}^1,\psi_{{\m}})-b_{\pw}(u_{\m}^{0},u_{\m}^0,\psi_{{\m}})\big).\label{ee2}
\end{align}
 The definition of ${\cal{R}}_{\m}$ once again shows $a(v^{1/2},J\psi_{{\m}})=a_{\pw}({\cal{R}}_{\m}v^{1/2},\psi_{{\m}})$. The identity $b_{\pw}(\varphi^{1},\varphi^1,\chi)+b_{\pw}(\varphi^0,\varphi^0,\chi)=4 b_{\pw}(\varphi^{1/2},\varphi^{1/2},\chi)-2b_{\pw}(\varphi^0,\varphi^1,\chi)$ follows from the trilinearity of \(b_{\pw}(\bullet,\bullet,\bullet)\)   and its symmetry in first two components. A combination of these steps for   $\psi_{{\m}}=\theta^{1/2}$ in \eqref{ee2} with the definitions $\theta^{1/2}$ from \eqref{p4-split2} and $\trinl \bullet \trinr_\pw$ from \eqref{P1 mesh_norm}, eventually shows that 
\begin{align}
  \trinl \theta^{1/2}\trinl_{\pw}^2&= b(u^{1/2},u^{1/2},J\theta^{1/2})-\frac{1}{2} b(u_0,u^1,J\theta^{1/2}) -b_{\pw}(u_{\m}^{1/2},u_{\m}^{1/2},\theta^{1/2})+ \frac{1}{2}b_{\pw}(u_{\m}^{0},u_{\m}^1,\theta^{1/2}).
 \label{ib-theta}
\end{align}
A combination of \eqref{ib-zeta} and \eqref{ib-theta} with a re-grouping of the nonlinear and truncation term, lead to
\begin{align}
& \|\bar{\partial}_t\zeta^{1/2}\|^2+\trinl\zeta^{1/2}\trinl_{\pw}^2+\trinl\theta^{1/2}\trinl_{\pw}^2\nonumber\\
&= \left[(f^{1/2}-u_{tt}^{1/2},(J-I)\zeta^{1/2} )_{L^2(\Omega)}+(\bar{\partial}_t u^{1/2}-u_1-\frac{k}{2}u_{tt}^{1/2}-\bar{\partial}_t \rho^{1/2},\bar{\partial}_t\zeta^{1/2})_{L^2(\Omega)}\right] \nonumber\\
    &\quad+ \left[ b_{\pw}(u_{\m}^{1/2},\zeta^{1/2},v_{\m}^{1/2})-b(u^{1/2},J\zeta^{1/2},v^{1/2}) \right]+ \left[ b(u^{1/2},u^{1/2},J\theta^{1/2})-b_{\pw}(u_{\m}^{1/2},u_{\m}^{1/2},\theta^{1/2}) \right]\nonumber\\
    &\quad + \frac{1}{2} \left[ b_{\pw}(u_{\m}^{0},u_{\m}^1,\theta^{1/2})-b(u_0,u^1,J\theta^{1/2}) \right] -\frac{k^2}{4}b(\bar{\partial}_t u^{1/2},J\zeta^{1/2},\bar{\partial}_t v^{1/2}).\label{all_ini}
\end{align}
The first  term on the right-hand side is linear contributions, the last term is the nonlinear truncation term, and the remaining terms involve the trilinear terms. 

 \medskip \noindent{\it Step 2. Bound for linear terms in \eqref{all_ini}.}  The three inequalities below come from Cauchy-Schwarz inequality (applied in  all the three), the definition of $\bar{\partial}_t \rho^{1/2}$ followed by Lemma~\ref{bhcompanion_lem}$(e)$, Lemma~\ref{trunc-lem}$(a)$, and  Lemma~\ref{P1 ritz_lemma},  respectively:
    \begin{align}
    & (f^{1/2}-u_{tt}^{1/2},(J-I)\zeta^{1/2} )_{L^2(\Omega)} \le \|f^{1/2}-u_{tt}^{1/2}\|\|(J-I)\zeta^{1/2} \|\le \constref{ca}h^2\|f^{1/2}-u_{tt}^{1/2}\|\trinl\zeta^{1/2} \trinl_{\pw},\nonumber \\ 
    &k(2k^{-1}(\bar{\partial}_t u^{1/2}-u_1)-u_{tt}^{1/2},\bar{\partial}_t\zeta^{1/2})_{L^2(\Omega)} \le k\|2k^{-1}(\bar{\partial}_t u^{1/2}-u_1)-u_{tt}^{1/2}\|\|\bar{\partial}_t\zeta^{1/2}\| \le k^2\|u_{ttt}\|_{L^\infty(0,t_1;L^2(\Omega))}\|\bar{\partial}_t\zeta^{1/2}\|, \nonumber \\
    &|(\bar{\partial}_t \rho^{1/2}, \bar{\partial}_t\zeta^{1/2})_{L^2(\Omega)} |\le \|\bar{\partial}_t \rho^{1/2}\|\|\bar{\partial}_t\zeta^{1/2}\| =\frac{1}{k}\|\int_{0}^{t_1} \rho_t \dt\|\|\bar{\partial}_t\zeta^{1/2}\|\le \constref{car}h^{2\sigma}\|u_t\|_{L^\infty(0,t_1;H^{2+\sigma}(\Omega))}\|\bar{\partial}_t\zeta^{1/2}\| .\nonumber
\end{align}
We apply Young's inequality to each of the three inequalities above with the following choices: $a= \constref{ca}h^2\|f^{1/2}-u_{tt}^{1/2}\|$, $b=\trinl\zeta^{1/2} \trinl_{\pw}$, $\epsilon=8$ (for the first term); $a=k\|u_{ttt}\|_{L^\infty(0,t_1;L^2(\Omega))}$, $b=\|\bar{\partial}_t\zeta^{1/2}\|$, $\epsilon=1$ (for the second term); $a=\constref{car}h^{2\sigma}\|u_t\|_{L^\infty(0,t_1;H^{2+\sigma}(\Omega))}$, $b=\|\bar{\partial}_t\zeta^{1/2}\|$, $\epsilon=2$ (for the third term). This leads to
\begin{align}
   &(f^{1/2}-u_{tt}^{1/2},(J-I)\zeta^{1/2} )_{L^2(\Omega)}+(\bar{\partial}_t u^{1/2}-u_1-\frac{k}{2}u_{tt}^{1/2}-\bar{\partial}_t \rho^{1/2},\bar{\partial}_t\zeta^{1/2})_{L^2(\Omega)}\nonumber\\
    & \qquad \le  {\mathcal C}(f,u_t,u_{tt}, u_{ttt})  (h^{4\sigma} + k^4) +\frac{1}{16}\trinl \zeta^{1/2} \trinl_{\pw}^2+\frac{1}{2}\|\bar{\partial}_t\zeta^{1/2}\|^2.  \label{ini1} 
   \end{align}
   In the sequel, the notation ${\mathcal C} $ denotes a generic term that depends on $f, u, v, u_t, v_t, u_{tt}, v_{tt}, u_{ttt}$ etc. with the dependence pointed out explicitly, but is independent of the spatial and temporal discretization parameters.
   
\medskip \noindent{\it Step 3. Rearrangement of nonlinear terms in \eqref{all_ini}.} The nonlinear terms on the right-hand side of \eqref{all_ini} require a clever manipulation to introduce  errors that can be controlled. The trilinearity of  $b(\bullet,\bullet,\bullet)$ and  $b_{\pw}(\bullet,\bullet,\bullet)$, their symmetry with respect to the first and second variables,  \eqref{p4-split1}-\eqref{p4-split2}, and elementary algebra leads to  
\begin{align*}
    &b_{\pw}(u_{\m}^{1/2},\zeta^{1/2},v_{\m}^{1/2})-b(u^{1/2},J\zeta^{1/2},v^{1/2}) =b_{\pw}(u^{1/2},(I-J)\zeta^{1/2},v^{1/2})\\
    &-b_{\pw}(\zeta^{1/2},\zeta^{1/2},v_{\m}^{1/2})-b_{\pw}(u^{1/2},\zeta^{1/2},\theta^{1/2})-b_{\pw}(\rho^{1/2},\zeta^{1/2},v_{\m}^{1/2})-b_{\pw}(u^{1/2},\zeta^{1/2},\chi^{1/2}).
\end{align*}
Analogous arguments also imply that 
\begin{align*}
&b(u^{1/2},u^{1/2},J\theta^{1/2})-b_{\pw}(u_{\m}^{1/2},u_{\m}^{1/2},\theta^{1/2})=b_{\pw}(u^{1/2},u^{1/2},(J-I)\theta^{1/2})+b_{\pw}(u^{1/2},\zeta^{1/2},\theta^{1/2})\\
     &\qquad \qquad \quad\quad\quad+b_{\pw}(\zeta^{1/2},u_{\m}^{1/2},\theta^{1/2})+b_{\pw}(u^{1/2}
,\rho^{1/2},\theta^{1/2})+b_{\pw}(\rho^{1/2},u_{\m}^{1/2},\theta^{1/2}),\\
 & \frac{1}{2} \left[b_{\pw}(u_{\m}^{0},u_{\m}^1,\theta^{1/2})-b(u_0,u^1,J\theta^{1/2})\right]=-\frac{1}{2}b_{\pw}(u_0,u^1,(J-I)\theta^{1/2})
     -b_{\pw}(u^0_{\m},\zeta^{1/2},\theta^{1/2})\\
     & \quad\quad\quad -\frac{1}{2}b_{\pw}(u^0_{\m},\rho^{1},\theta^{1/2})-\frac{1}{2}b_{\pw}(\rho^{0},u^1,\theta^{1/2})
\end{align*}
with $u_{\m}^{0} ={\cal R}_{\m}u_{0}$ from \eqref{app_at_0}  (which shows $\zeta^0=0$ and $\zeta^1=2\zeta^{1/2}$ from \eqref{p4-split1}) in the last identity. Since $b_{\pw}(\zeta^{1/2},u_{\m}^{1/2},\theta^{1/2}) =b_{\pw}(u_{\m}^{1/2}, \zeta^{1/2},\theta^{1/2}) $, two terms appearing on the right-hand side of the above two identities can be combined  as  $b_{\pw}(\zeta^{1/2},u_{\m}^{1/2},\theta^{1/2})-b_{\pw}(u^0_{\m},\zeta^{1/2},\theta^{1/2}) 
=\frac{1}{2}b_{\pw}(u_{\m}^1-u^0_{\m},\zeta^{1/2},\theta^{1/2}).$ 
Since $b_{\pw}(u^{1/2},\zeta^{1/2},\theta^{1/2})$ cancels out, the 
nonlinear terms on the right-hand side of \eqref{all_ini} rearrange as 
\begin{align}
   & -b_{\pw}(\zeta^{1/2},\zeta^{1/2},v_{\m}^{1/2})+\frac{1}{2}b_{\pw}(u_{\m}^1-u^0_{\m},\zeta^{1/2},\theta^{1/2}) +b_{\pw}(u^{1/2},(I-J)\zeta^{1/2},v^{1/2})+b_{\pw}(u^{1/2},u^{1/2},(J-I)\theta^{1/2})\nonumber\\
   &-\frac{1}{2}b_{\pw}(u_0,u^1,(J-I)\theta^{1/2})-b_{\pw}(\rho^{1/2},\zeta^{1/2},v_{\m}^{1/2})-b_{\pw}(u^{1/2},\zeta^{1/2},\chi^{1/2})+ b_{\pw}(u^{1/2}
,\rho^{1/2},\theta^{1/2})\nonumber\\
  & +b(\rho^{1/2},u_{\m}^{1/2},\theta^{1/2})-\frac{1}{2}b_{\pw}(u^0_{\m},\rho^{1},\theta^{1/2})-\frac{1}{2}b(\rho^{0},u^1,\theta^{1/2})
  :=T_1+T_2+\cdots+T_{11}.\label{non-all}
\end{align}
\medskip \noindent{\it Step 4. Bound for  nonlinear terms in \eqref{non-all}.}
Applying \eqref{Bh-bdd} (twice) shows $T_1+T_2 := -b_{\pw}(\zeta^{1/2},\zeta^{1/2},v_{\m}^{1/2})+\frac{1}{2}b_{\pw}(u_{\m}^{1}-u_{\m}^{0},\zeta^{1/2},\theta^{1/2})$ can be bounded viz.
\begin{align*}
   T_1+T_2
   & \le |b_{\pw}(\zeta^{1/2},\zeta^{1/2},v_{\m}^{1/2})|+\frac{1}{2}|b_{\pw}(u_{\m}^{1}-u_{\m}^{0},\zeta^{1/2},\theta^{1/2})| \nonumber\\
    &  \le C_{\rm dS}\trinl \zeta^{1/2}\trinl_{\pw}^2 \trinl v_{\m}^{1/2}\trinl_{\pw}+ \frac{1}{2}C_{\rm dS}\trinl u_{\m}^{1}-u_{\m}^{0}\trinl_{\pw}\trinl \zeta^{1/2} \trinl_{\pw}\trinl \theta^{1/2} \trinl_{\pw}\nonumber\\
    &  \le C_{\rm dS}\big( \trinl v_{\m}^{1/2}\trinl_{\pw}+\frac{1}{4}C_{\rm dS}\trinl u_{\m}^{1}-u_{\m}^{0} \trinl_{\pw}^2\big)\trinl \zeta^{1/2} \trinl_{\pw}^2+\frac{1}{4}\trinl \theta^{1/2} \trinl_{\pw}^2 
\end{align*}
with the Young's inequality (taking $a={\frac{1}{2}C_{\rm dS}}\trinl u_{\m}^{1}-u_{\m}^{0}  \trinl_{\pw}\trinl \zeta^{1/2} \trinl_{\pw},b=\trinl \theta^{1/2} \trinl_{\pw}, \epsilon=2$)  for the second term in the penultimate step. 
This, \eqref{sm} and the smallness assumption $ {\rm M}_{2}(u_0,u_1,f,k) \le 1$ from Theorem \ref{TH-WELLPOSED-INI}$(d)$ lead to 
\begin{align*}
    T_1+T_2
    &\le \frac{3}{4} {\rm M}_{2}(u_0,u_1,f,k)\trinl \zeta^{1/2} \trinl_{\pw}^2 +\frac{1}{4}\trinl \theta^{1/2} \trinl_{\pw}^2 \le   \frac{3}{4}\trinl \zeta^{1/2} \trinl_{\pw}^2 +\frac{1}{4}\trinl \theta^{1/2} \trinl_{\pw}^2 .
\end{align*}
Lemma~\ref{IB}$(e)$,  Lemma~\ref{axul}$(a)$, and  Young's inequality (skipping details for brevity) lead to
\begin{align*}
   T_3:=b_{\pw}(u^{1/2},(I-J)\zeta^{1/2},v^{1/2})&
   \le   \constref{C3.4b} h^\sigma\norm{u}_{C([0,t_1];H^{2+\sigma}(\Omega))}\norm{v}_{C([0,t_1];H^2(\Omega))}\trinl \zeta^{1/2} \trinl _{\pw} 
  \le { {\mathcal C}(u,v) h^{2\sigma}+\frac{1}{64}\trinl \zeta^{1/2} \trinl_{\pw}^2}.
  \end{align*}
  Analogous arguments with Lemma~\ref{IB}$(d)$   reveal
\begin{align*}
    T_4+T_5 &:=b_{\pw}(u^{1/2},u^{1/2},(J-I)\theta^{1/2})  
   -\frac{1}{2}b_{\pw}(u_0,u^1,(I-J)\theta^{1/2}) \\
   &\le |b_{\pw}(u^{1/2},u^{1/2},(J-I)\theta^{1/2})| 
   +\frac{1}{2}|b_{\pw}(u_0,u^1,(I-J)\theta^{1/2})|\\
    &\le \constref{C3.4a} h\big(\norm{u}_{C([0,t_1];H^2(\Omega))}^2+\frac{1}{2}\norm{u_0}_{H^2(\Omega)}\norm{u}_{C([0,t_1];H^2(\Omega))}\big)\trinl \theta^{1/2}\trinl_{\pw} \le {\mathcal C}(u) h^{2}+\frac{1}{8}\trinl \theta^{1/2}\trinl_{\pw}^2. 
   \end{align*}
Next, we utilize \eqref{Bh-bdd}, Lemma~\ref{axul_ritz}$(a)$ followed by Lemma~\ref{axul}$(a)$ to obtain
\begin{align*}
&T_6+T_7:=-b_{\pw}(\rho^{1/2},\zeta^{1/2},v_{\m}^{1/2})-b_{\pw}(u^{1/2},\zeta^{1/2},\chi^{1/2})\le |b_{\pw}(\rho^{1/2},\zeta^{1/2},v_{\m}^{1/2})|+|b_{\pw}(u^{1/2},\zeta^{1/2},\chi^{1/2})|\\
&\le C_{\rm dS}\constref{car}h^\sigma \big(\| u\|_{C({[0,t_1];H^{2+\sigma}(\Omega))}}\trinl v_{\m}^{1/2} \trinl_{\pw}+\|u\|_{C({[0,t_1];H^{2}(\Omega))}}\| v\|_{C({[0,t_1];H^{2+\sigma}(\Omega))}}\big)\trinl \zeta^{1/2} \trinl_{\pw}\le {\mathcal C}(u) h^{2\sigma}+\frac{1}{32}\trinl \zeta^{1/2}\trinl_{\pw}^2
\end{align*}
with Young's inequality, the bound  $\trinl v_{\m}^{1/2} \trinl_{\pw} \le  \frac{3}{4}{\rm M}_{2}(u_0,u_1,f,k) \le \frac{3}{4}$  from \eqref{sm}, and the smallness assumption from Theorem \ref{TH-WELLPOSED-INI}$(d)$ in the last step.
Similarly, we infer
\begin{align*}
    T_8&+T_9+T_{10}+T_{11}\le |b_{\pw}(u^{1/2},\rho^{1/2},\theta^{1/2})|+|b(\rho^{1/2},u_{\m}^{1/2},\theta^{1/2})|+|\frac{1}{2}b_{\pw}(u^0_{\m},\rho^{1},\theta^{1/2})|+\frac{1}{2}|b(\rho^{0},u^1,\theta^{1/2})|\\
    &\le C_{\rm dS}\constref{car}\| u\|_{C({[0,t_1];H^{2+\sigma}(\Omega))}}h^\sigma\big(\|u\|_{C({[0,t_1];H^{2}(\Omega))}} +\trinl u_{\m}^{1/2} \trinl_{\pw}+\frac{1}{2}\trinl u^0_{\m} \trinl_{\pw}+\frac{1}{2}\|u\|_{C({[0,t_1];H^{2}(\Omega))}}  \big)\trinl  \theta^{1/2}\trinl_{\pw}\\
 & \le {\mathcal C}(u) h^{2\sigma}+\frac{1}{2}\trinl  \theta^{1/2}\trinl_{\pw}^2,   
\end{align*}
where we have utilized  $\trinl u_{\m}^{1/2} \trinl_{\pw} \le \frac{1}{2}\big(\trinl u_{\m}^{0} \trinl_{\pw}+\trinl u_{\m}^{1} \trinl_{\pw}\big)$ and the bound
from \eqref{sm} in the last step. 

\medskip \noindent{\it Step 5. Control of the truncation term.}    Apply Lemma~\ref{IB}$(a)$ and  Lemma~\ref{axul}$(c)$ to observe 
\begin{align*}
  \frac{k^2}{4}|b(\bar{\partial}_t u^{1/2},J\zeta^{1/2},\bar{\partial}_t v^{1/2})|
 &\le\constref{ccj}\frac{k^2}{4}\|u_t\|_{L^\infty(0,t_1;H^2(\Omega))}\|v_t\|_{L^\infty(0,t_1;H^2(\Omega))}\trinl \zeta^{1/2} \trinl_{\pw}
 \le  { {\mathcal C}(u_t,v_{t})  k^4+\frac{1}{64}\trinl \zeta^{1/2} \trinl_{\pw}^2}
\end{align*}
with Young's inequality ($a= \frac{1}{8}\constref{ccj}k^2\|u_t\|_{L^\infty(0,t_1;H^2(\Omega))}\|v_t\|_{L^\infty(0,t_1;H^2(\Omega))}$, $b=\trinl \zeta^{1/2} \trinl_{\pw}$, and $\epsilon=32$) in the last step. 

    \medskip \noindent{\it Step 6. Consolidation.}
A Combination of  \eqref{ini1}, the bounds of the terms in  \eqref{non-all} from Step 4, Step 5, and \eqref{all_ini} lead to
\begin{align}
   \frac{1}{2}\|\bar{\partial}_t\zeta^{1/2}\|^2+\frac{1}{8}\trinl \zeta^{1/2} \trinl_{\pw}^2+ \frac{1}{8}\trinl \theta^{1/2} \trinl_{\pw}^2& 
   \le { {\mathcal C}(u_0,f,u,v,u_t,v_t,u_{tt}, u_{ttt})  (h^{2\sigma} + k^4) }.
   \label{eqn-ni}
\end{align}
This and triangle inequalities show $\norm{\bar{\partial}_t(u^{1/2}-u_{\m}^{1/2})}+ \trinl u^{1/2}-u_{\m}^{1/2}\trinl_{\pw}+ \trinl v^{1/2}-v_{\m}^{1/2} \trinl_{\pw}\le  \norm{\bar{\partial}_t\rho^{1/2}}+ \trinl \rho^{1/2}\trinl_{\pw}+ \trinl \chi^{1/2} \trinl_{\pw} + \norm{\bar{\partial}_t\zeta^{1/2}}+ \trinl \zeta^{1/2}\trinl_{\pw}+ \trinl\theta^{1/2} \trinl_{\pw}$. Using Lemma~\ref{P1 ritz_lemma} for the first three terms and \eqref{eqn-ni} to the remaining terms, we conclude the proof.
\end{proof}

\section{Proofs of results for fully discrete scheme}\label{sect-fully-full}
{This section presents the proofs of the well-posedness result in 
Theorem~\ref{TH-WELLPOSED-FINAL} and the a priori error estimates in 
Theorem~\ref{TH-ERROR-FINAL} for the fully discrete scheme~\eqref{P4fully}. 
Subsection~\ref{subsec-wellfull} establishes the existence, uniqueness, and 
stability of the discrete solution $(u_{\m}^{n+1}, v_{\m}^{n+1})$ via an 
inductive argument, where the base case is established in 
Theorem~\ref{TH-WELLPOSED-INI}. Subsection~\ref{apriori-sec} derives the 
optimal error estimates in the energy norm, achieving optimal convergence 
in space and quadratic convergence in time, building on the error equation, 
approximation properties of the Ritz projection, and the truncation error 
bounds from Lemma~\ref{trunc-lem}.} 
\subsection{Well-posedness of fully discrete scheme}\label{subsec-wellfull}
This subsection presents the proof of Theorem~\ref{TH-WELLPOSED-FINAL}. 
We begin by proving Theorem~\ref{TH-WELLPOSED-FINAL}$(a)$, which establishes that any solution 
$(u_{\m}^{n+1}, v_{\m}^{n+1})$ to~\eqref{P4fully}, if it exists, 
satisfies an a priori bound via a discrete energy estimate, with the 
nonlinear identity~\eqref{motiv-final} as the key ingredient. This result reveals that the scheme \eqref{P4fully} preserves the energy balance property at a discrete level and ensures its unconditional stability.
\begin{proof}[{\textbf{{Proof of Theorem~\ref{TH-WELLPOSED-FINAL}$(a)$}}}]
 We establish the a priori stability bound~\eqref{stabf1} for 
the fully discrete scheme~\eqref{P4fully} via a discrete energy argument, 
with the nonlinear identity~\eqref{motiv-final} and a discrete 
Gr{o}nwall lemma as the key ingredients.\\
Choose $\varphi_{\m}=u_{\m}^{n+1}-u_{\m}^{n-1}=k(\bar{\partial}_t u_{\m}^{n+1/2}+\bar{\partial}_t u_{\m}^{n-1/2})=2(u_{\m}^{n+1/2}-u_{\m}^{n+1/2})=2k\delta_t u_{\m}^{n}$ \big(from \eqref{dtu}\big) in \eqref{P4 fully_discrete1} to obtain
 \begin{align}
 &k(\bar{\partial}^2_t u_{\m}^{n},\bar{\partial}_t u_{\m}^{n+1/2}+\bar{\partial}_t u_{\m}^{n-1/2})_{L^2(\Omega)} + 2a_{\pw} (u_{\m}^{n,1/4},u_{\m}^{n+1/2}-u_{\m}^{n-1/2})\nonumber\\
 &+2kb_{\pw}(u_{\m}^{n,1/4},\delta_t u_{\m}^{n},v_{\m}^{n,1/4})=k(f^{n,1/4},\bar{\partial}_t u_{\m}^{n+1/2} +\bar{\partial}_t u_{\m}^{n-1/2} )_{L^2(\Omega)}.    \label{s11}
\end{align}
Utilize $\bar{\partial}^2_t u_{\m}^{n}=k^{-1}(\bar{\partial}_t u_{\m}^{n+1/2}-\bar{\partial}_t u_{\m}^{n-1/2})$ from \eqref{varphi2} and  $u_{\m}^{n,1/4}=\frac{1}{2}(u_{\m}^{n+1/2}+u_{\m}^{n+1/2})$ from \eqref{un14}, linearity, and symmetry of $(\bullet,\bullet)_{L^2(\Omega)}$ and $a_{\pw}(\bullet,\bullet)$  to show
\begin{align*}
    &k(\bar{\partial}^2_t u_{\m}^{n},\bar{\partial}_t u_{\m}^{n+1/2}+\bar{\partial}_t u_{\m}^{n-1/2})_{L^2(\Omega)}=(\bar{\partial}_t u_{\m}^{n+1/2}-\bar{\partial}_t u_{\m}^{n-1/2},\bar{\partial}_t u_{\m}^{n+1/2}+\bar{\partial}_t u_{\m}^{n-1/2})_{L^2(\Omega)} = \norm{\bar{\partial}_t u_{\m}^{n+1/2}}^2- \norm{\bar{\partial}_t u_{\m}^{n-1/2}}^2,\\
   & 2a_{\pw} (u_{\m}^{n,1/4},u_{\m}^{n+1/2}-u_{\m}^{n-1/2})=a_{\pw} (u_{\m}^{n+1/2}+u_{\m}^{n-1/2},u_{\m}^{n+1/2}-u_{\m}^{n-1/2})=\trinl u_{\m}^{n+1/2} \trinl_{\pw}^2- \trinl u_{\m}^{n-1/2} \trinl_{\pw}^2.
\end{align*}
A combination of these identities and \eqref{motiv-final} in  \eqref{s11} leads to
 \begin{align*}
     &\norm{\bar{\partial}_t u_{\m}^{n+1/2}}^2- \norm{\bar{\partial}_t u_{\m}^{n-1/2}}^2+ \trinl u_{\m}^{n+1/2} \trinl_{\pw}^2- \trinl u_{\m}^{n-1/2} \trinl_{\pw}^2\\
    &+\trinl v_{\m}^{n+1/2} \trinl_{\pw}^2- \trinl v_{\m}^{n-1/2} \trinl_{\pw}^2=k (f^{n,1/4},\bar{\partial}_tu_{\m}^{n+1/2}+\bar{\partial}_tu_{\m}^{n-1/2})_{L^2(\Omega)}.
 \end{align*} 
We sum over $n = 1, 2, \ldots, m$ and utilize the telescopic behavior of the left-hand side terms to derive
  \begin{align}  
 \norm{\bar{\partial}_t u_{\m}^{m+1/2}}^2 &+ \trinl u_{\m}^{m+1/2} \trinl_{\pw}^2+\trinl v_{\m}^{m+1/2} \trinl_{\pw}^2 =\norm{\bar{\partial}_t u_{\m}^{1/2}}^2 + \trinl u_{\m}^{1/2} \trinl_{\pw}^2+  \trinl v_{\m}^{1/2} \trinl_{\pw}^2 \nonumber\\
 &
 +k\sum_{n=1}^{m}(f^{n,1/4},\bar{\partial}_tu_{\m}^{n+1/2}+\bar{\partial}_tu_{\m}^{n-1/2})_{L^2(\Omega)}  \text{ for all }1 \le m \le N-1. \label{f1}
 \end{align} 
An application of Cauchy-Schwarz and Young's inequalities with $a=\norm{f^{n,1/4}}, \;b=\norm{\bar{\partial}_t u_{\m}^{n+1/2}+\bar{\partial}_t u_{\m}^{n-1/2}},\;\epsilon=2T$ followed by a
summation from  $n=1$ to $m$, and  $\norm{\bar{\partial}_tu_{\m}^{n+1/2}+\bar{\partial}_t u_{\m}^{n-1/2}} ^2 \le 2\norm{\bar{\partial}_tu_{\m}^{n+1/2}}^2+2\norm{\bar{\partial}_t u_{\m}^{n-1/2}}  $ from triangle inequality lead to
\begin{align} 
 k\sum_{n=1}^{m}(f^{n,1/4},\bar{\partial}_tu_{\m}^{n+1/2}+\bar{\partial}_t u_{\m}^{n-1/2})_{L^2(\Omega)}
  &\le T{k} \sum_{n=1}^{m}\norm{f^{n,1/4}}^2+\frac{1}{2} \norm{\bar{\partial}_t u_{\m}^{m+1/2}} ^2+\frac{k}{T} \sum_{n=0}^{m}\norm{\bar{\partial}_t u_{\m}^{n+1/2}} ^2\label{f2}
\end{align} 
with \(\frac{k}{T} \le 1\) in  the second term on the right-hand side of \eqref{f2}. 
A combination of \eqref{f1}-\eqref{f2} reads 
\begin{align*}
   & \frac{1}{2}\norm{\bar{\partial}_t u_{\m}^{m+1/2}}^2+ \trinl u_{\m}^{m+1/2}\trinl_{\pw}^2+ \trinl v_{\m}^{m+1/2}\trinl_{\pw}^2 \nonumber\\
    &
    \le \norm{\bar{\partial}_t u_{\m}^{1/2}}^2+\trinl u_{\m}^{1/2}\trinl_{\pw}^2+\trinl v_{\m}^{1/2}\trinl_{\pw}^2+T{k} \sum_{n=1}^{m}\norm{f^{n,1/4}}^2+\mu \sum_{n=0}^{m-1} \frac{1}{2}\norm{\bar{\partial}_t u_{\m}^{n+1/2}} ^2
 \end{align*} 
 with $\mu= 2\frac{k}{T}$. Lemma~\ref{P1 d-gronwall} with $a_m=\frac{1}{2}\norm{\bar{\partial}_t u_{\m}^{m+1/2}}^2 $, $b_m=\trinl u_{\m}^{m+1/2}\trinl_{\pw}^2+ \trinl v_{\m}^{m+1/2}\trinl_{\pw}^2 $, and  $c_m= \norm{\bar{\partial}_t u_{\m}^{1/2}}^2+\trinl u_{\m}^{1/2}\trinl_{\pw}^2+\trinl v_{\m}^{1/2}\trinl_{\pw}^2$ (resp. $c_m= \norm{\bar{\partial}_t u_{\m}^{1/2}}^2+\trinl u_{\m}^{1/2}\trinl_{\pw}^2+\trinl v_{\m}^{1/2}\trinl_{\pw}^2+T{k} \sum_{n=1}^{m}\norm{f^{n,1/4}}^2$) if $m=0$ (resp. if $m \ge 1$),   leads to
\begin{align}
    &\frac{1}{2}\norm{\bar{\partial}_t u_{\m}^{m+1/2}}^2+ \trinl u_{\m}^{m+1/2}\trinl_{\pw}^2+ \trinl v_{\m}^{m+1/2}\trinl_{\pw}^2 \le  
    e^2 \big(\norm{\bar{\partial}_t u_{\m}^{1/2}}^2+\trinl u_{\m}^{1/2}\trinl_{\pw}^2+\trinl v_{\m}^{1/2}\trinl_{\pw}^2+ T{k} \sum_{n=1}^{m}\norm{f^{n,1/4}}^2\big)\label{stabf}
 \end{align} 
 with  $e^{m\mu} = e^{2m\frac{k}{T}}  \le  e^2$ from $mk \le T$ in the last step.
Triangle inequalities and  
\eqref{stab-halfway}  result in
     \begin{align}
      &  \norm{\bar{\partial}_t u_{\m}^{1/2}}^2+\trinl u_{\m}^{1/2}\trinl_{\pw}^2+\trinl v_{\m}^{1/2}\trinl_{\pw}^2 \le \norm{\bar{\partial}_t u_{\m}^{1/2}}^2+ \frac{1}{2}\big(\trinl u_{\m}^1\trinl_{\pw}^2 + \trinl v_{\m}^1\trinl_{\pw}^2 + \trinl u_{\m}^0\trinl_{\pw}^2 +\trinl v_{\m}^0\trinl_{\pw}^2 \big)\nonumber\\
        &\le \|J\|^2\trinl u_0\trinl^2\big(1+\frac{1}{4}C_{\rm dS}^2\|J\|^2\trinl u_0\trinl^2\big)+2 \norm{u_1}^2+ \frac{k^2}{2}\norm{f}^2_{C([0,t_1];L^2(\Omega))} \label{smlll}
     \end{align}
  with the bounds for $\trinl u_{\m}^0\trinl_{\pw}$ and $\trinl v_{\m}^0\trinl_{\pw}$ from   Theorem~\ref{TH-WELLPOSED-INI}$(a)$ in the last step. 
Then \eqref{smlll}, \eqref{stabf}, and  elementary algebra lead to 
  \begin{align}
    &\frac{1}{2}\norm{\bar{\partial}_t u_{\m}^{m+1/2}}^2+ \trinl u_{\m}^{m+1/2}\trinl_{\pw}^2+ \trinl v_{\m}^{m+1/2}\trinl_{\pw}^2 \le \text{\v{M}}_{3}(u_0,u_1,f,k).\label{stabf1}
 \end{align} 
Some elementary manipulations conclude the proof.
\end{proof} 
{We next prove  Theorem~\ref{TH-WELLPOSED-FINAL}$(b)$, which establishes the existence of a solution 
$(u_{\m}^{n+1}, v_{\m}^{n+1})$ to the nonlinear coupled system~\eqref{P4fully} 
at step $t_{n+1}$. The proof proceeds by induction, assuming the existence 
of solutions at levels $t_{n}$ and $t_{n-1}$, where the base case is 
furnished by Theorem~\ref{TH-WELLPOSED-INI}. The existence is established 
via a corollary of Brouwer's fixed-point theorem (Theorem~\ref{Brouwer}), 
utilizing the a priori bounds from part~$(a)$.}
\begin{proof}[{\textbf{{Proof of Theorem~\ref{TH-WELLPOSED-FINAL}$(b)$}}}]
 The  existence follows as in Theorem~\ref{TH-WELLPOSED-INI}$(b)$ and hence we highlight the choices of the inner product and the mapping, which differ  because the initial discretization and fully discrete schemes are different.
  
 Choose ${\bf H}={{\bf M}({\cal T})}$ and $\displaystyle \text{for all } \boldsymbol{\Psi_{\m}}=({\psi_{{\m}1}},{\psi_{{\m}2}}) $ , $ \boldsymbol{\Phi_{\m}}=({\varphi_{{\m1}}},{\varphi_{{\m2}}}) \in {{\bf M}({\cal T})}$, consider the inner product in ${\bf H}$ defined as  $(\boldsymbol{\Psi_{\m}},\boldsymbol{\Phi_{\m}})_{\pw}:=(\psi_{{\m}1},{\varphi_{{\m1}}})_{L^2(\Omega)}+\frac{k^2}{4}\left( a_{\pw}(\psi_{{\m}1},{\varphi_{{\m1}}})+a_{\pw}(\psi_{{\m 2}},{\varphi_{{\m2}}})\right)$. Given  $\boldsymbol{\Psi_{\m}} =(\psi_{{\m}1},\psi_{{\m}2}) \in {{\bf M}({\cal T})}$ and motivated by  \eqref{P4fully}, we define  $\boldsymbol{\mathcal{G}}_{\boldsymbol{\Psi_{\m}}}: {{\bf M}({\cal T})} \rightarrow \mathbb{R}$ by 
\begin{align}
   \boldsymbol{\mathcal{G}}_{\boldsymbol{\Psi_{\m}}}(\boldsymbol{\Phi_{\m}})=
    (\boldsymbol{\Psi_{\m}},\boldsymbol{\Phi_{\m}})_{\pw}+\frac{k^2}{16}\big[b_{\pw}(\psi_{{\m}1}+4u_{\m}^{n-1/2},\varphi_{{\m1}},\psi_{{\m}2})&-b_{\pw}(\psi_{{\m}1}+{2}u_{\m}^{n-1/2}, \psi_{{\m}1}+2u_{\m}^{n-1/2},\varphi_{{\m2}})\big]\nonumber\\
       -k^2\big[(f^{n,1/4}+{2}{k^{-1}}\bar{\partial}_t u_{\m}^{n-1/2},\varphi_{{\m1}})_{L^2(\Omega)}-a_{\pw}(&u_{\m}^{n-1/2},\varphi_{{\m1}})+\frac{1}{2}a_{\pw}(v_{\m}^{n-1/2},\varphi_{{\m2}})\big].\label{gmap-final}
\end{align}
Note that 
$\boldsymbol{\mathcal{G}}_{\boldsymbol{\Psi_{\m}}}(\bullet)$ is linear and   bounded with respect to $\|\bullet \|_\pw$ and hence by the Riesz representation theorem,  we have that $\boldsymbol{\boldsymbol{\mathcal{L}}}: {{\bf M}({\cal T})} \rightarrow{{\bf M}({\cal T})}$ defined by 
  $
       (\boldsymbol{\mathcal{L}}(\boldsymbol{\Psi_{\m}}),\boldsymbol{\Phi_{\m}})_{\pw} =\boldsymbol{\mathcal{G}}_{\boldsymbol{\Psi_{\m}}}(\boldsymbol{\Phi_{\m}}) \;\text{is well defined and}
     $  continuous.
     To apply Theorem \ref{Brouwer} it remains to prove that there exists some ${\rm R}>0$ such that  $(\boldsymbol{\mathcal{L}}(\boldsymbol{\Psi_{\m}}),\boldsymbol{\Psi_{\m}})_{\pw} \ge 0$, for all $\boldsymbol{\Psi_{\m}} \in {{\bf M}({\cal T})}$ with  $\norm{\boldsymbol{\Psi_{\m}}}_{\pw}={\rm R}$. For this, we set 
\begin{equation}
   {\rm R}^2:=2k \bigl[{k^2}\norm{f}^2_{C([0,T];L^2(\Omega))}+4\norm{\bar{\partial}_t u_{\m}^{n-1/2}}^2+2\trinl u_{\m}^{n-1/2}\trinl_{\pw}^2 +2\trinl v_{\m}^{n-1/2}\trinl_{\pw}^2\bigr], \label{raduis-final}
\end{equation} 
and choose 
$\boldsymbol{\Psi_{\m}} \in {{\bf M}({\cal T})}$ such that  $\norm{\boldsymbol{\Psi_{\m}}}_{\pw}={\rm R}$. 
 Selecting  
$\boldsymbol{\Phi_{\m}}=\boldsymbol{\Psi_{\m}}$ in \eqref{gmap-final} we observe that the linearity, symmetry of $b_{\pw}(\bullet,\bullet\,\bullet)$ in first two components, and \eqref{P4fully}$(b)$ implies $b_{\pw}( \psi_{{\m}1}+4u_{\m}^{n-1/2}, \psi_{{\m}1},\psi_{{\m}2} )-b_{\pw}( \psi_{{\m}1}+2u_{\m}^{n-1/2}, \psi_{{\m}1}+2u_{\m}^{n-1/2},\psi_{{\m}2})=-4b_{\pw}( u_{\m}^{n-1/2}, u_{\m}^{n-1/2},\psi_{{\m}2})=-8a_{\pw}(v_{\m}^{n-1/2},\psi_{{\m}2})$. Hence  
\begin{align}
&(\boldsymbol{\mathcal{L}}(\boldsymbol{\Psi_{\m}})),\boldsymbol{\Psi_{\m}})_{\pw}=\boldsymbol{\mathcal{G}}_{\boldsymbol{\Psi_{\m}}}(\boldsymbol{\Psi_{\m}})\nonumber\\
     &=\norm{\boldsymbol{\Psi_{\m}}}^2_{\pw}-{k^2}\big[(f^{n,1/4}+{2}{k^{-1}}\bar{\partial}_t u_{\m}^{n-1/2},\varphi_{{\m1}})_{L^2(\Omega)}-a_{\pw}(u_{\m}^{n-1/2},\psi_{{\m}1})+a_{\pw}(v_{\m}^{n-1/2},\psi_{{\m}2})\big].\label{put}
\end{align}
Cauchy-Schwarz and Young's inequalities with {$a=k^2 \norm{f^{n,1/4}+{2}{k^{-1}}\bar{\partial}_t u_{\m}^{n-1/2}},b=\norm{\psi_{{\m}1}}$, $\epsilon=k^{2}$}, and  $\norm{f^{n,1/4}} \le \norm{f}_{C([0,T];L^2(\Omega))}$ from Lemma~\ref{axul}$(b)$ reveal 
{
\begin{align*}
    k^2(f^{n,1/4}&+{2}{k^{-1}}\bar{\partial}_t u_{\m}^{n-1/2},\psi_{{\m}1})_{L^2(\Omega)}
    \le k^2\|f^{n,1/4}+{2}{k^{-1}}\bar{\partial}_t u_{\m}^{n-1/2}\|\|\psi_{{\m}1}\|\\
    &\le \frac{k^4}{2}\|f^{n,1/4}+{2}{k^{-1}}\bar{\partial}_t u_{\m}^{n-1/2}\|^2+\frac{1}{2}\|\psi_{{\m}1}\|^2\le{k^4}\norm{f}^2_{C([0,T];L^2(\Omega))}+4k^2\norm{\bar{\partial}_t u_{\m}^{n-1/2}}^2 +\frac{1}{2}\|\psi_{{\m}1}\|^2.
\end{align*}}
The continuity of the bilinear form $a_{\pw}(\bullet,\bullet)$  and Young's inequality (applied twice) gives 
\begin{align*}
 | a_{\pw}(u_{\m}^{n-1/2},\psi_{{\m}1})|+|a_{\pw}(v_{\m}^{n-1/2},\psi_{{\m}2}) | 
  &\le 2\big(\trinl v_{\m}^{n-1/2}\trinl_{\pw}^2+\trinl u_{\m}^{n-1/2}\trinl_{\pw}^2\big)+\frac{1}{8}\big(\trinl \psi_{{\m}1}\trinl_{\pw}^2+\trinl \psi_{{\m}2}\trinl_{\pw}^2\big).
\end{align*}
Combine the last two displayed inequalities with 
 $\norm{\boldsymbol{\Psi_{\m}}}^2_{\pw}=\| \psi_{{\m}1} \|^2+\frac{k^2}{4}\big(\trinl \psi_{{\m}1} \trinl^2_{\pw}+\trinl \psi_{{\m}2} \trinl_{\pw}^2\big)={\rm R}^2$ and utilize   \eqref{raduis-final} to obtain
\begin{align*}
    &{k^2}\big[(f^{n,1/4}+{2}{k^{-1}}\bar{\partial}_t u_{\m}^{n-1/2},\varphi_{{\m1}})_{L^2(\Omega)}-a_{\pw}(u_{\m}^{n-1/2},\psi_{{\m}1})-a_{\pw}(v_{\m}^{n-1/2},\psi_{{\m}2})\big]\\
    & \le \frac{1}{2}{\rm R}^2+k^2\big({k^2}\norm{f}^2_{C([0,T];L^2(\Omega))}+4\norm{\bar{\partial}_t u_{\m}^{n-1/2}}^2 +2\trinl u_{\m}^{n-1/2}\trinl_{\pw}^2+2\trinl v_{\m}^{n-1/2}\trinl_{\pw}^2\big)= \frac{3}{4}{\rm R}^2.
\end{align*}
This and  \eqref{put}  along with the fact that $\norm{\boldsymbol{\Psi_{\m}}}_{\pw}={\rm R}$ leads to
$
(\boldsymbol{\mathcal{L}}(\boldsymbol{\Psi_{\m}})),\boldsymbol{\Psi_{\m}})_{\pw}
     \ge\frac{{\rm R}^2}{4}>0.$
Theorem \ref{Brouwer} shows that there exists $\boldsymbol{ \Psi^*_{\m}} =(\psi^*_{{\m}1}, \psi^*_{{\m}2})  \in {\bf M}({\cal T}) $ such that $\norm{\boldsymbol{ \Psi^*_{\m}}}_{\pw} \le {\rm R}$ and $ \boldsymbol{\mathcal{L}} (\boldsymbol{ \Psi^*_{\m}})=0$. Define $(u^{n+1}_{\m},v^{n+1}_{\m})=( \psi^*_{{\m}1}+u_{\m}^{n-1},\psi^*_{{\m}2}-2v_{\m}^{n}-v_{\m}^{n-1})$. This,
\eqref{gmap-final}, and the definition of $\boldsymbol{\mathcal{L}}$ show that 
$(u^{n+1}_{\m},v^{n+1}_{\m})$ 
solves the fully-discrete scheme \eqref{P4fully}.
\end{proof}
\noindent {We conclude this subsection with the proof of Theorem~\ref{TH-WELLPOSED-FINAL}$(c)$, which establishes the uniqueness of the 
solution $(u_{\m}^{n+1}, v_{\m}^{n+1})$ to~\eqref{P4fully} at step 
$t_{n+1}$. The proof again proceeds by induction, assuming uniqueness 
at levels $t_{n}$ and $t_{n-1}$, and employs the a priori bounds 
from part~$(a)$ and the smallness assumption of the given data.}
\begin{proof}[{\textbf{{Proof of Theorem~\ref{TH-WELLPOSED-FINAL}$(c)$}}}]
If possible, let $(u_{\m}^{n+1},v_{\m}^{n+1})$ and $(\hat{u}_{\m}^{n+1},\hat{v}_{\m}^{n+1})$ be two solutions of  \eqref{P4fully} at $t_{n+1}$ given that  $(u_{\m}^{n},v_{\m}^{n})$ (resp. $({u}_{\m}^{n-1},{v}_{\m}^{n-1})$) are unique solutions at $t_n$ (resp. $t_{n-1}$). Define ${e}_{\m}:=u_{\m}^{n+1}-\hat{u}_{\m}^{n+1} $ and $\bar{e}_{\m}: =v_{\m}^{n+1}-\hat{v}_{\m}^{n+1}$. With the definitions from \eqref{varphi1} and \eqref{varphi2}, \eqref{P4fully} implies 
\begin{align*}
    &\frac{1}{k^2}( {e}_{\m},\varphi_{\m})_{L^2(\Omega)} + \frac{1}{4}a_{\pw} ({e}_{\m},\varphi_{\m} )+\frac{1}{16}b_{\pw}(u_{\m}^{n+1}+2u_{\m}^{n}+u_{\m}^{n-1},\varphi_{\m},v_{\m}^{n+1}+2v_{\m}^{n}+v_{\m}^{n-1})\\ \nonumber
    &\qquad\qquad\quad\quad-\frac{1}{16}b_{\pw}(\hat{u}_{\m}^{n+1}+2u_{\m}^{n}+u_{\m}^{n-1},\varphi_{\m},\hat{v}_{\m}^{n+1}+2v_{\m}^{n}+v_{\m}^{n-1})= 0\;\text{ for all } \varphi_{\m} \in {\m}({\cal T}), \\
  &a_{\pw}(\bar{e}_{\m},\psi_{\m}) =\frac{1}{4}b_{\pw}(u_{\m}^{n+1}+u_{\m}^{n},u_{\m}^{n+1}+u_{\m}^{n},\psi_{\m})-\frac{1}{4}b_{\pw}(\hat{u}_{\m}^{n+1}+u_{\m}^{n},\hat{u}_{\m}^{n+1}+u_{\m}^{n},\psi_{\m}) \;\text{ for all }\psi_{\m} \in {\m}({\cal T}) .
    \end{align*}
   The test functions $\varphi_{\m}=16e_{\m}$ and $\psi_{\m}=4\bar{e}_{\m}$ in the above system of equation lead to
   \begin{subequations}\label{com}
    \begin{align}
    {16}{k^{-2}}\norm{{e}_{\m}}^2 + 4\trinl {e}_{\m}\trinl_{\pw}^2=-b_{\pw}(u_{\m}^{n+1}+2u_{\m}^{n}+u_{\m}^{n-1},{e}_{\m},v_{\m}^{n+1}+2v_{\m}^{n}+v_{\m}^{n-1})\nonumber\\ 
    +b_{\pw}(\hat{u}_{\m}^{n+1}+2u_{\m}^{n}+u_{\m}^{n-1},{e}_{\m},\hat{v}_{\m}^{n+1}+2v_{\m}^{n}+v_{\m}^{n-1}),\label{u-1} \\
  4\trinl \bar{e}_{\m}\trinl_{\pw}^2 =b_{\pw}(u_{\m}^{n+1}+u_{\m}^{n},u_{\m}^{n+1}+u_{\m}^{n},\bar{e}_{\m})-b_{\pw}(\hat{u}_{\m}^{n+1}+u_{\m}^{n},\hat{u}_{\m}^{n+1}+u_{\m}^{n},\bar{e}_{\m}) . \label{u-2}
    \end{align}\end{subequations}
The  linearity of $b_{\pw}(\bullet,\bullet,\bullet)$ in each component and its symmetry in the first two components reveal that the terms on the right-hand sides of \eqref{u-1} and \eqref{u-2} can be combined as 
 $2b_{\pw}({u}_{\m}^{n+1/2}-{u}_{\m}^{n-1/2},e_{\m},\bar{e}_{\m})-4b_{\pw}(e_{\m},e_{\m},v_{\m}^{n,1/4})$.
This, an addition of \eqref{u-1} and \eqref{u-2}, and the boundedness of $b_{\pw}(\bullet,\bullet,\bullet)$ from \eqref{Bh-bdd} leads to 
\begin{align*}
   & {8}{k^{-2}}\norm{{e}_{\m}}^2 + 2\trinl {e}_{\m}\trinl_{\pw}^2+2\trinl \bar{e}_{\m}\trinl_{\pw}^2
=b_{\pw}({u}_{\m}^{n+1/2}-{u}_{\m}^{n-1/2},e_{\m},\bar{e}_{\m})-2b_{\pw}(e_{\m},e_{\m},v_{\m}^{n,1/4})\nonumber\\
    &\qquad\le C_{\rm dS}\big(\trinl {u}_{\m}^{n+1/2}-{u}_{\m}^{n-1/2}\trinl_{\pw}\trinl e_{\m}\trinl_{\pw}\trinl \bar{e}_\m\trinl_{\pw}+\trinl {v}_{\m}^{n+1/2}+ {v}_{\m}^{n-1/2}\trinl_{\pw}\trinl e_{\m}\trinl_{\pw}^2\big). 
\end{align*}
A Young's inequality with $a=\trinl {u}_{\m}^{n+1/2}-{u}_{\m}^{n-1/2}\trinl_{\pw}\trinl e_{\m}\trinl_{\pw},b=\trinl \bar{e}_\m\trinl_{\pw}$, and $\epsilon=C_{\rm dS}/2$, and a  triangle inequality  imply 
\begin{align*}
\trinl {u}_{\m}^{n+1/2}-{u}_{\m}^{n-1/2}\trinl_{\pw}\trinl e_{\m}\trinl_{\pw}\trinl \bar{e}_\m\trinl_{\pw} 
& \le  \frac{C_{\rm dS}}{2}\big(\trinl {u}_{\m}^{n+1/2}\trinl_{\pw}^2+\trinl {u}_{\m}^{n-1/2}\trinl_{\pw}^2\big)\trinl e_{\m}\trinl_{\pw}^2+{C_{\rm dS}^{-1}}\trinl \bar{e}_\m\trinl_{\pw}^2.
\end{align*}
Choose $\psi=v_{\m}^{n+1/2}$ in  \eqref{P4 fully_discrete2} and utilize   \eqref{Bh-bdd} to show   $\trinl v_{\m}^{n+1/2}\trinl_{\pw}\le \frac{1}{2}C_{\rm dS}\trinl u_{\m}^{n+1/2}\trinl_{\pw}^2$.
From the bound $\trinl v_{\m}^{n-1/2}\trinl_{\pw}\le \frac{1}{2}C_{\rm dS}\trinl u_{\m}^{n-1/2}\trinl_{\pw}^2$ (obtained by replacing $n$ by $n-1$ in the last inequality)  and   triangle inequality, we have 
\begin{align*}
   \trinl {v}_{\m}^{n+1/2}+ {v}_{\m}^{n-1/2}\trinl_{\pw} \le \trinl {v}_{\m}^{n+1/2}\trinl_{\pw}+\trinl  {v}_{\m}^{n-1/2}\trinl_{\pw} \le \frac{1}{2} C_{\rm dS}\big( \trinl {u}_{\m}^{n+1/2}\trinl_{\pw}^2+\trinl  {u}_{\m}^{n-1/2}\trinl_{\pw}^2 \big).
\end{align*}
In turn, a combination of the last three displayed  inequalities followed by   \eqref{stabf1} (applied twice) implies 
\begin{align*}
&  {8}{k^{-2}}\norm{{e}_{\m}}^2 + 2\trinl {e}_{\m}\trinl_{\pw}^2+\trinl \bar{e}_{\m}\trinl_{\pw}^2  \le  C_{\rm dS}^2\big( \trinl {u}_{\m}^{n+1/2}\trinl_{\pw}^2+\trinl  {u}_{\m}^{n-1/2}\trinl_{\pw}^2 \big)\trinl e_{\m}\trinl_{\pw}^2 \le 2{\rm M}_{3}(u_0,u_1,f,k)\trinl e_{\m}\trinl_{\pw}^2 .
\end{align*}
Under the smallness assumption on  $u_0,u_1,k$  
from Theorem \eqref{TH-WELLPOSED-FINAL}$(c)$, we have ${\rm M}_{3}(u_0,u_1,f,k) =\delta$ for some $\delta <1$. This  shows that $ {8}{k^{-2}}\norm{{e}_{\m}}^2+2(1-\delta)\trinl {e}_{\m}\trinl_{\pw}^2+\trinl \bar{e}_{\m}\trinl_{\pw}^2 \le 0$, which is only possible if $\trinl {e}_{\m}\trinl_{\pw}=\trinl \bar{e}_{\m}\trinl_{\pw}=0$. 
\end{proof}
\subsection{A priori error analysis}\label{apriori-sec}
In this subsection, we first present the error equation. This is followed by Lemmas \ref{f,u,r,rho-lem}-\ref{lem-non-trunc} with  estimates for the intermediate terms appearing in the error equation. The proof of   
the error bound in energy-norm for the fully-discrete scheme concludes this subsection with optimal convergence in space and 
quadratic convergence in time.

\smallskip \noindent
\textbf{Error equation.} By the definition of the companion operator, we have \(J\varphi_{\m} \in H^2_0(\Omega)\) for all \(\varphi_{{\m}} \in {\m}({\cal T})\).  
With this choice of the test function, consider the convex combination  of  \eqref{P1 weak_form1}  with coefficients 
\( \frac{1}{4} \), \(\frac{1}{2}\), and \( \frac{1}{4} \) at the three time levels \(t_{n-1}\), \(t_n\), and \(t_{n+1}\), respectively, and subtract \eqref{P4 fully_discrete1} from resultant equation to obtain
\begin{align}
        &(u_{tt}^{n,1/4},J\varphi_{\m})_{L^2(\Omega)}+a(
u^{n,1/4},J\varphi_{\m} )- (\bar{\partial}_t ^2 u_{\m}^n,\varphi_{\m})_{L^2(\Omega)} - a_{\pw} (u_{\m}^{n,1/4},\varphi_{\m} )= (f^{n,1/4},(J-I)\varphi_{\m} )_{L^2(\Omega)}\nonumber\\
&\quad-\frac{1}{4}\big[b(u^{n+1},J\varphi_{\m},v^{n+1})+2b(u^{n},J\varphi_{\m},v^{n}) +b(u^{n-1},J\varphi_{\m},v^{n-1})\big]+b_{\pw}(u_{\m}^{n,1/4},\varphi_{\m},v_{\m}^{n,1/4}).\label{er1}
\end{align}
The linearity of \(b(\bullet,\bullet,\bullet)\) in each component and  \eqref{un14}-\eqref{dtu} with elementary algebra leads to 
the identity
\begin{align*}   
 &b(u^{n+1},J\varphi_{\m},v^{n+1})+2b(u^{n},J\varphi_{\m},v^{n}) +b(u^{n-1},J\varphi_{\m},v^{n-1})\\
 &=4b(u^{n,1/4},J\varphi_{\m},v^{n,1/4})+2k^2\big[b(\delta_tu^{n},J\varphi_{\m},\delta_tv^{n})+\frac{k^2}{8}b(\bar{\partial}^2_tu^{n},J\varphi_{\m},\bar{\partial}^2_t v^n)\big].
\end{align*}
This, $a(u^{n,1/4},J\varphi_{{\m}})=a_{\pw}(\mathcal{R}_{\m}u^{n,1/4},\varphi_{{\m}})$ from  \eqref{P1 ritz_projection}, \eqref{p4-split1}, and some elementary manipulations in \eqref{er1} with the truncation  error $r^n:=\bar{\partial}_t ^2 u^n- u_{tt}^{n,1/4} $ lead to
\begin{align*}
     (\bar{\partial}_t ^2 \zeta^n,\varphi_{\m})_{L^2(\Omega)} + a_{\pw} (\zeta^{n,1/4},\varphi_{\m})&= ( f^{n,1/4}- u_{tt}^{n,1/4}, (J-I)\varphi_{\m})_{L^2(\Omega)}+({r}^n-{\bar{\partial}_t}^2 \rho^n,\varphi_{\m} )_{L^2(\Omega)} \nonumber\\  
    &\quad +b_{\pw}(u_{\m}^{n,1/4},\varphi_{\m},v_{\m}^{n,1/4})-b(u^{n,1/4},J\varphi_{\m},v^{n,1/4}) \\
    &\quad+\frac{k^2}{2}\big[b(\delta_tu^{n},J\varphi_{\m},\delta_tv^{n})+\frac{k^2}{8}b(\bar{\partial}^2_tu^{n},J\varphi_{\m},\bar{\partial}^2_t v^n)\big].
    \end{align*}
     The choice $\varphi_{\m} =  2k \delta_t \zeta^n $ in the above equation, \eqref{un14}-\eqref{dtu}, and \eqref{P1 mesh_norm} reveal
\begin{align}
&\norm{\bar{\partial}_t \zeta^{n+1/2}}^2-\norm{\bar{\partial}_t \zeta^{n-1/2}}^2+ \trinl\zeta^{n+1/2}\trinl_{\pw}^2-\trinl \zeta^{n-1/2}\trinl_{\pw}^2=2k( f^{n,1/4}- u_{tt}^{n,1/4}, (J-I)\delta _t\zeta^{n}) +2k(r^n-{\bar{\partial}_t}^2 \rho^n,\delta _t\zeta^{n})\nonumber\\
 &
+2k\big[b_{\pw}(u_{\m}^{n,1/4},\delta_t \zeta^n,v_{\m}^{n,1/4})-b(u^{n,1/4},J\delta_t \zeta^n,v^{n,1/4})\big]+{k^3}\big[b(\delta_tu^{n},J\delta_t \zeta^n,\delta_tv^{n})+\frac{k^2}{8}b(\bar{\partial}^2_tu^{n},J\delta_t \zeta^n,\bar{\partial}^2_t v^n)\big].\label{eru1}
 \end{align}
 Choose the test function in \eqref{P1 weak_form2} as $J\psi_{\m}$ and consider the difference at $t=t_{n+1}$ and $t=t_{n-1}$. Utilize the linearity of $a(\bullet,\bullet)$ to show $ a(v^{n+1}-v^{n-1},J\psi_{\m})=\frac{1}{2}b(u^{n+1},u^{n+1},J\psi_{\m})-\frac{1}{2}b(u^{n-1},u^{n-1},J\psi_{\m})$ for all $ \psi_{\m} \in {\m}({\cal T})$.  Also multiply $\eqref{P4 fully_discrete2}$ by 2 and  consider its difference  at  $n$ and $n+1$ steps  
 to obtain $2a_{\pw}(v_{\m}^{n+1/2}-v_{\m}^{n-1/2},\psi_{\m})=b_{\pw}(u_{\m}^{n+1/2},u_{\m}^{n+1/2},\psi_{\m})-b_{\pw}(u_{\m}^{n-1/2},u_{\m}^{n-1/2},\psi_{\m})$. Subtract  the two equations and utilize \eqref{un14} to arrive at
\begin{align}
   & a(v^{n+1}-v^{n-1},J\psi_{\m})- a_{\pw}(v_{\m}^{n+1}-v_{\m}^{n-1},\psi_{\m})=\frac{1}{2}\left(b(u^{n+1},u^{n+1},J\psi_{\m})-b(u^{n-1},u^{n-1},J\psi_{\m})\right)\nonumber
     \\&\qquad \qquad-b_{\pw}(u_{\m}^{n+1/2},u_{\m}^{n+1/2},\psi_{\m})+b_{\pw}(u_{\m}^{n-1/2},u_{\m}^{n-1/2},\psi_{\m})
     .\label{errv1}
\end{align}
The identity $a(v^{n+1}-v^{n-1},J\psi_{\m})=a_{\pw}({\cal R}_{\m}(v^{n+1}-v^{n-1}),\psi_{\m})$ from \eqref{P1 ritz_projection} and an application of \eqref{p4-split2} rewrite the left-hand side of \eqref{errv1} as
 \begin{align*}
     a(v^{n+1}-v^{n-1},J\psi_{\m})- a_{\pw}(v_{\m}^{n+1}-v_{\m}^{n-1},\psi_{\m})=a_{\pw}(\theta^{n+1}-\theta^{n-1},\psi_{\m}).
 \end{align*}
 The linearity of \(b(\bullet,\bullet,\bullet)\)  (and \(b_{\pw}(\bullet,\bullet,\bullet)\)) and \eqref{un14}-\eqref{dtu} modify the right-hand side of \eqref{errv1} as
 \begin{align*}
     &\frac{1}{2}\left(b(u^{n+1},u^{n+1},J\psi_{\m})-b(u^{n-1},u^{n-1},J\psi_{\m})\right)=2kb(u^{n,1/4},\delta_tu^{n},J\psi_{\m})+\frac{k^3}{2}b(\bar{\partial}^2_tu^{n},\delta_tu^{n},J\psi_{\m}),\\
     &b_{\pw}(u_{\m}^{n+1/2},u_{\m}^{n+1/2},\psi_{\m})-b_{\pw}(u_{\m}^{n-1/2},u_{\m}^{n-1/2},\psi_{\m}) =2kb_{\pw}(u_{\m}^{n,1/4},\delta_tu_{\m}^{n},\psi_{\m}).
     \end{align*}
 A combination of the last three displayed identities in
\eqref{errv1} shows
\begin{align*}
    &a_{\pw}(\theta^{n+1}-\theta^{n-1},\psi_{\m})
    =2k\big(b(u^{n,1/4},\delta_tu^{n},J\psi_{\m})-b_{\pw}(u_{\m}^{n,1/4},\delta_tu_{\m}^{n},\psi_{\m})\big)+\frac{k^3}{2}b(\bar{\partial}^2_tu^{n},\delta_tu^{n},J\psi_{\m}).
\end{align*}
 The choice $\psi_{\m}=\theta^{n,1/4}=\frac{1}{2}\big(\theta^{n+1/2}+\theta^{n-1/2}\big)$ and the identity $\theta^{n+1}-\theta^{n-1}=2\big(\theta^{n+1/2}-\theta^{n-1/2}\big)$ from \eqref{varphi1} then gives 
 \begin{align*}
\trinl{\theta^{n+1/2}}\trinl_{\pw}^2-\trinl{\theta^{n-1/2}}\trinl_{\pw}^2&=2k\big[b(u^{n,1/4},\delta_tu^{n},J\theta^{n,1/4})-b_{\pw}(u_{\m}^{n,1/4},\delta_tu_{\m}^{n},\theta^{n,1/4})\big] +\frac{k^3}{2}b(\bar{\partial}^2_tu^{n},\delta_tu^{n},J\theta^{n,1/4}).
 \end{align*}
 We then add this and \eqref{eru1}, sum from $n=1,2,\cdots,m$, and utilize the telescopic property 
 to arrive at 
 \begin{align}
  &  \|\bar{\partial}_t \zeta^{m+1/2}\|^2+\trinl{\zeta^{m+1/2}}\trinl_{\pw}^2+\trinl{\theta^{m+1/2}}\trinl_{\pw}^2 = \big[\norm{\bar{\partial}_t \zeta^{1/2}}^2+ \trinl {\zeta^{1/2}}\trinl_{\pw}^2  +  \trinl{\theta^{1/2}}\trinl_{\pw}^2 \big]\nonumber\\
 & \quad 
 +2k\sum_{n=1}^m\big[
 ( f^{n,1/4}- u_{tt}^{n,1/4}, (J-I)\delta _t\zeta^{n})+(r^n-{\bar{\partial}_t}^2 \rho^n,\delta _t\zeta^{n})\big]
 \nonumber\\
 & \quad 
 +2k\sum_{n=1}^m\big[b_{\pw}(u_{\m}^{n,1/4},\delta_t \zeta^n,v_{\m}^{n,1/4})-b(u^{n,1/4},J\delta_t \zeta^n,v^{n,1/4})+ b(u^{n,1/4},\delta_tu^{n},J\theta^{n,1/4})-b_{\pw}(u_{\m}^{n,1/4},\delta_tu_{\m}^{n},\theta^{n,1/4})\big]\nonumber\\
 & \quad + k^3\sum_{n=1}^m  \big[b(\delta_tu^{n},J\delta_t \zeta^n,\delta_tv^{n})+\frac{k^2}{8}b(\bar{\partial}^2_tu^{n},J\delta_t \zeta^n,\bar{\partial}^2_t v^n)  +\frac{1}{2}b(\bar{\partial}^2_tu^{n},\delta_tu^{n},J\theta^{n,1/4}) \big] \nonumber\\
 &\quad =:T_1+T_2+T_3+T_4 \; \text{ for } 1 \le m \le N-1. \label{TJ}
\end{align}
This is analogous to \eqref{all_ini}; with an (additional) initial error contribution $T_1$, linear contributions $T_2$, nonlinear contributions $T_3$, and truncation terms $T_4$. The term $T_1$ is controlled in \eqref{eqn-ni}. The estimates for $T_2$-$T_4$  are technical and we present them in the Lemmas \ref{f,u,r,rho-lem}-\ref{lem-non-trunc} below. Combining these arguments gives the proof of Theorem~\ref{TH-ERROR-FINAL}.
\begin{lemma}[bound of linear term-$T_2$]\label{f,u,r,rho-lem}
Let  $f  \in H^1(0,T;L^2(\Omega))$, $u_{tt}  \in L^2(0,T;H^{2+\sigma}(\Omega))$, and   $u  \in H^4(0,T;L^2(\Omega))$. Then the following bound holds:
\begin{align*}
   T_2:=2 k \sum_{n=1}^m\big[
( f^{n,1/4}- u_{tt}^{n,1/4}, (J-I)\delta _t\zeta^{n})_{L^2(\Omega)}+(r^n-{\bar{\partial}_t}^2 \rho^n,\delta _t\zeta^{n})_{L^2(\Omega)}\big] \le {\cal C}(u_{tt},u_{ttt},u_{tttt},f,f_t)\big(h^{2\sigma}+k^4\big)&\\
+\frac{1}{32}\big[\trinl  \zeta^{1/2}\trinl_{\pw}^2+\trinl  \zeta^{m+1/2}\trinl_{\pw}^2\big]+\frac{7}{8}\big[\|\bar{\partial}_t \zeta^{1/2}\|^2+\|\bar{\partial}_t \zeta^{m+1/2}\|^2\big]+ k \sum_{n=0}^{m-1}\trinl \zeta^{n+1/2}\trinl_{\pw}^2+\frac{k}{T}\sum_{n=0}^{m-1}\|\bar{\partial}_t \zeta^{n+1/2}\|^2.
\end{align*}
\end{lemma}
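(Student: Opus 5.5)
The plan is to split $T_2$ into three pieces and bound each one separately:
\[
T_2^{(1)}:=2k\sum_{n=1}^m( g^{n,1/4},(J-I)\delta_t\zeta^n)_{L^2(\Omega)},\qquad g:=f-u_{tt},
\]
\[
T_2^{(2)}:=2k\sum_{n=1}^m (r^n,\delta_t\zeta^n)_{L^2(\Omega)},\qquad
T_2^{(3)}:=-2k\sum_{n=1}^m(\bar{\partial}_t^2\rho^n,\delta_t\zeta^n)_{L^2(\Omega)}.
\]
The two pieces $T_2^{(2)}$ and $T_2^{(3)}$ are benign. The scheme only controls $\delta_t\zeta^n$ in $L^2$. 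We therefore write $\delta_t\zeta^n=\frac12(\bar{\partial}_t\zeta^{n+1/2}+\bar{\partial}_t\zeta^{n-1/2})$ from \eqref{dtu} and use Cauchy--Schwarz together with Young's inequality with weight $\epsilon\sim T$:
\[
2k\,(r^n-\bar{\partial}_t^2\rho^n,\delta_t\zeta^n)_{L^2(\Omega)}\le 2Tk\,\big(\|r^n\|^2+\|\bar{\partial}_t^2\rho^n\|^2\big)+\frac{k}{4T}\big(\|\bar{\partial}_t\zeta^{n+1/2}\|^2+\|\bar{\partial}_t\zeta^{n-1/2}\|^2\big)\quad\text{(constants to be tuned).}
\]
Summing over $n$ produces $\frac{k}{T}\sum_{n=0}^{m-1}\|\bar{\partial}_t\zeta^{n+1/2}\|^2$ plus a single top term $\frac{k}{4T}\|\bar{\partial}_t\zeta^{m+1/2}\|^2$. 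Since $k\le T$, this top term is absorbed into the allowed $\frac78\|\bar{\partial}_t\zeta^{m+1/2}\|^2$. For the data terms, Lemma~\ref{trunc-lem}$(b)$ gives $k\sum\|r^n\|^2\le\mathcal C(u_{tttt})k^4$. For the Ritz part, Lemma~\ref{axul}$(g)$ with $X=L^2(\Omega)$ and $\varphi=\rho$ gives $k\sum\|\bar{\partial}_t^2\rho^n\|^2\le\frac43\|\rho_{tt}\|^2_{L^2(0,T;L^2(\Omega))}$. Because $\mathcal R_{\m}$ is linear and time independent, $\rho_{tt}=u_{tt}-\mathcal R_{\m}u_{tt}$, and Lemma~\ref{P1 ritz_lemma} bounds this by $\constref{car}^2h^{4\sigma}\|u_{tt}\|^2_{L^2(0,T;H^{2+\sigma}(\Omega))}$.

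The main obstacle is $T_2^{(1)}$. Lemma~\ref{bhcompanion_lem}$(e)$ with $s=0$ needs $\trinl\delta_t\zeta^n\trinr_\pw$, which the energy does not control uniformly in $k$. I would avoid it by a summation by parts in time, using $2k\delta_t\zeta^n=2(\zeta^{n+1/2}-\zeta^{n-1/2})$:
\[
T_2^{(1)}=2(g^{m,1/4},(J-I)\zeta^{m+1/2})-2(g^{1,1/4},(J-I)\zeta^{1/2})-2\sum_{n=1}^{m-1}(g^{n+1,1/4}-g^{n,1/4},(J-I)\zeta^{n+1/2}).
\]
The two boundary terms are treated with Lemma~\ref{bhcompanion_lem}$(e)$, $\|(J-I)\zeta\|\le\constref{ca}h^2\trinl\zeta\trinr_\pw$, and Lemma~\ref{axul}$(b)$ for $\|g^{n,1/4}\|\le\|g\|_{C([0,T];L^2(\Omega))}$. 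Young's inequality then gives $\mathcal C(f,u_{tt})h^4+\frac1{32}\big(\trinl\zeta^{1/2}\trinr^2_\pw+\trinl\zeta^{m+1/2}\trinr^2_\pw\big)$. The continuity of $g$ in time follows from $f\in H^1(0,T;L^2(\Omega))$ and $u\in H^4(0,T;L^2(\Omega))$.

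For the sum, Lemma~\ref{axul}$(f)$ gives $\|g^{n+1,1/4}-g^{n,1/4}\|\le\sqrt{k/2}\,\|g_t\|_{L^2(t_{n-1},t_{n+2};L^2(\Omega))}$. Combining this with Lemma~\ref{bhcompanion_lem}$(e)$ and Young's inequality, each summand is at most $\constref{ca}^2h^4\|g_t\|^2_{L^2(t_{n-1},t_{n+2};L^2(\Omega))}+k\trinl\zeta^{n+1/2}\trinr^2_\pw$ up to constants. The time intervals $(t_{n-1},t_{n+2})$ overlap at most three times, so the sum is bounded by $\mathcal C(f_t,u_{ttt})h^4+k\sum_{n=0}^{m-1}\trinl\zeta^{n+1/2}\trinr^2_\pw$. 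Finally, $h^4\le |\Omega|$-dependent multiples of $h^{2\sigma}$ and $h^{4\sigma}\le h^{2\sigma}$ for $h\lesssim1$. Collecting the three pieces yields the asserted bound with $\mathcal C(u_{tt},u_{ttt},u_{tttt},f,f_t)(h^{2\sigma}+k^4)$; the residual coefficient $\frac78$ leaves room for the Young constants chosen above.
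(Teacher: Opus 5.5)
Your proposal is correct and follows essentially the same route as the paper. The paper also uses summation by parts in time for the companion-operator term, applies Lemma~\ref{bhcompanion_lem}$(e)$ to the two boundary terms and to the differences $g^{n+1,1/4}-g^{n,1/4}$ (writing these out as integrals of $f_t-u_{ttt}$ rather than citing Lemma~\ref{axul}$(f)$), and treats $r^n-\bar{\partial}_t^2\rho^n$ by Cauchy--Schwarz/Young with a weight of order $T$ together with Lemma~\ref{trunc-lem}$(b)$, Lemma~\ref{axul}$(g)$ and Lemma~\ref{P1 ritz_lemma}. Your weight $\epsilon=T$ reproduces the stated coefficient $\frac{k}{T}$ on $\sum_{n=0}^{m-1}\|\bar{\partial}_t\zeta^{n+1/2}\|^2$ exactly, whereas the paper's $\epsilon=4T/7$ leads to $\frac{7k}{4T}$ there; this difference is harmless for the later Gronwall argument.
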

\begin{proof}
    The definition \eqref{varphi2} and the identity   \eqref{dtu} show  $k\delta _t\zeta^{n}=\zeta^{n+1/2} -\zeta^{n-1/2}$. Then,  we obtain 
\begin{align}
k\sum_{n=1}^{m}
 ( f^{n,1/4}- u_{tt}^{n,1/4}, (J-I)\delta _t\zeta^{n})_{L^2(\Omega)}
 &
 = ( f^{m,1/4}- u_{tt}^{m,1/4}, (J-I) \zeta^ {m+1/2})_{L^2(\Omega)} 
 +(  f^{1,1/4}-u_{tt}^{1,1/4}, (I-J) \zeta^ {1/2} )_{L^2(\Omega)}  \nonumber \\
&  + \sum_{n=1}^{m-1} \big( (f^{n+1,1/4}-f^{n,1/4})-(u_{ttt}^{n+1,1/4}-u_{ttt}^{n,1/4}),
(I-J)\zeta^{n+1/2}\big)_{L^2(\Omega)}.\label{fini}
\end{align}
A Cauchy-Schwarz inequality, Lemma \ref{bhcompanion_lem}$(e)$, and   Young’s inequality with $a=\constref{ca}h^2  \norm{ f^{m,1/4}- u_{tt}^{m,1/4}}$, $b=\trinl \zeta^ {m+1/2}\trinl_{\pw}$, and $\epsilon=32$ lead to
\begin{align}
   ( f^{m,1/4}&- u_{tt}^{m,1/4}, (J-I) \zeta^ {m+1/2})_{L^2(\Omega)}  \le \norm{ f^{m,1/4}- u_{tt}^{m,1/4}}\norm{(J-I) \zeta^ {m+1/2}}\nonumber\\
    & \le    {32}\constref{ca}^2h^4 \big(\norm{ f}_{C(0,T;L^2(\Omega))}^2+\norm{ u_{tt}}^2_{C(0,T;L^2(\Omega))}\big)+\frac{1}{64}\trinl \zeta^ {m+1/2}\trinl_{\pw}^2\label{f0}
\end{align}
with $\norm{ f^{m,1/4}- u_{tt}^{m,1/4}} \le 2 \norm{ f}_{C(0,T;L^2(\Omega))}^2+2\norm{ u_{tt}}^2_{C(0,T;L^2(\Omega))}$ in the last step.
Analogous arguments show
\begin{align}
  &( f^{1,1/4}- u_{tt}^{1,1/4}, (I-J) \zeta^ {1/2})_{L^2(\Omega)} 
  \le    {32}\constref{ca}^2h^4 \big(\norm{ f}_{C(0,T;L^2(\Omega))}^2+\norm{ u_{tt}}^2_{C(0,T;L^2(\Omega))}\big)+\frac{1}{64}\trinl \zeta^ {1/2}\trinl_{\pw}^2.\label{f00}
\end{align}
The definition  \eqref{un14} reveals 
\begin{align}
    & (f^{n+1,1/4}-f^{n,1/4})-(u_{tt}^{n+1,1/4}-u_{tt}^{n,1/4})= \frac{1}{4}\Big( \int_{t_{n}}^{t_{n+2}}(f_t(t)-u_{ttt}(t))\dt+\int_{t_{n-1}}^{t_{n+1}}(f_t(t)-u_{ttt}(t))\dt\Big).\label{17}
\end{align}
A Cauchy–Schwarz inequality, Lemma \ref{bhcompanion_lem}$(e)$, and elementary manipulations lead to
\begin{align}
    \big( \int_{t_{n}}^{t_{n+2}}( f_t(t)-u_{ttt}(t)))\dt ,
(I-J)\zeta^{n+1/2}\big)_{L^2(\Omega)} \le \|\int_{t_{n}}^{t_{n+2}}( f_t(t)-u_{ttt}(t))\dt\|\|(I-J)\zeta^{n+1/2}\|\qquad\quad\nonumber\\
 \le \constref{ca}h^2\|\int_{t_{n}}^{t_{n+2}}( f_t(t)-u_{ttt}(t))\dt\|\trinl \zeta^ {n+1/2}\trinl_{\pw}\le \constref{ca}\sqrt{2{k}}h^2\|f_t-u_{ttt}\|_{L^2({t_{n}},{t_{n+2}},L^2(\Omega))}\trinl \zeta^ {n+1/2}\trinl_{\pw} \label{ff1}
\end{align}
with $\|\int_{t_{n}}^{t_{n+2}}( f_t(t)-u_{ttt}(t))\dt\|  \le \int_{t_{n}}^{t_{n+2}}\| f_t(t)-u_{ttt}(t)\|\dt \le \sqrt{2k}\|f_t-u_{ttt}\|_{L^2({t_{n}},{t_{n+2}},L^2(\Omega))}$ in the last step. A Young's inequality with $a=\constref{ca}h^2\|f_t-u_{ttt}\|_{L^2({t_{n}},{t_{n+2}},L^2(\Omega))}$, $b=\sqrt{2{k}}\trinl \zeta^ {n+1/2}\trinl_{\pw}$, and $\epsilon=1$ on the right-hand side of  \eqref{ff1}, and a triangle inequality  result in
\begin{align*}
\big( \int_{t_{n}}^{t_{n+2}}( f_t(t)-u_{ttt}(t))\dt ,
(I-J)\zeta^{n+1/2}\big)_{L^2(\Omega)} &
\le \constref{ca}^2h^4\big(\|f_t\|^2_{L^2({t_{n}},{t_{n+2}},L^2(\Omega))}+\|u_{ttt}\|^2_{L^2({t_{n}},{t_{n+2}},L^2(\Omega))}\big)+k\trinl \zeta^ {n+1/2}\trinl_{\pw}^2.
\end{align*}
Analogous arguments show
\begin{align*}
\big( \int_{t_{n-1}}^{t_{n+1}}( f_t(t)-u_{ttt}(t))\dt ,
(I-J)\zeta^{n+1/2}\big)_{L^2(\Omega)}  & \le \constref{ca}^2h^4\big(\|f_t\|^2_{L^2({t_{n-1}},{t_{n+1}},L^2(\Omega))}+\|u_{ttt}\|^2_{L^2({t_{n-1}},{t_{n+1}},L^2(\Omega))}\big)+k\trinl \zeta^ {n+1/2}\trinl_{\pw}^2.
\end{align*}
A combination of last two inequalities in \eqref{17} and $\sum_{i=1}^{m-1}\norm{\varphi}^2_{L^2({t_{i}},{t_{i+2}},L^2(\Omega))} \le 2 \norm{\varphi}^2_{L^2(0,T,L^2(\Omega))}$ for $\varphi \in \{f,u_{ttt}\}$, reveals
\begin{align*}
  &  \sum_{n=1}^{m-1} \big( (f^{n+1,1/4}-f^{n,1/4})-(u_{ttt}^{n+1,1/4}-u_{ttt}^{n,1/4}),
(I-J)\zeta^{n+1/2}\big)_{L^2(\Omega)}\nonumber\\
&\le \constref{ca}^2h^4\big(\|f_t\|^2_{L^2(0,T;L^2(\Omega))}+\|u_{ttt}\|^2_{L^2(0,T;L^2(\Omega))}\big)+ \frac{k}{2} \sum_{n=1}^{m-1}\trinl \zeta^ {n+1/2}\trinl_{\pw}^2.
\end{align*}
This and a combination of \eqref{fini}- \eqref{f00} with $h^4 \lesssim h^{2\sigma}$ establish
\begin{align}
   & 2k\sum_{n=1}^{m}
 ( f^{n,1/4}- u_{tt}^{n,1/4}, (J-I)\delta _t\zeta^{n})_{L^2(\Omega)}\nonumber\\
 & \le  {\cal C}(u_{tt},u_{ttt},f,f_t)h^{2\sigma}+\frac{1}{32}\big[\trinl \zeta^ {1/2}\trinl_{\pw}^2
  +\trinl \zeta^ {m+1/2}\trinl_{\pw}^2\big]+ k \sum_{n=1}^{m-1}\trinl \zeta^ {n+1/2}\trinl_{\pw}^2 \label{f,utt}.
\end{align}
Cauchy-Schwarz and Young's inequalities (with $a=\|r^n-{\bar{\partial}_t}^2 \rho^n\|,$ $b=\|\delta _t \zeta^{n}\|$, and $\epsilon=4T/7$), a summation  over $n$, 
a triangle inequality $\|r^n-{\bar{\partial}_t}^2 \rho^n\|^2\le 2  \|r^n\|^2+2\|{\bar{\partial}_t}^2 \rho^n\|^2$, the bounds from Lemma~\ref{trunc-lem}$(b)$ for the truncation error, and Lemma~\ref{P1 ritz_lemma} result in
\begin{align*}
    2k\sum_{n=1}^{m}(r^n-{\bar{\partial}_t}^2 \rho^n,\delta _t \zeta^{n} )_{L^2(\Omega)}\nonumber 
    &\le \frac{8T}{7}\big[{\cal C}(u_{tttt})k^4+\frac{4}{3} h^{4\sigma}\|u_{tt}\|^2_{L^2(0,T;H^{2+\sigma}(\Omega))}\big]+\frac{7k}{4T}\sum_{n=1}^{m}\|\delta _t \zeta^{n}\|^2.
\end{align*}
From \eqref{dtu}, we have $2\delta_t\zeta^{n}=\bar{\partial}_t \zeta^{n-1/2}+\bar{\partial}_t \zeta^{n+1/2}$. This and a  triangle inequality infer
\begin{align*}
  \frac{k}{T}\sum_{n=1}^m \trinl\delta_t\zeta^{n}\trinl_{\pw}^2 &\le \frac{k}{2T}\sum_{n=1}^m\big[\trinl\bar{\partial}_t \zeta^{n-1/2}\trinl_{\pw}^2+\trinl\bar{\partial}_t \zeta^{n+1/2}\trinl_{\pw}^2 \big]  \le \frac{k}{2T}\big[\trinl \bar{\partial}_t\zeta^{1/2}\trinl_{\pw}^2+\trinl \bar{\partial}_t \zeta^{m+1/2}\trinl_{\pw}^2\big] \nonumber\\
  &\qquad+\frac{k}{T}\sum_{n=1}^{m-1} \trinl\bar{\partial}_t \zeta^{n+1/2}\trinl_{\pw}^2\le \frac{1}{2}\big[\trinl \bar{\partial}_t \zeta^{1/2}\trinl_{\pw}^2+\trinl \bar{\partial}_t \zeta^{m+1/2}\trinl_{\pw}^2\big]+\frac{k}{T}\sum_{n=1}^{m-1} \trinl \bar{\partial}_t\zeta^{n+1/2}\trinl_{\pw}^2
\end{align*}
with $k \le T$ for the first two terms on the right-hand side. The last two displayed inequalities and $h^{4\sigma} \le h^{2\sigma}|\Omega|^{\sigma} $ reveal 
\begin{align*}
    2k\sum_{n=1}^{m}(r^n-{\bar{\partial}_t}^2 \rho^n,\delta _t \zeta^{n} )_{L^2(\Omega)}
    & \le {\cal C}(u_{tt},u_{tttt})(k^4+h^{2\sigma})
   +\frac{7}{8}\big[\|\bar{\partial}_t \zeta^{1/2}\|^2+\|\bar{\partial}_t \zeta^{m+1/2}\|^2\big]+\frac{7k}{4}\sum_{n=1}^{m-1}\|\bar{\partial}_t \zeta^{n+1/2}\|^2.
\end{align*}
This in combination with  \eqref{f,utt} and the addition of non-negative terms $k \trinl \zeta^ {1/2}\trinl_{\pw}^2+\frac{7k}{4}\|\bar{\partial}_t \zeta^{1/2}\|^2$ to the right-hand side conclude the proof.
\end{proof}
The next lemma provides bounds for the nonlinear contributions $T_3$, representing the main 
difficulty. The estimates rely on the stability  of continuous and discrete trilinear forms, approximation properties of the  Ritz 
projection (cf. Lemma~\ref{P1 ritz_lemma}) and of the companion operator (cf.  Lemma~\ref{bhcompanion_lem}($d,e$)). 
\begin{lemma}[Bound for nonlinear term $T_3$]Let  $u_0 \in H^2_0(\Omega), u_1\in L^2(\Omega)$, and $f \in L^2(0,T,L^2(\Omega))$ be  such that ${\rm M}_{1}(u_0,u_1,f) \le 1/2$. Assume that   $ u,v,  \in H^1(0,T;H^2_0(\Omega)\cap H^{2+\sigma}(\Omega)),$ $u_{t},v_t\in L^\infty(0,T;H^{2}(\Omega))$. Then
    \begin{align*}
     &T_3:=2k\sum_{n=1}^m \big[b_{\pw}(u_{\m}^{n,1/4},\delta_t \zeta^n,v_{\m}^{n,1/4})-b(u^{n,1/4},J\delta_t \zeta^n,v^{n,1/4})+  b(u^{n,1/4},\delta_t u^{n},J\theta^{n,1/4})-b_{\pw}(u_{\m}^{n,1/4},\delta_t u_{\m}^{n},\theta^{n,1/4})\big]\\
     &\qquad\le{\cal C}(u,u_t,v,v_t)h^{2\sigma}+\frac{17}{32}\trinl\zeta^{1/2}\trinl_{\pw}^2 +\frac{9}{16}\trinl\theta^{1/2}\trinl_{\pw}^2+ \frac{25}{32}\trinl\zeta^{m+1/2}\trinl_{\pw}^2+\frac{13}{16}\trinl\theta^{m+1/2}\trinl_{\pw}^2\\
      &\qquad \quad +k\big(\constref{cdsj}\big[\|u_t\|_{L^\infty(0,T;H^2(\Omega))}+\|v_t\|_{L^\infty(0,T;H^2(\Omega))}\big]+3\big)\sum_{n=0
      }^{m-1} \trinl\zeta^{n+1/2}\trinl_{\pw}^2\\
      &\qquad \quad+k\big(\constref{cdsj}\|u_t\|_{L^\infty(0,T;H^2(\Omega))}+\frac{9}{8T}\big)\sum_{n=0}^{m-1} \trinl\theta^{n+1/2}\trinl_{\pw}^2 \;\text{  holds {for any $m$ with $1 \le m \le N-1$}.}
    \end{align*}
\end{lemma}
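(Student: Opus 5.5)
The plan is to mirror Step 3--Step 4 of the proof of Theorem~\ref{LEM-ERROR-INI}, now with quarter averages and $\delta_t$ in place of midpoint values, plus a summation by parts in time. Recall the splittings \eqref{p4-split1}--\eqref{p4-split2}, $u_{\m}=u-\rho-\zeta$ and $v_{\m}=v-\chi-\theta$. First expand each of the four trilinear terms in $T_3$ by inserting these splittings and using trilinearity and symmetry of $b_\pw$ in its first two arguments; write $b(\cdot,J\cdot,\cdot)=b_\pw(\cdot,\cdot,\cdot)+b_\pw(\cdot,(J-I)\cdot,\cdot)$. The aim is to rearrange the sum into four groups:
\begin{itemize}
\item[($i$)] companion-operator consistency terms such as $b_\pw(u^{n,1/4},(I-J)\delta_t\zeta^n,v^{n,1/4})$ and $b_\pw(u^{n,1/4},\delta_tu^n,(J-I)\theta^{n,1/4})$;
\item[($ii$)] Ritz-projection terms carrying $\rho,\chi$ or $\delta_t\rho$;
\item[($iii$)] purely discrete quadratic terms in $(\zeta,\theta)$;
\item[($iv$)] the cross terms $b_\pw(u^{n,1/4},\delta_t\zeta^n,\theta^{n,1/4})$.
\end{itemize}
The cross terms in ($iv$) should cancel between the $\zeta$-equation and the $\theta$-equation, exactly as $b_\pw(u^{1/2},\zeta^{1/2},\theta^{1/2})$ did in \eqref{non-all}. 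This cancellation is the structural point that makes the energy argument close.

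Next treat every term that carries $\delta_t\zeta^n$ by discrete summation by parts. Use $k\delta_t\zeta^n=\zeta^{n+1/2}-\zeta^{n-1/2}$ so that
\[
k\sum_{n=1}^m b_\pw(a^n,\delta_t\zeta^n,c^n)=b_\pw(a^m,\zeta^{m+1/2},c^m)-b_\pw(a^1,\zeta^{1/2},c^1)-\sum_{n=1}^{m-1}b_\pw(a^{n+1}-a^n,\zeta^{n+1/2},c^{n+1})+\cdots .
\]
This produces boundary terms at $n=m$ and $n=1$. Each of them is absorbed by Young's inequality into the coefficients $\tfrac{25}{32},\tfrac{13}{16}$ (at level $m$) and $\tfrac{17}{32},\tfrac{9}{16}$ (at level $1/2$). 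It also produces interior sums with time differences $a^{n+1}-a^n$. Those differences are bounded through Lemma~\ref{axul}$(f)$ and Lemma~\ref{axul_ritz}$(d)$ by $\sqrt{k}\,\|\varphi_t\|_{L^2(H^2)}$, or by $k\|u_t\|_{L^\infty(H^2)}$ and $k\|v_t\|_{L^\infty(H^2)}$; this yields the Gronwall-type sums $k(\constref{cdsj}[\ldots]+3)\sum\trinl\zeta^{n+1/2}\trinl_\pw^2$.

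The remaining estimates are as follows.
\begin{itemize}
\item[($i$)] Apply Lemma~\ref{IB}$(d),(e)$ with $h\lesssim h^\sigma$.
\item[($ii$)] Apply \eqref{Bh-bdd} together with Lemma~\ref{axul_ritz}$(a)$--$(d)$, which gives $h^\sigma$ factors and hence the ${\cal C}(u,u_t,v,v_t)h^{2\sigma}$ contribution.
\item[($iii$)] Apply \eqref{Bh-bdd} together with discrete a priori bounds. The factors $\trinl v_\m^{n,1/4}\trinl_\pw$, $\trinl u_\m^{n,1/4}\trinl_\pw$ and $\trinl\delta_t u_\m^n\trinl_\pw$ are replaced by continuous quantities plus errors.
\end{itemize}
For the continuous parts, use \eqref{energy-c2}, \eqref{r-con} and the smallness ${\rm M}_1(u_0,u_1,f)\le 1/2$. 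These turn quadratic terms such as $b_\pw(\zeta,\zeta,\mathcal R_\m v)$ into $\tfrac12\trinl\zeta\trinl_\pw^2$-type contributions with coefficient below one. Terms containing $\delta_t u_\m^n$ are split as $\delta_t u^n-\delta_t\rho^n-\delta_t\zeta^n$; the $\delta_t\zeta^n$ piece is again summed by parts.

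The main obstacle will be group ($iii$): the genuinely nonlinear terms $b_\pw(\zeta^{n,1/4},\delta_t\zeta^n,v_\m^{n,1/4})$ and $b_\pw(\zeta^{n,1/4},\delta_t\zeta^n,\theta^{n,1/4})$. Here $\delta_t\zeta^n$ is not controlled in $\trinl\cdot\trinl_\pw$ without a factor $k^{-1}$. I would first try the symmetry identity $2k\,b_\pw(\zeta^{n,1/4},\delta_t\zeta^n,w)=b_\pw(\zeta^{n+1/2},\zeta^{n+1/2},w)-b_\pw(\zeta^{n-1/2},\zeta^{n-1/2},w)$, which holds by \eqref{un14}--\eqref{dtu}. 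Summing over $n$ then moves the time difference onto $w\in\{v_\m^{n,1/4},\theta^{n,1/4}\}$. The resulting $w^{n+1,1/4}-w^{n,1/4}$ is controlled either via the discrete $v$-equation (giving $\theta$-sums with coefficient $\tfrac{9}{8T}$) or via the stability bound \eqref{stabf1}. What remains is to bookkeep the Young constants so that the stated fractions come out.
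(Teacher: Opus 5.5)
There is a genuine gap in your treatment of group ($iii$). You only record the cancellation of $b_\pw(u^{n,1/4},\delta_t\zeta^n,\theta^{n,1/4})$. If you carry the splitting out completely, using $u_\m^{n,1/4}=u^{n,1/4}-\rho^{n,1/4}-\zeta^{n,1/4}$, $\delta_t u_\m^n=\mathcal R_\m\delta_t u^n-\delta_t\zeta^n$ \emph{and} $v_\m^{n,1/4}=\mathcal R_\m v^{n,1/4}-\theta^{n,1/4}$, two more terms cancel between the $\zeta$-part and the $\theta$-part of $T_3$: $b_\pw(\rho^{n,1/4},\delta_t\zeta^n,\theta^{n,1/4})$ and the cubic term $b_\pw(\zeta^{n,1/4},\delta_t\zeta^n,\theta^{n,1/4})$. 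Equivalently, your two ``obstacle'' terms combine into a single term:
\[
-b_\pw(\zeta^{n,1/4},\delta_t\zeta^n,v_\m^{n,1/4}+\theta^{n,1/4})=-b_\pw(\zeta^{n,1/4},\delta_t\zeta^n,\mathcal R_\m v^{n,1/4}).
\]
This is the decisive structural fact. The paper is left with eight terms, none cubic in the errors. The only one pairing $\zeta$ with $\delta_t\zeta$ is $-2k\,b_\pw(\zeta^{n,1/4},\delta_t\zeta^n,\mathcal R_\m v^{n,1/4})$. For it, your symmetry identity with $w=\mathcal R_\m v^{n,1/4}$ moves the time difference onto $\mathcal R_\m\delta_t v^n$, where $\trinl\mathcal R_\m\delta_t v^n\trinl_\pw\le\|J\|\,\|v_t\|_{L^\infty(0,T;H^2(\Omega))}$ by \eqref{r-con} and Lemma~\ref{axul}$(e)$. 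The boundary terms are bounded by $\constref{cdsj}\trinl v^{m,1/4}\trinl_\pw\le{\rm M}_1(u_0,u_1,f)\le 1/2$. That is why only the continuous smallness ${\rm M}_1\le 1/2$ is needed.

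The route you propose for $w\in\{v_\m^{n,1/4},\theta^{n,1/4}\}$ does not close. After summation by parts you need $\trinl w^{n+1,1/4}-w^{n,1/4}\trinl_\pw=\mathcal O(k)$. The discrete equation \eqref{P4 fully_discrete2} only gives
\[
\trinl v_\m^{n+1/2}-v_\m^{n-1/2}\trinl_\pw\le\tfrac{C_{\rm dS}}{2}\trinl u_\m^{n+1/2}+u_\m^{n-1/2}\trinl_\pw\,k\trinl\delta_t u_\m^n\trinl_\pw,
\]
and $\trinl\delta_t u_\m^n\trinl_\pw$ is not uniformly bounded. The stability bound \eqref{stabf1} controls $\bar\partial_t u_\m$ only in $L^2$, so you obtain merely $\mathcal O(1)$.

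Consequences:
\begin{itemize}
\item The interior sum $\sum_n b_\pw(\zeta^{n+1/2},\zeta^{n+1/2},w^{n+1,1/4}-w^{n,1/4})$ is then bounded by $\sum_n\trinl\zeta^{n+1/2}\trinl_\pw^2$ without the factor $k$. If you instead split $\delta_t u_\m^n$, you get quartic error terms. Either way the Gronwall argument breaks.
\item The boundary terms $b_\pw(\zeta^{m+1/2},\zeta^{m+1/2},\theta^{m,1/4})$ and $b_\pw(\zeta^{m+1/2},\zeta^{m+1/2},v_\m^{m,1/4})$ would need smallness of a discrete quantity, which is not among the hypotheses.
\item The same failure occurs for $b_\pw(\rho^{n,1/4},\delta_t\zeta^n,\theta^{n,1/4})$ if you sum it by parts rather than cancel it.
\end{itemize}

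A smaller bookkeeping point: the coefficient $\frac{9}{8T}$ does not come from the discrete $v$-equation. It comes from the three terms linear in $\theta^{n,1/4}$, namely $b_\pw(\rho^{n,1/4},\mathcal R_\m\delta_t u^n,\theta^{n,1/4})$, $b_\pw(u^{n,1/4},\delta_t\rho^n,\theta^{n,1/4})$ and $b_\pw(u^{n,1/4},\delta_t u^n,(J-I)\theta^{n,1/4})$. Each contributes $\frac{3k}{8T}$ via Young's inequality and \eqref{thetan14}.
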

\begin{proof} Definitions  \eqref{varphi1}-\eqref{varphi2} and linearity of $b_{\pw}(\bullet,\bullet,\bullet)$ in each component imply the relation 
    \begin{align*}
       & b_{\pw}(u_{\m}^{n,1/4},\delta_t \zeta^n,v_{\m}^{n,1/4})-b(u^{n,1/4},J\delta_t \zeta^n,v^{n,1/4}) =b_{\pw}(\zeta^{n,1/4},\delta_t \zeta^n,\theta^{n,1/4})\\
       &-b_{\pw}(\zeta^{n,1/4},\delta_t \zeta^n,\mathcal{R}_{\m}v^{n,1/4})+b_{\pw}(\rho^{n,1/4},\delta_t \zeta^n,\theta^{n,1/4})-b_{\pw}(\rho^{n,1/4},\delta_t \zeta^n,\mathcal{R}_{\m}v^{n,1/4})\\
       &-b_{\pw}(u^{n,1/4},\delta_t \zeta^n,\theta^{n,1/4})-b_{\pw}(u^{n,1/4},\delta_t \zeta^n,\chi^{n,1/4})+b_{\pw}(u^{n,1/4},(I-J)\delta_t \zeta^n,v^{n,1/4}).\nonumber
    \end{align*}
Analogous arguments with $\delta_t (\mathcal{R}_{\m} u^{n})=\mathcal{R}_{\m}\delta_t u^n$ from linearity of $\mathcal{R}_{\m}$ lead to
    \begin{align*}
      &  b(u^{n,1/4},\delta_t u^{n},J\theta^{n,1/4})-b_{\pw}(u_{\m}^{n,1/4},\delta_t u_{\m}^{n},\theta^{n,1/4})=-b_{\pw}(\zeta^{n,1/4},\delta_t \zeta^{n},\theta^{n,1/4})\\
       & +b_{\pw}(\zeta^{n,1/4},\mathcal{R}_{\m}\delta_t u^n,\theta^{n,1/4})-b_{\pw}(\rho^{n,1/4},\delta_t\zeta^{n},\theta^{n,1/4})+b_{\pw}(\rho^{n,1/4},\mathcal{R}_{\m}\delta_t u^n,\theta^{n,1/4})\\
       &+b_{\pw}(u^{n,1/4},\delta_t \zeta^{n},\theta^{n,1/4})+  b_{\pw}(u^{n,1/4},\delta_t \rho^{n},\theta^{n,1/4})+b_{\pw}(u^{n,1/4},\delta_t u^{n},(J-I)\theta^{n,1/4}).
    \end{align*}
Then, noticing  that $b_{\pw}(\zeta^{n,1/4},\delta_t \zeta^n,\theta^{n,1/4}),$ $b_{\pw}(\rho^{n,1/4},\delta_t \zeta^n,\theta^{n,1/4})$, and  $ b_{\pw}(u^{n,1/4},\delta_t \zeta^n,\theta^{n,1/4})$ cancel out, we arrive at  
    \begin{align}
     T_3
        = 2k\sum_{n=1}^m \big[ &-b_{\pw}(\zeta^{n,1/4},\delta_t \zeta^n,\mathcal{R}_{\m}v^{n,1/4})+b_{\pw}(\zeta^{n,1/4},\mathcal{R}_{\m}\delta_t u^n,\theta^{n,1/4})-b_{\pw}(\rho^{n,1/4},\delta_t \zeta^n,\mathcal{R}_{\m}v^{n,1/4})\nonumber\\
       &+b_{\pw}(\rho^{n,1/4},\mathcal{R}_{\m}\delta_t u^n,\theta^{n,1/4})-b_{\pw}(u^{n,1/4},\delta_t \zeta^n,\chi^{n,1/4})+  b_{\pw}(u^{n,1/4},\delta_t \rho^{n},\theta^{n,1/4})\nonumber\\
       &+b_{\pw}(u^{n,1/4},(I-J)\delta_t \zeta^n,v^{n,1/4})+b_{\pw}(u^{n,1/4},\delta_t u^{n},(J-I)\theta^{n,1/4})\big]=:\sum_{i=1}^8S_i.\label{sn}
    \end{align}
    The rest of the proof focuses on bounding the terms $S_1,S_2, \cdots,S_8$. 

\medskip \noindent {\it Bound for $S_1$.}
 Using $2\zeta^{n,1/4}=\zeta^{n+1/2}+\zeta^{n-1/2}$, $k\delta_t \zeta^n=\zeta^{n+1/2}-\zeta^{n-1/2}$ (obtained from  \eqref{varphi1}-\eqref{varphi2})  followed by linearity of $b_{\pw}(\bullet,\bullet,\bullet)$ in each component  and symmetry  in first two components show \begin{align*}
 &-2kb_{\pw}(\zeta^{n,1/4},\delta_t \zeta^n,\mathcal{R}_{\m}v^{n,1/4})=-b_{\pw}(\zeta^{n+1/2},\zeta^{n+1/2},\mathcal{R}_{\m}v^{n,1/4})+b_{\pw}(\zeta^{n-1/2},\zeta^{n-1/2},\mathcal{R}_{\m}v^{n,1/4}).
\end{align*}
 Utilize  
${\mathcal{R}_{\m} v^{n,1/4}=\mathcal{R}_{\m} v^{n+1/2}-\frac{k}{2}\mathcal{R}_{\m} \delta_t v^{n}}$ and  ${ \mathcal{R}_{\m} v^{n,1/4}=\mathcal{R}_{\m} v^{n-1/2}+\frac{k}{2}\mathcal{R}_{\m} \delta_t v^{n}}$
in the third components of {first and second term, respectively, } to infer
    \begin{align*}
    -2kb_{\pw}(\zeta^{n,1/4},\delta_t \zeta^n,\mathcal{R}_{\m}v^{n,1/4})
    & =-b_{\pw}(\zeta^{n+1/2},\zeta^{n+1/2},\mathcal{R}_{\m}v^{n+1/2})+b_{\pw}(\zeta^{n-1/2},\zeta^{n-1/2},\mathcal{R}_{\m}v^{n-1/2}) \\
    &\quad +\frac{k}{2}\big(b_{\pw}(\zeta^{n+1/2},\zeta^{n+1/2},\mathcal{R}_{\m}\delta_tv^{n})+b_{\pw}(\zeta^{n-1/2},\zeta^{n-1/2},\mathcal{R}_{\m}\delta_tv^{n})\big).
    \end{align*}
A summation over $n=1,2,\cdots,m$ (for $1 \le m \le N-1$), the definition $S_1 := -2k \sum_{n=1}^m b_{\pw}(\zeta^{n,1/4},\delta_t \zeta^n,\mathcal{R}_{\m}v^{n,1/4})$, and a telescopic sum property in the first two terms on the  right-hand side show that 
  \begin{align*}
      S_1 & =-b_{\pw}(\zeta^{m+1/2},\zeta^{m+1/2},\mathcal{R}_{\m}v^{m+1/2})+b_{\pw}(\zeta^{1/2},\zeta^{1/2},\mathcal{R}_{\m}v^{1/2}) \\
    &\quad +\frac{k}{2} \sum_{n=1}^m\big(b_{\pw}(\zeta^{n+1/2},\zeta^{n+1/2},\mathcal{R}_{\m}\delta_tv^{n})+b_{\pw}(\zeta^{n-1/2},\zeta^{n-1/2},\mathcal{R}_{\m}\delta_tv^{n})\big).
  \end{align*}
  Utilize ${\cal R}_{\m}v^{m+1/2}={\cal R}_{\m}v^{m,1/4}+\frac{k}{2}{\cal R}_{\m}\delta_tv^{m}$  and linearity of $b_{\pw}(\bullet,\bullet,\bullet)$ in third component to obtain
\begin{align*}
  &-b_{\pw}(\zeta^{m+1/2},\zeta^{m+1/2},{\cal R}_{\m}v^{m+1/2})+\frac{k}{2}\sum_{n=1}^m b_{\pw}(\zeta^{n+1/2},\zeta^{n+1/2},{\cal R}_{\m}\delta_tv^{n}) \\
  &=-b_{\pw}(\zeta^{m+1/2},\zeta^{m+1/2},{\cal R}_{\m}v^{m,1/4})+\frac{k}{2}\sum_{n=1}^{m-1}b_{\pw}(\zeta^{n+1/2},\zeta^{n+1/2},{\cal R}_{\m}\delta_tv^{n}).
  \end{align*}
   A combination of last two displayed identities reveal
    \begin{align}
       S_1&=-b_{\pw}(\zeta^{m+1/2},\zeta^{m+1/2},\mathcal{R}_{\m}v^{m,1/4})+b_{\pw}(\zeta^{1/2},\zeta^{1/2},\mathcal{R}_{\m}v^{1/2})\nonumber\\
      & \qquad  +\frac{k}{2}\Big[ \sum_{n=1}^{m-1}b_{\pw}(\zeta^{n+1/2},\zeta^{n+1/2},\mathcal{R}_{\m}\delta_tv^{n})+\sum_{n=1}^{m}b_{\pw}(\zeta^{n-1/2},\zeta^{n-1/2},\mathcal{R}_{\m}\delta_tv^{n})\Big].\label{s1}
    \end{align}
    Then,    from \eqref{Bh-bdd}, \eqref{r-con},  $\constref{cdsj}=\|J\|C_{\rm dS }$, 
     $\trinl v^{m,1/4} \trinl_{\pw} \le \frac{1}{4}\big(\trinl v^{m-1} \trinl_{\pw}+2\trinl v^{m} \trinl_{\pw}+\trinl v^{m+1} \trinl_{\pw}\big)$  from  \eqref{varphi1},  a triangle  inequality, \eqref{energy-c2} with $t=t_{m-1},t_m,$ and $t_{m+1}$, we obtain  
    \begin{align*}
        &|b_{\pw}(\zeta^{m+1/2},\zeta^{m+1/2},\mathcal{R}_{\m}v^{m,1/4})| \le C_{\rm dS }\trinl \zeta^{m+1/2} \trinl_{\pw}^2\trinl\mathcal{R}_{\m}v^{m,1/4} \trinl_{\pw}\\
        &\qquad\le\constref{cdsj}\trinl \zeta^{m+1/2} \trinl_{\pw}^2\trinl v^{m,1/4} \trinl_{\pw}\le {\rm M}_{1}(u_0,u_1,f)\trinl \zeta^{m+1/2} \trinl_{\pw}^2 \le \frac{1}{2}\trinl \zeta^{m+1/2} \trinl_{\pw}^2.
    \end{align*}
  The last inequality follows from the  smallness data assumption  ${\rm M}_{1}(u_0,u_1,f) \le 1/2$. Similar arguments  lead to
   \begin{align*}
       b_{\pw}(\zeta^{1/2},\zeta^{1/2},\mathcal{R}_{\m}v^{1/2}) \le \frac{1}{2}\trinl \zeta^{1/2} \trinl_{\pw}^2. 
   \end{align*}
  Recall $\constref{cdsj}:=C_{\rm dS}\|J\|$. The inequalities \eqref{Bh-bdd},  \eqref{r-con}, and the bound 
   \begin{align}
   \trinl \delta_tv^{n} \trinl_{\pw}\le 
       \| \delta_tv^{n} \|_{H^2(\Omega)} \le \norm{v_t}_{L^\infty(0,T;H^2(\Omega))}
        \label{23}   
   \end{align}
 from Lemma~\ref{axul}$(e)$  reveal 
   \begin{align*}
         &b_{\pw}(\zeta^{n+1/2},\zeta^{n+1/2},\mathcal{R}_{\m}\delta_tv^{n}) 
         \le\constref{cdsj}\trinl \zeta^{n+1/2} \trinl_{\pw}^2\trinl \delta_tv^{n} \trinl_{\pw} \le \constref{cdsj}\norm{v_t}_{L^\infty(0,T;H^2(\Omega))}\trinl \zeta^{n+1/2} \trinl_{\pw}^2,
        \\
        &b_{\pw}(\zeta^{n-1/2},\zeta^{n-1/2},\mathcal{R}_{\m}\delta_tv^{n} )\le\constref{cdsj}\trinl \zeta^{n-1/2} \trinl_{\pw}^2\trinl \delta_tv^{n} \trinl_{\pw} \le \constref{cdsj}\norm{v_t}_{L^\infty(0,T;H^2(\Omega))}\trinl \zeta^{n-1/2} \trinl_{\pw}^2.
   \end{align*}
   A combination of all this with an addition of the term $\constref{cdsj}k\norm{v_t}_{L^\infty(0,T;H^2(\Omega))}\trinl \zeta^{1/2} \trinl_{\pw}^2$ on the  right-hand side  in \eqref{s1} leads to
   \begin{align*}
       S_1 \le   \frac{1}{2}\big(\trinl \zeta^{1/2} \trinl_{\pw}^2+ \trinl \zeta^{m+1/2} \trinl_{\pw}^2\big)+  \constref{cdsj}k\norm{v_t}_{L^\infty(0,T;H^2(\Omega))} \sum_{n=0}^{m-1}\trinl \zeta^{n+1/2} \trinl_{\pw}^2. \qed
   \end{align*}
   \medskip \noindent {\it Bound for $S_2$.}
Recall $2\zeta^{n,1/4}=\zeta^{n+1/2}+\zeta^{n-1/2}$ and $2\theta^{n,1/4}=\theta^{n+1/2}+\theta^{n-1/2}$ from  \eqref{varphi1}. The  linearity of $b_{\pw}(\bullet,\bullet,\bullet)$ in first and third components,
  \eqref{Bh-bdd}, and  \eqref{r-con} lead to
    \begin{align}
        S_2&:=2k \sum_{n=1}^m b_{\pw}(\zeta^{n,1/4},\mathcal{R}_{\m}\delta_t u^n,\theta^{n,1/4})  \le \frac{1}{2}\constref{cdsj}k \sum_{n=1}^m  \trinl  \delta_t u^n\trinl_{\pw} \big[\trinl\zeta^{n+1/2} \trinl_{\pw} \trinl\theta^{n+1/2} \trinl_{\pw} \nonumber \\
        &\qquad\qquad +\trinl \zeta^{n+1/2}\trinl_{\pw}  \trinl\theta^{n-1/2} \trinl_{\pw}+\trinl \zeta^{n-1/2}\trinl_{\pw}  \trinl\theta^{n+1/2} \trinl_{\pw} +\trinl \zeta^{n-1/2}\trinl_{\pw}\trinl\theta^{n-1/2} \trinl_{\pw}\big] .\label{s2}
    \end{align}
    We now control the terms on the right-hand side of \eqref{s2}. A  
  Young's inequality with  $a=\sqrt{\constref{cdsj} \trinl  \delta_t u^n\trinl_{\pw}}\trinl\zeta^{n+1/2}\trinl_{\pw},$ $b=\sqrt{\constref{cdsj} \trinl  \delta_t u^n\trinl_{\pw}}\trinl\theta^{n+1/2}\trinl_{\pw}$, and $\epsilon= 1$ (resp. $a=\constref{cdsj} \trinl  \delta_t u^n\trinl_{\pw}\trinl\zeta^{n+1/2}\trinl_{\pw}$, $b=\trinl\theta^{n+1/2}\trinl_{\pw}$, and $\epsilon= 2$)  for $ 1\le n \le m-1$  (resp. for $n=m$) reveals
  \begin{align}\label{sp2}
\frac{1}{2} \constref{cdsj}k \trinl  \delta_t u^n\trinl_{\pw}\trinl\zeta^{n+1/2}\trinl_{\pw}\trinl\theta^{n+1/2}\trinl_{\pw} \le
 \begin{cases}
 \frac{1}{4}{\constref{cdsj}}k\trinl \delta_t u^{n}\trinl_{\pw}\big[\trinl \zeta^{n+1/2}\trinl_{\pw}^2+\trinl \theta^{n+1/2} \trinl_{\pw}^2\big]&\text{if } 1 \le n \le m-1,\\
 \frac{1}{2}\constref{cdsj}^2{k^2} \trinl\delta_t u^{m}\trinl^2_{\pw}\trinl \zeta^{m+1/2}\trinl_{\pw}^2+\frac{1}{4}\trinl \theta^{m+1/2} \trinl_{\pw}^2&\text{if } n= m .
\end{cases}
  \end{align}
Then, \eqref{varphi2}, a triangle inequality, \eqref{energy-c2}, and the smallness data assumption ${\rm M}_{1}(u_0,u_1,f) \le 1/2$   show 
  \begin{align}
\frac{1}{2}\constref{cdsj}^2{k^2} \trinl \delta_t u^{m}\trinl^2_{\pw} &\le\frac{1}{4} \constref{cdsj}^2 \big(\trinl u^{m+1}\trinl^2_{\pw}+\trinl u^{m-1}\trinl^2_{\pw}\big)\le \frac{1}{2}\big({\rm M}_{1}(u_0,u_1,f)\big)^2 \le 1/8.  \label{vsmal}
 \end{align}
 This, the bound similar to \eqref{23} (with $v$ replaced by $u$)
 in \eqref{sp2}, and a summation from $n=1$ to $n=m$   reveal
  \begin{align}
     & \frac{1}{2}\constref{cdsj}k \sum_{n=1}^m  \trinl  \delta_t u^n\trinl_{\pw} \trinl\zeta^{n+1/2} \trinl_{\pw} \trinl\theta^{n+1/2} \trinl_{\pw}\nonumber \\
      &\le  \frac{1}{4}{\constref{cdsj}}k\|u_t\|_{L^\infty(0,T;H^2(\Omega))} \sum_{n=0}^{m-1}\big[\trinl \zeta^{n+1/2}\trinl_{\pw}^2+\trinl \theta^{n+1/2} \trinl_{\pw}^2\big] +\frac{1}{8}\trinl \zeta^{m+1/2}\trinl_{\pw}^2+\frac{1}{8}\trinl \theta^{m+1/2} \trinl_{\pw}^2\label{s21}
  \end{align}
  with the addition of  $\frac{1}{4}{\constref{cdsj}}k\|u_t\|_{L^\infty(0,T;H^2(\Omega))} \big[\trinl \zeta^{1/2}\trinl_{\pw}^2+\trinl \theta^{1/2} \trinl_{\pw}^2\big]$ on the right-hand side  of the   last inequality.

  \medskip \noindent Next we bound the second term on the right-hand side of \eqref{s2}. 
  Young's inequality with $a= \trinl\zeta^{n+1/2}\trinl_{\pw},b=\trinl\theta^{n-1/2}\trinl_{\pw}$, and $\epsilon= 1$ followed by \eqref{23} (with $v$ replaced by $u$) for $ 1\le n \le m-1$ and \eqref{vsmal} for $n=m$
  reveals
  \begin{align*}
     \frac{1}{2}\constref{cdsj}k    \sum_{n=1}^m \trinl \delta_t u^n\trinl_{\pw}\trinl\zeta^{n+1/2}\trinl_{\pw}\trinl\theta^{n-1/2}\trinl_{\pw} \le \frac{1}{4} \constref{cdsj}k\sum_{n=1}^{m}\trinl \delta_t u^n\trinl_{\pw}\big[ \trinl\zeta^{n+1/2}\trinl_{\pw}^2+\trinl\theta^{n-1/2}\trinl_{\pw}^2\big]\\
      \le \frac{1}{8}\trinl\zeta^{m+1/2}\trinl_{\pw}^2+\frac{1}{4} \constref{cdsj}k\|u_t\|_{L^\infty(0,T;H^2(\Omega))}\sum_{n=0}^{m-1}\Big[\trinl\zeta^{n+1/2}\trinl_{\pw}^2+\trinl\theta^{n+1/2}\trinl_{\pw}^2\Big]
  \end{align*}
  with the addition of a non-negative term $\frac{1}{4} \constref{cdsj}k \|u_t\|_{L^\infty(0,T;H^2(\Omega))} \|\zeta^{1/2}\|^2$ on  the right-hand side.

  \medskip
  \noindent
  Interchanging the roles of $\zeta^{n}$ and $\theta^{n}$, we bound the third term on the right-hand side of \eqref{s2} as
\begin{align*}
     &\frac{1}{2}\constref{cdsj}k    \sum_{n=1}^m \trinl\delta_t u^n\trinl_{\pw}\trinl\zeta^{n-1/2}\trinl_{\pw}\trinl\theta^{n+1/2}\trinl_{\pw} \\
     &\quad
      \le \frac{1}{8}\trinl\theta^{m+1/2}\trinl_{\pw}^2+\frac{1}{4} \constref{cdsj}k\|u_t\|_{L^\infty(0,T;H^2(\Omega))}\sum_{n=0}^{m-1}\Big[ \trinl\zeta^{n+1/2}\trinl_{\pw}^2 + \trinl\theta^{n+1/2}\trinl_{\pw}^2\Big].
  \end{align*}
  Analogous arguments lead to the bounds of the last term on the right-hand side of \eqref{s2} as
  \begin{align*}
      \frac{1}{2}\constref{cdsj}k \sum_{n=1}^m  \trinl  \delta_t u^n\trinl_{\pw} \trinl\zeta^{n-1/2} \trinl_{\pw} \trinl\theta^{n-1/2} \trinl_{\pw} &\le  \frac{1}{4}{\constref{cdsj}}k\|u_t\|_{L^\infty(0,T;H^2(\Omega))} \sum_{n=0}^{m-1}\big[\trinl \zeta^{n+1/2}\trinl_{\pw}^2+\trinl \theta^{n+1/2} \trinl_{\pw}^2\big].
      \end{align*}
Putting together the last three displayed inequalities and \eqref{s21} in \eqref{s2} results in 
      \begin{align*}
          S_2 &\le \frac{1}{4}\big[\trinl \zeta^{m+1/2}\trinl_{\pw}^2+\trinl \theta^{m+1/2} \trinl_{\pw}^2\big]+{\constref{cdsj}}k\|u_t\|_{L^\infty(0,T;H^2(\Omega))} \sum_{n=0}^{m-1}\big[\trinl \zeta^{n+1/2}\trinl_{\pw}^2+\trinl \theta^{n+1/2} \trinl_{\pw}^2\big] . \qed
      \end{align*}

\medskip \noindent {\it Bound for $S_3$.}
The identity  $k\delta _t \zeta^n= \zeta^{n+1/2}-\zeta^{n-1/2}$ from \eqref{dtu} shows 
$$-k b_{\pw}(\rho^{n,1/4},\delta_t \zeta^n,\mathcal{R}_{\m}v^{n,1/4})=- b_{\pw}(\rho^{n,1/4},\zeta^{n+1/2}-\zeta^{n-1/2},\mathcal{R}_{\m}v^{n,1/4}).$$ 
This and elementary manipulations motivate a split for $S_3$ in
\begin{align}
  & \frac{1}{2} S_3:=-k\sum_{n=1}^m b_{\pw}(\rho^{n,1/4},\delta_t \zeta^n,\mathcal{R}_{\m}v^{n,1/4})=  -b_{\pw}(\rho^{m,1/4},\zeta^{m+1/2},\mathcal{R}_{\m}v^{m,1/4})+ b_{\pw}(\rho^{1,1/4},\zeta^{1/2},\mathcal{R}_{\m}v^{1,1/4})\nonumber\\
    &\; +\sum_{n=1}^{m-1} b_{\pw}(\rho^{n+1,1/4}-\rho^{n,1/4},\zeta^{n+1/2},\mathcal{R}_{\m}v^{n+1,1/4})+ \sum_{n=1}^{m-1}b_{\pw}(\rho^{n,1/4},\zeta^{n+1/2},\mathcal{R}_{\m}(v^{n+1,1/4}-v^{n,1/4})).\label{s4all}
\end{align}
The boundedness from \eqref{Bh-bdd}, \eqref{r-con}, Lemma~\ref{axul_ritz}$(b)$,  and Lemma~\ref{axul}$(b)$ imply
\begin{align*}
 |b_{\pw}(\rho^{m,1/4},\zeta^{m+1/2},\mathcal{R}_{\m}v^{m,1/4}) |
& \le \constref{car}\constref{cdsj}h^{\sigma}\|u\|_{C([0,T];H^{2+\sigma}(\Omega))}\|v\|_{C([0,T];H^2(\Omega))}\trinl\zeta^{m+1/2} \trinl_{\pw}. 
 \end{align*}
 A Young's inequality with $a=\constref{car}\constref{cdsj}h^{\sigma}\|u\|_{C([0,T];H^{2+\sigma}(\Omega))}\|v\|_{C([0,T];H^2(\Omega))},$ $ b=\trinl\zeta^{m+1/2} \trinl_{\pw},$ and $\epsilon=96$ controls the first term on the right-hand side of 
\eqref{s4all} viz. 
 \begin{align}
  |b_{\pw}(\rho^{m,1/4},\zeta^{m+1/2},\mathcal{R}_{\m}v^{m,1/4})| & 
   \le {\cal C}(u,v)h^{2\sigma}+\frac{1}{192}\trinl\zeta^{m+1/2} \trinl_{\pw}^2\label{S41}.
\end{align}
  Analogous arguments with $m=1$ 
  establish
  \begin{align}
      b_{\pw}(\rho^{1,1/4},\zeta^{1/2},\mathcal{R}_{\m}v^{1,1/4}) \le {\cal C}(u,v)h^{2\sigma}+\frac{1}{192}\trinl\zeta^{1/2} \trinl_{\pw}^2 .\label{S42}
  \end{align}
  Exactly similar arguments with Lemma~\ref{axul_ritz}$(d)$ and Lemma~\ref{axul}$(f)$ lead to 
     \begin{align*}
          &b_{\pw}(\rho^{n+1,1/4}-\rho^{n,1/4},\zeta^{n+1/2},\mathcal{R}_{\m}v^{n+1,1/4})
         \le \constref{car}\constref{cdsj}\sqrt{k/2} h^{\sigma}\| u_t\|_{L^2(t_{n-1},t_{n+2};H^{2+\sigma}(\Omega))}\| v\|_{C([0,T];H^{2}(\Omega))}\trinl \zeta^{n+1/2}\trinl_{\pw},\\
         &b_{\pw}(\rho^{n,1/4},\zeta^{n+1/2},\mathcal{R}_{\m}(v^{n+1,1/4}-v^{n,1/4}))
         \le \constref{car}\constref{cdsj}\sqrt{k/2} h^{\sigma}\| u\|_{C([0,T];H^{2+\sigma}(\Omega))}\| v_t\|_{L^2(t_{n-1},t_{n+2};H^2(\Omega))}\trinl \zeta^{n+1/2}\trinl_{\pw}.
     \end{align*}
 A Young's inequality with $b=\sqrt{k/2}\trinl \zeta^{n+1/2}\trinl_{\pw}$, $\epsilon=1$, $a= \constref{car}\constref{cdsj} h^{\sigma}\| u_t\|_{L^2(t_{n-1},t_{n+2};H^{2+\sigma}(\Omega))}\| v\|_{C([0,T];H^{2}(\Omega))}$ (resp. $a=\constref{car}\constref{cdsj} h^{\sigma}\| u\|_{C([0,T];H^{2+\sigma}(\Omega))}\| v_t\|_{L^2(t_{n-1},t_{n+2};H^2(\Omega))}$),  and the sum for $n=1$ to $m$, and some re-arrangements reveal
     \begin{align*}
        &\sum_{n=1}^{m-1} b_{\pw}(\rho^{n+1,1/4},\zeta^{n+1/2},\mathcal{R}_{\m}(v^{n+1,1/4}-v^{n,1/4}))\le  {\cal C}(u,v_t)h^{2 \sigma} +\frac{k}{4}\trinl \zeta^{n+1/2}\trinl_{\pw}^2\\
        &\Big(\text{resp. }\sum_{n=1}^{m-1} b_{\pw}(\rho^{n+1,1/4}-\rho^{n,1/4},\zeta^{n+1/2},\mathcal{R}_{\m}v^{n+1,1/4})
        \le  {\cal C}(u_t,v)h^{2 \sigma} +\frac{k}{4}\trinl \zeta^{n+1/2}\trinl_{\pw}^2\Big).
\end{align*}
A combination of \eqref{S41}-\eqref{S42} and the last two displayed inequalities  in \eqref{s4all} lead to
\begin{align*}
    S_3 \le  {\mathcal C}(u,v,u_t,v_t)  h^{2\sigma} + \frac{1}{96}\big[\trinl \zeta^{1/2}\trinl_{\pw}^2+\trinl \zeta^{m+1/2}\trinl_{\pw}^2\big]+k\sum_{n=0}^{m-1} \trinl \zeta^{n+1/2} \trinl_{\pw}^2
\end{align*}
with the addition of non-negative term $k\trinl \zeta^{1/2} \trinl_{\pw}^2$ on the right-hand side. \qed

\medskip \noindent {\it Bound for $S_4$.} The inequalities in  \eqref{Bh-bdd}, \eqref{r-con},  and  Lemma~\ref{axul_ritz}$(b)$ followed by the definition $k\delta_t u^{n}  =u^{n+1/2}-u^{n-1/2}$,  and  the bound from Lemma~\ref{axul}$(d)$ reveal
    \begin{align*}
       & kb_{\pw}(\rho^{n,1/4},\mathcal{R}_{\m}\delta_t u^n,\theta^{n,1/4}) \le  \constref{car}\constref{cdsj}\sqrt{k/2}h^{\sigma}\|u\|_{C([0,T];H^{2+\sigma}(\Omega))}\|u_t\|_{L^2(t_{n-1},t_{n+1};H^{2}(\Omega))}\trinl \theta^{n,1/4} \trinl_{\pw}.
    \end{align*} 
    The  
identity \eqref{un14}, a triangle inequality, and  some elementary manipulations lead to 
\begin{align}
  \frac{k}{T}\sum_{n=1}^m \trinl\theta^{n,1/4}\trinl_{\pw}^2& \le \frac{k}{2T}\sum_{n=1}^m\big[\trinl\theta^{n-1/2}\trinl_{\pw}^2+\trinl\theta^{n+1/2}\trinl_{\pw}^2 \big] \nonumber\\
 &\le \frac{k}{2T}\big[\trinl\theta^{1/2}\trinl_{\pw}^2+\trinl\theta^{m+1/2}\trinl_{\pw}^2\big]+\frac{k}{T}\sum_{n=1}^{m-1} \trinl\theta^{n+1/2}\trinl_{\pw}^2 \nonumber\\
  &\le \frac{1}{2}\big[\trinl\theta^{1/2}\trinl_{\pw}^2+\trinl\theta^{m+1/2}\trinl_{\pw}^2\big]+\frac{k}{T}\sum_{n=1}^{m-1} \trinl\theta^{n+1/2}\trinl_{\pw}^2
  \label{thetan14}
\end{align}
with $k \le T$ for the first two terms on the right-hand side of last inequality. This, a Young's inequality with $a=\constref{car}\constref{cdsj}h^{\sigma}\|u\|_{C([0,T];H^{2+\sigma}(\Omega))}\|u_t\|_{L^2(t_{n-1},t_{n+1};H^{2}(\Omega))}$, $b=\sqrt{k/2}\trinl\theta^{n,1/4}\trinl_{\pw}$, and $\epsilon=4/3T$ lead to
\begin{align*}
    S_4&:=2k\sum_{n=1}^{m-1} b_{\pw}(\rho^{n,1/4},\mathcal{R}_{\m}\delta_t u^n,\theta^{n,1/4}) 
    \le  {\cal C}(u,u_t)h^{2\sigma}+\frac{3}{16}\big[\trinl\theta^{1/2}\trinl_{\pw}^2+\trinl\theta^{m+1/2}\trinl_{\pw}^2\big]+\frac{3k}{8T}\sum_{n=0}^{m-1} \trinl\theta^{n+1/2}\trinl_{\pw}^2
\end{align*}
with an addition of non-negative term $\frac{3k}{8T} \trinl\theta^{1/2}\trinl_{\pw}^2$ on the right-hand side.
\qed
 
     \medskip \noindent {\it Bound for $S_5$.} 
     The details of the bound for $S_5:=-2k\sum_{n=1}^{m}b_{\pw}(u^{n,1/4},\delta _t\zeta^{n},\chi^{n,1/4})$ follows  verbatim from the bound for $S_3$ in   with $u^{n,1/4}$ replaced by $\rho^{n,1/4}$ and $\chi^{n,1/4}$ by $\mathcal{R}_{\m}v^{n,1/4}$. 
Hence the details are skipped and the final bound is 
\begin{align*}
    S_5 \le {\mathcal C}(u,v,u_t,v_t)  h^{2\sigma} + \frac{1}{96}\big[\trinl \zeta^{1/2}\trinl_{\pw}^2+\trinl\zeta^{m+1/2}\trinl_{\pw}^2\big]+ k\sum_{n=0}^{n-1}\trinl \zeta^{n+1/2}\trinl_{\pw}^2. \qed
\end{align*}
\noindent{\it Bound for $S_6$.}
The boundedness from \eqref{Bh-bdd}, Lemma~\ref{axul}$(b)$,  $k\delta_t \rho^{n}  =\rho^{n+1/2}-\rho^{n-1/2}$  from \eqref{dtu}, and   Lemma~\ref{axul_ritz}$(c)$  lead to 
 \begin{align*}
  kb_{\pw}(u^{n,1/4},\delta_t \rho^{n},\theta^{n,1/4}) 
& \le C_{\rm dS}\constref{car}\sqrt{k/2}h^{\sigma}\|u\|_{C([0,T];H^2(\Omega))}\|u_t\|_{L^2(t_{n-1},t_{n+2};H^{2+\sigma}(\Omega))}\trinl\theta^{n,1/4}\trinl_{\pw}.
 \end{align*}
 This and analogous arguments as for the bound of $S_4$ lead to
 \begin{align*}
   S_6
   \le {\cal C}(u,u_t)h^{2\sigma}+\frac{3}{16}\big[\trinl\theta^{1/2}\trinl_{\pw}^2+\trinl\theta^{m+1/2}\trinl_{\pw}^2\big]+\frac{3k}{8T}\sum_{n=0}^{m-1} \trinl\theta^{n+1/2}\trinl_{\pw}^2. \qquad \qed
 \end{align*}
\medskip \noindent {\it Bound for $S_7$.}
    The identity  $k\delta _t= \zeta^{n+1/2}-\zeta^{n-1/2}$ from  \eqref{dtu} and elementary manipulations analogous to $S_3$ reveal
\begin{align}
 \frac{1}{2} S_7& :=   k\sum_{n=1}^{m}b_{\pw}(u^{n,1/4},(I-J)\delta _t\zeta^{n},v^{n,1/4})=  b_{\pw}(u^{m,1/4},(I-J)\zeta^{m+1/2},v^{m,1/4}) \nonumber \\
 &\qquad + b_{\pw}(u^{1,1/4},(J-I)\zeta^{1/2},v^{1,1/4}) +\sum_{n=1}^{m-1} b_{\pw}(u^{n+1,1/4}-u^{n,1/4},(J-I)\zeta^{n+1/2},v^{n+1,1/4})\nonumber\\
   &\qquad+ \sum_{n=1}^{m-1}b_{\pw}(u^{n,1/4},(J-I)\zeta^{n+1/2},v^{n+1,1/4}-v^{n,1/4}).\label{s7}
    \end{align}
Lemma~\ref{IB}$(e)$ and a Young's inequality with $a=\constref{C3.4b} 
     h^\sigma \norm{u^{m,1/4}}_{ H^{2+\sigma}(\Omega) }\norm{v^{m,1/4}}_{H^2(\Omega)},$ $b=\trinl  \zeta^{m+1/2} \trinl _{\pw},$ and $\epsilon=96$ provide
    \begin{align}
       b_{\pw}(u^{m,1/4},(I-J)\zeta^{m+1/2},v^{m,1/4}) &\le   \constref{C3.4b} 
     h^\sigma\norm{u^{m,1/4}}_{H^{2+\sigma}(\Omega)} \trinl \zeta^{m+1/2} \trinl _{\pw}\norm{v^{m,1/4}}_{H^2(\Omega)}\nonumber\\
  & \le  {\cal C}(u,v)
     h^{2\sigma} +\frac{1}{192}\trinl \zeta^{m+1/2} \trinl _{\pw}^2.\label{34-0}    \end{align}
    Analogous arguments with $m=1$ show
    \begin{align}
       b_{\pw}(u^{1,1/4},(I-J)\zeta^{1/2},v^{1,1/4}) &\le  {\cal C}(u,v)
     h^{2\sigma}+\frac{1}{192}\trinl \zeta^{1/2} \trinl _{\pw}^2   \label{34-00}.
    \end{align}
Utilize Lemma~\ref{IB}$(e)$,  Lemma~\ref{axul}($b,f$), 
and a Young's inequality with $a= \constref{C3.4b}h^{\sigma}\| u_t\|_{L^2(t_{n-1},t_{n+2};H^{2+\sigma}(\Omega))}   \|v\|_{C(0,T;H^2(\Omega))}$ $(\text{resp. } a=\constref{C3.4b}h^{\sigma}\| u\|_{C(0,T;H^{2+\sigma}(\Omega))}$ $   \|v_t\|_{L^2(t_{n-1},t_{n+2};H^{2}(\Omega))}))$, $b=\sqrt{k/2}\trinl \zeta^{n+1/2} \trinl _{\pw}$, $\epsilon=1$, followed by some elementary manipulations to bound the third (resp. fourth) term on the right-hand side of \eqref{s7} as
    \begin{align*}
        \sum_{n=1}^{m-1}b_{\pw}(u^{n+1,1/4}-u^{n,1/4},(J-I)\zeta^{n+1/2},v^{n+1,1/4}) \le {\cal C}(u_t,v)
     h^{2\sigma}+\frac{k}{4}\sum_{n=1}^{m-1}\trinl \zeta^{n+1/2} \trinl _{\pw}^2\\
        \Big( \text{resp. }\sum_{n=1}^{m-1}b_{\pw}(u^{n,1/4},(J-I)\zeta^{n+1/2},v^{n+1,1/4}-v^{n,1/4}) \le{\cal C}(u,v_t)
     h^{2\sigma}+\frac{k}{4}\sum_{n=1}^{m-1}\trinl \zeta^{n+1/2} \trinl _{\pw}^2\Big).
    \end{align*}
Combining this estimate with \eqref{34-0}-\eqref{34-00} in \eqref{s7} leads to 
    \begin{align*}
      S_7 \le   {\mathcal C}(u,v,u_t,v_t)  h^{2\sigma}+\frac{1}{96}\big[\trinl \zeta^{1/2}\trinl_{\pw}^2+\trinl \zeta^{m+1/2}\trinl_{\pw}^2\big]+{k}\sum_{n=0}^{m-1}\trinl\zeta^{n+1/2} \trinl_{\pw}^2. \qed 
    \end{align*}
\medskip \noindent {\it Bound for $S_8$.}
   {Apply Lemma~\ref{IB}$(d)$, $k\delta_tu^n=u^{n+1/2}-u^{n-1/2}$  from \eqref{dtu}, and Lemma~\ref{axul}($b,d)$   to obtain}
\begin{align*}
kb_{\pw}(\delta_tu^n,u^{n,1/4},& (J-I)\theta^{n,1/4})  \le  \constref{C3.4a}h^{\sigma}\trinl k \delta_tu^n\trinl_{\pw}\trinl u^{n,1/4}\trinl_{\pw}\trinl \theta^{n,1/4}\trinl_{\pw}\\
& \le  \constref{C3.4a}\sqrt{k/2}h^{\sigma}\|u_t\|_{L^2(t_{n-1},t_{n+1};H^{2}(\Omega))}\| u\|_{C([0,T];H^2(\Omega))}\trinl \theta^{n,1/4}\trinl_{\pw}.
\end{align*}\noindent
This, a Young's inequality with $a=\constref{C3.4a}h^{\sigma}\|u_t\|_{L^2(t_{n-1},t_{n+1};H^{2}(\Omega))}\| u\|_{C([0,T];H^2(\Omega))},b=\sqrt{k/2}\trinl \theta^{n,1/4}\trinl_{\pw}$, and $\epsilon=4/3T$ followed by some elementary manipulations and \eqref{thetan14}  lead to
\begin{align*}
S_8:=2k\sum_{n=1}^{m}b_{\pw}(\delta_tu^n,u^{n,1/4},& (J-I)\theta^{n,1/4})  \le   {\cal C}(u,u_t)h^{2\sigma}+\frac{3}{16}\big[\trinl\theta^{1/2}\trinl_{\pw}^2+\trinl\theta^{m+1/2}\trinl_{\pw}^2\big]+\frac{3k}{8T}\sum_{n=0}^{m-1} \trinl\theta^{n+1/2}\trinl_{\pw}^2
\end{align*}
with an addition of the non-negative term  $\frac{3k}{8T} \trinl\theta^{1/2}\trinl_{\pw}^2$ on the right-hand side.
\medskip \noindent 
A combination of \eqref{TJ}, \eqref{sn}, and the bounds for $S_1, S_2, \cdots, S_8$ from  above conclude the proof.
\end{proof}
\noindent The subsequent lemma controls  the truncation error for the nonlinear term $T_4$ (in error equation \eqref{TJ}).
\begin{lemma}[Truncation error for the nonlinear term $T_4$]\label{lem-non-trunc}If $u_t,v_t \in H^1(0,T;H^2(\Omega))$, then for any $m$ with $1 \le m \le N-1$,
 the following bound holds:
\begin{align*}
      T_4&:=k^3\sum_{n=1}^m  \big[b(\delta_tu^{n},J\delta_t \zeta^n,\delta_tv^{n})+\frac{k^2}{8}b(\bar{\partial}^2_tu^{n},J\delta_t \zeta^n,\bar{\partial}^2_t v^n)  +\frac{1}{2}b(\bar{\partial}^2_tu^{n},\delta_tu^{n},J\theta^{n,1/4}) \big] \\
&\le {\cal C }(u_t,u_{tt},v_t,v_{tt})k^4+\frac{1}{16}\big[\trinl\zeta^{1/2}\trinl_{\pw}^2+\trinl\theta^{1/2}\trinl_{\pw}^2+\trinl\zeta^{m+1/2}\trinl_{\pw}^2+\trinl\theta^{m+1/2}\trinl_{\pw}^2\big]\\
&\quad +k\big(1+\frac{1}{16T}\big)\sum_{n=0}^{m-1}\trinl\zeta^{n+1/2}\trinl_{\pw}^2+\frac{k}{8T}\sum_{n=0}^{m-1}\trinl\theta^{n+1/2}\trinl_{\pw}^2.
\end{align*}
\end{lemma}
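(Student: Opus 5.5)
The plan is to bound the three sums in $T_4$ separately. In each, the explicit power $k^3$ is split as $k\cdot k^2$. The factor $k^2$ (times the regularity norms) is routed into the data term, which then becomes $\mathcal{O}(k^4)$ after Young's inequality. The remaining factor $k$ multiplies $\trinl\zeta^{n\pm1/2}\trinl_{\pw}^2$ or $\trinl\theta^{n\pm1/2}\trinl_{\pw}^2$ and is left for the discrete Gronwall argument (Lemma~\ref{P1 d-gronwall}).

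\emph{First term.} Lemma~\ref{IB}$(a)$ gives
$$|b(\delta_tu^n,J\delta_t\zeta^n,\delta_tv^n)|\le\constref{ccj}\trinl\delta_tu^n\trinl\,\trinl\delta_tv^n\trinl\,\trinl\delta_t\zeta^n\trinl_{\pw}.$$
Lemma~\ref{axul}$(e)$ bounds $\trinl\delta_tu^n\trinl$ and $\trinl\delta_tv^n\trinl$ by $\|u_t\|_{L^\infty(0,T;H^2(\Omega))}$ and $\|v_t\|_{L^\infty(0,T;H^2(\Omega))}$, which are finite by $H^1(0,T;H^2)\hookrightarrow C([0,T];H^2)$. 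For $\delta_t\zeta^n$ I will not use an inverse estimate in time. Instead I write $k\delta_t\zeta^n=\zeta^{n+1/2}-\zeta^{n-1/2}$, so that $k^3\trinl\delta_t\zeta^n\trinl_{\pw}\le k^2(\trinl\zeta^{n+1/2}\trinl_{\pw}+\trinl\zeta^{n-1/2}\trinl_{\pw})$.

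A Young inequality then treats the two indices differently. For $n\le m-1$, with weight $\epsilon=1$, the contribution is
$$\frac{k^3}{2}\,\mathcal{C}(u_t,v_t)+\frac{k}{2}\bigl(\trinl\zeta^{n+1/2}\trinl_{\pw}^2+\trinl\zeta^{n-1/2}\trinl_{\pw}^2\bigr).$$
For the top index $n=m$, the term $k^2\,\mathcal{C}\,\trinl\zeta^{m+1/2}\trinl_{\pw}$ is split as $\mathcal{C}k^4+\frac{1}{64}\trinl\zeta^{m+1/2}\trinl_{\pw}^2$. The sum $\sum_{n\le m}k^3\le Tk^2$ is only $\mathcal{O}(k^2)$, so I refine: I keep $k^2\cdot k$ paired as $k\,(k^2\mathcal{C})\,\trinl\zeta\trinl_{\pw}\le k\bigl(k^4\mathcal{C}^2+\trinl\zeta\trinl_{\pw}^2\bigr)/2$. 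Summed over $n$, this gives $Tk^4\mathcal{C}^2+k\sum\trinl\zeta^{n+1/2}\trinl_{\pw}^2$. This accounts for the contribution $k\sum_{n=0}^{m-1}\trinl\zeta^{n+1/2}\trinl_{\pw}^2$ and the $\frac1{16}$ boundary terms for $\zeta^{1/2}$ and $\zeta^{m+1/2}$, after shifting the index of the $\zeta^{n-1/2}$ sum.

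\emph{Second term.} It carries $k^5/8$. I use Lemma~\ref{IB}$(a)$ again, together with $k\|\bar\partial_t^2u^n\|_{H^2}\le\sqrt{k}\,\|u_{tt}\|_{L^2(t_{n-1},t_{n+1};H^2)}$ (by Cauchy--Schwarz in the Taylor representation of Lemma~\ref{axul}$(g)$) and the same estimate for $v$. With one factor $\|v_{tt}\|$ bounded by $\sup$, or via a product in which one factor is controlled by $2\|\cdot\|_{L^\infty(H^2)}/k$ times $k^2$, the term is of the form $k^3\,\mathcal{C}(u_t,u_{tt},v_t,v_{tt})\,\trinl\delta_t\zeta^n\trinl_{\pw}$. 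It is then handled exactly like the first term.

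\emph{Third term.} For $\frac{k^3}{2}b(\bar\partial_t^2u^n,\delta_tu^n,J\theta^{n,1/4})$, Lemma~\ref{IB}$(a)$ gives the bound $\frac{k^3}{2}\constref{ccj}\trinl\bar\partial_t^2u^n\trinl\,\|u_t\|_{L^\infty(H^2)}\trinl\theta^{n,1/4}\trinl_{\pw}$. Young's inequality with $a=k^2\trinl\bar\partial_t^2u^n\trinl$, $b=\trinl\theta^{n,1/4}\trinl_{\pw}$ and a weight proportional to $T$ yields
$$\mathcal{C}\,T\,k^4\cdot k\sum_n\trinl\bar\partial_t^2u^n\trinl^2+\frac{k}{16T}\sum_n\trinl\theta^{n,1/4}\trinl_{\pw}^2.$$
The first sum is bounded by $\frac43\|u_{tt}\|^2_{L^2(0,T;H^2)}$ using Lemma~\ref{axul}$(g)$. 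The second is reduced by \eqref{thetan14}, which produces the $\frac{k}{8T}\sum\trinl\theta^{n+1/2}\trinl_{\pw}^2$ term and the $\frac1{16}$ boundary terms for $\theta$.

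The main obstacle is the bookkeeping for $\delta_t\zeta^n$. A naive bound of $\trinl\delta_t\zeta^n\trinl_{\pw}$ either loses a power of $k$ or produces a non-Gronwall term. The decomposition $k\delta_t\zeta^n=\zeta^{n+1/2}-\zeta^{n-1/2}$, with the top index $n=m$ absorbed separately, is what makes this work, and the constants have to be tuned so that the coefficients $\frac1{16}$ and $k(1+\frac1{16T})$ come out. A secondary point is the second term: if the crude $L^\infty$ bounds for both $\bar\partial_t^2u^n$ and $\bar\partial_t^2v^n$ are not available, I will spend one factor $k$ on $\bar\partial_t^2v^n$ via $k^2\bar\partial_t^2v^n=2k(\bar\partial_tv^{n+1/2}-\bar\partial_tv^{n-1/2})/2$, which is bounded by $2k\|v_t\|_{L^\infty(H^2)}$.
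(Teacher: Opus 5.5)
Your treatment of the first sum has a genuine gap: the claimed ``refinement'' counts one factor of $k$ twice. After you write $k\delta_t\zeta^n=\zeta^{n+1/2}-\zeta^{n-1/2}$, one of the three factors $k$ has been spent. Each summand is then only bounded by $k^2\mathcal{C}(u_t,v_t)\bigl(\trinl\zeta^{n+1/2}\trinl_{\pw}+\trinl\zeta^{n-1/2}\trinl_{\pw}\bigr)$. This step is in effect an inverse estimate in time after all. Your refinement $k\,(k^2\mathcal{C})\trinl\zeta\trinl_{\pw}$ uses that spent factor a second time, so it is not valid.

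With only $k^2$ per summand, no termwise Young splitting can work. Any choice leaves one of two failures:
\begin{itemize}
\item a data term $\sum_n k^3\mathcal{C}^2\approx Tk^2\mathcal{C}^2$, which gives only first order in time;
\item the sum $\sum_n\trinl\zeta^{n+1/2}\trinl_{\pw}^2$ with a weight of order $k^{-1}$ instead of $k$, which Lemma~\ref{P1 d-gronwall} cannot absorb.
\end{itemize}
For example, if $\trinl\zeta^{n+1/2}\trinl_{\pw}\approx k^{3/2}$ for all $n$, your summed termwise bound is of order $k^{5/2}$. The right-hand side of the lemma is then of order $k^3$. Note also that your first sum never uses $u_{tt}$ or $v_{tt}$, although the constant in the statement depends on them. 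They enter exactly here.

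The missing idea is the discrete summation by parts used in the paper:
\begin{align*}
k\sum_{n=1}^m b(\delta_tu^n,J\delta_t\zeta^n,\delta_tv^n)&=b(\delta_tu^m,J\zeta^{m+1/2},\delta_tv^m)-b(\delta_tu^1,J\zeta^{1/2},\delta_tv^1)\\
&\quad-\sum_{n=1}^{m-1}\bigl[b(\delta_tu^{n+1}-\delta_tu^n,J\zeta^{n+1/2},\delta_tv^{n+1})+b(\delta_tu^n,J\zeta^{n+1/2},\delta_tv^{n+1}-\delta_tv^n)\bigr].
\end{align*}
Combine it with $\delta_tu^{n+1}-\delta_tu^n=\frac{k}{2}(\bar{\partial}_t^2u^{n+1}+\bar{\partial}_t^2u^n)$, and likewise for $v$. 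This moves the time difference from $\zeta$ onto the exact solution and recovers the missing $k$.
\begin{itemize}
\item After multiplication by $k^2$, the interior sums are bounded by $\mathcal{C}k^5\sum_n\bigl(\|\bar{\partial}_t^2u^n\|_{H^2(\Omega)}^2+\|\bar{\partial}_t^2v^n\|_{H^2(\Omega)}^2\bigr)+k\sum_n\trinl\zeta^{n+1/2}\trinl_{\pw}^2$.
\item Lemma~\ref{axul}$(g)$ turns the data part into $\mathcal{C}(u_t,u_{tt},v_t,v_{tt})k^4$.
\item Only the two boundary terms remain, of size $k^2\mathcal{C}\trinl\zeta^{m+1/2}\trinl_{\pw}$ and $k^2\mathcal{C}\trinl\zeta^{1/2}\trinl_{\pw}$. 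Being single terms, your splitting $\mathcal{C}k^4+\frac{1}{32}\trinl\zeta\trinl_{\pw}^2$ handles them correctly.
\end{itemize}

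Your second sum does not need this trick, since $\frac{k^5}{8}$ carries a genuine extra power of $k$. It works termwise only if one second difference stays square-summable. Use $k\|\bar{\partial}_t^2v^n\|_{H^2(\Omega)}\le 2\|v_t\|_{L^\infty(0,T;H^2(\Omega))}$, and keep $\|\bar{\partial}_t^2u^n\|_{H^2(\Omega)}$ for Lemma~\ref{axul}$(g)$ or for your local $L^2$ bound. If you bound both second differences crudely, the term collapses to exactly the failing first-term form $k^3\mathcal{C}\trinl\delta_t\zeta^n\trinl_{\pw}$. The alternative $v_{tt}\in L^\infty(0,T;H^2(\Omega))$ does not follow from the hypotheses. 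So ``handled exactly like the first term'' is not an argument as it stands.

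Your third sum is correct and coincides with the paper's argument via Lemma~\ref{axul}$(g)$ and \eqref{thetan14}.
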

\begin{proof}
The identity $k\delta _t\zeta^{n}=\zeta^{n+1/2}-\zeta^{n-1/2}$ from  \eqref{dtu} and some elementary algebra lead to 
\begin{align}
   &k\sum_{n=1}^m  b(\delta_tu^{n},J\delta_t \zeta^n,\delta_tv^{n})=b(\delta_tu^{m},J \zeta^{m+1/2},\delta_tv^{m})-b(\delta_tu^{1},J\zeta^{1/2},\delta_tv^{1})\nonumber\\
  &\quad-\sum_{n=1}^{m-1}b(\delta_tu^{n+1}-\delta_tu^{n},J \zeta^{n+1/2},\delta_tv^{n+1})-\sum_{n=1}^{m-1}b(\delta_tu^{n},J \zeta^{n+1/2},\delta_tv^{n+1}-\delta_tv^{n}).\label{uist}
\end{align}
We establish bounds for the terms on the right-hand side of \eqref{uist}. 
Lemma~\ref{IB}$(a)$, \eqref{23} (and then with $v$ replaced by $u$) reveal 
 \begin{align}
   b(\delta_tu^{m},J \zeta^{m+1/2},\delta_tv^{m})&\le  \constref{ccj}   \trinl \delta_tu^{m} \trinl_{\pw}\trinl \zeta^{m+1/2} \trinl_{\pw}\trinl \delta_tv^{m} \trinl_{\pw} \nonumber\\
   &\le \constref{ccj} \|u_t\|_{L^\infty(0,T;H^2(\Omega))} \|v_t\|_{L^\infty(0,T;H^2(\Omega))}\trinl \zeta^{m+1/2}\trinl_{\pw}. \nonumber
 \end{align}
A  Young's inequality with $a=\constref{ccj} \|u_t\|_{L^\infty(0,T;H^2(\Omega))} \|v_t\|_{L^\infty(0,T;H^2(\Omega))}$, $b=\trinl\zeta^{m+1/2}\trinl_{\pw},$ and $\epsilon=16k^2$ provides
\begin{align}
      b(\delta_tu^{m},J \zeta^{m+1/2},\delta_tv^{m})  &\le {\cal C}(u_t,v_t)k^2+\frac{k^{-2}}{32}\trinl \zeta^{m+1/2}\trinl_{\pw}^2.\label{TR1}
\end{align}
Analogous arguments show
\begin{align}
   |b(\delta_tu^{1},J \zeta^{1/2},\delta_tv^{1})| \le {\cal C}(u_t,v_t)k^2+\frac{k^{-2}}{32}\trinl \zeta^{1/2}\trinl_{\pw}^2.\label{tr2}
 \end{align}
Utilize Lemma~\ref{IB}$(a)$, the identity $\delta_tu^{n+1}-\delta_tu ^{n}=\frac{k}{2}(\bar{\partial}_t^2u^{n+1}+\bar{\partial}_t^2u^{n})$ \big(resp. $\delta_tv^{n+1}-\delta_tv ^{n}=\frac{k}{2}(\bar{\partial}_t^2v^{n+1}+\bar{\partial}_t^2v^{n})$\big) obtained from \eqref{varphi2}, and   \eqref{23} (for $v$ and $u$) to obtain
\begin{align*}
    &|b(\delta_tu^{n+1}-\delta_tu^{n},J \zeta^{n+1/2},\delta_tv^{n+1})| \le  \frac{\constref{ccj}}{2}k\trinl \bar{\partial}_t^2u^{n+1}+\bar{\partial}_t^2u^{n} \trinl_{\pw}\|v_t\|_{L^\infty(0,T;H^2(\Omega))}\trinl\zeta^{n+1/2} \trinl_{\pw},\\
    &\big(\text{resp. }|b(\delta_tu^{n},J \zeta^{n+1/2},\delta_tv^{n+1}-\delta_tv^{n})| \le \frac{\constref{ccj}}{2}k\|u_t\|_{L^\infty(0,T;H^2(\Omega))}\trinl \bar{\partial}_t^2v^{n+1}+\bar{\partial}_t^2v^{n} \trinl_{\pw}\trinl\zeta^{n+1/2} \trinl_{\pw}\big).
\end{align*}
A Young's  inequality with $a=\frac{\constref{ccj}}{2}k\trinl \bar{\partial}_t^2u^{n+1}+\bar{\partial}_t^2u^{n} \trinl_{\pw}\|v_t\|_{L^\infty(0,T;H^2(\Omega))}$, $b=\trinl\zeta^{n+1/2} \trinl_{\pw}$, and $\epsilon=k$ (resp. $a=\frac{\constref{ccj}}{2}k\|u_t\|_{L^\infty(0,T;H^2(\Omega))}\trinl \bar{\partial}_t^2v^{n+1}+\bar{\partial}_t^2v^{n} \trinl_{\pw}$ $, b=\trinl\zeta^{n+1/2} \trinl_{\pw}$ and $\epsilon=k$) and the  triangle inequality lead to
\begin{align*}
        &|b(\delta_tu^{n+1}-\delta_tu^{n},J \zeta^{n+1/2},\delta_tv^{n+1})|  \le  \frac{{\constref{ccj}}^2}{4}k^3\big(\trinl \bar{\partial}_t^2u^{n+1}\trinl_{\pw}^2+\trinl \bar{\partial}_t^2u^{n} \trinl_{\pw}^2\big)\|v_t\|^2_{L^\infty(0,T;H^2(\Omega))}+\frac{1}{2k}\trinl\zeta^{n+1/2} \trinl_{\pw}^2,\\
    &| b(\delta_tu^{n},J \zeta^{n+1/2},\delta_tv^{n+1}-\delta_tv^{n}) |\le \frac{{\constref{ccj}}^2}{4}k^3\|u_t\|_{L^\infty(0,T;H^2(\Omega))}^2\big(\trinl \bar{\partial}_t^2v^{n+1}\trinl_{\pw}^2+\trinl \bar{\partial}_t^2v^{n} \trinl_{\pw}^2 \big)+\frac{1}{2k}\trinl\zeta^{n+1/2} \trinl_{\pw}^2.
\end{align*}
A combination of these two inequalities and  \eqref{TR1}-\eqref{tr2} in \eqref{uist}, the estimate $\sum_{n=1}^{m-1}\big(\trinl \bar{\partial}_t^2\varphi^{n+1}\trinl_{\pw}^2+\trinl \bar{\partial}_t^2\varphi^{n}\trinl_{\pw}^2\big) \le 2\sum_{n=1}^m\|\bar{\partial}_t^2\varphi^{n}\|_{H^2(\Omega)}$, and Lemma~\ref{axul}$(g)$ (applied twice)  reveals 
\begin{align}
    k^3\sum_{n=1}^m  b(\delta_tu^{n},J\delta_t \zeta^n,\delta_tv^{n}) \le {\cal{C}}(u_t,u_{tt},v_t,v_{tt})k^4+\frac{1}{32}\big[\trinl \zeta^{1/2}\trinl_{\pw}^2+\trinl \zeta^{m+1/2}\trinl_{\pw}^2\big]+k\sum_{n=1}^{m-1}\trinl\zeta^{n+1/2} \trinl_{\pw}^2.\label{Tc1}
\end{align}
Utilizing Lemma~\ref{IB}$(a)$,   the inequality $\|k\bar{\partial}^2_t v^n\|_{H^2(\Omega)}\le \|\bar{\partial}_t v^{n+1/2}\|_{H^2(\Omega)}+\|\bar{\partial}_t v^{n-1/2}\|_{H^2(\Omega)} \le  2\| v_t\|_{L^\infty(0,T;H^2(\Omega))}$ (obtained from triangle inequality and  Lemma~\ref{axul}$(c)$, and Young's inequality with $a= 2\constref{ccj}\| \bar{\partial}^2_tu^{n}\|_{H^2(\Omega)}\| v_t\|_{L^\infty(0,T;H^2(\Omega))}$, $b=\trinl\delta _t\zeta^{n}\trinl_{\pw},$ and $\epsilon=4Tk$, gives 
\begin{align*}
kb(\bar{\partial}^2_tu^{n},J\delta _t\zeta^{n},\bar{\partial}^2_t v^n)
  & \le \constref{ccj}\trinl \bar{\partial}^2_tu^{n}\trinl_{\pw}\trinl\delta_t\zeta^{n}\trinl_{\pw}\trinl k\bar{\partial}^2_t v^n \trinl_{\pw} \le 2\constref{ccj}\| \bar{\partial}^2_tu^{n}\|_{H^2(\Omega)}\| v_t\|_{L^\infty(0,T;H^2(\Omega))}\trinl\delta_t\zeta^{n}\trinl_{\pw} \\
  & \le 8\constref{ccj}^2Tk\| \bar{\partial}^2_tu^{n}\|^2_{H^2(\Omega)}\| v_t\|^2_{L^\infty(0,T;H^2(\Omega))}+\frac{1}{8Tk}\trinl\delta_t\zeta^{n}\trinl_{\pw}^2.
\end{align*}
A summation from $n=1$ to $m$ and  Lemma~\ref{axul}$(g)$  shows
\begin{align*}
 \frac{k^5}{8}\sum_{n=1}^mb(\bar{\partial}^2_tu^{n},J\delta _t\zeta^{n},\bar{\partial}^2_t v^n)   & \le {\cal C}(u_{tt},v_t)k^4+\frac{k^3}{64T}\sum_{n=1}^m\trinl\delta_t\zeta^{n}\trinl_{\pw}^2\nonumber.
\end{align*}
     The identity $k\delta_t\zeta^{n}=\zeta^{n+1/2}-\zeta^{n-1/2}$ from \eqref{dtu} followed by the  triangle inequality  leads to 
\begin{align*}
  \frac{k^3}{4T}\sum_{n=1}^m \trinl\delta_t\zeta^{n}\trinl_{\pw}^2 &\le \frac{k}{2T}\sum_{n=1}^m\big[\trinl\zeta^{n-1/2}\trinl_{\pw}^2+\trinl\zeta^{n+1/2}\trinl_{\pw}^2 \big]  \le \frac{k}{2T}\big[\trinl\zeta^{1/2}\trinl_{\pw}^2+\trinl\zeta^{m+1/2}\trinl_{\pw}^2\big] \nonumber\\
  &\qquad+\frac{k}{T}\sum_{n=1}^{m-1} \trinl\zeta^{n+1/2}\trinl_{\pw}^2\le \frac{1}{2}\big[\trinl\zeta^{1/2}\trinl_{\pw}^2+\trinl\zeta^{m+1/2}\trinl_{\pw}^2\big]+\frac{k}{T}\sum_{n=1}^{m-1} \trinl\zeta^{n+1/2}\trinl_{\pw}^2
\end{align*}
with $k \le T$ for the first two terms on the right-hand side. A combination of the last two inequalities reveal
\begin{align*}
 \frac{k^5}{8}\sum_{n=1}^mb(\bar{\partial}^2_tu^{n},J\delta _t\zeta^{n},\bar{\partial}^2_t v^n)   & \le {\cal C}(u_{tt},v_t)k^4+\frac{1}{32}\big[\trinl \zeta^{1/2}\trinl_{\pw}^2+\trinl \zeta^{m+1/2} \trinl_{\pw}^2\big]+\frac{k}{16T}\sum_{n=1}^{m-1}\trinl \zeta^{n+1/2}\trinl_{\pw}^2.
\end{align*}
 Lemma~\ref{IB}$(a)$ and Lemma~\ref{axul}$(e)$, 
a 
Young's inequality with $a=\constref{ccj}\|\bar{\partial}^2_tu^{n}\|_{H^2(\Omega)}\| u_t \|_{L^\infty(0,T;H^2(\Omega))} ,b=\trinl\theta^{n,1/4}\trinl_{\pw}$, and $\epsilon=2Tk^2$ reveal
    \begin{align*}
b(\bar{\partial}^2_tu^{n},\delta_tu^{n},J\theta^{n,1/4}) \le \constref{ccj}^2Tk^2\|\bar{\partial}^2_tu^{n}\|^2_{H^2(\Omega)}\| u_t \|^2_{L^\infty(0,T;H^2(\Omega))}+\frac{1}{4Tk^2}\trinl\theta^{n,1/4}\trinl_{\pw}^2.
    \end{align*}
    This,  together with the  estimate from Lemma~\ref{axul}$(g)$ and \eqref{thetan14} lead to
    \begin{align}
         \frac{k^3}{2}\sum_{n=1}^{m}b(\bar{\partial}^2_tu^{n},\delta_tu^{n},J\theta^{n,1/4}) \le  {\cal C}(u_t,u_{tt})k^4+\frac{1}{16}\big[\trinl\theta^{1/2}\trinl_{\pw}^2+\trinl\theta^{m+1/2}\trinl_{\pw}^2\big] +\frac{k}{8T}\sum_{n=1}^{m-1}\trinl\theta^{n+1/2}\trinl_{\pw}^2. \label{Tc3}
    \end{align}
The proof follows by  
    \eqref{Tc1}-\eqref{Tc3}, adding   $k\big(1+\frac{1}{16T}\big)\trinl\zeta^{1/2} \trinl_{\pw}^2+\frac{k}{8T}\trinl\theta^{1/2}\trinl_{\pw}^2$ on the right-hand side.
    \end{proof}
{We are in position to  present the proof of Theorem~\ref{TH-ERROR-FINAL}, which utilizes the bounds for $T_1$--$T_4$ from \eqref{eqn-ni} and 
Lemmas~\ref{f,u,r,rho-lem}--\ref{lem-non-trunc} into the error 
equation~\eqref{TJ} and concludes via a discrete Gronwall inequality.} \\

\medskip \noindent    {\textbf{{Proof of Theorem~\ref{TH-ERROR-FINAL}}}}.
The bounds for $T_2,T_3,$ and $T_4$ from  Lemmas \ref{f,u,r,rho-lem}-\ref{lem-non-trunc} in \eqref{TJ} lead to 
\begin{align*}
   &\frac{1}{8}\big[\|\bar{\partial}_t \zeta^{m+1/2}\|^2+\trinl{\zeta^{m+1/2}}\trinl_{\pw}^2+\trinl{\theta^{m+1/2}}\trinl_{\pw}^2\big] \le {\cal C}(u,u_t,u_{tt},u_{ttt},u_{tttt},v,v_t,v_{tt},f,f_t)(h^{2\sigma}+k^4)\\&+ \frac{15}{8}\norm{\bar{\partial}_t \zeta^{1/2}}^2+\frac{13}{8} \trinl {\zeta^{1/2}}\trinl_{\pw}^2  + \frac{13}{14} \trinl{\theta^{1/2}}\trinl_{\pw}^2+\frac{\mu^{*} k}{T} \sum_{n=0}^m\frac{1}{8}\big[\|\bar{\partial}_t \zeta^{n+1/2}\|^2+\trinl{\zeta^{n+1/2}}\trinl_{\pw}^2+\trinl{\theta^{n+1/2}}\trinl_{\pw}^2\big]
   \end{align*}
   with the constant 
   $$\mu^{*}=\max\left\{8,8T\big(\constref{cdsj}\big[\|u_t\|_{L^\infty(0,T;H^2(\Omega))}+\|v_t\|_{L^\infty(0,T;H^2(\Omega))}\big]+5+\frac{1}{16T}\big),8T\constref{cdsj}\|u_t\|_{L^\infty(0,T;H^2(\Omega))}+10\right\}.$$
   Next, we apply Lemma~\ref{P1 d-gronwall} with the following setting: $a_m= \frac{1}{8}\big[\|\bar{\partial}_t \zeta^{m+1/2}\|^2+\trinl{\zeta^{m+1/2}}\trinl_{\pw}^2+\trinl{\theta^{m+1/2}}\trinl_{\pw}^2\big]$, $b_m=0,$ $c_m={\cal C}(u,u_t,u_{tt},u_{ttt},u_{tttt},v,v_t,v_{tt},f,f_t)(h^{2\sigma}+k^4)+ \frac{15}{8}\norm{\bar{\partial}_t \zeta^{1/2}}^2+\frac{43}{32} \trinl {\zeta^{1/2}}\trinl_{\pw}^2  + \frac{13}{14} \trinl{\theta^{1/2}}\trinl_{\pw}^2$, and $\mu=\mu^{*}k/T$, and use that $mk \le T$ (which implies $e^{\frac{mk}{T}\mu^{*}} \le e^{\mu^{*}}$) and  the bounds from \eqref{eqn-ni} to obtain 
    \begin{align}
        \|\bar{\partial}_t \zeta^{m+1/2}\|^2+\trinl{\zeta^{m+1/2}}\trinl_{\pw}^2+\trinl{\theta^{m+1/2}}\trinl_{\pw}^2 &\lesssim h^{2\sigma}+k^4.\label{eqnm}
   \end{align}
  Inequality \eqref{eqnm} plus triangle inequality (applied thrice) show $\norm{\bar{\partial}_t(u^{m+1/2}-u_{\m}^{m+1/2})}+ \trinl u^{m+1/2}-u_{\m}^{m+1/2}\trinl_{\pw}+ \trinl v^{m+1/2}-v_{\m}^{m+1/2} \trinl_{\pw}\le  \norm{\bar{\partial}_t\rho^{m+1/2}}+ \trinl \rho^{m+1/2}\trinl_{\pw}+ \trinl \chi^{m+1/2} \trinl_{\pw} + \norm{\bar{\partial}_t\zeta^{m+1/2}}+ \trinl \zeta^{m+1/2}\trinl_{\pw}+ \trinl\theta^{m+1/2} \trinl_{\pw}$. Applying   Lemma~\ref{P1 ritz_lemma} for the first three terms and \eqref{eqnm} to the remaining terms, we conclude the proof.
\section{Numerical experiments}\label{num-sec}
In this section we present the outcomes of our numerical experiments performed using the fully-discrete
scheme described in Section~\ref{sec-main results}, with the goal to confirm the theoretical findings established 
in Theorems~\ref{LEM-ERROR-INI} and \ref{TH-ERROR-FINAL}. The numerical realization of the  fully-discrete formulation \eqref{P4fully}-\eqref{app_at_0} is carried out in an open-source finite element library \texttt{FreeFem++} \cite{MR3043640}. All simulations were performed on a workstation equipped with a 32-core processor, $2 \times 32$~GB RAM, a 2~TB enterprise HDD, and an NVIDIA RTX 4000 Ada 20~GB GDDR6 GPU, housed in a convertible tower cabinet. A Newton's iterative procedure is utilized to solve the nonlinear system arising at each time level, under a  user-specified tolerance of $\texttt{tol} := 10^{-7}$. {The details of Newton's iterations are given in the Table~\ref{tab-new}. Here $(\overset{(i)}{u_{\m}^n},\overset{(i)}{v_{\m}^n})$ denotes the $i^{\rm th}$ Newton iterate for the discrete solution at time level $t_n$, obtained by linearizing \eqref{P4fully} and \eqref{P1ic} about the previous iterate $(\overset{(i-1)}{u_{\m}^n},\overset{(i-1)}{v_{\m}^n})$. At each time step $t_n$ with $n\ge1$, the initial guess for the Newton loop is taken to be the converged solution $(u_{\m}^{n-1},v_{\m}^{n-1})$ from the previous time step, with the solution at $t=t_0$ itself obtained directly from the linear system \eqref{app_at_0} and hence requiring no Newton iteration. The loop terminates, i.e., the range of $i$ closes, as soon as the increment between successive iterates falls below $\texttt{tol}$.}
\begin{table}[h!]
\centering
\footnotesize
\begin{tabular}{@{}p{0.88\textwidth}@{}}
\toprule
\textbf{Newton iterations ($\texttt{tol} = 10^{-7}$)} \\
\midrule
\textit{Initial step corresponding to \eqref{P1ic}:}\\[0.5ex]
$\begin{aligned}[t]
&{2k^{-2}}( \overset{(i)}{u_{\m}^1}-{u_{\m}^0}-ku_1, \varphi_{{\m}} )_{L^2(\Omega)}+\frac{1}{2}a_{\pw}(\overset{(i)}{u_{\m}^{1}}+{u_{\m}^0},\varphi_{{\m}}) +\frac{1}{4}b_{\pw}(\overset{(i)}{u_{\m}^1},\varphi_{{\m}},\overset{(i-1)}{v_{\m}^{1}}+{v_{\m}^{0}})\\[4pt]
&+\frac{1}{4}b_{\pw}(\overset{(i-1)}{u_{\m}^{1}}+{u_{\m}^{0}},\varphi_{{\m}},\overset{(i)}{v_{\m}^{1}})=-\frac{1}{4}b_{\pw}({u_{\m}^0},\varphi_{{\m}},{v_{\m}^{0}}) +\frac{1}{4
}b_{\pw}(\overset{(i-1)}{u_{\m}^1},\varphi_{{\m}},\overset{(i-1)}{v_{\m}^{1}})+(f^{1/2},\varphi_{{\m}} )_{L^2(\Omega)},
\\[4pt]
&a_{\pw}(\overset{(i)}{v_{\m}^{1}},\psi_{{\m}}) -b_{\pw}(\overset{(i-1)}{u_{\m}^{1}},\overset{(i)}{u_{\m}^{1}},\psi_{{\m}})=-\frac{1}{2} b_{\pw}(\overset{(i-1)}{u_{\m}^{1}},\overset{(i-1)}{u_{\m}^{1}},\psi_{{\m}})+(g^{1},\psi_{{\m}} )_{L^2(\Omega)}.
\end{aligned}$\\[4pt]

\medskip
\noindent
\textit{General step ($n\geq 1$) corresponding to \eqref{P4fully}:}\\[0.5ex]

$\begin{aligned}[t]
&{k^{-2}}( \overset{(i)}{u_{\m}^{n+1}}-2{u_{\m}^{n}}+{u_{\m}^{n-1}}, \varphi_{{\m}} )_{L^2(\Omega)}+\frac{1}{4}a_{\pw}(\overset{(i)}{u_{\m}^{n+1}}+2{u_{\m}^{n}}+{u_{\m}^{n-1}},\varphi_{{\m}}) +\frac{1}{16}b_{\pw}(\overset{(i)}{u_{\m}^{n+1}},\varphi_{{\m}},\overset{(i-1)}{v_{\m}^{n+1}}+2{v_{\m}^{n}}+{v_{\m}^{n-1}})\\[4pt]
&+\frac{1}{16}b_{\pw}(\overset{(i-1)}{u_{\m}^{n+1}}+2{u_{\m}^{n}}+{u_{\m}^{n-1}},\varphi_{{\m}},\overset{(i)}{v_{\m}^{n+1}})=-\frac{1}{16}b_{\pw}(2{u_{\m}^n}+{u_{\m}^{n-1}},\varphi_{{\m}},2{v_{\m}^n}+{v_{\m}^{n-1}}) +\frac{1}{16}b_{\pw}(\overset{(i-1)}{u_{\m}^{n+1}},\varphi_{{\m}},\overset{(i-1)}{v_{\m}^{n+1}}) +(f^{n,1/4},\varphi_{{\m}} )_{L^2(\Omega)},\\[4pt]
&\frac{1}{2}a_{\pw}(\overset{(i)}{v_{\m}^{n+1}}+{v_{\m}^{n}},\psi_{{\m}}) -\frac{1}{4}b_{\pw}(\overset{(i)}{u_{\m}^{n+1}},\overset{(i-1)}{u_{\m}^{n+1}}+{u_{\m}^{n}},\psi_{{\m}})=\frac{1}{8}b_{\pw}({u_{\m}^{n}},{u_{\m}^{n}},\psi_{{\m}})-\frac{1}{8}b_{\pw}(\overset{(i-1)}{u_{\m}^{n+1}},\overset{(i-1)}{u_{\m}^{n+1}},\psi_{{\m}})+(g^{n+1/2},\psi_{{\m}} )_{L^2(\Omega)}.
\end{aligned}$\\
\bottomrule
\end{tabular}
\caption{Newton's iterative procedure for \eqref{P4fully} and  \eqref{P1ic}.}\label{tab-new}
\end{table}
For the tests in this article, Newton's method takes a maximum of four iterations per time step to converge.

\noindent The following notation is adopted for the convergence history tables in the rest of this subsection:
\begin{align*}
    &\texttt{Error}_0(u)=\underset{0 \le n \le N}{\max}\|u^{n}-u_{\m}^{n}\|, \quad  \texttt{Error}_0(v)=\underset{0 \le n \le N}{\max}\|v^{n}-v_{\m}^{n}\|,\\
    &  \texttt{Error}_2(u)=\underset{0 \le n \le N-1}{\max}\trinl u^{n+1/2}-u_{\m}^{n+1/2}\trinl_{\pw}, \quad  \texttt{Error}_2(v)=\underset{0 \le n \le N-1}{\max}\trinl v^{n+1/2}-v_{\m}^{n+1/2}\trinl_{\pw}.
    \end{align*}
    \textbf{Example 1.} (Convergence for smooth exact solution)
We choose the right-hand side load functions $(f,g)$ such that 
\begin{align*}
     f(x,y,t)&=u_{tt} + \Delta^2 u -[u,v], \\
g(x,y,t)&=\Delta^2 v +\frac{1}{2}[u,u],
\end{align*}
is satisfied on the domain $\Omega \times [0,T ] =(0,1)^2
\times [0,1 ]$ with the exact solution $(u,v)$ given
 by 
 $$u(x,y,t)=(1/10\sin(2\pi t)\sin(\pi x)\sin(\pi y))^2 \quad \text{ and }\quad v(x,y,t)=\exp(t)(x(x-1)y(y-1))^2 .$$
To compute the numerical convergence rates both in spatial and time discretization, we apply $k=h/4$ at each discretization step.  
 Table \ref{tab-smooth convergence} shows the errors and experimental convergence rates for the variables $u^n_{\m}$ and $v^n_{\m}$. The computational order of convergences in $L^2$ and energy norms are quasi-optimal and verify the
 theoretical results obtained in Theorem~\ref{LEM-ERROR-INI} and \ref{TH-ERROR-FINAL}.\\
\begin{table}[]
    \centering
    \begin{tabular}{|c|c|c|c|c|c|c|c|c|}
\hline
$h$&\texttt{Error}$_0(u$)&\texttt{Rate}&\texttt{Error}$_2(u$)&\texttt{Rate}&\texttt{Error}$_0(v$)&\texttt{Rate}&\texttt{Error}$_2(v$) &\texttt{Rate} \\ \hline
0.707 & 1.47e-02 &  $\star$ & 2.89e-01 &  $\star$& 6.94e-03 &  $\star$& 1.90e-01 &  $\star$\\ \hline
0.354 & 3.37e-03 & 2.124 & 1.14e-01 & 1.346 & 3.82e-03 & 0.859 & 1.39e-01 & 0.452\\ \hline
0.177 & 8.24e-04 & 2.032 & 6.01e-02 & 0.921 & 1.14e-03 & 1.749 & 7.72e-02 & 0.849\\ \hline
0.088 & 2.14e-04 & 1.948 & 3.08e-02 & 0.965 & 3.00e-04 & 1.925 & 3.99e-02 & 0.952\\ \hline
0.044 & 5.18e-05 & 2.042 & 1.55e-02 & 0.992 & 7.60e-05 & 1.979 & 2.02e-02 & 0.984\\ \hline
0.022 & 1.28e-05 & 2.020 & 7.76e-03 & 0.996 & 1.91e-05 & 1.994 & 1.01e-02 & 0.995\\ \hline
0.011 & 3.18e-06 & 2.006 & 3.88e-03 & 0.999 & 4.77e-06 & 1.999 & 5.07e-03 & 0.998\\ \hline
0.006 & 7.94e-07 & 2.003 & 1.94e-03 & 1.000 & 1.19e-06 & 2.000 & 2.54e-03 & 0.999\\ \hline

    \end{tabular}
    \caption{Convergence history in $L^2$ and energy norms with smooth exact solution $(u,v)$.}
    \label{tab-smooth convergence}
\end{table}

\noindent \textbf{Example 2.}  (2D-3D plate deflection comparison) Let $\Omega \subset \mathbb{R}^2$ be the  mid-surface (assumed to lie in alignment with the $xy$-axis) of a thin, isotropic, flat plate $\hat{\Omega}=[0,1] \times [0,1] \times [-d/2, d/2]$ with a uniform thickness $  d $.  Building upon \cite{MR569597}, given a space-time dependent loading $(\hat{f}_1(t),\hat{f}_2(t),\hat{f}_3(t))=\boldsymbol{f} (t): \widehat{\Omega}  \to \mathbb{R}^3$, the dynamic 3D von K\'arm\'an model seeks the displacement field ${\bf u}=(\hat{u}_i)_{i=1}^3$ and the Piola--Kirchhoff   stress tensor $\boldsymbol{\sigma}=(\hat{\sigma}_{ij})_{i,j=1}^3$ such that 
\begin{subequations}\label{3Deqn}
\begin{align}
  & {\bf u}_{tt} -{\partial_j}\left(\hat{\sigma}_{ij}+\hat{\sigma}_{kj}{\partial_k}\hat{u}_i\right)={\bf f}(\hat{\bx},t) \qquad\text{ for } (\hat{\bx},t) \in \hat{\Omega}\times (0,T],\\
 &  \left(\frac{1+\nu}{E}\right)\hat{\sigma}_{ij}-\frac{\nu}{E}\hat{\sigma}_{ll}\delta_{ij}=\frac{1}{2}\left({\partial_i}\hat{u}_j+{\partial_j}\hat{u}_i+{\partial_i}\hat{u}_l{\partial_j}\hat{u}_l\right) \quad \text{ for } (\hat{\bx},t) \in   \hat{\Omega}\times [0,T],\\
 &\text{with the boundary conditions, }\hat{u}_3=0, \text{ and }\hat{u}_1,\;\hat{u}_2 \text{ are independent  of }z \text{ on }\partial \Omega \times [\-d/2,d/2] \times (0,T] ,\\
  &\left(\hat{\sigma}_{i3}+\hat{\sigma}_{l3}{\partial_l}\hat{u}_i\right)=0\text{ on } \Omega\times\{\pm \; d/2\}\times (0,T], \quad \frac{1}{d}\int_{-d/2}^{d/2}\left(\hat{\sigma}_{rs}+\hat{\sigma}_{ls}{\partial_l}\hat{u}_s\right){n}_{r}=0 \text{ on }\partial \Omega \times (0,T] 
\end{align}  
\end{subequations}
with $i,j,l \in \{1,2,3\}$ and $r,s \in \{1,2\}$.  Here, $E$ is Young's modulus, $\nu$ is the Poisson's ratio, and ${\bf n}=(n_1,n_2)$ is the unit outer normal to  $\partial \Omega.$ \\
A dimensional reduction analysis in \cite{MR569597} for static case derives a 2D model  \eqref{2deqn} from the 3D model \eqref{3Deqn} as: given the load function $f=\frac{1}{  d }\int_{ - d /2}^{  d /2} \hat{f}_3\;{\rm d}z$, seek
   transverse displacement averaged over thickness, 
  $
   u=\frac{1}{ d }\int_{ - d /2}^{  d /2} \hat{u}_3 \;{\rm d}z
  $, and the Airy stress function $v$ such that \cite[eqn. 6.11-6.15]{MR569597}
\begin{subequations}\label{2deqn}
\begin{align}
&u_{tt} + \frac{d^2 E}{12(1-\nu^2)}\Delta^2 u -[u,v] = f(\bx,t) \quad \text{ for }  (\bx,t) \in \Omega \times \left(0,T \right]\\
& \frac{1}{E}\Delta^2 v +\frac{1}{2}[u,u]= 0\quad\quad\quad (\text{ for } \bx,t) \in \Omega \times \left[0,T \right]\\
 & u=\frac{\partial u}{\partial {\bf n}} =0,\; v=\frac{\partial v}{\partial  {\bf n}} =0 \text{ on }\partial \Omega \times \left(0,T \right].
\end{align}
\end{subequations}
Our objective is to illustrate that the 2D von K\'arm\'an  model~\eqref{2deqn}, effectively approximates the 3D von K\'arm\'an  model \eqref{3Deqn} in the sense that if $U^n$ is the approximation of the displacement $ {u}$ of the 2D model \eqref{2deqn} at time $t=t_n$ computed with the discrete formulation \eqref{P4fully} (with the coefficients $\frac{d^2 E}{12(1-\nu^2)}$ (resp. ${1}/{E})$ in bilinear forms  of \eqref{P4 fully_discrete1} (resp.\eqref{P4 fully_discrete2}) and $\hat{\mathbf{U}}^n$ is the discrete solution of \eqref{3Deqn} at $t=t_n$, then $U^n$ approximates $\frac{1}{d}\int_{-d/2}^{d/2}\hat{U}_3^n\; {\rm d}z$ with $\hat{\mathbf{U}}^n=(\hat{U}_1^n,\hat{U}_2^n,\hat{U}_3^n)$. 
 \begin{figure}
     \centering
     \includegraphics[width=0.5\linewidth]{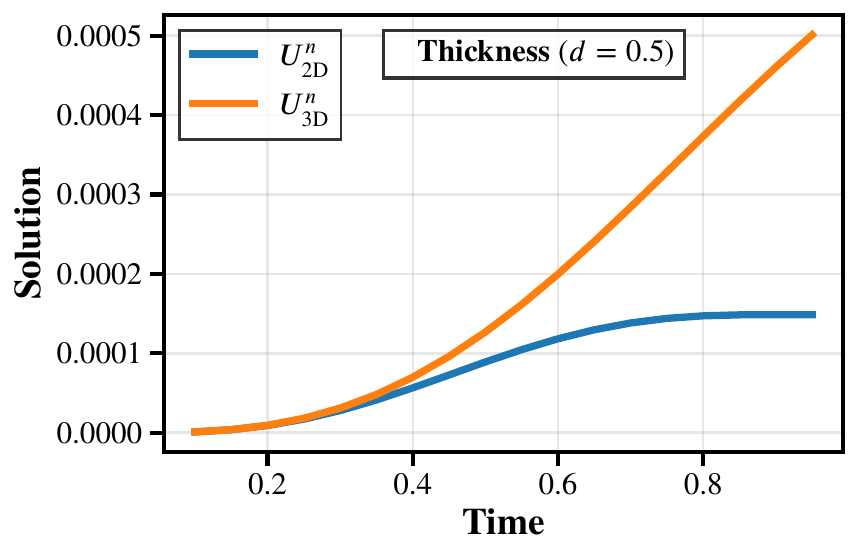}\includegraphics[width=0.5\linewidth]{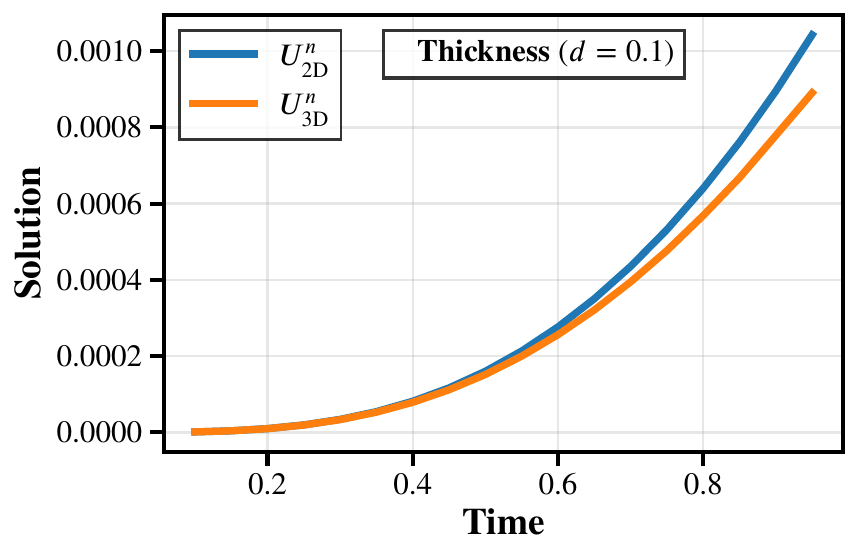}\\\includegraphics[width=0.5\linewidth]{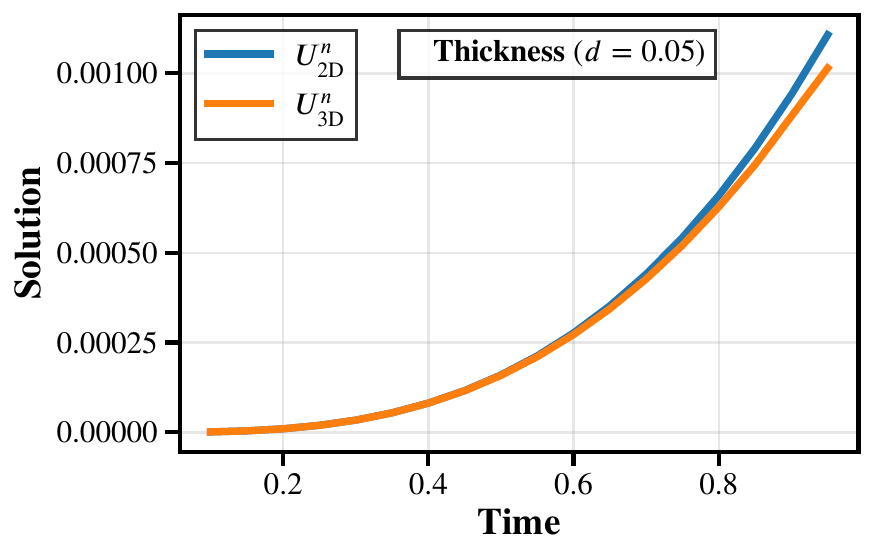}\includegraphics[width=0.5\linewidth]{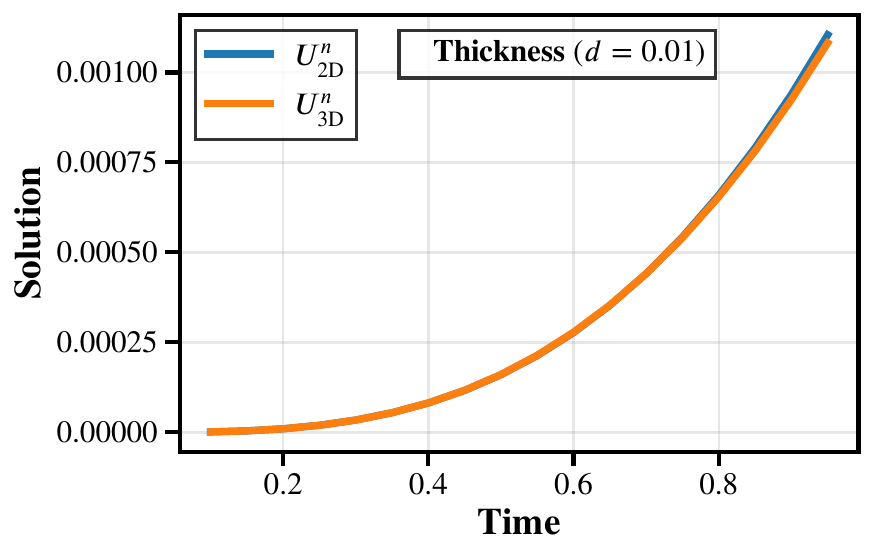}
     \caption{Example 2. 2D displacement  $U^n_{2\rm D}$ and 3D displacement  ${U}^n_{3\rm D}$  vs time $t_n$ with plate thickness $d=0.5,0.1,0.05,0.01$.}
    \label{fig2d3d}
 \end{figure}
To achieve this, we solve the 3D system \eqref{3Deqn} using continuous $H^1-$ conforming Lagrange finite element space: \( (\mathcal{P}_1(\mathcal{T}))^3  \) for displacement \( \boldsymbol{u} \), and \( (\mathcal{P}_0(\mathcal{T}))^9 \)-dG for stress. The temporal discretization is done using  the modified Newmark scheme. Homogeneous Dirichlet boundary conditions are set on all the sides except the surfaces $z=-d/2,d/2$ where the plate is assumed  traction free. The 3D load function reads
\begin{align}
\mathbf{f} = \left(0, 0, \frac{1}{100}t\sin(\pi x) \sin(\pi y)\right).\label{load-exp}
\end{align}
For 2D, we utilize $f=\frac{1}{d}\int_{-d/2}^{d/2}\hat{f}_3^n$ to obtain $f= {\rm d}z=\frac{1}{100}t\sin(\pi x) \sin(\pi y)$. The initial conditions are set to zero in both 2D and 3D cases. 
The elastic parameters take typical values for copper plates from~\cite{sherief2005half}. Let $T=1$, $\Delta t=1/20$ and consider the cells   $\hat{\Omega}_c=[5/64,6/64]\times [5/64,6/64]\times  [-d/2,d/2]$ and ${\Omega_c}=[5/64,6/64]\times [5/64,6/64]$.  At time $t=t_n$, we will use the following output quantities   
\begin{align*}
& U^n_{3\rm D}:=\frac{1}{|\hat{\Omega}_c|} \int_{\hat{\Omega}_c}   \hat{U}_3\;  d\hat{\bx}\; \text{ for } \hat{\bx}=(x,y,z) \text{ and }\;U^n_{2\rm D}:=\frac{1}{|{\Omega}_c|} \int_{{\Omega}_c}   {U}^n  \;d\bx\; \text{ for } \bx=(x,y).
\end{align*}
The simulations in Figure~\ref{fig2d3d} reveal that as the plate thickness \( d \) decreases  the results of the 2D model approximate those of the 3D model. As expected, the computational efficiency is significantly improved: The 2D model takes approximately  $25\%$ of time than that of 3D one.

\subsection*{Acknowledgments}
This work has been supported by the J.C. Bose grant ANRF/JBG/2025/000209/HAA and by the Australian Research Council through the Future Fellowship grant FT220100496.
\bibliographystyle{siam}
\bibliography{References}
\end{document}